\theoremstyle{plain}
\newcolumntype{P}[1]{>{\raggedright\arraybackslash}p{\dimexpr#1\linewidth-2\tabcolsep}}
\theoremstyle{definition} 
\newtheorem{theorem}{Theorem}[section]
\newtheorem{lemma}[theorem]{Lemma}
\newtheorem*{lemmm}{Lemma}
\newtheorem{corollary}[theorem]{Corollary}
\newtheorem{problem}[theorem]{Problem}
\newtheorem*{theoremm}{Theorem}
\theoremstyle{definition}
\newtheorem{definition}[theorem]{Definition}
\newtheorem*{definitionm}{Definition}
\def\mod{\text{mod}\;}
\def\NumOne{\text{NumOnes}}
\def\max{\text{max}}
\def\int{\mathrm{int}}
\def\log{\text{log}\,}
\def\S{\mathrm{S}}
\def\PV{\mathrm{PV}}
\def\iWPHP{\mathrm{iWPHP}}
\begin{document}

\title{Feasibility of Primality in Bounded Arithmetic} 
\author{Raheleh Jalali\footnote{\texttt{rahele.jalali@gmail.com}}, Ond\v{r}ej Je\v{z}il\footnote{\texttt{ondrej.jezil@email.cz}}\\
\small{Department of Computer Science, University of Bath$^*$ }\\
\small{Faculty of Mathematics and Physics, Charles University$^\dagger$}
}

\newcommand{\BASIC}{\text{BASIC}}
\newcommand{\NN}{\mathbb{N}}
\newcommand{\LBASIC}{L_{\BASIC}}
\newcommand{\val}{\text{val}}
\newcommand{\xgcd}{\text{xgcd}}
\newcommand{\lcm}{\text{lcm}}
\newcommand{\LCM}{\text{LCM}}
\newcommand{\Binom}{\text{Binom}}
\newcommand{\ord}{\text{ord}}
\newcommand{\ORD}{\text{ORD}}
\newcommand{\Pow}{\text{Pow}}
\newcommand{\Poly}{\text{Poly}}
\newcommand{\findval}{\text{findval}}
\newcommand{\sigb}[1]{\Sigma^b_{#1}}
\newcommand{\md}[1]{\ (\text{mod }{#1})}
\newcommand{\mdl}{\text{ mod }}
\newcommand{\floor}[1]{\left \lfloor {#1} \right\rfloor}
\newcommand{\flr}[1]{\lfloor {#1} \rfloor}
\newcommand{\abs}[1]{{\lvert {#1} \rvert}}
\renewcommand{\div}[2]{\floor{#1/#2}}
\newcommand{\half}[1]{\floor{#1/2}}
\newcommand{\Fact}[1]{\text{Fact}({#1})}
\newcommand{\FactD}[1]{\text{Fact}({#1})}
\newcommand{\Log}{\text{Log}}
\newcommand{\mult}{\textsc{mult}}
\newcommand{\Prime}{\text{Prime}}
\renewcommand{\mod}[2]{#1\text{ mod }#2}
\newcommand{\delb}[1]{\Delta^b_{#1}}
\renewcommand{\S}[1]{S^i_2}
\newcommand{\AKSPrime}{\textsc{AKSPrime}}
\newcommand{\AKSCorrect}{\textsc{AKSCorrect}}
\newcommand{\PerfectPower}{\textsc{PerfectPower}}
\newcommand{\ct}{\text{count}}
\newcommand{\Primes}{\textsc{Primes}}
\renewcommand{\P}{\textbf{P}}
\newcommand{\VTC}{\text{VTC}}
\newcommand{\card}{\text{card}}
\newcommand{\TC}{\text{TC}}
\newcommand{\NP}{\text{NP}}
\newcommand{\AC}{\text{AC}}
\newcommand{\GFLT}{\text{GFLT}}
\newcommand{\PHP}{\text{PHP}}
\newcommand{\IMUL}{\text{IMUL}}
\newcommand{\Tot}{\textsc{Tot}}
\newcommand{\bit}{\textrm{bit}}
\newcommand{\Cyc}{\textsc{Cyc}}
\newcommand{\DLB}{\mathrm{RUB}}
\newcommand{\sblm}{\Sigma^b_1\text{-LMIN}}
\newcommand{\sblma}{\Sigma^b_1\text{-LMAX}}
\newcommand{\X}{\mathcal{X}}
\newcommand{\Y}{\mathcal{Y}}
\newcommand{\G}{\mathcal{G}}
\newcommand{\R}{\text{R}}
\newcommand{\F}{\mathbb{F}}
\newcommand{\ZZ}{\mathbb{Z}}
\date{\today}
\maketitle
\begin{abstract}
We prove the correctness of the AKS algorithm \cite{AKS} within the bounded arithmetic theory $T^\ct_2$ or, equivalently, the first-order consequences of the theory $\VTC^0$ expanded by the smash function, which we denote by $\VTC^0_2$. Our approach initially demonstrates the correctness within the theory $S^1_2 + \iWPHP$ augmented by two algebraic axioms and then show that they are provable in $\VTC^0_2$. The two axioms are: a generalized version of Fermat's Little Theorem and an~axiom adding a new function symbol which injectively maps roots of polynomials over a definable finite field to numbers bounded by the degree of the given polynomial. To obtain our main result, we also give new formalizations of parts of number theory and algebra: 
\begin{itemize}
 \item In $\PV_1$: We formalize Legendre's Formula on the prime factorization of $n!$, key properties of the Combinatorial Number System and the existence of cyclotomic polynomials over the finite fields $\ZZ/p$.   
 \item In $S^1_2$: We prove the inequality $\lcm(1,\dots, 2n) \geq 2^n$.
 \item In $\VTC^0$: We verify the correctness of the Kung--Sieveking algorithm for polynomial division. 
 \end{itemize}

\end{abstract}

\newpage
\tableofcontents

\newpage
\section{Introduction}

In 2002 the first deterministic polynomial-time algorithm for primality testing was found --- the AKS algorithm (named after its creators Agrawal, Kayal, and Saxena)~\cite{AKS}. The containment $\Primes \in \P$ can be interpreted as saying: ``Primality is a \emph{feasible} property''. The general question we are treating in this work is: 
\begin{itemize}
    \item How feasible can a proof of the statement $\Primes \in \P$ be?
\end{itemize} 
More concretely, since the AKS algorithm is currently the only known primality testing algorithm unconditionally running in deterministic polynomial time, the particular way we interpret this question is to ask: 
\begin{itemize}
    \item How feasible can a proof of the correctness of the AKS algorithm be?
\end{itemize}

The feasibility of a proof here is captured by concepts from proof complexity, namely from the field of bounded arithmetic. The theories of bounded arithmetic, which are in depth treated in~(\cite{krajicek1995bounded}, \cite{cook2010logical}), are comparatively weak subtheories of Peano arithmetic. Each such theory, in some precise technical sense, corresponds to reasoning with concepts from a concrete complexity class --- the theory corresponding to polynomial-time (or \emph{feasible}) reasoning being $\PV_1$. There is already a rich body of results formalizing the correctness of various algorithms and parts of complexity theory in $\PV_1$ and its extensions (\cite{pich2015logical},\cite{muller2020feasibly},\cite{jerabek2022iterated},\cite{Igor},\cite{gaysin2024proof}), the main motivation being:

\begin{itemize}
    \item \emph{Bounded reverse mathematics}: Formalizing known mathematics in such theories provides insight into the intrinsic power of these theories and into the feasibility of the underlying constructions. For more details, see \cite{Nguyen,chen2024reverse,Igor25}.
    \item \emph{Propositional translations}: Provability of sharply bounded formulas in a bounded arithmetic theory $T$ 
    gives a specific proof system corresponding to $T$ in which all propositional tautologies expressing the formula for inputs of various lengths can be efficiently proved.
    \item \emph{Witnessing results}: Provability of a suitable existential formula in a bounded arithmetic theory $T$ can give an algorithm computing the witness in the complexity class $C$ corresponding to the theory $T$.
\end{itemize}
Regarding primality testing, Jeřábek has proved the correctness of the Rabin--Miller primality test in $S^2_2 + \iWPHP(\PV) + \PHP(\PV)$~\cite{emildual,EmilAbelian}.

By the correctness of the AKS algorithm, we mean the statement 
\[(\forall x)(\Prime(x) \leftrightarrow  \AKSPrime(x)),\]
where $\Prime(x)$ is the usual $\Pi^b_1$-formula defining primality and $\AKSPrime(x)$ is a predicate stating that the AKS algorithm claims that the number $x$ is a prime. Herbrand's theorem readily implies that if
\[\PV_1 \vdash (\forall x)(\Prime(x) \to \AKSPrime(x)),\]
then there is a polynomial-time algorithm for factoring integers, which seems utmost unlikely. This conditional unprovability result is not the only obstacle in formalizing the proof. While the original proof is relatively elementary, it still involves number-theoretic and algebraic statements, which were so far untreated in the context of bounded arithmetic.  We show that the correctness of the AKS algorithm for primality testing can be proved in the bounded arithmetic theory $T_2^{\ct}$ by first showing that it can be proved in $S^1_2 + \iWPHP$ enriched by two algebraic axioms. The first axiom is \emph{Generalized Fermat's Little theorem} (Definition \ref{definitiongflt}), which asserts that the polynomials $(\X+a)^p$ and $(\X^p+a)$ are congruent modulo any polynomial $\X^r-1$ modulo $p$, where $r$ is of logarithmic size. The second axiom is called \emph{Root Upper Bound}, $\DLB$ (Section \ref{subsecdlb}): it essentially axiomatizes a new function symbol, which provides an injective function from the roots of the polynomial $f$ into the set $\{1,\dots,\deg f\}$, which we then allow to appear in the formulas of the schemes axiomizing the base theories $\PV_1$, $S^1_2$ and $T_2$. As additional background on formalizing complexity-theoretic statements in bounded arithmetic, it is worth noting that \cite{Atserias} formalizes the Schwartz-Zippel Lemma\footnote{Schwartz-Zippel Lemma states that if a low-degree multivariate polynomial with coefficients
in a field is not zero everywhere in the field, then it has few roots on every
finite subcube of the field.} in $S^1_2$. They also formalize $\mathrm{FTA}_{\le}$, half of Fundamental Theorem of Algebra, in $S^1_2$ \cite[Lemma 3.2.]{Atserias}. 
Let us compare $\mathrm{FTA}_{\le}$ and our axiom  $\DLB$. We see that $\mathrm{FTA}_{\le}$ works for small-degree univariate polynomials in small-size fields and constructively lists all roots.  $\DLB$ handles arbitrary-degree sparse polynomials and provides an injective indexing of roots. $\mathrm{FTA}_{\le}$ is stronger constructively within its limited domain, but  $\DLB$ applies to cases $\mathrm{FTA}_{\le}$ cannot handle, making it essential for general sparse polynomial reasoning.

Along the way, we prove in $\PV_1$ Legendre's theorem~(Theorem~\ref{thrmlegendre}), the existence of \emph{cyclotomic extensions} of finite fields $\ZZ/p$~(Theorem~\ref{cycltomexists}) and the correctness of the combinatorial number system~(Lemma~\ref{lem: injective function}), in $S^1_2$ the bound $\lcm(1,\dots,2n) \geq 2^n$~(Corollary \ref{corollarylcmbound}),  and in $\VTC^0$ the correctness of the Kung--Sieveking algorithm~(Lemma~\ref{lemmavtcpolydiv}) for polynomial division, which could be of independent interest.

Another notable point is that the formalized proofs closely follow the original AKS proofs, preserving their core combinatorial and number-theoretic structure. This shows that bounded arithmetic can naturally capture these fundamental results without requiring major changes to the underlying arguments.
\section{Preliminaries}

\subsection{Bounded arithmetic and the theories in question}

In this section, we recall basic facts about the theories relevant to our formalization. We refer the reader interested in a comprehensive treatment of bounded arithmetic to~\cite{krajicek1995bounded}.


The minimal language we consider is Buss' language $L_{S_2}$~\cite{buss1985} used in the original definition of the theory $S^1_2$ (see below). The language $L_{S_2}$ consists of the non-logical symbols $\{0,S,+,\cdot, \leq, \#, \floor{x/2},  \abs{x}\}$ and all the usual logical symbols. The symbols $0, S,+,\cdot, \leq$ are the zero constant, the successor function, addition, multiplication,
and the less-than-or-equal-to relation. The intended meaning of $\floor{x/2}$ is to divide by two and round down, and $|x|$ is $\lceil \log_2 (x+1) \rceil$, which is the length of the binary representation of $x$. We define $x \# y=2^{|x| \cdot |y|}$. The behaviour of these symbols is axiomatized by the 32 universal axioms of $\BASIC$ (see \cite[Section 2.2]{buss1985}). As we will only consider logarithms base 2, from now on, by $\log x$, we mean $\log_2 (x)$. Denote $S(0)$ by $1$. The \emph{$L_{S_2}$-terms} and \emph{$L_{S_2}$-formulas} are defined as usual. 

A \emph{bounded quantification} of the free variable $x$ in an $L_{S_2}$-formula $\varphi(x)$ is either of the following
\begin{align*}
    &(\forall x \leq t)(\varphi(x))\equiv(\forall x)(x\leq t \to \varphi(x))\\
    &(\exists x \leq t)(\varphi(x))\equiv(\exists x)(x\leq t \land \varphi(x)),
\end{align*}
where $t$ is an $L_{S_2}$-term that does not contain $x$. We call the quantifiers of the above form \emph{bounded}. We say a quantifier is \emph{sharply bounded} if it is bounded and the term $t$ is of the form $\abs{s}$ for some term $s$. A \emph{(sharply) bounded formula} in the language $L_{S_2}$ is simply a formula in which every quantifier is (sharply) bounded. The set of all bounded formulas is denoted~$\Sigma^b_\infty$. An  $L_{S_2}$-formula is $\Sigma^b_0=\Pi^b_0$ if all its quantifiers are sharply bounded. An $L_{S_2}$-formula is $\Sigma^b_1$ (resp. $\Pi^b_1$) if it is constructed from sharply bounded formulas using conjunction, disjunction, sharply bounded quantifiers and existential (resp. universal) bounded quantifiers. 

The theory $S^1_2$ is axiomatized over BASIC by the polynomial induction schema $\Sigma^b_1$-PIND for any $\Sigma^b_1$-formula $\varphi$:
\[
\varphi(0) \wedge \forall x (\varphi(\floor{x/2}) \to \varphi(x)) \to \varphi(a),
\]
or alternatively by the length minimization schema $\Sigma^b_1$-LMIN:
\[\varphi(a) \to (\exists x\leq a)(\forall y \leq a)(\varphi(x) \land (\abs{y}<\abs{x} \to \lnot \varphi(y))\]
for any $\Sigma^b_1$-formula $\varphi$, or by the length maximization schema $\Sigma^b_1$-LMAX:
\[
\varphi(0) \to (\exists x \leq a)(\forall y \leq a)(\varphi(x)\land (\abs{x}<\abs{y} \to \lnot \varphi(y)))
\]
for any $\Sigma^b_1$-formula $\varphi$.
The theory $T_2$ is axiomatized over BASIC by the induction schema $\Sigma^b_{\infty}$-IND:
\[
\varphi(0) \wedge \forall x (\varphi(x) \to \varphi(x + 1)) \to \forall x \varphi(x).
\]
for any $\Sigma^b_{\infty}$-formula, or equivalently bounded formula $\varphi$.

Throughout this paper, we shall use the richer language of Cook's theory $\PV$~\cite{cook1975} and of the theory $\PV_1$~\cite{krajicek1991bounded}, which contains function symbols for every polynomial-time algorithm. In particular, this language includes $L_{S_2}$. With a slight abuse of notation, we also use $\PV$ to refer to this richer language.
We will simply refer to these function symbols for polynomial-time algorithms as \emph{$\PV$-symbols}. We shall sometimes treat them as predicates, which we call \emph{$\PV$-predicates}. The intended meaning of a $\PV$-predicate is that a corresponding $\PV$-symbol for the characteristic function of the predicate outputs $1$. The definitions of $\Sigma^b_1$, $\Pi^b_1$, and $\Sigma^b_\infty$ can be extended to this new language by allowing the terms and open formulas to be in $\PV$. These classes of formulas are called $\Sigma^b_1(\PV)$, $\Pi^b_1(\PV)$, and $\Sigma^b_\infty(\PV)$, and the expanded theories $S^1_2(\PV)$ and $T_2(\PV)$. 
However, with a slight abuse of the notation, we use $\Sigma^b_1$, $\Pi^b_1$, and $\Sigma^b_\infty$, $S^1_2$, and $T_2$.

The weakest theory we consider is $\PV_1$, first defined in~\cite{krajicek1991bounded} as a conservative extension of Cook's equational theory\footnote{Another theory $\PV1$ was introduced simultaneously with $\PV$ in~\cite{cook1975}, this theory uses the universal fragment of first order logic. Compare the notation to $\PV_1$ which is a first order theory.} $\PV$~\cite{cook1975}. The theory $\PV_1$, a universal first-order theory, formalizes polynomial-time reasoning in the language of $\PV$. 
It is shown in~\cite{krajicek1995bounded} that $\PV_1$ proves $\Sigma^b_0$-IND. We say that the formula $A$ is \emph{$\Delta^b_1$-definable} in a theory $R$ if{f} there are formulas $B \in \Sigma^b_1$ and $C \in \Pi^b_1$ such that $A \leftrightarrow B$ and $A \leftrightarrow C$ are provable in the theory $R$. Every $\PV$-function can be $\Delta^b_1$-defined in $S^1_2$, hence we can see $S^1_2$ as an extension of $\PV_1$ (see~\cite{buss1985}). Also, using Buss' witnessing theorem~\cite{buss1985}, we can see that $S^1_2$ is $\forall \Sigma^b_1$-conservative over $\PV_1$. Therefore, when we want to prove a $\forall \Sigma^b_1$-formula in $\PV_1$, we can prove it in $S^1_2$. As mentioned earlier, we assume that $S^1_2$ is in the language $\PV$. To prove the existence of prime factorization of numbers, it seems to be necessary to work in $S^1_2$ (see~\cite{jezil25prime}). 

Let us define an axiom schema, which we will use later. The injective weak pigeonhole principle $\iWPHP(\PV)$ is the axiom schema 
\[
a>0 \to \big((\exists x <2a)(f(x)\geq a) \lor (\exists x< x' < 2a)(f(x)=f(x'))\big)
\]
for each $\PV$-symbol $f$. We will introduce two additional axiom schemata, the \emph{Generalized Fermat's Little Theorem} ($\GFLT$) and the \emph{Root Upper Bound} ($\DLB$), once we have defined all the required notions. The axiom $\GFLT$ is treated in Definition~\ref{definitiongflt}, and the axiom $\DLB$ and the resulting theory $S^1_2+\iWPHP+ \DLB$ are treated in Section~\ref{subsecdlb}.

Another essential theory for us is the theory $T^\ct_2$. It consists of the theory $T_2(\PV)$ 
with its language extended by a recursively defined family of counting functions for each $\Sigma^b_\infty$-formula $A$. Then, for each bounded formula, it contains the first level counting functions, and so on. We will not work with this theory directly. Instead, we will consider $\Sigma^b_\infty$-consequences of the theory $\VTC^0_2$ which are fully conservative over $T^\ct_2$~\cite[essentially Lemma 13.1.2]{krajicek1995bounded}.

Let us now define the theory $\VTC^0_2$. First, the theory $V^0_2$ is a two-sorted extension of $T_2$, where the first sort is the `\emph{number sort}' denoted by lowercase variables $x, y, \ldots$ as in $T_2$, and the new `\emph{set sort}' corresponds to binary strings whose bits are indexed by the number sort and is denoted by uppercase variables $X, Y, \ldots$. The language of $V^0_2$ is extended by an equality symbol for the set sort, the elementhood relation $x\in X$ between the number sort on the left and the set sort on the right, and the length symbol $\abs{X}$, which takes as an input an element of the set sort and outputs an element of the number sort. The intended meaning of $\abs{X}$ is the least strict upper bound on elements of $X$. There is one exception to the number sort and set sort notation: The name of an arbitrary bounded field will be denoted $F$ despite bounded fields being objects of the number sort. Note that only in the language of $V^0_2$ we have two sorts (i.e., the number sort and the set sort), and we use the uppercase and lowercase letters to distinguish between them. However, in the language of $\PV$, there is only one sort, i.e., the number sort. Therefore, when we are working in a theory in the language of $\PV$, such as $\PV_1$ or $S^1_2$, we freely use uppercase letters also to denote objects of the number sort.

A $\Sigma^B_0$-formula is a bounded formula in this new language without quantification of the set sort.
Following~\cite{jerabek2022iterated}, the theory $V^0_2$ can be axiomatized by $T_2$, the basic axioms
\begin{align*}
    &\abs{X}\neq 0 \to (\exists x)(x\in X \land \abs{X}=x+1)\\
    &x\in X \to x < \abs{X}\\
    &(\forall x)((x\in X \leftrightarrow x\in Y) \to X=Y)
\end{align*}
and the comprehension schema for $\Sigma^B_0$-formulas. That is, for every $\Sigma^B_0$-formula $\varphi$, the following is an axiom schema,
\[(\exists X\leq x)(\forall u < x)(u \in X \leftrightarrow \varphi(u))\tag{$\varphi\text{-}\mathrm{COMP}$}\]
where a bounded quantification of an element of the set sort $(\exists Y \leq t)(\dots)$ is interpreted as $(\exists Y)( \abs{Y}\leq t \land \dots )$.

Note that using sets, we can encode sequences of the elements of the number sort, $X^{(i)}$ being the $i$-th element of such encoding, where $X$ is an element of the set sort and $i$ an element of the number sort.
We obtain the theory $\VTC^0_2$ by extending $V^0_2$ by the axiom $(\mathrm{CARD})$:
\begin{align*}
(\forall n)(\forall X)(\exists Y)(&Y^{(0)}=0 \land \\&(\forall i<n )((i\not \in X \to Y^{(i+1)}=Y^{(i)}) \land\\&\:\phantom{(\forall i<n)}\: (i\in X \to Y^{(i+1)} = Y^{(i)}+1)))
\end{align*}
In essence, the theory $\VTC^0_2$ is a simple extension of the two-sorted theory $\VTC^0$ originally defined in~\cite{cook2010logical} by the smash function $\#$ for the number sort, and in our setting also by all $\PV$-symbols for the number sort. In this work, we understand the theory $\VTC^0$ as a theory whose axioms are the ones of $\VTC^0_2$ which do not contain the symbol for the smash function $\#$.

We say a formula in the language of $\VTC^0_2$ is $\Sigma^B_1$ if it is equivalent over predicate calculus to a formula that contains only bounded number sort quantifications, bounded existential set sort quantification and no negations appear before any of the existential set sort quantifications. A function is $\Sigma^B_1$-definable in $\VTC^0_2$ if its graph can be defined in the standard model using a $\Sigma^B_1$-formula $\varphi(X,Y)$ such that $\VTC^0_2 \vdash (\forall X)(\exists! Y)\varphi(X,Y)$. It can be shown~\cite[Theorem IX.3.7.]{cook2010logical} that $\VTC^0_2$ proves the comprehension axiom which we shall denote $\Sigma^B_0(\card)\text{-}\mathrm{COMP}$. This comprehension axiom allows the $\Sigma^B_0$-formulas to contain new function symbols each computing a $\Sigma^B_1$-definable function in $\VTC^0_2$, similarly it proves the induction axiom $\Sigma^B_0(\card)\text{-}\mathrm{IND}$, which allows such formulas in the induction axiom.

\subsection{Algebraic primitives in bounded arithmetic}

Let us start with the formalization of elementary number theory and polynomial arithmetic in $\PV_1$. For numbers $x$, $y$ we denote $x$ divides $y$ by $x \mid y$. There is a well-behaved $\PV$-symbol $\mod{x}{m}$ which computes the remainder of $x$ modulo $m$. The primality of a number $x$ is defined by the $\Pi^b_1$-formula $\Prime(x)$:
\[x>1 \land (\forall y\leq x)(y \mid x \to (y=1 \lor y=x)).\]

Coding of both sets and sequences of elements can be developed in $\PV_1$, here we will describe the main ideas. We use the notation $1^{(r)}$ to denote the number whose binary expansion consists of $r$-many ones, in other words, the number $r$ is of logarithmic size. Coding of sets is simple, there is a well-behaved $\PV$-symbol $\bit(x,i)$ which outputs the $i$-th bit of the number $x$, the elementhood relation $i\in x$ is then defined to be equivalent to $\bit(x,i)=1$. Regarding sequences, a sequence $( a_1,\dots, a_k),$ where $1^{(k)}$ exists, can be coded by a pair $\langle a,b \rangle$, with $a$ consisting of a number whose binary expansion is the concatenation of the binary expansions of $a_1, a_2, \dots, a_k$ and $b$ is the number whose binary expansion contains $1$ at the positions which correspond to the beginning of each $a_i$. We shall denote the pair $\langle a,b\rangle$ by $\langle a_1,\dots,a_k\rangle$. More details on coding of sequences in $\PV_1$ can be found in~\cite[Section 5.4]{krajicek1995bounded}. Sums and products over sequences can also be defined by the obvious iterative algorithm. Such an algorithm obtains a sequence of numbers and outputs the iterated sum/product as a single number. 

Let us note that sometimes we consider sets which are not neccesarily codeable in $\PV_1$ but are definable by a formula: These are just abbreviations in the meta-language where we talk about a definable predicate as if it were the set of all elements which satisfy it. As such, definable sets are not quantifiable. Of special importance are sets of the form $\{x;C(x)=1\}$ for a Boolean circuit $C$, as these can appear in the induction formulas. We always make it clear when a set is codeable as oppose to definable.

\emph{Polynomials} are defined to be sequences of coefficients such that the number of coefficients is one more than the degree of the polynomial, with the addition, multiplication and composition defined as usual, division defined using the long division algorithm, and exponentiation modulo some other polynomial defined using exponentiation by squaring. \emph{Divisibility} of polynomials is defined as usual, the notation $f\mid g$ meaning that the polynomial $f$ divides $g$. A polynomial is called \emph{irreducible} if it cannot be written as a product of two polynomials, both having positive degrees. We will sometimes call polynomials formalized like this \emph{low degree polynomials}, as their degree is bounded by a number of logarithmic size. Later in this section we will define a second formalization of polynomials in a two sorted theory, and we will call polynomials following this second formalism high degree polynomials.

The basic properties of the \emph{greatest common divisor} ($\gcd$) of numbers are essential for many arguments. The following is well-known (see~\cite{EmilAbelian}):
\begin{lemma}[$\PV_1$, Euclid's algorithm]\label{lemmapveuclid}
    There is a $\PV$-function symbol $\gcd$ such that $\PV_1$ proves
    \begin{align*}
        &\gcd(x,y) \mid x\\
        &\gcd(x,y) \mid y\\
        &(z \mid x \land z \mid y) \to (z \mid \gcd(x,y)),
    \end{align*}
    and also a $\PV$-function symbol $\xgcd$ for which $\PV_1$ proves
    \begin{align*}
        (\xgcd(x,y) = (g,u,v)) \to (g=\gcd(x,y) \land g=ux+vy).
    \end{align*}
\end{lemma}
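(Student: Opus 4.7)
The plan is to implement $\gcd$ and $\xgcd$ as a single $\PV$-algorithm that produces the entire sequence of Euclidean triples and then to verify the three claimed properties using only $\Sigma^b_0$-IND, which is available in $\PV_1$. After reducing to the case $x\geq y\geq 0$ and handling the trivial cases $y=0$ separately, I would define a $\PV$-symbol that, on input $(x,y)$, outputs the sequence $s=\langle (r_0,u_0,v_0),(r_1,u_1,v_1),\dots,(r_k,u_k,v_k)\rangle$ given by $(r_0,u_0,v_0)=(x,1,0)$, $(r_1,u_1,v_1)=(y,0,1)$, and the usual extended Euclid recurrence: for $i\geq 1$ with $r_i>0$, set $q_i=\lfloor r_{i-1}/r_i\rfloor$ and $(r_{i+1},u_{i+1},v_{i+1})=(r_{i-1}-q_ir_i,\;u_{i-1}-q_iu_i,\;v_{i-1}-q_iv_i)$, stopping at the first index $k$ with $r_{k+1}=0$. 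Because $r_{i+2}<r_i/2$ for every valid $i$, the loop terminates in at most $O(|y|)$ iterations, and a direct size estimate shows $|u_i|,|v_i|\leq |x|+|y|+O(1)$, so the whole sequence has polynomial size and can be produced by a $\PV$-symbol. I would then define $\gcd(x,y):=r_k$ and $\xgcd(x,y):=(r_k,u_k,v_k)$.

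For the correctness proofs inside $\PV_1$, I would formulate three invariants about the sequence $s$:
\begin{align*}
&(\mathrm{i})\ r_i=u_ix+v_iy,\\
&(\mathrm{ii})\ r_k\mid r_i,\\
&(\mathrm{iii})\ (z\mid x\land z\mid y)\to z\mid r_i,
\end{align*}
each quantified sharply by $i\leq k\leq |s|$. Since the extraction of the $i$-th entry of $s$ is a $\PV$-symbol and divisibility is a $\PV$-predicate, each of these invariants, once the quantifier over $i$ is made sharply bounded, is $\Sigma^b_0$. I would then prove (i) and (iii) by forward induction on $i$ using the recurrence $r_{i+1}=r_{i-1}-q_ir_i$, and (ii) by backward induction starting from $r_k\mid r_k$ and $r_k\mid r_{k-1}$ (the latter being how $r_k$ was produced) and using the same recurrence to lift divisibility from $(r_i,r_{i+1})$ to $r_{i-1}$. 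All three are instances of $\Sigma^b_0$-IND, which $\PV_1$ proves. The three conclusions of the lemma then come out at once: (ii) at $i=0$ and $i=1$ yields $\gcd(x,y)\mid x$ and $\gcd(x,y)\mid y$; (iii) gives the maximality property $(z\mid x\land z\mid y)\to z\mid\gcd(x,y)$; and (i) at $i=k$ gives the Bezout identity $\gcd(x,y)=ux+vy$ that is required for $\xgcd$.

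The principal obstacle is the bookkeeping needed to make the invariants genuinely $\Sigma^b_0$. One has to check that the sequence $s$ fits into a single number of polynomial size (which is where the bound $|u_i|,|v_i|\leq |x|+|y|+O(1)$ matters), that extraction from $s$ is a $\PV$-symbol, and that each invariant, when written out with the bounded quantifier $(\forall i\leq |s|)$, really is open. Beyond this foundational setup the arguments are elementary arithmetic; no stronger induction principle than $\Sigma^b_0$-IND is needed, which is exactly what keeps the whole construction inside $\PV_1$.
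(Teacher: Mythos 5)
The paper offers no proof of this lemma at all; it is introduced with the sentence ``The following is well-known (see~\cite{EmilAbelian})'' and left as a citation. So there is no internal argument to compare against, and your task was effectively to reconstruct the standard formalization, which you have done correctly. Producing the full extended-Euclid trace $\langle(r_i,u_i,v_i)\rangle_{i\le k}$ as a single polynomial-size sequence, bounding the coefficients, and then running three $\Sigma^b_0$-inductions---forward for the Bezout invariant $r_i=u_ix+v_iy$ and for $(z\mid x\land z\mid y)\to z\mid r_i$, backward for $r_k\mid r_i$---is exactly the standard route, and the observation that each invariant becomes $\Sigma^b_0$ once $i$ is sharply bounded by $|s|$ is the key point that places the whole argument in $\PV_1$.

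One detail you gloss over that deserves a sentence in a full write-up: the Bezout coefficients $u_i,v_i$ alternate in sign, so the identity $r_i=u_ix+v_iy$ lives in $\ZZ$, not $\NN$. Since $\PV$ is a theory of natural numbers, you need either an explicit signed-integer encoding for the $u_i,v_i$ (a sign bit plus magnitude, with the arithmetic $\PV$-symbols adjusted accordingly), or a reformulation of the invariant that keeps everything nonnegative, e.g.\ maintaining $|u_i|,|v_i|$ and a parity bit and proving $r_i+|v_i|y=|u_i|x$ for even $i$ and $r_i+|u_i|x=|v_i|y$ for odd $i$. This is entirely routine bookkeeping, but it is precisely the kind of thing that makes the lemma a genuine (if unexciting) piece of formalization rather than a one-liner, and it is worth being explicit that the bound $|u_i|,|v_i|\le|x|+|y|+O(1)$ you state is a bound on the magnitudes under such an encoding. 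With that clarified, the plan is sound.
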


Throughout this work, we introduce $\PV$-symbols by declaring their name and properties inside the Lemma environment. Therefore, in the subsequent parts of this work, $\gcd$ and $\xgcd$ in any extension of $\PV_1$ denote specific $\PV$-symbols which satisfy the statement of the Lemma~\ref{lemmapveuclid}. We are now ready to define the first algebraic axiom we need for our formalization. Throughout the paper, we will use calligraphic uppercase letters $\X$, $\Y$, $\dots$ for formal variables of polynomials.

\begin{definition}[Generalized Fermat's Little Theorem]\label{definitiongflt}
Let $\GFLT$ be the natural $\Sigma^b_1$-sentence formalizing the following statement, where the exponentiation of polynomials is given by a $\PV$-symbol for exponentiation by repeatedly squaring modulo the polynomial $\X^r-1$.

\vspace{0.5em}
\fbox{\begin{minipage}{30em}
Let $p$ be a prime, and $a$ and $1^{(r)}$ be numbers such that $\gcd(a,p)=1$ and $r<p$. Then
\[
(\X+a)^p \equiv \X^p + a \md{p, \X^r-1}.
\]
\end{minipage}}
\end{definition}

The validity of $\GFLT$ follows in a sufficiently strong meta-theory by applying binomial theorem to the expresison $(\X+a)^p$ and considering which coefficients are divisible by $p$, this gives us $(\X+a)^p \equiv \X^p + a \md{p}$ which we take modulo $\X^r-1$. Since there are exponentially many monomials in the expansion of $(\X+a)^p$, it is not clear how to adapt the proof strategy in $\PV_1$ even if we assume the ordinary Fermat's Little Theorem as an axiom.

Let us now discuss the formalization of algebraic concepts in $\VTC^0_2$. As an extension of $T_2$, all function and relational symbols on the number sort are carried over from $\PV_1$. More interesting is the case of arithmetical operations over the set sort, where we interpret the set $X$ as a number $\sum_{u\in X}2^u$. Already $V^0_2$ can prove the totality and basic properties of addition and ordering. However, multiplication is $\TC^0$-complete. Thus, it is known that $V^0_2$ cannot prove it is total as this would contradict known lower bounds against $\AC^0$. Coding of sequences of elements of the set sort can be developed in $\VTC^0_2$, with $i$-th element of a sequence $S$ being $S^{[i]}$. It is also well-known that $\VTC^0_2$ can prove the totality and basic properties of multiplication. Moreover, $\VTC^0_2$ can $\Sigma^B_1$-define iterated addition $\sum_{i<n}X^{[i]}$ and prove
    \begin{align*}
        \sum_{i<0} X^{[i]} &= 0,\\
        \sum_{i<n+1} X^{[i+1]} &= X^{[n]}+\sum_{i<n} X^{[i]}.
    \end{align*}

Only recently has Jeřábek~\cite{jerabek2022iterated} shown that $\VTC^0_2$ can also $\Sigma^B_1$-define iterated multiplication and prove the analogous recursive properties. This also implies that it can define the division of integers.
\begin{theorem}[\cite{jerabek2022iterated}]\label{thrmimul}
    $\VTC^0$ can $\Sigma^B_1$-define iterated products $\prod_{i<n} X^{[i]}$ and prove the iterated multiplication axiom $\IMUL$:
    \begin{align*}
        \prod_{i<0} X^{[i]} &= 1,\\
        \prod_{i<n+1} X^{[i+1]} &= X^{[n]}\cdot\prod_{i<n} X^{[i]}.
    \end{align*}

    Moreover, $\VTC^0$ can $\Sigma^B_1$-define a function $\floor{X/Y}$ such that it proves \[Y\neq 0 \to \floor{X/Y}\cdot Y \leq X < (\floor{X/Y}+1)\cdot Y.\]
\end{theorem}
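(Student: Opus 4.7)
The plan is to formalize the classical $\TC^0$ algorithm of Hesse--Allender--Barrington for iterated multiplication inside $\VTC^0$, and this is essentially the route taken in~\cite{jerabek2022iterated}. The backbone is Chinese Remainder Representation (CRR). Given a sequence $X$ of length $n$ with each $X^{[i]}<2^m$, I would first enumerate enough primes $p_1,\dots,p_k$ of logarithmic size so that $M=\prod_i p_i > 2^{mn}$. Existence and enumeration of such primes reduces to counting (prime density on logarithmic-size numbers), which is available in $\VTC^0$ via $\Sigma^B_0(\card)\text{-}\mathrm{COMP}$, and the primes themselves, as well as their pairwise products used below, are numbers whose binary length is a small polynomial in~$n,m$, so they remain manageable objects of the number or set sort.

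Second, for each such prime $p$, I would reduce iterated multiplication modulo $p$ to iterated addition modulo $p-1$ via discrete logarithms. A generator $g$ of $(\ZZ/p)^*$ is found by brute search (the group has logarithmic size), and one tabulates $\log_g$ as a $\Sigma^B_1$-definable function. Then, separately tracking the number of factors $X^{[j]}$ divisible by $p$ using $\VTC^0$-counting, one has
\[
\prod_{j<n} X^{[j]}\equiv g^{\,\sum_{j}\log_g X^{[j]}\bmod(p-1)} \md{p}
\]
whenever no factor is divisible by $p$, and the product is $\equiv 0 \md{p}$ otherwise. Since iterated addition of (polynomially many logarithmic-sized) numbers is a direct consequence of $\VTC^0$'s ability to count bits at each position with carry propagation, this yields the CRR coordinates of $\prod_{j<n}X^{[j]}$.

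The core technical step is converting CRR back to binary: the desired product is recovered from $\prod_{j<n}X^{[j]} \equiv \sum_i (a_i\, M_i^{-1} \bmod p_i)\cdot M_i \md{M}$ with $M_i=M/p_i$. Evaluated naively this uses iterated multiplication again, so circularity must be broken by passing to the fractional form $\prod_{j<n}X^{[j]}/M \equiv \sum_i (a_i\, M_i^{-1}\bmod p_i)/p_i \pmod{1}$. The right-hand side involves only a sum of logarithmic-denominator rationals truncated to sufficient precision, hence reduces to one more iterated addition of bounded-precision dyadic approximations, from which the integer product is read off using the known bound $\prod_{j<n}X^{[j]}<M$. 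The recursive identities of $\IMUL$ then follow by $\Sigma^B_0(\card)\text{-}\mathrm{IND}$ on $n$ applied to the $\Sigma^B_1$ graph, since splitting off the last factor amounts to the same CRR computation on a shorter sequence.

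For the division claim, once $\IMUL$ is available I would compute $\lfloor 2^{2\abs{Y}}/Y\rfloor$ by Newton iteration $z_{t+1}=2z_t-(Yz_t^2)\cdot 2^{-2^{t+1}}$, which doubles precision per round and therefore converges in $O(\log\abs{Y})$ steps — each step is a constant number of multiplications provided by $\IMUL$. Multiplying by $X$, shifting, and performing a single correction of size $\leq 1$ yields $\floor{X/Y}$. The hard part throughout is the third step: carrying out the CRR-to-binary conversion with the precision bookkeeping and dyadic truncations that $\VTC^0$ can actually verify, which is precisely what makes~\cite{jerabek2022iterated} a nontrivial contribution rather than a routine transcription of the $\TC^0$ algorithm.
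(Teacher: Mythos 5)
Note first that the paper only cites this theorem from~\cite{jerabek2022iterated} and does not prove it, so there is no in-paper argument to compare against. Evaluating your sketch on its own merits: the $\IMUL$ portion is a sound high-level outline of the Hesse--Allender--Barrington $\TC^0$ approach, which is indeed what the cited work formalizes --- reduce to CRR coordinates via discrete logarithms modulo small primes, do iterated addition of exponents, then recover the binary representation from a sum of truncated dyadic approximations to $(a_i M_i^{-1}\bmod p_i)/p_i$. You are also right to identify the CRR-to-binary step and its precision bookkeeping as the crux of the formalization effort.

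The division claim, however, has a genuine gap. Quadratic Newton iteration $z_{t+1}=2z_t-Yz_t^2\cdot 2^{-2^{t+1}}$ doubles precision per round and so requires $\Theta(\log\abs{Y})$ \emph{sequential} rounds; each round is constant-depth, but their composition has depth $\Theta(\log\abs{Y})$, which is not $\TC^0$. On the theory side, proving existence of the length-$\Theta(\log\abs{Y})$ sequence of iterates would amount to iterating a $\Sigma^B_1$-definable function logarithmically many times, which is a form of bounded string recursion available in $V^1$ but not in $\VTC^0$; it is not an instance of $\Sigma^B_0(\card)$-IND because each step reintroduces an existential set witness. Put differently, if that graph were $\Sigma^B_1$ and provably total in $\VTC^0$, the witnessing theorem would place your sequential procedure in $\TC^0$, which it is not. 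The repair is to unroll the iteration: from $1-Yz_t=(1-Yz_0)^{2^t}$ one gets the closed form $z\approx z_0\sum_{i<k}(1-Yz_0)^i$ for a one-shot seed $z_0$, a single truncated geometric series whose powers $(1-Yz_0)^i$ are produced \emph{simultaneously} by the same CRR/discrete-log machinery used for $\IMUL$ and then combined by one iterated addition. That is a genuine $\TC^0$ computation and the route the literature takes; your sequential Newton formulation should be replaced by it.
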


Finally, let us treat polynomials encoded by elements of the set sort as a sequence of coefficients, where each coefficient is an element in the number sort, which we shall call \emph{high degree polynomials}. The totality of addition of high-degree polynomials is straightforward, and using iterated addition, the totality of their multiplication can be proved in $\VTC^0$. We will observe in Theorem~\ref{thrmdlb} that the Kung-Sieveking $\TC^0$-algorithm as presented in \cite{healy2006fieldops} for the division of polynomials can also be proved correct in $\VTC^0_2$ which also implies the totality of division for high degree polynomials.

\section{The AKS algorithm and its correctness}

\begin{algorithm}
    \caption{The AKS algorithm~\cite{AKS}}\label{algaks}
 \hspace*{\algorithmicindent} \textbf{Input:} $n\geq 0$\\
 \hspace*{\algorithmicindent} \textbf{Output:} Whether $n$ is \texttt{COMPOSITE} or \texttt{PRIME}.
\begin{algorithmic}[1]
\State \text{Check whether $n=p^k$ for some $p\geq 0,k>1$, if yes output \texttt{COMPOSITE}.}
\State \text{Find the first $r$ such that $\ord_r(n) > \flr{\log n}^2$.}
\State \text{If some $a\leq r$ has $1<\gcd(a,n)<n$, then output \texttt{COMPOSITE}.}
\State \text{If $n\leq r$, output \texttt{PRIME}.}
\State \text{For $a = 0,\dots, \flr{\sqrt{\phi(r)}}\cdot \floor{\log n}$:} 
\State \qquad \text{If $(\X+a)^n \not \equiv \X^n + a$ mod $(\X^r-1,n)$: output \texttt{COMPOSITE}.}
\State \text{Output \texttt{PRIME}.}
\end{algorithmic}
\end{algorithm}
\subsection{Proof of the correctness}
We begin by providing a high-level overview of the proof of correctness of the AKS algorithm (Algorithm~\ref{algaks})~\cite{AKS}, here $\ord_r(n)$ is the multiplicative order of $n$ modulo $r$ and $\log$ is the base $2$ logarithm. In doing so, we restate and give new alphabetical names to key Lemmas and Theorems of~\cite{AKS} to avoid confusion with the numbering in our work. The key number-theoretic statement behind the algorithm is the following:

\begin{lemmm}[A] {\cite[Lemma 2.1]{AKS}} 
    Let $a\in \ZZ$, $n\in \NN$, $n\geq 2$ and $\gcd(a,n)=1$. Then $n$ is prime if and only if
    \[(\X+a)^n\equiv \X^n+a \md{n}.\]
\end{lemmm}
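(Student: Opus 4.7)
The plan is to prove the two implications separately; the forward direction relies on the Freshman's Dream identity combined with Fermat's Little Theorem, whereas the converse proceeds by contraposition using the exact power of a chosen prime $p \mid n$ dividing the binomial coefficient $\binom{n}{p}$.

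For the forward direction, assume $n$ is prime. Expanding by the binomial theorem,
\[(\X+a)^{n} = \sum_{k=0}^{n}\binom{n}{k}\X^{k}a^{n-k}.\]
For $0<k<n$, the coefficient $\binom{n}{k}$ is divisible by $n$: writing $\binom{n}{k}=\frac{n(n-1)\cdots(n-k+1)}{k!}$, the numerator is visibly divisible by $n$, while every prime factor of $k!$ is strictly less than $n$ and hence coprime to the prime $n$. Thus $(\X+a)^{n}\equiv \X^{n}+a^{n}\md{n}$, and since $\gcd(a,n)=1$, Fermat's Little Theorem yields $a^{n}\equiv a\md{n}$, completing this direction.

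For the converse we argue the contrapositive: if $n$ is composite, then $(\X+a)^{n}\not\equiv \X^{n}+a\md{n}$. Pick any prime $p\mid n$ (so $p<n$, since $n$ is composite) and let $k\geq 1$ be the largest integer with $p^{k}\mid n$. The coefficient of $\X^{p}$ on the right-hand side is zero (since $0<p<n$), while on the left-hand side it equals $\binom{n}{p}\,a^{n-p}$, so it suffices to show $n\nmid \binom{n}{p}\,a^{n-p}$; as $\gcd(a,n)=1$, this reduces to $p^{k}\nmid \binom{n}{p}$. From the identity $\binom{n}{p}=\frac{n}{p}\binom{n-1}{p-1}$ we obtain $\binom{n}{p}=p^{k-1}\cdot(n/p^{k})\cdot\binom{n-1}{p-1}$. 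The factor $n/p^{k}$ is coprime to $p$ by the choice of $k$; in the fraction $\binom{n-1}{p-1}=\frac{(n-1)(n-2)\cdots(n-p+1)}{(p-1)!}$ each numerator factor satisfies $n-j\equiv -j\not\equiv 0\md{p}$ (as $1\leq j\leq p-1$), while the denominator $(p-1)!$ is likewise coprime to $p$, so $\binom{n-1}{p-1}$ is coprime to $p$. Hence the exact power of $p$ dividing $\binom{n}{p}$ is $p^{k-1}$, strictly less than $p^{k}$, giving $p^{k}\nmid \binom{n}{p}$ as required.

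The only step that requires genuine care is the $p$-adic computation in the reverse direction; everything else is elementary manipulation of the binomial expansion. Standard treatments of binomial coefficients, prime factorization and Fermat's Little Theorem, developed elsewhere in this work inside $\PV_1$ and $S^1_2$, suffice to internalize the entire argument in a weak theory of arithmetic should this be desired.
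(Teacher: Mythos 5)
Your argument is mathematically correct and is essentially the standard proof of Lemma 2.1 in~\cite{AKS}: forward direction via the binomial theorem, $p\mid\binom{p}{k}$ for $0<k<p$, and Fermat's Little Theorem; converse by locating the coefficient of $\X^p$ and showing $\nu_p\bigl(\binom{n}{p}\bigr)=\nu_p(n)-1$ so that $n\nmid\binom{n}{p}$. The $p$-adic computation you isolate as the delicate step is carried out correctly.

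However, the paper does not prove Lemma A at all. It is stated as a citation to~\cite{AKS} purely to motivate the formalization, and the text explicitly notes that the statement of Lemma A \emph{cannot even be expressed in the language of} $\PV$: the polynomial $(\X+a)^n$ has degree $n$, which is exponential in $\abs{n}$, so it has no representation as a low-degree polynomial and no bounded formula of the language can refer to it. Instead the paper extracts only the ``only if'' direction, weakens the congruence to be modulo $\X^r-1$ for a logarithmically small $r$, and postulates this as the axiom $\GFLT$ (Definition~\ref{definitiongflt}), which it then proves in $\VTC^0_2$ (Lemma~\ref{lemmagflt}) using the binomial theorem (Theorem~\ref{thrmbinomial}) and $p\mid\binom{p}{i}$ for $0<i<p$ (Lemma~\ref{lemmabinomdivisibility}) --- the same ingredients as your forward direction. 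The ``if'' direction of Lemma A is never formalized; the paper recovers the corresponding direction of correctness through the entire Lemma H argument, not through Lemma A.

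Your closing sentence, that the whole argument ``can be internalized in $\PV_1$ and $S^1_2$,'' therefore overclaims. Even restricted to $r$-th residues, $\GFLT$ is only shown provable in $\VTC^0_2$ (using iterated multiplication), and the full statement of Lemma A with degree-$n$ polynomials is outside the expressive reach of the bounded theories considered here. Your proof is a fine account of the classical result, but it does not and cannot serve as a formalization in the paper's framework; the authors' decision to replace Lemma A by $\GFLT$ is precisely the point where the formalization departs from the textbook argument.
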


The AKS algorithm essentially emerges as an effectivization of this characterization of primality. As testing for this equality explicitly requires comparing objects of size exponential in $\abs{n}$, the AKS algorithm instead checks the equality modulo a low degree polynomial $\X^r-1$ for polynomially many values of $a$. Since the statement of this lemma cannot even be expressed in the language of $\PV$, in the formalization, we instead supplement it by the $\GFLT$ axiom. This axiom is what remains of Lemma A by keeping the only if direction and taking the equality mod $\X^r-1$ for $r$ polynomial in $\abs{n}$. 

This axiom implies the following direction of the correctness.

\begin{lemmm}[B] {\cite[Lemma 4.2]{AKS}}
    If $n$ is prime, the algorithm returns prime.
\end{lemmm}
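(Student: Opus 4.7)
The plan is to verify that none of the steps outputting \texttt{COMPOSITE} can trigger when $n$ is prime, and that one of the steps outputting \texttt{PRIME} is eventually reached. I would walk through the algorithm line by line, discharging each potential failure using either elementary properties of primes formalizable in $\PV_1$ (using $\gcd$ from Lemma~\ref{lemmapveuclid}) or, for the main polynomial congruence, the $\GFLT$ axiom.

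\textbf{Step 1.} If $n$ is prime, then $n$ is not of the form $p^k$ with $p,k > 1$, since such a factorization contradicts the $\Pi^b_1$-definition of $\Prime(n)$. I would invoke the $\PV_1$-provable correctness of the perfect power test (which must be established as a separate lemma before this point). \textbf{Step 3.} For every $1 \le a \le r$ with $a < n$, since $n$ is prime the divisors of $n$ are $1$ and $n$, so $\gcd(a,n) \in \{1, n\}$, and because $\gcd(a,n) \le a < n$ we have $\gcd(a,n) = 1$; for $a \ge n$, write $\gcd(a,n) = \gcd(a \bmod n, n)$ which is again $1$ or $n$. In no case does $1 < \gcd(a,n) < n$ hold. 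This reasoning is purely $\Sigma^b_0$ using $\gcd$. \textbf{Step 4.} If $n \le r$, the algorithm returns \texttt{PRIME} and we are done.

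\textbf{Step 6.} The remaining work is in the loop, assuming $n > r$. Fix $a$ in the iteration range and set $a' := \mod{a}{n}$. If $a' = 0$ (which includes $a = 0$ and $a$ a multiple of $n$), then $(\X+a)^n \equiv \X^n \equiv \X^n + a \md{n}$, so the congruence modulo $\X^r-1$ follows trivially. Otherwise $1 \le a' < n$, hence $\gcd(a',n) = 1$ by the argument above. Since $n$ is prime and $r < n$, the hypotheses of $\GFLT$ (with prime $p := n$) are satisfied for $a'$, giving
\[
(\X+a')^n \equiv \X^n + a' \md{n, \X^r-1}.
\]
Because $a \equiv a' \md{n}$, the polynomial identities $(\X+a) \equiv (\X+a') \md{n}$ and $\X^n + a \equiv \X^n + a' \md{n}$ hold formally from the definition of polynomial arithmetic, and exponentiation modulo $n$ preserves this congruence (a routine induction on the squaring algorithm, provable in $\PV_1$). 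Combining these transforms the $\GFLT$ conclusion for $a'$ into the one for $a$, so the test in line 6 never fails, and the algorithm proceeds to line 7 and outputs \texttt{PRIME}.

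The only non-routine ingredient is the invocation of $\GFLT$, which is precisely the axiom designed to carry this case; everything else is a sequence of short $\Sigma^b_0$ or $\Sigma^b_1$ manipulations of $\gcd$, modular reduction, and low-degree polynomial arithmetic already available in $\PV_1$. The main subtleties to watch for are (i) making sure that reducing $a$ modulo $n$ commutes with the $n$-th power modulo $(n, \X^r-1)$, which is straightforward but must be spelled out, and (ii) verifying that the hypothesis $r < n$ of $\GFLT$ is exactly the negation of the step~4 early-exit condition, so the two cases $n \le r$ and $n > r$ partition the proof cleanly.
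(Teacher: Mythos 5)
Your proof is correct and follows essentially the same approach as the paper, which dispatches this direction in a single sentence inside the proof of Theorem~\ref{theoremT2proof}, citing only Lemma~\ref{lemmaperfectpower} and the $\GFLT$ axiom. Your careful walkthrough of the composite-outputting steps, in particular the reduction $a' = a \bmod n$ to meet the $\gcd(a',n)=1$ and $r<n$ hypotheses of $\GFLT$ and the separate handling of $a \equiv 0 \md{n}$, is exactly the detail the paper's one-line justification compresses.
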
 

Next, the goal is to prove that the number $r$, which is essentially found by brute force in the AKS algorithm, always exists and has size polynomial in $\abs{n}$. This is Lemma D, which is, in turn, proved using Lemma C. Here, we use the symbol $\lcm$ to denote the function computing the least common multiple.

\begin{lemmm}[C] {\cite[Lemma 3.1]{AKS}} 
    Let $\LCM(m)$ denote the $\lcm$ of the first $m$ numbers. Then for $m\geq 7$: \[\LCM(m)\geq 2^m.\]
\end{lemmm}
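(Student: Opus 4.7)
The plan is to adapt Nair's classical elementary argument to an integral-free form suitable for formalization in bounded arithmetic. The key divisibility underlying the bound is
\[
(m+1)\binom{m}{k} \mid \LCM(m+1), \qquad 0 \leq k \leq m,
\]
classically extracted from the beta-integral identity $\int_0^1 x^k(1-x)^{m-k}\,dx = 1/((m+1)\binom{m}{k})$ by expanding $(1-x)^{m-k}$ and integrating term by term to obtain
\[
\frac{1}{(m+1)\binom{m}{k}} = \sum_{j=0}^{m-k}(-1)^j\binom{m-k}{j}\frac{1}{k+j+1}.
\]
Multiplying through by $\LCM(m+1)$ turns the right-hand side into a sum of integers (since each $k+j+1 \leq m+1$ divides $\LCM(m+1)$), hence $(m+1)\binom{m}{k}$ divides $\LCM(m+1)$. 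In a formalization-friendly version, the identity above is proved directly by induction on $m-k$ after clearing denominators, giving a polynomial identity over $\ZZ$.

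With this divisibility in hand I would specialize to $k = \lfloor m/2 \rfloor$. Since $\sum_k \binom{m}{k} = 2^m$ is distributed over $m+1$ summands of which the central one is the largest,
\[
\binom{m}{\lfloor m/2\rfloor} \geq \frac{2^m}{m+1},
\]
which yields the baseline estimate $\LCM(m+1) \geq 2^m$, or equivalently $\LCM(m) \geq 2^{m-1}$. To recover the sharper constant claimed in the statement, I would then observe that the ratio $(m+1)\binom{m}{\lfloor m/2\rfloor}/2^m$ is nondecreasing for $m\geq 5$ (checkable by comparing consecutive terms via the Pascal recurrence), and at $m=6$ already exceeds $2$; hence $(m+1)\binom{m}{\lfloor m/2\rfloor}\geq 2\cdot 2^m$ for every $m\geq 6$, which upgrades the bound to $\LCM(m+1) \geq 2^{m+1}$ and gives $\LCM(m)\geq 2^m$ for all $m\geq 7$. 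The base point $\LCM(7)=420 > 2^7=128$ is verified by direct computation.

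The main obstacle is the signed combinatorial identity above: after clearing denominators it becomes a polynomial identity in $\ZZ$ whose inductive proof requires careful bookkeeping of signs and shifts of binomial coefficients, though no single step is conceptually deep. Everything after the identity is routine arithmetic on binomial coefficients together with finite case checks, all comfortably expressible in $\PV_1$ and $S^1_2$ given the $\gcd$/$\lcm$ and binomial machinery the paper develops, so this intermediate identity is the real workhorse of the proof.
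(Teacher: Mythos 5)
Your proposal is mathematically correct, but it takes a genuinely different route from the paper --- and in fact proves a stronger statement than the paper actually formalizes. The paper does not establish the full $\LCM(m)\geq 2^m$ bound; it only proves (Theorem~\ref{thrmlcmbound} and Corollary~\ref{corollarylcmbound}) that $\lcm(1,\dots,2n)\geq 2^n$, i.e.\ $\lcm(1,\dots,m)\geq 2^{\lfloor m/2\rfloor}$, explicitly noting earlier that it uses ``slightly different bounds \dots polynomially related to the original ones.'' The paper's route is to show $\binom{2n}{n}\mid\lcm(1,\dots,2n)$ by comparing prime valuations via Legendre's formula (Theorem~\ref{thrmlegendre}), machinery it has already developed, and then use the elementary estimate $\binom{2n}{n}\geq 2^n$; this gives no chance of recovering the factor of $m+1$, so the $2^m$ constant is out of reach on that path. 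Your approach uses Nair's signed beta-integral identity to obtain the stronger divisibility $(m+1)\binom{m}{\lfloor m/2\rfloor}\mid\LCM(m+1)$, and the extra $(m+1)$ factor is exactly what upgrades $2^m/(m+1)$ to $2^{m+1}$; your ratio monotonicity check and the base case $m=6$ are correct. The trade-off is that the paper avoids alternating signs entirely --- everything is a nonnegative $\nu_p$ comparison over $\mathbb{N}$, which is pleasant in $\PV_1$ and reuses the Legendre machinery needed elsewhere --- whereas your proof's workhorse identity $(m+1)\binom{m}{k}\sum_{j=0}^{m-k}(-1)^j\binom{m-k}{j}\tfrac{\LCM(m+1)}{k+j+1}=\LCM(m+1)$, while provable by induction on $m-k$ using the Pascal-style recurrence $S(m,k)=S(m-1,k)-S(m,k+1)$ after clearing denominators, requires maintaining two separate (even-$j$/odd-$j$) sums and proving nonnegativity at the end to stay inside $\mathbb{N}$. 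Both proofs are sound; yours is the tighter bound at the cost of more delicate bookkeeping, and the paper's is the path of least resistance given the surrounding formalization.
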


\begin{lemmm}[D] {\cite[Lemma 4.3]{AKS}}
    There exists an $r \leq \max \{3, \lceil \log^5 n \rceil\}$ such that $\ord_r(n) > \floor{\log{n}}^2$.  
\end{lemmm}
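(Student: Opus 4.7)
The plan is to argue by contradiction, invoking Corollary~\ref{corollarylcmbound} (the formalized Lemma C). Set $M := \max\{3, \lceil \log^5 n \rceil\}$ and $B := \floor{\log n}^2$; small cases can be verified directly, so assume $M \geq 7$. Suppose, toward a contradiction, that every $r \leq M$ is \emph{bad}, meaning either $\gcd(r,n) > 1$, or else $\gcd(r,n) = 1$ and $\ord_r(n) \leq B$.

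The key claim is that $\lcm(1,\ldots,M)$ divides
\[
 N := n^{\lceil \log M \rceil} \cdot \prod_{i=1}^{B}(n^i - 1).
\]
Since every $r \leq M$ factors as a product of pairwise coprime prime powers, each itself $\leq r \leq M$, it suffices to verify the claim for every prime power $q = p^k \leq M$. If $p \mid n$, then $p^k \mid n^k$ and $k \leq \log_p M \leq \lceil \log M\rceil$, giving $p^k \mid n^{\lceil \log M\rceil}$. If $p \nmid n$, then $\gcd(q,n)=1$, so badness forces $\ord_q(n) \leq B$, whence $q \mid n^{\ord_q(n)} - 1 \mid \prod_{i=1}^B(n^i - 1)$.

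Combining this with the lower bound from Corollary~\ref{corollarylcmbound} gives
\[
 2^M \;\leq\; \lcm(1,\ldots,M) \;\leq\; N \;\leq\; n^{\lceil\log M\rceil + B(B+1)/2},
\]
so that $M \leq (\lceil\log M\rceil + B(B+1)/2)\log n$. Substituting $M \geq \log^5 n$ and $B \leq \log^2 n$, the right-hand side is at most $\tfrac{1}{2}\log^5 n + O(\log^3 n)$, contradicting $M \geq \log^5 n$ for $n$ sufficiently large; the finitely many residual cases are absorbed by the $\max$ with $3$.

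The chief obstacle to formalizing this in $S^1_2$ is handling the prime-power decomposition of arbitrary $r \leq M$, but since any such $r$ is of polylogarithmic size, its factorization is a small object obtainable from the existence of prime factorization, which the paper's preliminaries note to be available in $S^1_2$. The function $\ord_r(n)$ is $\PV$-definable by bounded search since $\ord_r(n) < r \leq M$. The size bookkeeping is elementary but must keep the exponent $\lceil \log M\rceil + B(B+1)/2 = O(\log^4 n)$ explicitly bounded, so that $N$ has polylog length and the comparisons with $2^M$ proceed comfortably within the feasibility regime of $S^1_2$.
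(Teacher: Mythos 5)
You've taken essentially the same approach as the paper: both proofs use the lower bound on $\lcm(1,\dots,M)$ from Lemma~C to argue that some $r\le M$ with $\gcd(r,n)=1$ must have $\ord_r(n)>\floor{\log n}^2$, since otherwise $\lcm(1,\dots,M)$ would divide a number of the form $n^{c}\prod_{i\le B}(n^i-1)$ that is too small. The paper's formalized version (Lemma~\ref{lemma D}) is phrased constructively --- it forms $a=x^{b}\prod_{i=1}^{\abs{x}^2}(x^i-1)$, picks the \emph{smallest} $r\le 2\abs{x}^6$ with $r\nmid a$, and then deduces $\gcd(r,x)=1$ and $\ord_r(x)>\abs{x}^2$ from minimality together with the $\nu_p$-valuation criterion (Lemma~\ref{lemmavaldiv}) --- whereas you argue by contradiction and explicitly decompose $\lcm(1,\dots,M)$ into prime powers. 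Your treatment of the two cases $p\mid n$ and $p\nmid n$, and the role of the factor $n^{\lceil\log M\rceil}$ in absorbing prime powers dividing $n$, closely parallel the paper's use of $x^{b}$ with $b=\abs{2\abs{x}^6}$.

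The genuine gap is numerical, and it is precisely why the paper proves Lemma~\ref{lemma D} with the weaker bound $r\le 2\abs{x}^6$ rather than $\lceil\log^5 n\rceil$ (the text flags this with ``We use slightly different bounds in our formalization''). You write $2^{M}\le\lcm(1,\dots,M)$ and cite Corollary~\ref{corollarylcmbound}, but that corollary only gives $2^{\floor{M/2}}\le\lcm(1,\dots,M)$ --- half the exponent of the original Lemma~C of~\cite{AKS}. With the halved exponent your size comparison no longer produces a contradiction at $M=\lceil\log^5 n\rceil$: both $\floor{M/2}$ and your upper bound $\log N\le(\lceil\log M\rceil+B(B+1)/2)\log n$ sit at $\approx\tfrac12\log^5 n$ to leading order, so $\floor{M/2}>\log N$ never holds. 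To repair the argument you would either need to prove the sharper $\lcm(1,\dots,m)\ge 2^{m}$ for $m\ge 7$ in $S^1_2$ (which the paper does not attempt), or relax the target bound to something polynomially equivalent such as $2\floor{\log n}^6$ --- which is exactly what the paper's Lemma~\ref{lemma D} does.
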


We use slightly different bounds in our formalization, but they are always polynomially related to the original ones.

From now on, we assume that on the input $n$ the AKS algorithm answered \texttt{PRIME}. We take $r$ from the statement of Lemma D and fix some prime $p \mid n$ such that $\ord_r(p) > 1$, the rest of the proof consists of showing that $n=p$.
Furthermore, we fix the number $\ell = \floor{\sqrt{\phi(r)}} \cdot \floor{\log n}$.

The proof then continues by defining the concept of \emph{introspectivity}.

\begin{definitionm}[{\cite[Definition 4.4]{AKS}}]
    For a polynomial $f(\X)$ and a number $m\in \NN$, we say that $m$ is \emph{introspective} for $f(\X)$ if
    \[[f(\X)]^m \equiv [f(\X^m)] \md{\X^r-1}.\]
\end{definitionm}

\begin{lemmm}[E] {\cite[Lemmas 4.5 and 4.6]{AKS}}
    If $m$ and $m'$ are introspective for $f(\X)$, then so is $m\cdot m'$. Moreover if $m$ is introspective for $f(\X)$ and $g(\X)$ then it is also introspective for $f(\X)\cdot g(\X)$
\end{lemmm}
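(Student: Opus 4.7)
The plan is to factor both parts of the lemma through algebraic properties of the substitution map $\sigma_m \colon p(\X) \mapsto p(\X^m) \bmod (\X^r-1)$, which I would introduce as a $\PV$-symbol on low-degree polynomials: given $p(\X) = \sum_{i\leq d} a_i \X^i$ and parameters $m, r$, its value is the polynomial whose coefficient at position $j < r$ is $\sum_{i\,:\, im \equiv j \pmod{r}} a_i$. The preparatory step carried out in $\PV_1$ is verifying that $\sigma_m$ is a ring endomorphism on low-degree polynomials modulo $\X^r-1$, that is, $\sigma_m(p+q) \equiv \sigma_m(p)+\sigma_m(q)$, $\sigma_m(pq) \equiv \sigma_m(p)\sigma_m(q)$, and $\sigma_m(\sigma_{m'}(p)) \equiv \sigma_{mm'}(p)$ modulo $\X^r-1$. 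Each identity reduces to a routine reindexing of the double sum from the defining formula.

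With $\sigma_m$ in hand, the first part of the lemma follows from a chain of congruences modulo $\X^r-1$:
\[
f(\X)^{mm'} \equiv (f(\X)^m)^{m'} \equiv f(\X^m)^{m'} \equiv \sigma_m(f)^{m'} \equiv \sigma_m(f^{m'}) \equiv \sigma_m(f(\X^{m'})) \equiv f(\X^{mm'}).
\]
The second $\equiv$ is $m$-introspectivity of $f$ raised to the $m'$-th power; the middle step uses multiplicativity of $\sigma_m$, giving $\sigma_m(h^{m'}) \equiv \sigma_m(h)^{m'}$ by an easy induction on the exponent; the penultimate $\equiv$ applies $\sigma_m$ to the $m'$-introspectivity congruence for $f$; and the last step is the composition identity $\sigma_m \circ \sigma_{m'} \equiv \sigma_{mm'}$. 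The second part of the lemma is shorter: $(fg)^m \equiv f^m g^m \equiv f(\X^m)\,g(\X^m) \equiv \sigma_m(f)\sigma_m(g) \equiv \sigma_m(fg) \equiv (fg)(\X^m)$, combining both $m$-introspectivity hypotheses with the multiplicativity of $\sigma_m$.

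The main delicacy I anticipate lies in the step $\sigma_m(f^{m'}) \equiv \sigma_m(f(\X^{m'}))$, where one applies $\sigma_m$ to the $m'$-introspectivity congruence: this requires that $\sigma_m$ sends multiples of $\X^r-1$ to multiples of $\X^r-1$. The underlying identity is $\X^{mr} \equiv 1 \pmod{\X^r-1}$, whose explicit witness $\sum_{i<m}\X^{ri}$ has degree $(m-1)r$ and is not a low-degree polynomial when $m$ is polynomial in the input size. The workaround I would use is to compute $(\X^r)^m \bmod (\X^r-1)$ via the $\PV$-symbol for polynomial exponentiation by squaring, starting from the immediate low-degree identity $\X^r \equiv 1 \pmod{\X^r-1}$: each squaring step preserves the value $1$, so $\PV_1$ proves $\X^{mr} \equiv 1 \pmod{\X^r-1}$, which is exactly what makes $\sigma_m$ well-defined on congruence classes and justifies the remaining algebraic manipulations.
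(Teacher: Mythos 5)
Your proposal is correct, but it takes a genuinely different (and arguably cleaner) route than the paper. The paper proves the lemma by working directly with the $\PV$-symbol for exponentiation by squaring: it first reduces $m$ to $m_r=m \bmod r$ so that $\X^{m_r}$ is a low-degree polynomial, and then shuttles congruences between the moduli $\X^r-1$ and $\X^{m_r\cdot r}-1$ using its ``Evaluation Lemma'' (Lemma~\ref{lem: evaluation}) and ``Divisor Lemma'' (Lemma~\ref{lemmaexpdivisor}). You instead package the operation ``substitute $\X^m$ and reduce mod $\X^r-1$'' into a single $\PV$-symbol $\sigma_m$ with an explicit coefficient-wise definition, prove it is a ring endomorphism and that $\sigma_m\circ\sigma_{m'}=\sigma_{mm'}$ by reindexing, and then read off both halves of Lemma E as short chains of congruences. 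This decomposition is closer to the ring-homomorphism intuition behind the original argument and avoids juggling the auxiliary modulus $\X^{m_r\cdot r}-1$; the paper's version is more hands-on but requires two more technical lemmas about exponentiation by squaring. One small remark: the ``delicacy'' you flag at the step $\sigma_m(f^{m'})\equiv\sigma_m(f(\X^{m'}))$ is actually a non-issue given your own definition of $\sigma_m$. You do not need $\X^{mr}\equiv 1$ as a separate fact, nor exponentiation by squaring on $\X^r$: with the coefficient-wise formula, $\sigma_m((\X^r-1)q)=0$ falls out of the same reindexing that gives multiplicativity, using only $rm\equiv 0\pmod r$ (shift the index by $r$ in the part coming from $\X^r q$ and cancel against the part coming from $-q$). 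So $\sigma_m$ respects the ideal $(\X^r-1)$ automatically, and the ``workaround'' paragraph can simply be deleted. The only place you do need an induction (on the binary length of $m'$) is to push $\sigma_m$ through the exponentiation-by-squaring loop in the step $\sigma_m(f)^{m'}\equiv\sigma_m(f^{m'})$, which you already note is routine given multiplicativity.
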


Together Lemma A and our assumptions about $n$ and $p$ imply that $\frac{n}{p}$ and $p$ are both introspective for all $(\X+a)$, $0\leq a \leq \ell$. Lemma F then allows us to show that for the sets $I=\{(\frac{n}{p})^i p^j; i,j \geq 0\}$ and $P = \{\prod_{a}(\X+a)^{e_a};e_a\geq 0\}$ it holds that every number from $I$ is introspective for every polynomial in $P$.

Now we finally define $G = I \mdl r$ and $\G = P \mdl h$, where $h$ is an irreducible factor of the cyclotomic polynomial $Q_r$ over the field $\F_p$. After setting $t=\abs{G}$, we can state the last two technical Lemmas.

\begin{lemmm}[F] {\cite[Lemma 4.7]{AKS}}
    $\abs{\G} \geq \binom{t+\ell}{t-1}$
\end{lemmm}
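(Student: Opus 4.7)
The plan is to produce $\binom{t+\ell}{t-1}$ explicit, pairwise distinct elements of $\G$. The candidates are the reductions modulo $h$ of the monomials
\[ g_{(e_0,\dots,e_\ell)}(\X) := \prod_{a=0}^{\ell} (\X+a)^{e_a}, \qquad \sum_{a=0}^{\ell} e_a \leq t-1, \]
all of which lie in $P$ by construction and hence project into $\G$. A stars-and-bars count gives exactly $\binom{t+\ell}{t-1}$ such multi-indices, and the combinatorial number system formalized in Lemma~\ref{lem: injective function} supplies a $\PV$-definable bijection between them and the interval $\{0,\dots,\binom{t+\ell}{t-1}-1\}$, which I will use both to verify the count and to index the polynomials within a single sequence of the set sort.

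The core step is pairwise distinctness. Suppose $g_1 := g_{(e)}$ and $g_2 := g_{(e')}$ satisfy $g_1 \equiv g_2 \pmod{h}$. Iterating Lemma~E from the introspectivity of $n/p$ and $p$ for each $\X+a$ yields that every $m \in I$ is introspective for both $g_1$ and $g_2$. Since introspectivity depends only on $m \bmod r$, for every $m \in G$ we obtain $g_i(\X)^m \equiv g_i(\X^m) \pmod{\X^r-1,\,p}$, and also modulo $h$, because $h \mid Q_r \mid \X^r-1$ by the construction of the cyclotomic polynomial (Theorem~\ref{cycltomexists}). Setting $\alpha := \X \bmod h$ in $F := \F_p[\X]/(h)$, I get $g_i(\alpha)^m = g_i(\alpha^m)$ in $F$; combined with $g_1(\alpha) = g_2(\alpha)$ this forces $g_1(\alpha^m) = g_2(\alpha^m)$ for every $m \in G$.

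Since $h \mid Q_r$, the element $\alpha$ is a primitive $r$-th root of unity in $F$, so the map $m \mapsto \alpha^m$ separates elements of $G$, giving $|G| = t$ distinct roots of $g_1 - g_2$ in $F$. But $\deg(g_1 - g_2) \leq t-1 < t$, and the Root Upper Bound axiom $\DLB$ bounds the number of roots of a polynomial over the definable field $F$ by its degree. This contradiction forces $g_1 = g_2$ as polynomials in $\F_p[\X]$; then, since $\ell < r < p$ (already established when the algorithm reaches this step), the factors $\X + a$ for $0 \leq a \leq \ell$ are pairwise distinct linear irreducibles, so unique factorization in $\F_p[\X]$ forces $(e_a) = (e'_a)$.

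The main obstacle I expect is the bookkeeping needed to carry this out in bounded arithmetic: encoding the sequence of $\binom{t+\ell}{t-1}$ multi-indices via the combinatorial number system together with an array of their images in $F$, verifying that introspectivity propagates through the iterated product forming each $g_{(e)}$ (Lemma~E applied inductively, with the induction kept $\Sigma^b_1$), and invoking $\DLB$ on $g_1 - g_2$ presented as a polynomial over the definable finite field $F$. The algebra is standard; the challenge lies in packaging each step so that the induction on the multi-indices and the iterated polynomial products stay within the $\VTC^0_2$ regime.
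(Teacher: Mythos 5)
Your proposal matches the paper's argument almost step for step: index polynomials $\prod_a(\X+a)^{e_a}$ with $\sum_a e_a \leq t-1$ via the combinatorial number system (Lemma~\ref{lem: injective function}), note they lie in $\hat P_t$, and show distinct multi-indices yield distinct residues mod $h$ by the introspectivity argument --- from $f\equiv g \md{h,p}$ and introspectivity of elements of $I$ one gets $f(\X^m)\equiv g(\X^m) \md{h,p}$ for all $m\in G_r$, and since $h\nmid \X^{r'}-1$ for $0<r'<r$ (Corollary~\ref{Cor: h divisor}/Lemma~\ref{lem: distinct mm'}), the powers $\alpha^m$ are pairwise distinct, yielding $t$ distinct roots of the degree-$<t$ polynomial $f-g$, a contradiction. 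The composition of the three injections and the unique-factorization step (Lemma~\ref{lem: uv}) is likewise the paper's Corollary~\ref{cor: combination of lemmas}.

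The one deviation is the final invocation of $\DLB$. In this lemma the polynomial $Q(\Y)=f(\Y)-g(\Y)$ has degree $<t\leq\phi(r)<r$, which is a length, so $Q$ is a low-degree polynomial and the bound ``at most $\deg Q$ roots over a bounded field'' is already available in $\PV_1$ by iteratively dividing off factors $(\Y-\alpha)$ --- the paper explicitly flags this (``This argument can be formalized in $\PV_1$ because the polynomials used here have logarithmically bounded degrees'') and proves Lemma~\ref{Lemma 4.7} in $\PV_1$. The axiom $\DLB$ is only genuinely needed in Lemma~G (Lemma~\ref{Lemma 4.8}), where the sparse polynomial $Q'(\Y)=\Y^{m_1}-\Y^{m_2}$ has degree $m_1$ of full polynomial size, not a length. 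Using $\DLB$ here is not wrong but needlessly weakens the claim from a $\PV_1$ result to an $S^1_2+\DLB$ one. Similarly, the phrase ``introspectivity depends only on $m\bmod r$'' is a small imprecision: what one actually uses is $\X^M\equiv\X^{m}\md{\X^r-1}$ for $M\in I$ and $m=M\bmod r$, which transports $f(\X^M)\equiv g(\X^M)$ to $f(\X^m)\equiv g(\X^m)$ mod $h$; the introspectivity identity itself is only asserted for the $M\in I$.
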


\begin{lemmm}[G] {\cite[Lemma 4.8]{AKS}}
    If $n$ is not a power of $p$, then $\abs{\G} \leq n^{\floor{\sqrt{t}}}$.
\end{lemmm}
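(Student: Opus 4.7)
The plan is to mirror the classical AKS argument by finding two distinct ``duplicates'' in a carefully chosen subset of $I$, using their equal residues modulo $r$ to collapse a family of high powers in the field $\F_p[\X]/(h)$, and then applying the Root Upper Bound axiom. Concretely, I would introduce the set $\hat I = \{(n/p)^i\, p^j : 0 \le i,j \le \flr{\sqrt t}\}$. Under the hypothesis that $n$ is not a power of $p$, the map $(i,j)\mapsto (n/p)^i p^j$ is injective on $\{0,\dots,\flr{\sqrt t}\}^2$: an equality $(n/p)^{i_1} p^{j_1} = (n/p)^{i_2} p^{j_2}$ with $i_1>i_2$ yields $n^{i_1-i_2} = p^k$ for some $k$, and the unique factorization results available in $S^1_2$ force $n$ to be a power of $p$. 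Thus $|\hat I|=(\flr{\sqrt t}+1)^2 > t = |G|$, so the map $m \mapsto m \bmod r$ from $\hat I$ into $G$ cannot be injective. Since both $t$ and $|\hat I|$ are of logarithmic size in $n$, producing a collision $m_1 > m_2$ in $\hat I$ with $m_1 \equiv m_2 \md{r}$ is a sharply bounded search and does not require $\iWPHP$.

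Given such $m_1,m_2$, I would argue that every element of $\G$ is a root of $\Y^{m_1} - \Y^{m_2}$ in the field $\F_p[\X]/(h)$. By Lemma~E (iterating both of its closure properties) every $m\in I$, and in particular both $m_1$ and $m_2$, is introspective for every $f\in P$, so $f(\X)^{m_i} \equiv f(\X^{m_i}) \md{\X^r-1,\ p}$ for $i=1,2$. Because $m_1\equiv m_2 \md{r}$, we have $\X^{m_1}\equiv \X^{m_2} \md{\X^r-1}$ and hence $f(\X^{m_1})\equiv f(\X^{m_2}) \md{\X^r-1}$; combining these yields $f(\X)^{m_1}\equiv f(\X)^{m_2} \md{\X^r-1,\ p}$. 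Since $h$ divides $\X^r-1$ in $\F_p[\X]$, the same congruence holds modulo $h$, and because $h$ is irreducible, $\F_p[\X]/(h)$ is a field in which the class of $f(\X)$ is a root of $\Y^{m_1}-\Y^{m_2}$. Every element of $\G$ arises in this way, so applying $\DLB$ to this polynomial gives $|\G| \le m_1$, and the estimate $m_1=(n/p)^i p^j = n^i p^{j-i} \le n^{\max(i,j)} \le n^{\flr{\sqrt t}}$ (using $p\le n$) closes the bound.

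The main technical obstacle I expect is the final application of $\DLB$: the polynomial $\Y^{m_1} - \Y^{m_2}$ has degree $m_1$, which can be as large as $n^{\flr{\sqrt t}}$ and is therefore not of logarithmic size. Consequently, the argument cannot be carried out inside the low-degree polynomial formalism of $\PV_1$; instead, $\Y^{m_1} - \Y^{m_2}$ must be treated as a high-degree polynomial over the quotient field, and one must check that $\DLB$, as formulated for polynomials with coefficients in a definable finite field, still supplies an injective map from its roots into $\{1,\dots,m_1\}$ in this enlarged regime. A secondary but more routine task is verifying that the closure operations defining $I$ and $P$ are compatible with the iteration of introspectivity in Lemma~E, so that $m_1$ and $m_2$ are provably introspective for every $f\in P$ and not merely for the generators $(\X+a)$.
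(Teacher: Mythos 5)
Your proof is correct and follows essentially the same route as the paper's Lemma~\ref{Lemma 4.8}: construct $\hat I$, find a collision $m_1\equiv m_2\pmod r$ by a bounded pigeonhole argument, observe that every class in $\G$ is a root of $\Y^{m_1}-\Y^{m_2}$ over $F=\F_p[\X]/(h)$, and apply $\DLB$. Your first worry is precisely why the paper formulates $\DLB$ over \emph{sparse} polynomials (lists of monomial--coefficient pairs) rather than low-degree polynomials: $\Y^{m_1}-\Y^{m_2}$ has only two monomials and hence a feasible representation even though its degree $m_1$ may be as large as $n^{\floor{\sqrt t}}$, so $\DLB$ applies as stated; your second concern is resolved by iterating Lemma~E over the exponent sequence $\langle e_0,\dots,e_\ell\rangle$ with $\sum_a e_a < t$, a bounded iteration since $t$ is of logarithmic size.
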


Since the AKS algorithm checks whether $n$ is a perfect power and we assume that it is not (otherwise, the algorithm outputs \texttt{COMPOSITE}), we get that both bounds on $\G$ hold. By showing they are incompatible we obtain:

\begin{lemmm}[H] {\cite[Lemma 4.9]{AKS}}
    If the algorithm returns \texttt{PRIME}, then $n$ is a prime.
\end{lemmm}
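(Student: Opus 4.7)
The plan is to argue by contradiction: assume the algorithm returns PRIME on input $n$ but $n$ is composite, and derive a contradiction from Lemmas F and G. First I would handle the two exit points. If the algorithm returns PRIME at step 4, then $n \leq r$ and step 3 ensures $\gcd(a,n) \in \{1,n\}$ for every $a \leq r$; for $2 \leq a < n$ this forces $\gcd(a,n) = 1$, proving $n$ is prime. Otherwise the algorithm reaches step 7, whence $n$ is not a perfect power (step 1), $n > r$, every prime divisor of $n$ exceeds $r$ (step 3, since otherwise some $a \leq r$ would yield $1 < \gcd(a,n) < n$), and the polynomial congruences $(\X+a)^n \equiv \X^n + a \md{n,\X^r-1}$ hold for all $0 \leq a \leq \ell$ (step 6).

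Next I would invoke the framework built up in Lemmas E--G. Using prime factorization, available in $S^1_2$, fix a prime $p \mid n$ with $\ord_r(p) > 1$; such a prime exists because if every prime divisor of $n$ were $\equiv 1 \md r$ then $n \equiv 1 \md r$, contradicting $\ord_r(n) > \lfloor \log n \rfloor^2 \geq 1$ from step 2. Set $t = |G|$ and assume for contradiction that $n$ is not a power of $p$. Lemma G gives $|\G| \leq n^{\lfloor \sqrt{t} \rfloor}$ and Lemma F gives $|\G| \geq \binom{t+\ell}{t-1}$, so it suffices to derive $\binom{t+\ell}{t-1} > n^{\lfloor \sqrt{t} \rfloor}$. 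Since $G$ is a subgroup of $(\ZZ/r)^*$ containing the cyclic subgroup generated by $n \mdl r$, we have $t \geq \ord_r(n) > \lfloor \log n \rfloor^2$ and $t \mid \phi(r)$, so $\lfloor \sqrt{\phi(r)} \rfloor \geq \lfloor \sqrt{t} \rfloor$ and $\ell \geq \lfloor \sqrt{t} \rfloor \lfloor \log n \rfloor$. The elementary lower bound $\binom{t+\ell}{t-1} \geq 2^{\min(t-1,\ell)+1}$, together with $\sqrt{t} > \lfloor \log n \rfloor$, then yields the strict inequality, forcing $n$ to be a power of $p$. Combined with step 1, $n = p$ is prime.

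The main obstacle will be carrying out the arithmetic inequality $\binom{t+\ell}{t-1} > n^{\lfloor \sqrt{t} \rfloor}$ in the base theory, since both sides have magnitude exponential in $|n|$ and cannot be computed directly as $\PV$-terms. The proof must route through $\Sigma^b_1$-definable lower bounds on binomial coefficients, for which we rely on the Combinatorial Number System (Lemma~\ref{lem: injective function}) and Legendre's formula (Theorem~\ref{thrmlegendre}) developed in the earlier sections. A second delicate point is the group-theoretic reasoning that $G$ is a subgroup of $(\ZZ/r)^*$ and hence its order divides $\phi(r)$; this requires exhibiting modular inverses via the extended Euclidean algorithm (Lemma~\ref{lemmapveuclid}) and is the standard way such arguments get formalized in $\PV_1$. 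Everything else is a direct bookkeeping application of the preceding lemmas.
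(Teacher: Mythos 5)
Your high-level outline matches the paper's plan (use Lemma F for a lower bound, Lemma G for an upper bound, combine to force $n$ to be a power of $p$, then invoke Step~1 to get $n=p$), and your bookkeeping of the algorithm's exit points is fine. But there is a genuine gap exactly at the heart of the formalization, and a couple of smaller misdirections.

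The crucial missing ingredient is $\iWPHP$. You write ``Lemma G gives $|\G| \leq n^{\lfloor \sqrt{t} \rfloor}$ and Lemma F gives $|\G| \geq \binom{t+\ell}{t-1}$, so it suffices to derive $\binom{t+\ell}{t-1} > n^{\lfloor \sqrt{t} \rfloor}$, \dots\ forcing $n$ to be a power of $p$.'' In bounded arithmetic this last step does not follow from the inequality alone, because $\G$ is not a set bounded by a length, so it has no $\PV_1$-definable cardinality and the usual ``two incompatible bounds on $|\G|$'' argument is not directly available. The paper replaces both cardinality statements by injective maps: Lemma~\ref{Lemma 4.7} gives an injection (really, a map with pairwise non-congruent images) from $\bigl[\binom{t+\ell}{t-1}\bigr]$ into $\hat P_t$, and Lemma~\ref{Lemma 4.8} gives an injection from $\hat P_t$ into $\bigl[n^{\lfloor\sqrt t\rfloor}\bigr]$; composing these and invoking the axiom $\iWPHP(\iota)$ produces the contradiction. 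That axiom is precisely why the base theory is $S^1_2+\iWPHP+\DLB+\GFLT$ rather than $S^1_2+\DLB+\GFLT$, and without it the argument does not close. Related to this, $\iWPHP$ needs a factor-of-two gap (an injection from $[2a]$ into $[a]$), not merely a strict inequality; this is why the paper works to establish $\binom{t+\ell}{t-1} \geq 2\,n^{\lfloor\sqrt t\rfloor}$, whereas your target $\binom{t+\ell}{t-1} > n^{\lfloor\sqrt t\rfloor}$ is not strong enough as stated.

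Two smaller points. First, your appeal to $t \mid \phi(r)$ and ``$G$ is a subgroup of $(\ZZ/r)^*$'' brings in Lagrange's theorem, which is more machinery than is needed or used: the paper gets by with the much weaker $t \leq \phi(r)$ (Lemma~\ref{lem: t<phi(r)}), proved by a direct counting argument showing every element of $G_r$ is coprime to $r$, with no group structure on $G_r$ required. Second, the formalized algorithm in the paper is slightly strengthened to check $\ord_r(n) > |n|^2$ rather than $> \lfloor\log n\rfloor^2$, which tightens the inequality chain in exactly the place where the factor-of-two margin matters; your version uses the original AKS threshold and would need to re-verify that the margin survives. Your instinct to route the binomial lower bound through the combinatorial number system and Legendre's formula is right and is what the paper does, but the mechanism turning the numerical inequality into a contradiction is the weak pigeonhole principle, and that needs to appear explicitly.
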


Putting Lemma B and Lemma H together, we get:

\begin{theoremm}[{\cite[Theorem 4.1]{AKS}}]
    The AKS algorithm outputs \texttt{PRIME} if and only if the number $n$ is a prime.
\end{theoremm}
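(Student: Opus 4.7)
The plan is essentially a one-line combination of the two implications already isolated in the preceding lemmas. The forward direction---\emph{if $n$ is prime then the algorithm outputs \texttt{PRIME}}---is precisely the content of Lemma~B. The reverse direction---\emph{if the algorithm outputs \texttt{PRIME} then $n$ is prime}---is precisely the content of Lemma~H. Conjoining these two statements yields the biconditional in the theorem, so no further argument is required beyond chaining the two lemmas together.

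To double-check the first direction, I would trace the algorithm on input a prime $n$: Step~1 cannot reject since a prime is not a nontrivial perfect power; Step~3 cannot reject since $1<\gcd(a,n)<n$ is impossible when $n$ is prime; if the guard of Step~4 applies, the output \texttt{PRIME} is correct; and otherwise the loop in Steps~5--6 never rejects because $(\X+a)^n\equiv \X^n+a \md{n,\X^r-1}$ is delivered by the $\GFLT$ axiom for each admissible $a$. This mirrors the proof of Lemma~B.

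For the reverse direction, I would argue along the lines of Lemma~H: assuming the algorithm outputs \texttt{PRIME}, Step~1 rules out that $n$ is a nontrivial perfect power, Lemma~D supplies an $r$ with $\ord_r(n)>\flr{\log n}^2$, and a prime $p\mid n$ with $\ord_r(p)>1$ is fixed. The sets $I$, $P$ and their images $G$, $\G$ are then built so that Lemmas~F and~G give, respectively, a lower and an upper bound on $\abs{\G}$; under the further hypothesis that $n$ is not a power of $p$ these bounds are incompatible, forcing $n=p^k$, and combined with Step~1 this collapses to $n=p$.

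The main obstacle at the level of the present statement is thus \emph{already discharged} once Lemmas~B and~H are in hand; the substantive technical work lies upstream, in formalizing Lemma~H in the target theory, which requires the full algebraic infrastructure developed in the previous sections (cyclotomic polynomials over $\F_p$, the finite field extension $\F_p[\X]/(h)$, the combinatorial lower bound of Lemma~F via the combinatorial number system, and the Root Upper Bound axiom invoked in Lemma~G). At the level of the theorem itself, the proof is a two-line corollary of Lemmas~B and~H.
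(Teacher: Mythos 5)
Your proposal matches the paper exactly: the paper states this theorem immediately after the line ``Putting Lemma B and Lemma H together, we get,'' and its entire proof is the observation that Lemma~B gives the forward implication and Lemma~H gives the reverse one. Your additional tracing of the algorithm's steps simply restates the content of those two lemmas, so the argument and decomposition are the same.
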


\subsection{Overview of our formalization}

Let us start by the formalization of the AKS algorithm as a $\PV$-symbol. The algorithm begins by checking whether the number on the input is not a perfect power; this can be checked in polynomial time, and the algorithm as a $\PV$-symbol can be proved correct in $\PV_1$.

\begin{lemma}\label{lemmaperfectpower}
    There is a $\PV$-symbol $\PerfectPower$ for which $\PV_1$ proves:
    \[(\forall x)(\PerfectPower(x) \leftrightarrow ((x\leq 1)\lor (\exists a,y \leq x)( \abs{y} > 1 \land a^\abs{y} = x)))\]
\end{lemma}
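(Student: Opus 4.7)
The plan is to define $\PerfectPower$ as the natural polynomial-time algorithm: iterate a candidate exponent $k$ over $\{2, \dots, \abs{x}\}$ and, for each such $k$, binary-search for a base $a \in \{0, \dots, x\}$ with $a^k = x$. Since $y \leq x$ forces $\abs{y} \leq \abs{x}$, every exponent $\abs{y}$ arising from a candidate witness $y \leq x$ is covered by the outer loop. The exponentiation $a^k$ appearing inside the search is the standard $\PV$-symbol for integer exponentiation; since $k \leq \abs{x}$ and $a \leq x$, the value $a^k$ has bit-length at most $k \cdot \abs{a} \leq \abs{x}^2$, so everything runs in polynomial time.

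The algorithm produces, for each $k$, the unique $a \in \{0, \dots, x\}$ with $a^k \leq x < (a+1)^k$ via an $\abs{x}$-step binary search on the interval $[0,x]$ that compares $m^k$ to $x$ at each midpoint $m$. Two short inductions suffice to verify correctness in $\PV_1$: first, strict monotonicity $(0 < a < b \land k \geq 1) \to a^k < b^k$ by $\Sigma^b_0$-induction on the sharply bounded $k$; second, the binary-search invariant ``$lo^k \leq x < (hi+1)^k$'' by $\Sigma^b_0$-induction on the loop counter, which is also sharply bounded (of length $\abs{x}$). Both principles are available in $\PV_1$.

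For the forward direction of the equivalence, if $\PerfectPower(x) = 1$ then the algorithm explicitly produces $a$ and $k$ with $2 \leq k \leq \abs{x}$ and $a^k = x$; taking $y := 2^{k-1}$ yields $\abs{y} = k > 1$ and $y \leq 2^{\abs{x}-1} \leq x$, which is a basic property of the length function provable in $\PV_1$. For the backward direction, suppose $a_0, y_0 \leq x$ witness $\abs{y_0} > 1$ and $a_0^{\abs{y_0}} = x$; setting $k_0 := \abs{y_0}$, we have $2 \leq k_0 \leq \abs{x}$, and by strict monotonicity $a_0$ is the unique element of $\{0, \dots, x\}$ satisfying $a_0^{k_0} \leq x < (a_0+1)^{k_0}$, so the binary search at iteration $k_0$ returns $a_0$ and the outer loop then outputs $1$. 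The only moderately delicate step is the inductive correctness of the binary search; however, since the invariant is $\Sigma^b_0$ and the loop has length $\abs{x}$, the required $\Sigma^b_0$-IND is routine in $\PV_1$ and poses no essential obstacle.
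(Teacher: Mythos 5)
Your proposal is correct and follows essentially the same strategy as the paper's own proof: iterate the exponent $k$ over $\{2,\dots,\abs{x}\}$ (the paper uses $b\in\{1,\dots,\abs{x}+1\}$, a cosmetic difference), binary-search for the base $a$, and recover a witness $y$ of length $k$ bounded by $x$ (you take $y=2^{k-1}$, the paper takes $y=1^{(k)}$). You flesh out the $\Sigma^b_0$-IND verification of the binary-search invariant and the monotonicity of $a\mapsto a^k$, which the paper leaves implicit, but the underlying argument is the same.
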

\begin{proof}
    The $\PV$-symbol $\PerfectPower$ can be constructed to follow the procedure: For each value of $b\in\{1,\dots,\abs{x}+1\}$, use binary search to find a value of $a$ such that $a^{b}=x$ or continue if no such value exists. Such values for $a$ and $b$ will be found if and only if such values exist, which is equivalent to the existence of $a$ and $y=1^{(b)}$ such that $a^{\abs{y}}=x$.
\end{proof}

Next, one has to fix some upper bound for the number $r$, which is polynomial in $\abs{n}$. We will later prove in $S^1_2$ that such a bound exists. For convenience, let us also assume that the algorithm actually checks for the slightly stronger $\ord_r(n) > \abs{n}^2$. We show this stronger assumption is justified in Lemma~\ref{lemma D} by the same argument as in~\cite{AKS}. For any natural number $r$, the function $\phi(r)$
is \emph{Euler’s totient function} which outputs the number of numbers less than $r$ that are
relatively prime to $r$. The value $\phi(r)$ can be computed by a $\PV$-symbol which obtains $1^{(r)}$ on the input (see Lemma~\ref{lemmapvtot}). The function $\flr{\sqrt{x}}$ can be computed by a binary search for the value $y$ which satisfies $y^2 \leq x < (y+1)^2$. The function $\flr{\log x}$ is exactly equal to $\abs{x}-1$. All of these algorithms can be straightforwardly formalized as $\PV$-symbols. Altogether, the AKS algorithm can be formalized as a $\PV$-symbol $\AKSPrime$, which outputs $1$ if and only if the algorithm would output \texttt{PRIME}. By the correctness of the AKS algorithm, we mean the sentence $\AKSCorrect$:
\[(\forall x)(\AKSPrime(x) \leftrightarrow \Prime(x))\]

We prove the correctness by following the outline of the proof as in the previous section. For each Lemma or Theorem used in the formalization, we always strive to use the weakest possible theory for the given argument.

In Section~\ref{seccyclext}, we show that $S^1_2$ proves the existence of cyclotomic extensions of finite fields. This is important later in the argument to prove the existence of the polynomial $h$.

For the formalization, we will use the following congruence property. Recall that we fixed the number $r$ from Lemma~D satisfying $r \leq \max \{3, \lceil \log^5 n \rceil\}$ and $\ord_r(n) > \floor{\log{n}}^2$. We also fixed $\ell = \floor{\sqrt{\phi(r)}} \cdot \floor{\log n}$.
\begin{lemmm}[Congruence]
Let \(n\) be a number such that \(\AKSPrime(n)\) holds, i.e., the AKS algorithm
asserts that \(n\) is prime, and $r$ be the value provided by the algorithm satisfying \(\ord_r(n) > |n|^{2}\). Assume $p$ be a prime divisor of $n$ such that $\ord_r(p)>1$. And let us fix $\ell = \floor{\sqrt{\phi(r)}} \cdot \floor{\log n}$.
In $S^1_2+\GFLT$ it holds that for every $0 \leq a \leq \ell$
\[
(\X+a)^{\frac{n}{p}} \equiv \X^{\frac{n}{p}}+a \md{\X^r-1, p}.
\]
\end{lemmm}

We provide most of our formalization of the original proof in Section~\ref{secT2proof} in the theory $S^1_2 + \iWPHP + \DLB + \GFLT$. We start in Section~\ref{subseclegendre} by proving Legendre's formula in $\PV_1$; we use it in Section~\ref{subseclemmac} to prove our version of Lemma C, which is used in Section~\ref{subseclemmad} to prove Lemma D. In Section~\ref{subseccong}, we prove the Congruence Lemma and in Section~\ref{subseclemmae}, we formalize the concept of introspectivity and prove Lemma E. 

Since the property of a polynomial being in the group $\G$ is not easily formulated by a $\PV$-predicate, we instead use the predicate $\hat P$ recognizing all those polynomials over $F$ with degree at most $r$. Our version of Lemma F instead of an inequality states that there exists a function $\tau$ from $\{1,\dots, \binom{t+\ell}{t-1}\}$ into $\hat P$, 
which satisfies $x\neq y \to \tau(x) \nequiv \tau(y) \md {h, p}$. 
To prove Lemma F in Section~\ref{subseclemmaf}, we first formalize the Combinatorial Number System, which assigns to each number $i\leq \binom{m}{k}$ a unique $k$-element subset of $\{1,\dots,m\}$, when $m$ is given in unary. 
Our version of Lemma G, which again replaces inequality by an existence of a function $g:\hat P \to n^{\flr{\sqrt{t}}}$, 
with the property $f_1 \nequiv f_2 \md{h,p} \to g(f_1) \neq g(f_2)$, is proved in Section~\ref{subseclemmag} using the axiom $\DLB$. We finish in Section~\ref{subseclemmah} by proving Lemma H: the composition of the functions from Lemmas F and G results in an injective function which is in contradiction with the axiom $\iWPHP$. 

The remainder of the formalization is done in Section~\ref{secvtc}, where we prove that $\VTC^0_2$ proves the consequences of $S^1_2+\iWPHP+\DLB+\GFLT$. We thus obtain that both $\VTC^0_2$ and $T^\ct_2$ prove $\AKSCorrect$.
\section{Cyclotomic polynomials over finite fields in $\PV_1$}\label{seccyclext}

\subsection{Formalization of algebraic structures}

In this section, we shall prove in $\PV_1$ that over the finite fields $\ZZ/p$, where $p$ is prime, there are $r$-th cyclotomic polynomials for arbitrary $r<p$ represented in unary. We also define a concept of \emph{bounded fields} and use $S^1_2+\GFLT$ to prove that the $r$-th cyclotomic extension of $\ZZ/p$ exists. Bounded fields are simply finite fields on a domain $[b]=\{0,\dots, b-1\}$ for some number $b$, such that their operations are computed by boolean circuits. An analogous definition was already provided by Jeřábek in~\cite{jerabek2005weak}; here, we make the possibility of the field operation being coded by a circuit an explicit part of the definition to allow quantification over bounded fields.

\begin{definition}[$\PV_1$]
   A \emph{bounded field} is a tuple $(b,\tilde 0, \tilde 1,C_i,C_o,C_a,C_m)$ of elements, where $C_i$ is a circuit computing $i:[b]\to[b]$, $C_o$ is a circuit computing $o:[b]\setminus \{\tilde 0\} \to [b]\setminus \{\tilde 0\}$, $C_a$ is a circuit computing $a:[b]^2\to[b]$ and $C_m$ is a circuit computing $m:[b]^2\to[b]$ such that $a$ as addition, $m$ as multiplication, $i$ as additive inverse and $o$ as multiplicative inverse satisfy the field axioms on the universe $[b]$ with $\tilde 1$ as the $1$ element, and $\tilde 0$ as the $0$ element. That is, for all $x,y,z \in [b]$ we have
   \begin{align*}
       a(x,a(y,z))&= a(a(x,y),z) \tag{associativity of $a$}\\
       m(x,m(y,z))&= m(m(x,y),z) \tag{associativity of $m$}\\
       a(x,y)&=a(y,x)\tag{commutativity of $a$}\\
       m(y,x)&=m(y,x)\tag{commutativity of $m$}\\
       a(x,\tilde 0) &= x \tag{neutrality of $\tilde 0$}\\
       m(x, \tilde 1) &= x \tag{neutrality of $\tilde 1$}\\
       a(x, i(x)) &= \tilde 0 \tag{$i$ is the additive inverse}\\
       m(a(x,y),z) &= a(m(x,z),m(y,z)),\tag{distributivity}
   \end{align*}
   and for all nonzero $x\in [b]:$
   \[m(x,o(x))=\tilde 1. \tag{$o$ is the multiplicative inverse}\]
   
\end{definition}

When a bounded ring/field is understood from the context, we shall denote $a(x,y)$ as $x+y$, $m(x,y)$ as $x\cdot y$, $o(x)$ as $x^{-1}$, $i(x)$ as $-x$, $\tilde 1$ as $1$, and $\tilde 0$ as $0$.

\begin{lemma}[$\PV_1$]
    If $p$ is a prime, then $\ZZ/p=(p,1,0,C_i,C_o,C_a,C_m)$, where $C_i$, $C_o$, $C_a$, $C_m$ are the circuits computing the appropriate operations modulo $p$ is a bounded field.
\end{lemma}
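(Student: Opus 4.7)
I plan to prove the lemma by exhibiting the circuits as translations of the standard polynomial-time algorithms for modular arithmetic, and then verifying each field axiom by reducing it to the corresponding identity on integers plus the homomorphism property of $\text{mod } p$.

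First, I construct the circuits. Addition is $a(x,y) = (x+y) \bmod p$; multiplication is $m(x,y) = (x \cdot y) \bmod p$; additive inverse is $i(x) = (p - x) \bmod p$; and the multiplicative inverse $o(x)$, for $x \in [p] \setminus \{0\}$, is defined using the $\PV$-symbol $\xgcd$ from Lemma~\ref{lemmapveuclid}: letting $\xgcd(x,p) = (g,u,v)$, we set $o(x) = u \bmod p$. Each of these is a $\PV$-symbol, and since $\PV_1$ can prove that every $\PV$-symbol is computed by a uniform family of polynomial-size circuits encoded as numbers, we obtain the required codes $C_a$, $C_m$, $C_i$, $C_o$ together with the provable property that their evaluation agrees with the corresponding $\PV$-function on inputs from $[p]$.

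Next, I verify the field axioms. For each one, I start from the matching integer identity --- for instance, $(x+y)+z = x+(y+z)$ --- which $\PV_1$ proves, and then push it through the reduction $\bmod\, p$ using the basic congruence laws $(a+b) \bmod p = ((a \bmod p) + b) \bmod p$ and $(a \cdot b) \bmod p = ((a \bmod p) \cdot b) \bmod p$, all of which are routine $\PV_1$ facts about the $\mod$ symbol. Associativity, commutativity, distributivity, and the neutrality of $0$ and $1$ fall out this way. For the additive inverse, $a(x,i(x)) = (x + (p-x)) \bmod p = p \bmod p = 0 = \tilde 0$.

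The only step requiring more than rewriting is the multiplicative inverse axiom. Given $x \in [p]$ with $x \neq 0$, I must show $\gcd(x,p) = 1$ so that $\xgcd$ yields $ux + vp = 1$. Since $p$ is prime and $0 < x < p$, any common divisor $d$ of $x$ and $p$ divides $p$, so $d = 1$ or $d = p$; but $d \mid x$ together with $x < p$ rules out $d = p$, so $\gcd(x,p) = 1$ (this is a $\PV_1$-level consequence of the $\Pi^b_1$ definition of $\Prime$). Hence $\xgcd(x,p) = (1,u,v)$ with $ux + vp = 1$, so $(u \cdot x) \bmod p = 1$, i.e.\ $m(x,o(x)) = \tilde 1$. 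The main (mild) obstacle is the bookkeeping needed to make sure every step --- in particular the construction of the circuit code for $o$ from $\xgcd$ --- is phrased as a $\PV$-symbol so that $\PV_1$ has the required circuit representation; the algebraic content itself is essentially a reformulation of Lemma~\ref{lemmapveuclid}.
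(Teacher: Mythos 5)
Your proposal is correct and takes essentially the same route as the paper, which simply asserts that the lemma ``follows directly from Lemma~\ref{lemmapveuclid}''; you have filled in the routine congruence verifications and spelled out the one step with real content, namely deriving the multiplicative inverse from $\xgcd$ together with the observation that $\gcd(x,p)=1$ for $0<x<p$ when $p$ is prime.
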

\begin{proof}
    The existence multiplicative inverse follows from Lemma~\ref{lemmapveuclid} and the rest is immediate.
\end{proof}

In sections~\ref{seccyclext} and \ref{secT2proof}, the usual name for a bounded field is $F$. Low degree polynomials over a bounded field $F$ are defined as usual in $\PV_1$, that is as sequences of elements of $F$ of length equal to the degree of the polynomial plus one, and do not require special treatment. That is, for each bounded field $F$ there is a $\PV$-symbol which naturally encodes a predicate $f\in F[\X]$, we will therefore freely use this notation.

The following is straightforward, as the correctness of the Euclid's algorithm for polynomials can be proved analogously to the variant for integers.

\begin{lemma}[$\PV_1$, Euclid's algorithm]\label{lemmapveuclidpoly}
    Let $F$ be a bounded field, then there are $\PV$-symbols $\gcd$ and $\xgcd$ such that for any $f,g\in F[\X]$ we have
    \begin{align*}
        \gcd(f,g) &\in F[\X]\\
        \xgcd(f,g)&=(h,u,v)\\
        h,u,v&\in F[\X]\\
        h&=\gcd(f,g)\\
        h&=uf+vg,
        \end{align*}
 and
 \begin{align*}
        \gcd(f,g) & \mid f\\
         \gcd(f,g) & \mid g\\
        (\forall t\in F[\X])((t\mid f \land t & \mid g)  \to t \mid \gcd(f,g)).
    \end{align*}
    Moreover, $\gcd(f,g)$ is monic, that is the coefficient of the highest degree monomial is $1$.
\end{lemma}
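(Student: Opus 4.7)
The plan is to transcribe the classical Euclidean algorithm for polynomials into a $\PV$-function and then verify its properties by induction on the degree of the second argument, paralleling the argument used for the integer version in Lemma~\ref{lemmapveuclid}. For the construction, define an auxiliary $\PV$-symbol $\textsc{PolyDiv}(f,g)$ that returns a pair $(q,r)$ computing the long division $f = qg + r$ with $\deg r < \deg g$; this is a standard polynomial-time procedure, and $\PV_1$ proves its correctness by $\Sigma^b_0$-induction on the degree of $f$, using Lemma~\ref{lemmapveuclid} only to invoke the $\PV$-symbol for the multiplicative inverse of the leading coefficient of $g$ in the bounded field~$F$. Iterating $\textsc{PolyDiv}$ then yields a $\PV$-symbol $\gcd_0(f,g)$ which on inputs $r_0=f$, $r_1=g$ produces the standard remainder sequence $r_{i+1} = r_{i-1} \mdl r_i$ and returns the last nonzero $r_k$. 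In parallel, by maintaining Bézout coefficients $(u_i, v_i)$ satisfying $r_i = u_i f + v_i g$ via the recurrence $(u_{i+1}, v_{i+1}) = (u_{i-1} - q_i u_i, v_{i-1} - q_i v_i)$, we obtain a $\PV$-symbol $\xgcd_0(f,g)=(h_0, u, v)$ with $h_0 = u f + v g$. Finally set $\gcd(f,g) := c^{-1}\cdot h_0$ and correspondingly rescale $(u,v)$ to enforce monicness, where $c$ is the leading coefficient of $h_0$ (invertible since a bounded field is a field, by Lemma~\ref{lemmapveuclid}'s analogue applied inside $F$). Termination is immediate: since $\deg r_{i+1} < \deg r_i$ and polynomials here are of logarithmic-size degree, the procedure halts in at most $\deg g + 1$ many iterations, all within polynomial time.

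For correctness, the key invariant is the standard identity $\gcd(r_{i-1}, r_i) = \gcd(r_i, r_{i+1})$, which reduces to the polynomial version of the arithmetical fact that if $r_{i-1} = q_i r_i + r_{i+1}$ then the common divisors of $(r_{i-1}, r_i)$ and those of $(r_i, r_{i+1})$ coincide; this is provable in $\PV_1$ directly, since divisibility of low degree polynomials is a $\PV$-predicate (decidable by calling $\textsc{PolyDiv}$ and comparing the remainder to zero). To lift this step-wise invariant to the full remainder sequence, I would apply $\Sigma^b_0$-induction on the index $i$ (equivalently, on the decreasing degree of $r_i$, which is a number of logarithmic size, so $\PV_1$'s available induction suffices). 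The Bézout identity $r_i = u_i f + v_i g$ is verified by the same induction, using only $\PV_1$-provable ring identities in $F[\X]$; rescaling at the end preserves both the Bézout identity and the divisibility properties. The divisor properties $\gcd(f,g) \mid f$ and $\gcd(f,g) \mid g$ follow from the invariant taken at the last two indices $r_k, r_{k+1}=0$, and the universality property $(t \mid f \wedge t \mid g) \to t \mid \gcd(f,g)$ follows immediately from the Bézout identity $\gcd(f,g) = c^{-1}(uf+vg)$.

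The main obstacle I anticipate is the bookkeeping needed to stay within $\PV_1$: although the mathematical argument is entirely routine, one must ensure that every predicate appearing in an induction is $\Sigma^b_0$ (or $\Delta^b_1$ in $\PV_1$). This is where treating polynomial divisibility and polynomial equality as $\PV$-predicates is essential, and where the polynomial bound on the number of iterations (coming from the low degree bound on polynomials in $\PV_1$) is used. A secondary subtlety is that the operations of~$F$ are given by circuits whose evaluation is a $\PV$-function; one must check that the composition of these circuit evaluations inside $\textsc{PolyDiv}$ and the remainder sequence still yields a polynomial-time algorithm, but this is immediate from the fact that circuit evaluation is in $\P$ and the universal $\PV$-circuit evaluator is available. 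Once these bookkeeping points are handled, the proof is a direct polynomial-analogue of Lemma~\ref{lemmapveuclid}.
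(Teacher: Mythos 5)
The paper gives no explicit proof of this lemma; it merely remarks that correctness of the polynomial Euclidean algorithm is ``straightforward'' and analogous to the integer case (Lemma~\ref{lemmapveuclid}). Your proposal fleshes out exactly that implicit argument --- polynomial long division as a $\PV$-symbol, the remainder sequence with Bézout bookkeeping, normalization to a monic result, and termination/correctness by induction on the (logarithmic-size) degree --- so the approach matches what the paper intends, and the structure of the argument is sound.

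One small imprecision: you twice invoke ``Lemma~\ref{lemmapveuclid}'s analogue applied inside $F$'' (or Lemma~\ref{lemmapveuclid} itself) to obtain the multiplicative inverse of the leading coefficient. That is not where the inverse comes from. In the paper's definition of a bounded field, the multiplicative inverse is already supplied directly as the circuit $C_o$, which is part of the tuple $(b,\tilde 0,\tilde 1,C_i,C_o,C_a,C_m)$; no Euclidean algorithm is needed, and indeed for a generic bounded field no modular structure is available that would make the integer $\xgcd$ relevant. You should simply cite the circuit $C_o$ (and its correctness axiom $m(x,o(x))=\tilde 1$) at these two points. With that fix the proof is a faithful rendering of the paper's intended argument.
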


\begin{lemma}[$\PV_1$]
    For a bounded field $F$, and $f\in F[\X]$ irreducible low degree polynomial, there is a bounded field $F[\X]/(f)$ such that the ring operations are interpreted as on polynomial operations modulo $f$.
\end{lemma}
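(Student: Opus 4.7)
The plan is to exhibit the tuple $(b,\tilde 0,\tilde 1,C_i,C_o,C_a,C_m)$ explicitly. Writing $d := \deg f$ and $b_F$ for the size of the universe of $F$, I would take the universe $[b_F^d]$ and interpret each element as the canonical representative of a coset $g+(f)$, namely the unique polynomial of degree $< d$, via the base-$b_F$ encoding of its coefficient sequence. The designated elements $\tilde 0$ and $\tilde 1$ are the codes of the zero polynomial and of the constant polynomial $1$, respectively.

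Each of the four circuits is obtained by compiling a polynomial-time algorithm to a circuit. The circuit $C_a$ is $d$ parallel copies of the addition circuit of $F$ applied coefficient-wise, and $C_i$ is $d$ parallel copies of the additive-inverse circuit of $F$. The circuit $C_m$ first computes the product of the two input polynomials over $F$ (producing a polynomial of degree $<2d$) and then reduces the result modulo $f$ by long division, which is available since the leading coefficient of $f$ is nonzero in a field and hence invertible. Finally, $C_o$ computes the multiplicative inverse of a nonzero input $g$ by applying $\xgcd(g,f)$ from Lemma~\ref{lemmapveuclidpoly}, obtaining $(h,u,v)$ with $h=\gcd(g,f)$ and $ug+vf=h$, and outputs the reduction of $u$ modulo $f$. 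Since the data coding $F$ and $f$ makes all four algorithms polynomial time, each one can be realized by a polynomial-size circuit whose construction is itself a $\PV$-symbol applied to $F$ and $f$.

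To verify the field axioms, the ring axioms (associativity, commutativity, distributivity, neutrality of $\tilde 0$ and $\tilde 1$, and the additive-inverse law) transfer directly from the analogous provable identities for polynomial arithmetic over $F$ reduced modulo $f$, which are available in $\PV_1$. The only nontrivial axiom is the existence of multiplicative inverses. I would argue as follows: for any $g\in F[\X]$ with $\deg g<d$ and $g\neq 0$, every common divisor of $g$ and $f$ has degree at most $\deg g<d$; by the irreducibility of $f$, any divisor of $f$ is either a nonzero constant or has the same degree as $f$, so $\gcd(g,f)$ must be a nonzero constant. Since $\gcd(g,f)$ is monic by Lemma~\ref{lemmapveuclidpoly}, it equals $1$, and the identity $ug+vf=1$ yields $u\cdot g \equiv 1 \md{f}$, establishing that $u \bmod f$ is the desired inverse.

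The main obstacle is not mathematical but bookkeeping: one must make sure that the field operations of $F$, which are given as circuits rather than as $\PV$-symbols applied directly to elements, can be composed with the purely polynomial-time scaffolding (coefficient-wise iteration, polynomial multiplication, long division, and extended gcd) to produce the circuits $C_a,C_i,C_m,C_o$. Since circuit composition and evaluation are themselves $\PV$-definable, and each component algorithm has been formalized in $\PV_1$ (notably $\xgcd$ via Lemma~\ref{lemmapveuclidpoly}), the construction and the verification of the axioms proceed uniformly in $\PV_1$.
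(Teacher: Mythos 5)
Your proposal is correct and takes essentially the same approach as the paper: encode the cosets by polynomials of degree $<\deg f$ enumerated by numbers below $b_F^{\deg f}$, build the circuits from the circuits of $F$ together with standard polynomial arithmetic, and derive multiplicative inverses from $\xgcd$ (Lemma~\ref{lemmapveuclidpoly}) together with the irreducibility of $f$. The paper's proof is just a terser version of the same argument.
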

\begin{proof}
    This is immediate after identifying polynomials of degree less than $k=\deg f$ with numbers below $b^k$, where $b$ is the bound of the field $F$. The existence of multiplicative inverse follows from Lemma~\ref{lemmapveuclidpoly}.
\end{proof}

\subsection{Existence of Cyclotomic extensions}

We will start by proving basic properties of the Euler's totient function $\phi$, which is definable in $\PV_1$ for numbers represented in unary.

\begin{lemma}[$\PV_1$]\label{lemmapvtot}
    There is a $\PV$-symbol $\Tot$, such that $\PV_1$-proves
    \[\Tot(1^{(r)})=\sum_{\substack{1\leq i \leq r\\ \gcd(i,r)=1}}1,\]
    we will simply denote the value $\Tot(1^{(r)})$ as $\phi(r)$.
\end{lemma}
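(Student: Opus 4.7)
The plan is to define $\Tot$ as a straightforward iterative $\PV$-algorithm and verify correctness via $\Sigma^b_0$-induction. Because $|1^{(r)}| = r$, iterating from $i = 1$ to $r$ is polynomially bounded in the input length, and each inner step only requires a call to the $\gcd$ symbol from Lemma~\ref{lemmapveuclid}, which is already $\PV_1$-provably correct.

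First, I would declare the $\PV$-symbol $\Tot$ to compute, on input $y$, the value $r = |y|$ and then carry out the loop: starting from an accumulator set to $0$, at each step $i = 1, \ldots, r$, compute $\gcd(i, r)$ and add $1$ to the accumulator precisely when this value equals $1$; finally, output the accumulator. Since each operation in this loop is polynomial-time in $|y|$ and the loop runs at most $r = |y|$ times, this is a legitimate $\PV$-symbol, and its totality and basic equational properties are definitional.

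Next, I would interpret the sum on the right-hand side of the lemma within $\PV_1$ as the iterated sum of the sequence $\langle c_1, \ldots, c_r \rangle$ with $c_i := 1$ if $\gcd(i, r) = 1$ and $c_i := 0$ otherwise, using the sequence and iterated-sum machinery recalled in Section~2.2. Both $\Tot(1^{(r)})$ and this iterated sum are computed by the same recurrence on the same sequence, so a $\Sigma^b_0$-induction on the prefix length $k \leq r$ shows that the $k$-th partial accumulator of $\Tot$ equals the partial iterated sum restricted to indices $\leq k$. The base case $k = 0$ is immediate, and the induction step follows from the definition of both procedures, which add the same summand at each step. Setting $k = r$ yields the claimed equality.

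The main obstacle, which is bureaucratic rather than mathematical, is merely ensuring that the formal encoding of the sum in the lemma matches the encoding of the sequence $\langle c_1, \dots, c_r \rangle$ processed by $\Tot$, so that the two recurrences line up literally; once this is done, the $\Sigma^b_0$-IND available in $\PV_1$ closes the argument.
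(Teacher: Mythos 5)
Your proposal is correct. The paper states Lemma~\ref{lemmapvtot} without a proof, treating it as routine, and the argument you give is the standard (and essentially the only natural) one: define $\Tot$ by iterating the $\gcd$-test over $i = 1, \dots, |y|$, observe this is polynomial time since $|1^{(r)}| = r$ bounds the number of iterations, interpret the right-hand side as the iterated sum of the sequence $\langle c_1, \dots, c_r \rangle$ with $c_i = [\gcd(i,r)=1]$, and close the gap by a length-bounded induction on the prefix length $k \leq r$ over the open formula asserting agreement of the two partial accumulators. The one point worth stating explicitly is that the induction variable $k$ ranges up to $r = |1^{(r)}|$, i.e.\ up to a length, so the instance of induction you invoke is in fact $\Sigma^b_0$-LIND, which is safely within $\PV_1$.
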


\begin{lemma}[$\PV_1$]
    Let $1^{(r)}$ be a number, and $r$ be a prime, then $\phi(r)=r-1$.
\end{lemma}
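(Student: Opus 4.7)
The plan is to directly unwind the definition $\phi(r) = \Tot(1^{(r)}) = \sum_{1 \leq i \leq r,\, \gcd(i,r)=1} 1$ supplied by Lemma~\ref{lemmapvtot}, showing that every $i$ with $1 \leq i \leq r-1$ contributes a $1$ to the sum while $i = r$ contributes $0$. The whole argument should go through in $\PV_1$ because the summation is over $r$ many terms where $r = |1^{(r)}|$, so induction up to $r$ is sharply bounded.

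First I would isolate the key number-theoretic fact: for every $i$ with $1 \leq i \leq r-1$, $\gcd(i,r) = 1$. Setting $d := \gcd(i,r)$, Lemma~\ref{lemmapveuclid} gives $d \mid i$ and $d \mid r$. Since $r$ is prime (so its only divisors are $1$ and $r$), either $d = 1$ or $d = r$. But $d \mid i$ with $1 \leq i < r$ forces $d \leq i < r$, ruling out $d = r$, hence $d = 1$. In addition, $\gcd(r,r) = r$ and $r \geq 2$ (since $r$ is prime), so the term $i = r$ does not contribute.

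Next, to aggregate these pointwise facts into a single value of the sum, I would consider the $\PV$-symbol $P(j)$ which, analogously to $\Tot$, outputs the partial sum $\sum_{1 \leq i \leq j,\, \gcd(i,r)=1} 1$ (so $P(r) = \phi(r)$ and $P(0) = 0$). The iterative definition of such sums recalled in the preliminaries yields $P(j+1) = P(j) + [\gcd(j+1,r)=1]$ as a $\PV_1$-provable recurrence. Then I would prove by $\Sigma^b_0$-induction on $j \leq r-1$, a sharply bounded induction since $r = |1^{(r)}|$, that $P(j) = j$; the induction step invokes the fact of the previous paragraph to conclude that the new indicator is $1$. Finally, $\phi(r) = P(r) = P(r-1) + [\gcd(r,r) = 1] = (r-1) + 0 = r-1$.

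The only step that requires any care, and hence is the plausible obstacle, is checking that the iterative unfolding $P(j+1) = P(j) + [\gcd(j+1,r)=1]$ is indeed available in $\PV_1$ and that the induction formula $P(j) = j$ is genuinely $\Sigma^b_0$ once $1^{(r)}$ is fixed as a parameter. Both are routine given the discussion of iterated sums over sequences in the preliminaries, so I do not expect any genuine difficulty; the rest of the argument is purely elementary number theory.
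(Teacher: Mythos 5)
Your proposal is correct and follows essentially the same route as the paper's proof: both argue that for every $1 \leq i < r$ primality of $r$ forces $\gcd(i,r) = 1$, so the defining sum of $\phi(r)$ counts exactly the $r-1$ elements of $\{1,\dots,r-1\}$. You merely spell out the sharply bounded induction on the partial sums $P(j)$, which the paper leaves implicit.
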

\begin{proof}
    Since $r$ is a prime, then for $1\leq i < r$ we have $\gcd(i,r)=1$ and these elements simply form the set $\{1,\dots,r-1\}$ consisting of $r-1$ elements.
\end{proof}

\begin{definition}[$\PV_1$]
    Let $1^{(r)}$ be a number and let $d\leq r$. We define $S^r_d$ to be the number coding the set $\{0<m\leq r; \gcd(r,m)=d\}$.
\end{definition}

\begin{lemma}[$\PV_1$]\label{lemmacountsd}
    Let $1^{(r)}$ be a number and $d \mid r$, then $\abs{S^r_d} = \phi(r/d)$.
\end{lemma}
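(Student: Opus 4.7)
The plan is to construct an explicit bijection between $S^r_d$ and the set $T=\{0<k\leq r/d : \gcd(k,r/d)=1\}$, whose cardinality is $\phi(r/d)$ by the definition of $\Tot$ in Lemma~\ref{lemmapvtot}. The bijection is given by the mutually inverse maps $m\mapsto m/d$ and $k\mapsto kd$.

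First I would verify two arithmetic facts in $\PV_1$, both immediate from the axiomatic properties of $\gcd$ in Lemma~\ref{lemmapveuclid} and the Bezout representation $g=ux+vy$: (i) If $d\mid r$ and $d\mid m$ with $m\leq r$, then $\gcd(r,m)=d\cdot\gcd(r/d,m/d)$; in particular $\gcd(r,m)=d$ iff $\gcd(r/d,m/d)=1$. (ii) If $\gcd(r,m)=d$, then $d\mid m$, so $m/d$ is a well-defined integer in $\{1,\dots,r/d\}$; conversely any $k\in\{1,\dots,r/d\}$ with $\gcd(k,r/d)=1$ yields $m=kd\in S^r_d$.

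Since $1^{(r)}$ is in hand, the number $r$ is sharply bounded, and both $|S^r_d|$ and $\phi(r/d)$ are defined as iterated sums over sequences of logarithmically many indicator terms. The bijection can therefore be transferred to an equality of cardinalities by a straightforward $\Sigma^b_0$-induction on $k=0,1,\dots,r/d$ with invariant
\[
\sum_{\substack{0<m\leq kd \\ \gcd(m,r)=d}} 1 \;=\; \sum_{\substack{0<j\leq k \\ \gcd(j,r/d)=1}} 1.
\]
At the inductive step, the only new summand on the left that can be nonzero is $m=(k+1)d$ (by fact (ii), no $m$ strictly between $kd$ and $(k+1)d$ has $d\mid m$), and by fact (i) it contributes exactly when $\gcd(k+1,r/d)=1$, which is precisely when the right-hand side increments. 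Setting $k=r/d$ yields $|S^r_d|=\phi(r/d)$.

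The main obstacle, though mild, is fact (i): one must verify the multiplicative behaviour of $\gcd$ under cancellation of a common factor using only the universal property stated in Lemma~\ref{lemmapveuclid}. Both divisibilities $d\cdot\gcd(r/d,m/d)\mid\gcd(r,m)$ and $\gcd(r,m)\mid d\cdot\gcd(r/d,m/d)$ follow easily: the first since $d\cdot\gcd(r/d,m/d)$ is a common divisor of $r$ and $m$, and the second by first deducing $d\mid\gcd(r,m)$, writing $\gcd(r,m)=d\cdot e$, and verifying from $de\mid r$ and $de\mid m$ that $e$ divides both $r/d$ and $m/d$ and hence their $\gcd$. Once fact (i) is established, the rest of the argument is pure $\Sigma^b_0$-induction, well within $\PV_1$.
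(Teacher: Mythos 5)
Your proof is correct and follows essentially the same route as the paper: exhibiting division by $d$ as a bijection between $S^r_d$ and $\{0<k\leq r/d:\gcd(k,r/d)=1\}$. You spell out the multiplicative behaviour of $\gcd$ under cancellation and the $\Sigma^b_0$-induction needed to turn the bijection into an equality of the two iterated sums, which the paper leaves implicit, but the underlying idea is identical.
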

\begin{proof}
    We will prove the lemma by showing that division by $d$ defines a bijection between $S^r_d$ and the set $S^{r/d}_1 = \{0<m\leq r/d; \gcd(r/d,m)=1\}$.

    Indeed, assume $m \in S^r_d$, if $\gcd(m/d,r/d)=a>1$, then $a\cdot d$ is a common divisor of $m$ and $r$, a contradiction with $d$ being a greatest common divisor.

    On the other hand, assume $m\in S^{r/d}_1$, if $\gcd(r, m\cdot d)=a\cdot d$, we get that $a\cdot d$ divides both $(r/d)\cdot d$ and $m \cdot d$, then $a$ is a common divisor of $r/d$ and $m$ and thus $a=1$.
\end{proof}

\begin{lemma}[$\PV_1$]\label{lemmatotientsum}
    Let $1^{(r)}$ be a number, then $\sum_{d\mid r} \phi(d)=r$.
\end{lemma}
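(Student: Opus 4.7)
The plan is to partition the set $\{1,\dots,r\}$ according to the value of $\gcd(m,r)$, use Lemma~\ref{lemmacountsd} to count each block, and then reindex the resulting sum by the map $d\mapsto r/d$. Since $r$ is given in unary, all sums and the enumeration of divisors of $r$ are of polynomial size and can be handled by $\PV$-symbols performing the obvious iterative computations, so the argument stays within $\PV_1$.

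First I would verify in $\PV_1$ that the family $\{S^r_d : d\mid r\}$ partitions $\{1,\dots,r\}$. For any $m\in\{1,\dots,r\}$, $\gcd(m,r)$ is a divisor of $r$ by Lemma~\ref{lemmapveuclid}, so $m$ lies in exactly one $S^r_d$, namely the one for $d=\gcd(m,r)$. Hence, using an iterated sum over the (polynomial-size, since $r$ is in unary) sequence of divisors of $r$,
\[
r \;=\; \sum_{d\mid r} \abs{S^r_d}.
\]

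Next, applying Lemma~\ref{lemmacountsd} coordinate-wise inside this iterated sum (which is a routine $\PV$-manipulation since there are at most $r$ divisors of $r$), we rewrite the right-hand side as $\sum_{d\mid r}\phi(r/d)$. Finally, I would observe that the map $d\mapsto r/d$ is an involution on the set of divisors of $r$: the $\PV$-symbol enumerating divisors of $r$ in increasing order, composed with this involution, produces the same multiset of summands in reverse, so $\sum_{d\mid r}\phi(r/d)=\sum_{d\mid r}\phi(d)$. Combining the two equalities gives $\sum_{d\mid r}\phi(d)=r$.

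The only mildly delicate point is the bookkeeping in $\PV_1$ for the partition counting and the reindexing. Because $r$ is represented in unary, the list of divisors of $r$ is coded by a $\PV$-computable sequence of length at most $r$, and iterated sums over such sequences are standard (see the preliminaries). Concretely, one defines a $\PV$-symbol producing the sequence $(\phi(d_1),\dots,\phi(d_k))$ of values of $\phi$ on the divisors of $r$, observes that its iterated sum is invariant under reversal of the sequence (which realizes $d\mapsto r/d$), and checks that the same iterated sum equals $\sum_{d\mid r}\abs{S^r_d}=r$ by induction on the enumeration of divisors. I expect this bookkeeping, rather than any mathematical content, to be the main obstacle.
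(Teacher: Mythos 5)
Your proposal is correct and follows exactly the same route as the paper: partition $\{1,\dots,r\}$ into the sets $S^r_d$, apply Lemma~\ref{lemmacountsd} to each block, and reindex by the involution $d\mapsto r/d$. The extra remarks about $\PV_1$ bookkeeping for iterated sums over the unary-sized divisor list are fine and match what the paper leaves implicit.
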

\begin{proof}
    We start with the observation that $\{1,\dots,r\}  = \bigcup_{d\mid r} S_d^r$. By taking the cardinality of both sides, we get 
    \begin{align*}
        r &= \sum_{d\mid r} \abs{S^r_d}\\
          &= \sum_{d \mid r} \phi(r/d)\tag{\dag}\\
          &= \sum_{d\mid r} \phi(d),\tag{\ddag}
    \end{align*}
    where ($\dag$) follows from Lemma~\ref{lemmacountsd} and ($\ddag$) follows from the fact that the map $d\mapsto r/d$ is an involution on the set of all divisors of $r$.
\end{proof}

We will now prove basic properties of polynomials over bounded fields which are needed to prove the existence of cyclotomic polynomials.

\begin{lemma}[$\PV_1$]\label{lemmaxmdiv}
    Let $1^{(k)}$ and $1^{(l)}$ be numbers, $k,l\geq 1$ such that $k\mid l$. Then, $\X^k-1\mid\X^l-1,$ over any bounded field.
\end{lemma}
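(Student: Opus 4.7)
The plan is to exhibit an explicit witness quotient $q(\X)$ such that $(\X^k - 1) \cdot q(\X) = \X^l - 1$, and then verify this identity by $\Sigma^b_0$-induction. Since $k \mid l$ and $1^{(l)}$ is given, the quotient $m = l/k$ satisfies $m \leq l$, so $1^{(m)}$ exists and can serve as an induction parameter. Using a suitable $\PV$-symbol for building polynomials coefficient by coefficient, I would define
\[
q(\X) = 1 + \X^k + \X^{2k} + \cdots + \X^{(m-1)k},
\]
encoded as a sequence of length $l - k + 1$ with a $1$ at each position of the form $ik$ for $0 \leq i \leq m-1$ and a $0$ elsewhere. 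This construction is clearly polynomial-time in the unary inputs $1^{(k)}$, $1^{(l)}$.

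Next, I would prove by $\Sigma^b_0$-induction on $i \leq m$ the identity
\[
(\X^k - 1) \cdot \bigl(1 + \X^k + \cdots + \X^{(i-1)k}\bigr) = \X^{ik} - 1,
\]
where both sides are values of the $\PV$-symbol for polynomial multiplication on polynomials of length bounded by $l+1$. The base case $i = 0$ reduces to $(\X^k-1)\cdot 0 = 0$. For the inductive step, by distributivity of polynomial multiplication (a provable identity in $\PV_1$), the left-hand side for $i+1$ equals the left-hand side for $i$ added to $(\X^k - 1)\X^{ik}$, which by the induction hypothesis and the identity $(\X^k - 1)\X^{ik} = \X^{(i+1)k} - \X^{ik}$ simplifies to $(\X^{ik} - 1) + (\X^{(i+1)k} - \X^{ik}) = \X^{(i+1)k} - 1$. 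Instantiating $i = m$ yields $(\X^k - 1) \cdot q(\X) = \X^l - 1$, which directly witnesses $\X^k - 1 \mid \X^l - 1$.

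The main thing to be careful about is the complexity of the inductive formula: since polynomial multiplication, addition, and the construction of $q(\X)$ are all given by $\PV$-symbols, and equality of polynomial encodings is a $\Sigma^b_0$-predicate on sequences, the inductive statement is a $\PV$-predicate and the use of $\Sigma^b_0$-PIND (available in $\PV_1$) is legitimate. A secondary concern is that the polynomial $q$ has degree $l - k$, so its encoding has unary size; because $1^{(l)}$ is assumed to exist, the polynomial multiplications in play produce outputs of polynomial bitsize and the whole argument remains inside $\PV_1$. Beyond this, the proof is a telescoping calculation that uses only the basic algebraic properties of polynomial operations already available in the $\PV_1$ formalization.
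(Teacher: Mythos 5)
Your proposal is correct and takes essentially the same route as the paper's proof: both exhibit the explicit quotient $q(\X) = 1 + \X^k + \cdots + \X^{(m-1)k}$ and verify $(\X^k - 1)\cdot q(\X) = \X^l - 1$ by a telescoping argument. The paper writes the verification as a single chain of algebraic manipulations on sums, while you phrase the same telescoping as an explicit $\Sigma^b_0$-induction on the partial products, with a bit more care about the encoding and about why the induction is available in $\PV_1$; this is a stylistic difference, not a substantively different proof.
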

\begin{proof}
    Assume that $l = km$. Then we have
    \begin{align*}
        (\X^k-1) (\sum_{i=1}^m \X^{(m-i)k}) &= (\sum_{i=1}^m \X^{(m-i)k+k}) - (\sum_{i=1}^m \X^{(m-i)k})\\
                                                          &= (\sum_{i=1}^m\X^{(m-i+1)k})-(\sum_{i=1}^m \X^{(m-i)k})\\
                                                          &= \X^l +(\sum_{i=2}^{m}\X^{(m-i+1)k})-(\sum_{i=1}^{m-1} \X^{(m-i)k})-1\\
                                                          &= \X^l +(\sum_{i=1}^{m-1}\X^{(m-i-1+1)k})-(\sum_{i=1}^{m-1} \X^{(m-i)k})-1\\
                                                          &= \X^l - 1. \qedhere
    \end{align*}
\end{proof}

\begin{lemma}[$\PV_1$]\label{lemmacoprimefactor}
    Let $F$ be a bounded field, $f,g,h\in F[\X]$ and $\gcd(g,h)=1$, then $\gcd(fg,h)=\gcd(f,h)$.
\end{lemma}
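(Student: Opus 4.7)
The plan is to use a Bézout-style argument, which becomes a routine calculation in $\PV_1$ once we invoke Lemma~\ref{lemmapveuclidpoly}.

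First, I would apply Lemma~\ref{lemmapveuclidpoly} to the pair $(g,h)$. Since $\gcd(g,h)=1$, evaluating $\xgcd(g,h)=(1,u,v)$ yields polynomials $u,v\in F[\X]$ such that $ug+vh=1$. Multiplying this identity by $f$ produces $u\cdot(fg)+ (vf)\cdot h = f$, which is the key identity of the proof.

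Next, I would establish equality of the two $\gcd$s by mutual divisibility. Let $d=\gcd(fg,h)$ and $e=\gcd(f,h)$, both of which are monic by Lemma~\ref{lemmapveuclidpoly}. From the universal property we have $d\mid fg$ and $d\mid h$, so $d\mid u\cdot(fg)$ and $d\mid (vf)\cdot h$, whence $d \mid u\cdot(fg)+(vf)\cdot h = f$. Combining $d\mid f$ with $d\mid h$ and applying the universal property of $\gcd(f,h)$ gives $d\mid e$. Conversely, $e\mid f$ implies $e\mid fg$, and $e\mid h$ holds by definition; the universal property of $\gcd(fg,h)$ then gives $e\mid d$. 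Two monic polynomials that divide each other are equal, so $d=e$.

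There is no real obstacle here: the whole argument is quantifier-free once we fix the polynomials $u,v$ provided by $\xgcd$, and every step only uses the equational/divisibility properties of $\gcd$ and $\xgcd$ already stated in Lemma~\ref{lemmapveuclidpoly}. The only thing worth noting is that ``$\gcd(g,h)=1$'' should be read with the monic convention from Lemma~\ref{lemmapveuclidpoly}, so that the Bézout combination really produces the constant polynomial $1$ on the right-hand side rather than a nonzero scalar.
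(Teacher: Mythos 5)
Your proof is correct and follows the same Bézout-plus-mutual-divisibility strategy as the paper. The paper's version is slightly more elaborate: it invokes $\xgcd$ three times (for $(g,h)$, $(f,h)$, and $(fg,h)$) and multiplies $ug+vh=1$ by $g_1$ and by $g_2$ in turn, whereas you only invoke $\xgcd(g,h)$ once and multiply by $f$, getting $d\mid f$ directly and then appealing to the universal property of $\gcd$; this is a modest but genuine streamlining of the same idea, and your remark about the monic normalization of $\gcd$ correctly covers the final step $d\mid e \wedge e\mid d \Rightarrow d=e$.
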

\begin{proof}
bounded    Assume that 
    \begin{align*}
        \xgcd(g,h) &= (1,u,v)\\
        \xgcd(f,h) &=(g_1,u_1,v_1)\\
        \xgcd(fg,h)&=(g_2,u_2,v_2),
    \end{align*}
    we will show that $g_1=g_2$. To prove that $g_2 \mid g_1$, we will multiply the equality $ug+vh=1$ by $g_1$ to obtain
    \begin{align*}
        (ug+vh)g_1&=g_1\\
        (ug+vh)(u_1f+v_1h)&=g_1\\
        ugu_1f + ugv_1h+vhu_1f+ vv_1h^2&=g_1\\
        fg(uu_1)+h(ugv_1+vu_1f+vv_1h)&=g_1,
    \end{align*}
    and since $g_2$ is a common divisor of both $fg$ and $h$, it is a divisor of $g_1$. Analogously, by multiplying the equality $ug+vh=1$ by $g_2$ we obtain
    \begin{align*}
        (ug+vh)g_2&=g_2\\
        (ug+vh)(u_2fg+v_2h)&=g_2\\
        ug^2u_2f + ugv_2h+vhu_2fg+ vv_2h^2&=g_2\\
    f(ug^2u_2)+h(ugv_2+vu_2fg+vv_2h)&=g_2,
    \end{align*}
    and $g_1$ is a common divisor of both $f$ and $h$, therefore a divisor of $g_2$.
\end{proof}

\begin{lemma}[$\PV_1$]\label{lemmaxmgcd}
    Let $1^{(k)}$ and $1^{(l)}$ be numbers, $k,l\geq 1$, then \[\gcd(\X^k-1,\X^l-1)=\X^{\gcd(k,l)}-1,\] over any bounded field.
\end{lemma}
\begin{proof}
    By induction on $\max\{k,l\}$. If $\max\{k,l\}=1$, the equality is trivial.

    Assume that the statement holds for all instances where $\max\{k,l\}<s$ for some $s$, where $1^{(s)}$ exist, we will prove it for the case $\max\{k,l\}=s$.  Without loss of generality, we can assume $k<l$, as the case $k=l$ is trivial. Note that this is an instance of the second mathematical induction for open formulas and it is available in $\PV_1$ because the number $\max\{k,l\}$ is a length as the corresponding quantifier in the induction formula is sharply bounded.

    Let $r,q$ be numbers such that $l=qk+r$, and $r<k$, by Lemma~\ref{lemmaxmdiv} there is a polynomial $f$, such that $f\cdot(\X^k-1)=(\X^{kq}-1)$. We have
    \begin{align*}
        \X^l-1 &= \X^{kq+r}-1\\ 
               &= \X^r(\X^{kq}-1)+\X^r-1\\    
               &= \X^r\cdot f \cdot (\X^k-1) + \X^r-1,
    \end{align*}
    and thus by Lemma~\ref{lemmacoprimefactor} \[\gcd(\X^k-1,\X^l-1)=\gcd(\X^k-1,\X^r-1)=\X^{\gcd(k,r)}-1=\X^{\gcd(k,l)}-1. \qedhere\]
\end{proof}

\begin{lemma}[$\PV_1$]\label{lemmacoprimeproduct}
    Let $1^{(k)}$ be a number for $k \geq 1$ and $F$ be a bounded field. Let $\langle f_1,\dots,f_k\rangle$ be a sequence of elements of $F[\X]$ and let $g\in F[\X]$ such that for all $i\in \{1,\dots,k\}$ we have $\gcd(g,f_i)=1$. Then $\gcd(g,\prod_{i=1}^k f_i) = 1$.
\end{lemma}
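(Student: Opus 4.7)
The plan is to proceed by induction on $k$, with the sequence $\langle f_1,\dots,f_n\rangle$ and the polynomial $g$ held fixed as parameters, and $k$ ranging over $\{0,\dots,n\}$. The inductive claim is the equality $\gcd(g,\prod_{i=1}^{k}f_i)=1$, which, since both sides are outputs of $\PV$-symbols (the iterated product of polynomials over a bounded field is defined as a $\PV$-symbol in the preliminaries), is equivalent to a $\Sigma^b_0$-formula in $k$. Hence $\PV_1$'s $\Sigma^b_0$-$\mathrm{IND}$ suffices to drive the induction.

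For the base case, one takes $k=0$ (with the empty product equal to $1$) and notes $\gcd(g,1)=1$ directly from Lemma~\ref{lemmapveuclidpoly}. Alternatively, the base case $k=1$ is exactly the hypothesis $\gcd(g,f_1)=1$.

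For the inductive step, assume $\gcd(g,\prod_{i=1}^{k}f_i)=1$. From the recursive definition of the iterated product one has $\prod_{i=1}^{k+1}f_i=f_{k+1}\cdot\prod_{i=1}^{k}f_i$. Since $k+1\in\{1,\dots,n\}$, the ambient hypothesis yields $\gcd(g,f_{k+1})=1$. Now apply Lemma~\ref{lemmacoprimefactor} with its ``$f$'' instantiated to $\prod_{i=1}^{k}f_i$, its ``$g$'' to $f_{k+1}$, and its ``$h$'' to $g$; the coprimality $\gcd(f_{k+1},g)=1$ gives
\[
\gcd\!\Big(\Big(\prod_{i=1}^{k}f_i\Big)\cdot f_{k+1},\,g\Big)=\gcd\!\Big(\prod_{i=1}^{k}f_i,\,g\Big)=1,
\]
which is the statement for $k+1$.

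The only real subtlety is bookkeeping of the induction complexity: one must avoid quantifying over the sequence inside the induction formula, so the sequence and $g$ are fixed from the outset and the iterated product is treated as a single $\PV$-term in the parameter $k$. With this framing the induction is entirely routine, and the entire argument bottoms out on a single application of Lemma~\ref{lemmacoprimefactor} per inductive step, together with associativity of polynomial multiplication and the recursion defining $\prod_{i=1}^{k}$.
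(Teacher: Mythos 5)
Your proposal is correct and follows essentially the same route as the paper's own proof: induction on $k$ with Lemma~\ref{lemmacoprimefactor} applied once per inductive step. The only difference is that you spell out the bookkeeping of the $\Sigma^b_0$-$\mathrm{IND}$ framing, which the paper leaves implicit.
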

\begin{proof}
    By induction on $k$. The case where $k=1$ is trivial. Assume the lemma holds for $k-1$, then
    \begin{align*}
        \gcd(g,\prod_{i=1}^kf_i) &= \gcd(g,f_k \prod_{i=1}^{k-1}f_i)\\
        &=\gcd(g,\prod_{i=1}^{k-1}f_i)\tag{\dag}\\
        &=1\tag{\ddag},
    \end{align*}
    where ($\dag$) follows from the previous line by Lemma~\ref{lemmacoprimefactor} and ($\ddag$) follows from the induction hypothesis.
\end{proof}

\begin{lemma}[$\PV_1$]\label{lemmairreddivisors}
    Let $1^{(k)}$ be a number for $k \geq 1$ and $F$ be a bounded field. Let $\langle f_1,\dots,f_k \rangle$ be a sequence of elements of $F[\X]$, such that we have $\gcd(f_i,f_j)=1$ whenever $i\neq j$, and $g\in F[\X]$. If for every $1\leq i\leq k$ we have $f_i\mid g$, then $\prod_{i=1}^kf_i \mid g$.
\end{lemma}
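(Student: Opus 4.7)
The plan is to proceed by induction on $k$, which is of logarithmic size since the sequence $\langle f_1,\dots,f_k\rangle$ requires $1^{(k)}$ to exist. The base case $k=1$ is immediate from the hypothesis $f_1 \mid g$. For the inductive step, suppose the claim has been established for $k-1$; I apply it to the subsequence $\langle f_1,\dots,f_{k-1}\rangle$ (whose pairwise coprimality is inherited) to obtain $H := \prod_{i=1}^{k-1} f_i \mid g$. It then remains to show that $f_k \cdot H \mid g$.

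The key intermediate fact is the classical implication: if $a,b,c \in F[\X]$ satisfy $a\mid c$, $b\mid c$ and $\gcd(a,b)=1$, then $ab\mid c$. I would apply this with $a = f_k$, $b = H$, $c = g$. The required coprimality $\gcd(f_k, H) = 1$ follows from Lemma~\ref{lemmacoprimeproduct} applied with $g := f_k$ to the sequence $\langle f_1,\dots,f_{k-1}\rangle$, using the hypothesis $\gcd(f_k, f_i) = 1$ for $i < k$.

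To prove the intermediate fact in $\PV_1$, invoke Lemma~\ref{lemmapveuclidpoly} to produce polynomials $u,v \in F[\X]$ with $ua + vb = 1$, and use the divisibility hypotheses to write $c = a c_1 = b c_2$ (the quotients $c_1, c_2$ are delivered by the polynomial division $\PV$-symbol). A direct computation then gives
\[
c = (ua + vb)\, c = u a (b c_2) + v b (a c_1) = ab\,(u c_2 + v c_1),
\]
which exhibits $u c_2 + v c_1$ as an explicit witness that $ab \mid c$.

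The only nonroutine point is verifying that the induction goes through inside $\PV_1$. The statement ``$\prod_{i=1}^k f_i \mid g$'' can be written by a $\PV$-predicate, since the iterated product $\prod_{i=1}^k f_i$ and the divisibility test are both $\PV$-symbols; the induction is then over $k$ with $1^{(k)}$ available, so it falls within the $\Sigma^b_0$-induction available in $\PV_1$. Alternatively, one may sidestep the induction entirely by defining, through composition of existing $\PV$-symbols, a $\PV$-term computing a candidate quotient $q_k$ recursively via $q_1 = g/f_1$ and $q_i = q_{i-1}/f_i$ (each step justified by the Bézout argument above), and then verifying by induction on $i$ that $q_i \cdot \prod_{j\leq i} f_j = g$.
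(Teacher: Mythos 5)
Your proposal is correct and takes essentially the same route as the paper: induction on $k$, use of Lemma~\ref{lemmacoprimeproduct} to get $\gcd(f_k,\prod_{i<k}f_i)=1$, and the Bézout computation $g=(uf_k+vH)g=f_kH(uc_2+vc_1)$, which matches the paper's $g=hf_k(ut+vs)$ up to variable naming.
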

\begin{proof}
    By induction on $k$. For $k=1$ the statement is trivial.

    Assume that the statement holds for $k-1$, and that we want to prove it for the sequence $\langle f_1,\dots,f_k\rangle$ and $g\in F[\X]$ satisfying for every $1\leq i \leq k: f_i\mid g$ and also that for $i\neq j$ we have $\gcd(f_i,f_j)=1$. Then for $h=\prod_{i=1}^{k-1} f_i$ we have $h\mid g$ and by Lemma~\ref{lemmacoprimeproduct} also $\gcd(h,f_k)=1$. Assume that $\xgcd(h,f_k) = (1,u,v)$, therefore $uh+vf_k=1$ and fix polynomials $s,t\in F[\X]$ satisfying $sh = g$ and $tf_k = g$. 

    Then we have 
    \begin{align*}
        uh+vf_k&=1\\
        uhg+vf_kg&=g\\
        uhtf_k + vf_ksh &= g\\
        hf_k(ut+vs)&=g,
    \end{align*}
    and thus $\prod_{i=1}^kf_k = hf_k \mid g$.
\end{proof}

\begin{definition}[$\PV_1$]
    Let $1^{(k)}$ be a number for $k \geq 1$ and $F$ be a bounded field. If $f\in F[\X]$ and $f=\sum_{i=0}^k a_i \X^i$ we define its \emph{derivative} $f'\in F[\X]$ to be given by $\sum_{i=0}^{k-1}(i+1)a_{i+1}\X^i$.
\end{definition}

\begin{lemma}[$\PV_1$]\label{lemmaderi}
    Let $F$ be a bounded field. If $f,g\in F[\X]$, then
    \begin{align*}
        (f+g)'&=f'+g'\\
        (fg)'&=f'g+fg'.
    \end{align*}
\end{lemma}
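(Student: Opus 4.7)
The plan is to prove both identities directly from the definition of the derivative by coefficient-wise computation, relying on nothing beyond the defining recursions for addition and multiplication of polynomials over $F$ and the distributive law in $F$.

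For the sum rule, writing $f = \sum_{i=0}^{k} a_i \X^i$ and $g = \sum_{i=0}^{k} b_i \X^i$ (extended by zeros to the common upper bound $k=\max(\deg f, \deg g)$), the $i$-th coefficient of $f+g$ is $a_i+b_i$. By the definition of the derivative, the $i$-th coefficient of $(f+g)'$ is then $(i+1)(a_{i+1}+b_{i+1}) = (i+1)a_{i+1}+(i+1)b_{i+1}$, which is exactly the $i$-th coefficient of $f'+g'$. This is a pointwise identity and goes through in $\PV_1$ by $\Sigma^b_0$-induction over coefficient indices bounded by $k+1$.

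For the product rule, I would use the convolution formula $(fg)_n = \sum_{i+j=n} a_i b_j$, available by the iterative multiplication algorithm for low degree polynomials. Directly,
\[
((fg)')_n = (n+1)(fg)_{n+1} = (n+1)\sum_{i+j=n+1} a_i b_j.
\]
On the other hand, by re-indexing $i\mapsto i-1$ in the sum for $(f'g)_n$ and $j\mapsto j-1$ in the sum for $(fg')_n$,
\[
(f'g+fg')_n = \sum_{\substack{i+j=n+1\\ i\geq 1}} i\,a_i b_j \;+\; \sum_{\substack{i+j=n+1\\ j\geq 1}} j\,a_i b_j.
\]
For each pair $(i,j)$ with $i+j=n+1$: if both $i,j\geq 1$ the combined contribution is $i+j = n+1$; if $i=0$ only the second sum contributes $j = n+1$; if $j=0$ only the first contributes $i = n+1$. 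In every case the coefficient is $n+1$, so both sides equal $(n+1)\sum_{i+j=n+1} a_i b_j$, establishing the Leibniz identity pointwise.

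The main obstacle is clerical rather than conceptual: one must honestly track the zero-padding past the nominal degrees of $f$ and $g$ and verify that re-indexing and splitting of the bounded iterated sums in the convolution formula is justified in $\PV_1$. Since every index range is bounded by the polynomial lengths and the relevant iterated sum/product operations on bounded sequences are $\PV$-computable with the expected recursive properties (as reviewed in the Preliminaries), the entire argument is captured by $\Sigma^b_0$-induction on $n$ and is therefore provable in $\PV_1$.
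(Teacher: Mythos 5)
The paper's proof of this lemma is literally the single word ``Straightforward,'' so there is no detailed argument to compare against; your coefficient-wise computation is the standard one and it is correct. One small point worth keeping in mind when formalizing: the $(i+1)$ appearing in the definition of the derivative denotes an integer multiple of a field element, so the identities you use, namely $(i+1)(a_{i+1}+b_{i+1})=(i+1)a_{i+1}+(i+1)b_{i+1}$ and $i\cdot a_i b_j + j\cdot a_i b_j = (i+j)\,a_i b_j$, rest on distributivity and on additivity of integer scalar action, both of which are available in $\PV_1$ by a short bounded induction on the (length-bounded) multiplier; everything else (re-indexing the bounded convolution sums, zero-padding past the nominal degrees) is exactly the sort of clerical bookkeeping that $\Sigma^b_0$-induction handles, as you say.
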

\begin{proof}
    Straightforward.
\end{proof}

\begin{lemma}[$\PV_1$]\label{lemmamultrootcrit}
    Let $F$ be a bounded field, $f \in F[\X]$, and $\gcd(f,f')=1$. If for $g \in F[\X]$: $g^2 \mid f$, then $\deg g = 0$.
\end{lemma}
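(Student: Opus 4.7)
The plan is to reduce to the basic divisibility fact that anything dividing both $f$ and $f'$ must divide $\gcd(f,f')=1$, which in a bounded field forces the polynomial to be a constant. The whole argument is a short computation using the derivative rules already established in Lemma~\ref{lemmaderi} together with the GCD machinery of Lemma~\ref{lemmapveuclidpoly}. No induction or heavy formalization is needed, so everything should go through in $\PV_1$ directly.

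First, I would unpack the hypothesis $g^2 \mid f$: by the definition of divisibility for polynomials, there is an $h \in F[\X]$ with $f = g^2 h$. Second, I would apply Lemma~\ref{lemmaderi} (product rule) twice to compute
\[
f' \;=\; (g^2 h)' \;=\; (g^2)' h + g^2 h' \;=\; (g'g + g g') h + g^2 h' \;=\; g\bigl(2 g' h + g h'\bigr).
\]
Thus $g \mid f'$; and since $f = g \cdot (g h)$ we also have $g \mid f$. Third, I would invoke the universal characterization of $\gcd$ from Lemma~\ref{lemmapveuclidpoly}: any common divisor of $f$ and $f'$ divides $\gcd(f,f')$, which by hypothesis equals $1$. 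Hence $g \mid 1$.

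Finally, I would conclude that $\deg g = 0$: if $g \mid 1$ in $F[\X]$ then writing $1 = g \cdot q$ and comparing degrees yields $\deg g + \deg q = 0$, so in particular $\deg g = 0$. There is no real obstacle here; the only mild subtlety is that in characteristic $2$ the term $2g'g$ inside the derivative computation could vanish, but this is harmless — the factorization $f' = g(2g'h + gh')$ still exhibits $g$ as a divisor of $f'$ regardless of whether $2g'h$ collapses to zero — so the argument is uniform in the characteristic of the bounded field $F$.
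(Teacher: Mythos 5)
Your proof is correct and follows essentially the same route as the paper's: compute $f'$ via the product rule to exhibit $g$ as a common divisor of $f$ and $f'$, then appeal to the universal property of $\gcd$ from Lemma~\ref{lemmapveuclidpoly}. The paper phrases it as a contrapositive (assume $\deg g \geq 1$, derive $\gcd(f,f') \neq 1$) while you argue directly and add the harmless observation about characteristic $2$, but the mathematical content is identical.
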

\begin{proof}
    We proceed by proving the contrapositive. Assume $g^2\mid f$ and $\deg g $ is at least $1$. Then $f=hg^2$ and by Lemma~\ref{lemmaderi} we have $f' = h'g^2 + 2hg'g$. Therefore $g \mid f'$ and thus $g\mid \gcd(f,f')$. Hence, $\gcd(f,f')\neq 1$.
\end{proof}

We are now ready to prove the existence of cyclotomic polynomials in $\PV_1$, the proof we use differs from the standard proof by only using elementary concepts and thus not needing the definition of cyclotomic polynomials which uses the field of complex numbers.

\begin{theorem}[$\PV_1$]\label{cycltomexists}
    There is a $\PV$-symbol $\Cyc_p$ such that $\PV_1$ proves that for every prime $p$, and every number $1^{(r)}$, $1\leq r < p$, we have that $\Cyc_p(1^{(r)}) = Q_r$ is a degree $\phi(r)$ polynomial over $\ZZ/p$, such that
    \begin{align*}
        &Q_r \mid \X^r-1,\\
        \forall r'\in\{1,\dots,r-1\}:&\gcd (Q_r, X^{r'}-1)=1.
    \end{align*}
\end{theorem}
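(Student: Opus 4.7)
The plan is to define the cyclotomic polynomials by the recursion $Q_1 := \X - 1$ and, for $r > 1$,
\[
Q_r := (\X^r - 1) \big/ \prod_{d \mid r,\, d < r} Q_d.
\]
Since $r$ is presented in unary, this is formalised as a $\PV$-symbol $\Cyc_p$ in $\PV_1$ which computes $Q_1, Q_2, \dots, Q_r$ in order, using iterated polynomial multiplication to obtain the denominator and long division (over $\ZZ/p$) to obtain the quotient. The three desired properties, together with the exactness of the division at each stage, will be established by a simultaneous induction on $r$.

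For the degree, Lemma~\ref{lemmatotientsum} gives $\sum_{d \mid r} \phi(d) = r$, so applying the inductive hypothesis $\deg Q_d = \phi(d)$ for every proper divisor $d$ of $r$ yields
\[
\deg\!\Bigl(\prod_{d \mid r,\, d < r} Q_d\Bigr) \;=\; \sum_{d \mid r,\, d < r} \phi(d) \;=\; r - \phi(r),
\]
and hence, once the division has been shown to be exact, $\deg Q_r = \phi(r)$. The divisibility $Q_r \mid \X^r - 1$ is immediate from the construction.

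For the coprimality clause, fix $1 \le r' < r$ and set $d := \gcd(r, r')$. By Lemma~\ref{lemmaxmgcd}, $\gcd(\X^r - 1, \X^{r'} - 1) = \X^{d} - 1$, and since $d \le r' < r$ with $d \mid r$, $d$ is a proper divisor of $r$. Because $Q_r \mid \X^r - 1$, it suffices to show $\gcd(Q_r, \X^d - 1) = 1$. By the induction hypothesis, $\X^d - 1 = \prod_{d' \mid d} Q_{d'}$ and these $Q_{d'}$ are pairwise coprime (inductive coprimality applied to each pair, combined with Lemma~\ref{lemmairreddivisors}), so $\X^d - 1$ divides $\prod_{d'' \mid r,\, d'' < r} Q_{d''}$, and consequently $Q_r \cdot (\X^d - 1)$ divides $\X^r - 1$. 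Finally, since $r < p$ the derivative $(\X^r - 1)' = r\X^{r-1}$ is nonzero in $\ZZ/p$ and $\gcd(\X^r - 1, \X^{r-1}) = 1$ (as $\X \nmid \X^r - 1$); thus $\gcd(\X^r - 1, (\X^r - 1)') = 1$, and any common factor of $Q_r$ and $\X^d - 1$ of positive degree would give a square dividing $\X^r - 1$, contradicting Lemma~\ref{lemmamultrootcrit}. Hence $\gcd(Q_r, \X^{r'} - 1)$ is a constant, and monicity (Lemma~\ref{lemmapveuclidpoly}) forces it to be $1$.

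The main obstacle is threading the induction so that the three properties (exactness of the defining division, the degree formula, and coprimality) are proved together in $\PV_1$: the exactness at stage $r$ needs the identity $\X^d - 1 = \prod_{d' \mid d} Q_{d'}$ for proper divisors $d \mid r$, which itself relies on the inductive pairwise coprimality of the $Q_{d'}$ via Lemma~\ref{lemmairreddivisors}. I would therefore package the induction hypothesis as the conjunction of the three stated properties together with the product identity $\X^s - 1 = \prod_{d \mid s} Q_d$ for all $s \le r$, verify the base case $r = 1$ directly, and then carry out the inductive step using the lemmas developed in this section. Since $r$ is presented in unary, this induction is a straightforward iteration and can be handled within $\PV_1$.
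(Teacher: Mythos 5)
Your proposal is correct and takes essentially the same approach as the paper: the same recursive definition of $Q_r$, a simultaneous induction on $r$ (given in unary) over the product identity $\X^s-1 = \prod_{d\mid s}Q_d$, the degree formula via Lemma~\ref{lemmatotientsum}, and the coprimality clause, with the key technical step in both being that any positive-degree common factor yields a square dividing $\X^r-1$, contradicting Lemma~\ref{lemmamultrootcrit} since $r<p$ gives $\gcd(\X^r-1, r\X^{r-1})=1$. The only cosmetic difference is that the paper carries pairwise coprimality $\gcd(Q_{r_1},Q_{r_2})=1$ as the inductive invariant and then derives $\gcd(Q_r,\X^{r'}-1)=1$ at the end via Lemma~\ref{lemmacoprimeproduct}, whereas you carry the theorem's coprimality clause directly together with the product identity — the two formulations are interderivable.
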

\begin{proof}
    Let us first define $\Cyc_p(1^{(r)})$ to be either $\X-1$ in the case that $r=1$ and otherwise $\X^r-1 / (\prod_{d\mid r, d<r} \Cyc_p(1^{(d)})
    )$, possibly discarding the remainder. In the rest of this proof, we will denote the value of $\Cyc_p(1^{(a)})$ by $Q_{a}$. We will later show that the product $\prod_{d\mid r, d<r} Q_d$ indeed divides $\X^r-1$.
    
   The proof continues by induction on the sharply bounded formula $\psi(r)$ which is formed as a conjuction of the following formulas:
    \begin{itemize}
        \item $(\forall r' \leq \abs{1^{(r)}})(r'\geq 1 \to (\X^{r'}-1 = \prod_{d\mid r'} Q_d))$
        \item $(\forall r' \leq \abs{1^{(r)}})(r'\geq 1 \to (\deg Q_{r'} = \phi(r')))$
        \item $(\forall r_1, r_2 \leq \abs{1^{(r)}})((r_1 \neq r_2 \land r_1 \geq 1 \land r_2 \geq 1) \to \gcd(Q_{r_1},Q_{r_2})=1).$
    \end{itemize}

    The formula $\psi(r)$ is valid when $r=1$, as $Q_1=\X-1$, and the third conjuct of $\psi(1)$ is vacuously true.

    Now for the induction step assume that $\psi(r-1)$ is true. We will use it to prove $\psi(r)$. For distinct divisors $d_1,d_2$ of $r$ which are not equal to $r$ we have that $\gcd(Q_{d_1},Q_{d_2})=1$ and by Lemma~\ref{lemmaxmdiv} every $d\mid r, d<r$ satisfies $Q_{d} \mid \X^{d}-1 \mid \X^r-1$, we can obtain by Lemma~\ref{lemmairreddivisors} that $\prod_{d\mid r, d<r} Q_d \mid \X^r-1$.

    Regarding the second conjuct, the induction hypothesis implies that
    \[\deg \prod_{d\mid r, d<r} Q_d = \sum_{d\mid r, d<r}\phi(r),\]
    by the definition of $Q_r$, we have that $\deg Q_r = r-\sum_{d\mid r, d<r}\phi(r)$ which is equal to $\phi(r)$ by Lemma~\ref{lemmatotientsum}.

    To finish the induction step it remains to show that $\gcd(Q_r, Q_{r'})=1$ for every $1 \leq r' < r$. Assume that $g = \gcd(Q_r,Q_{r'})$, then
        \[g \mid Q_r = \frac{\X^r-1}{\prod_{d\mid r, d<r} Q_d}\tag{\dag}\]
    Let $r'' = \gcd(r,r')$, by Lemma~\ref{lemmaxmgcd} and the properties of $\gcd$ we have
    \[g=\gcd(Q_r,Q_{r'}) \mid \gcd(\X^{r}-1,\X^{r'}-1) = \X^{\gcd(r,r')}-1=\prod_{d\mid r''}Q_d,\]
    thus $g \mid \prod_{d\mid r''} Q_d \mid \prod_{d\mid r, d<r} Q_d$, combining this with ($\dag$) gives $g^2 \mid \X^r-1$. This implies by Lemma~\ref{lemmamultrootcrit} that $\deg g =0$, as $r<p$ gives us $\gcd(\X^r-1,r\X^{r-1})=1$. This concludes the induction step.

    To obtain the statement of the theorem, it remains to prove the equality $\gcd(Q_r,\X^{r'}-1)=1$ for any $0<r'<r$. By the third conjunct of $\psi(r)$ and Lemma~\ref{lemmacoprimeproduct} we have
    \[\gcd(Q_r,\X^{r'}-1)=\gcd(Q_r,\prod_{d\mid r'} Q_d)=1.\qedhere\]
\end{proof}

\begin{lemma}[$S^1_2$]\label{lemmapolyfact}
    Let $1^{(k)}$ be a number for $k \geq 1$, $F$ be a bounded field and $f\in F[\X]$. Then there is a sequence $\langle h_1,\dots,h_k\rangle$ such that each $h_i$ is an non-constant irreducible member of $F[\X]$ and $\prod_{i=1}^k h_i = f$. 
\end{lemma}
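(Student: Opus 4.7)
The plan is to prove the lemma by applying $\Sigma^b_1$-LMAX from $S^1_2$ to find a decomposition of $f$ into non-constant factors of maximum length, and then argue by maximality that each factor must be irreducible. This circumvents what would otherwise be a complexity issue, since directly expressing ``$f$ has a factorization into irreducibles'' is naturally $\Sigma^b_2$ rather than $\Sigma^b_1$.

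Concretely, assuming $\deg f \geq 1$ (with the constant case treated separately), I would consider the $\Sigma^b_1$ formula $\varphi(w)$ asserting that $w = 1^{(k)}$ for some $k \geq 1$ and there exists a sequence $\langle h_1, \dots, h_k\rangle$ of elements of $F[\X]$ with each $\deg h_i \geq 1$ and $\prod_{i=1}^k h_i = f$. The existential quantifier over the sequence is polynomially bounded, since $\sum \deg h_i = \deg f$ and each coefficient lies in $[b]$, so the encoding of any such sequence has length polynomial in $\deg f$ and $|b|$. The base case $\varphi(1^{(1)})$ is witnessed by the trivial sequence $\langle f\rangle$.

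Applying $\Sigma^b_1$-LMAX with bound $1^{(\deg f)}$, which is a legitimate bound because any factorization into non-constants has at most $\deg f$ factors, I would obtain some $w^* = 1^{(k^*)}$ of maximum length satisfying $\varphi$, together with the corresponding factorization $\langle h_1, \dots, h_{k^*}\rangle$. To finish, I would show each $h_j$ is irreducible by contradiction: if $h_j = g \cdot g'$ with $\deg g, \deg g' \geq 1$, then $\langle h_1, \dots, h_{j-1}, g, g', h_{j+1}, \dots, h_{k^*}\rangle$ is a valid length-$(k^* + 1)$ decomposition of $f$, giving $\varphi(1^{(k^* + 1)})$ with $|1^{(k^* + 1)}| = k^* + 1 > k^*$, contradicting the maximality provided by LMAX.

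The main subtlety I anticipate is precisely the one motivating the trick above: irreducibility is naturally $\Pi^b_1$, so if one were to place it inside the existential over sequences, the resulting formula would fall outside the reach of the $\Sigma^b_1$-LMIN/LMAX schemata available in $S^1_2$. Deriving irreducibility a posteriori from maximality neatly avoids this obstacle; the remaining bookkeeping (polynomially bounding the candidate sequences, and handling the edge case of a constant $f$) is routine.
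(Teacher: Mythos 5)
Your proposal matches the paper's proof essentially exactly: the paper invokes $\Sigma^b_1$-LMAX to obtain a longest sequence of non-constant polynomials multiplying to $f$, and derives irreducibility of each factor from maximality, just as you do. Your write-up simply spells out the bookkeeping (the $\Sigma^b_1$-bound on the witnessing sequence, the length bound $1^{(\deg f)}$, and the split-one-factor contradiction) that the paper leaves implicit.
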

\begin{proof}
    By $\sblma$ there is a longest sequence of non-constant polynomials $\langle h_1,\dots,h_n \rangle$ such that $\prod^{n}_{i=1}h_i = f$. The irreducibility of $h_i$'s follows from the maximality of the length of the sequence.
\end{proof}

To prove the existence of the polynomial $h$ from the proof of the correctness in~\cite{AKS}, we will first prove some basic properties of exponentiation by squaring.

\begin{lemma}[$\PV_1$, Recursive property of the exponentiation by squaring]\label{lemmaexpmultiplicative}
    Let $m \geq 1$ be a number, let $p$ be a prime and let $1^{(r)}$ be a number and let $f$ and $g$ be polynomials, then
    $f^m \equiv f^{m-1}\cdot  f \md{g,p}.$
\end{lemma}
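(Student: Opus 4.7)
The plan is to prove the identity by length induction on $m$, using the defining recursion of the exponentiation-by-squaring $\PV$-symbol. The symbol is constructed so that the equations
\[ f^{0} \equiv 1, \qquad f^{2k} \equiv (f^k)^2, \qquad f^{2k+1} \equiv (f^k)^2 \cdot f \pmod{g,p} \]
hold provably in $\PV_1$, since they are simply the defining clauses of the halving algorithm. All polynomials appearing are reduced modulo $g$ and their coefficients modulo $p$, so the equality being asserted is a quantifier-free statement about numbers of polynomially bounded length; strong induction on $m$ (equivalently induction on $|m|$) can be carried out as a $\Sigma^b_0$-induction in $\PV_1$, with the usual device of passing to the predicate $(\forall m' \leq 2^{|m|})\, P(m')$ to fit the $\Sigma^b_0$ schema.

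The base case $m=1$ is immediate: $f^{1} = (f^0)^2 \cdot f \equiv 1 \cdot f = f$ and $f^{m-1}\cdot f = f^0 \cdot f = f$. For the inductive step, I split on the parity of $m$.

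If $m = 2k+1$ is odd, the identity is direct from the recursive clauses without invoking the induction hypothesis:
\[ f^{2k+1} \equiv (f^k)^2 \cdot f \equiv f^{2k}\cdot f \pmod{g,p}. \]
If $m = 2k$ is even with $k \geq 1$, I first note $2k-1 = 2(k-1)+1$ is odd, so by the recursive clauses
\[ f^{2k} \equiv (f^k)^2, \qquad f^{2k-1} \equiv (f^{k-1})^2 \cdot f \pmod{g,p}. \]
By the induction hypothesis applied to $k < m$ we have $f^k \equiv f^{k-1}\cdot f \pmod{g,p}$, and squaring this congruence gives
\[ f^{2k} \equiv (f^k)^2 \equiv (f^{k-1}\cdot f)^2 \equiv (f^{k-1})^2 \cdot f^2 \equiv f^{2k-1}\cdot f \pmod{g,p}, \]
which closes the induction.

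The main conceptual hurdle is not the algebra, which is elementary, but making sure the induction principle being invoked is actually available in $\PV_1$: the predicate is quantifier-free in the language of $\PV$, and the recursion naturally halves the exponent, so length induction suffices, and $\PV_1$ handles this via $\Sigma^b_0$-induction on the bounded universal closure; alternatively, one may carry out the argument in $S^1_2$ and appeal to $\forall\Sigma^b_1$-conservativity over $\PV_1$.
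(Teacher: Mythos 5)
Your proof is correct and follows essentially the same route as the paper: open length (polynomial) induction on $m$, with the odd case handled directly from the recursive clauses of exponentiation-by-squaring and the even case closed by squaring the induction hypothesis applied to $\lfloor m/2 \rfloor$. The only cosmetic difference is that you spell out the formalizability discussion (why $\Sigma^b_0$-PIND suffices) somewhat more explicitly than the paper does.
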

\begin{proof}
We will proceed by open polynomial induction on~$m$. If $m=1$, the statement is trivial.

For the induction step, we will assume the statement for $\floor{m/2}$. The case where there is $l$ such that $m-1=2l$ is trivial, as by the definition of exponentiation by squaring we have
\[f^m \equiv (f^{l})^2 f \equiv f^{m-1} \cdot f \md{g,p}.\]
In the case where there is $l$ such that  $m=2l$, then we have by definition of exponentiation by squaring
\begin{align*}
    f^m&\equiv (f^l)^2 \md{g,p}\\
    f^{m-1} &\equiv (f^{l-1})^2 \cdot f\md{g,p},
\end{align*}
which implies
\begin{align*}
    f^{m-1}f &\equiv (f^{l-1})^2 \cdot f^2\\
                              &\equiv (f^{l-1} f)^2\\
                              &\equiv (f^l)^2 \tag{\dag}\\
                              &\equiv f^m \md{g,p},
\end{align*}
where $(\dag)$ follows from the induction hypothesis. The second congruence can be proved analogously.
\end{proof}

\begin{lemma}[$\PV_1$, Divisor lemma]\label{lemmaexpdivisor}
     Let $p$ be a prime, $l$ be a number and let $f$, $g$ and $g_0$ be polynomials. If $g_0\mid g$, then
     \[[f]_g^l  \equiv [f]_{g_0}^l \md{g_0,p},\]
     where $[f]_g^l$ and $[f]^l_{g_0}$ denote exponentiation by squaring modulo $g$ and $g_0$ respectively.
\end{lemma}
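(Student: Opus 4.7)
The plan is to proceed by polynomial induction (PIND) on $l$. Since the predicate $[f]_g^l \equiv [f]_{g_0}^l \pmod{g_0, p}$ is expressible as an equality between values of two $\PV$-function symbols, it is an open formula in the language of $\PV$, so $\PV_1$ can perform PIND on it.

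For the base case $l = 0$ (or $l = 1$), both $[f]_g^l$ and $[f]_{g_0}^l$ evaluate to the same constant polynomial (namely $1$, resp.\ $f \mdl g$ and $f \mdl g_0$, which agree modulo $g_0$). For the inductive step, assume the congruence for $\floor{l/2}$ and split on the parity of $l$. Using the recursive definition of exponentiation by squaring (the same mechanism underlying Lemma~\ref{lemmaexpmultiplicative}), we have, say in the case $l = 2k$,
\[
[f]_g^l = \bigl([f]_g^k\bigr)^2 \mdl g, \qquad [f]_{g_0}^l = \bigl([f]_{g_0}^k\bigr)^2 \mdl g_0,
\]
and analogously with an extra factor of $f$ when $l$ is odd.

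The key intermediate fact, provable directly from the long-division $\PV$-symbol and the hypothesis $g_0 \mid g$, is that for any polynomial $A$, the equality $A \mdl g \equiv A \md{g_0, p}$ holds: if $A = q g + (A \mdl g)$, then since $g_0 \mid g$ we also have $A \equiv A \mdl g \md{g_0, p}$. Applying this to $A = \bigl([f]_g^k\bigr)^2$ (or $\bigl([f]_g^k\bigr)^2 \cdot f$), together with the induction hypothesis $[f]_g^k \equiv [f]_{g_0}^k \md{g_0, p}$ (and the fact that congruence mod $(g_0, p)$ is preserved by polynomial multiplication and squaring), one chains
\[
[f]_g^l \equiv \bigl([f]_g^k\bigr)^2 \equiv \bigl([f]_{g_0}^k\bigr)^2 \equiv \bigl([f]_{g_0}^k\bigr)^2 \mdl g_0 = [f]_{g_0}^l \md{g_0, p},
\]
completing the step; the odd case is identical with an extra factor of $f$ on both sides.

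The main obstacle is not so much a conceptual difficulty as a bookkeeping one: one must verify that the recursive definition of the $\PV$-symbol for exponentiation by squaring behaves as written in $\PV_1$, and that both the auxiliary congruence $A \equiv (A \mdl g) \md{g_0, p}$ and the preservation of polynomial congruences under squaring and multiplication are available as open-formula facts. All of these reduce to straightforward properties of the long-division and polynomial-arithmetic $\PV$-symbols, so the induction is an induction on an open formula and is justified in $\PV_1$.
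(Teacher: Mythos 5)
Your proof is correct but takes a genuinely different route from the paper's. The paper proves the statement by ordinary ($\Sigma^b_0$-)induction on $l$: for the step from $l$ to $l+1$ it invokes Lemma~\ref{lemmaexpmultiplicative} (the recursive multiplicative property $f^{l+1}\equiv f^{l}\cdot f\md{g,p}$ of exponentiation by squaring) to peel off one factor of $f$ on both sides, then uses $g_0\mid g$ to downgrade the congruence $\md{g,p}$ to $\md{g_0,p}$, and chains with the induction hypothesis. You instead do PIND on $l$ directly, unfolding one level of the square-and-multiply recursion ($l\mapsto\floor{l/2}$) and relying on the auxiliary fact $A\equiv(A\mdl g)\md{g_0,p}$ when $g_0\mid g$, plus preservation of $(g_0,p)$-congruences under squaring and multiplication. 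The paper's decomposition buys a cleaner separation of concerns---the square-and-multiply bookkeeping is handled once in Lemma~\ref{lemmaexpmultiplicative}, so the Divisor Lemma reduces to a one-step congruence chain; yours is self-contained but re-derives the recursive structure inside the inductive step. Both inductions are on an open $\PV$-formula and hence available in $\PV_1$, so the choice is essentially one of modularity rather than strength.
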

\begin{proof}
    By induction on $l$, if $l=0$ the statement is trivial.

    For the induction step, assume the statement holds for $l$. Then by Lemma~\ref{lemmaexpmultiplicative} we have
    \[[f]^{l+1}_g \equiv  [f]^{l}_g\cdot  f \md{g,p},\]
    since $g_0\mid g$, then also
    \begin{align*}
        [f]^{l+1}_g &\equiv  [f]^{l}_g\cdot  f \md{g_0,p}\\
                    &\equiv  [f]^l_{g_0} \cdot f \md{g_0,p}\tag{*}\\
                    &\equiv  [f]^{l+1}_{g_0} \md{g_0,p}\tag{\dag},
    \end{align*}
    where ($\dag$) follows from Lemma~\ref{lemmaexpmultiplicative} and (*) follows from the induction hypothesis.
\end{proof}

\begin{lemma}[$\PV_1$, Composition of congruences]\label{lemmaexpcompcong}
    Let $p$ be a prime and let $l$ and $k$ be numbers, and let $f_1$, $f_2$ and $g$ be polynomials. If
    $f_1^l \equiv f_2 \md{g,p}$ then,
    \[(f_1^l)^k \equiv f_2^k \md{g,p}.\]
\end{lemma}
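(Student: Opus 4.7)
The proof is a straightforward induction on $k$, leveraging Lemma~\ref{lemmaexpmultiplicative} (the recursive property of exponentiation by squaring) together with the fact that congruence modulo $g$ and $p$ is preserved under multiplication of polynomials.

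The plan is to proceed by induction on $k$, following the same pattern as the preceding Divisor lemma. For the base case $k = 0$, both $(f_1^l)^0$ and $f_2^0$ evaluate to $1$ by the very definition of exponentiation by squaring, so the congruence $(f_1^l)^0 \equiv f_2^0 \md{g, p}$ holds trivially. This sets up the induction.

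For the induction step, I assume $(f_1^l)^k \equiv f_2^k \md{g, p}$ and aim to derive $(f_1^l)^{k+1} \equiv f_2^{k+1} \md{g, p}$. The idea is to peel off one factor on each side using Lemma~\ref{lemmaexpmultiplicative}, which gives
\[
(f_1^l)^{k+1} \equiv (f_1^l)^{k} \cdot (f_1^l) \md{g, p}
\qquad \text{and} \qquad
f_2^{k+1} \equiv f_2^{k} \cdot f_2 \md{g, p}.
\]
Now the induction hypothesis $(f_1^l)^k \equiv f_2^k \md{g, p}$ and the standing assumption $f_1^l \equiv f_2 \md{g, p}$ can be multiplied together (since congruence modulo $g$ and $p$ is closed under multiplication of polynomials, a routine fact provable in $\PV_1$) to obtain $(f_1^l)^k \cdot (f_1^l) \equiv f_2^k \cdot f_2 \md{g, p}$. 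Chaining this with the two displayed congruences yields the desired $(f_1^l)^{k+1} \equiv f_2^{k+1} \md{g, p}$, closing the induction.

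I do not anticipate any real obstacle: Lemma~\ref{lemmaexpmultiplicative} already does the work of translating the opaque exponentiation-by-squaring notation into a usable recursive identity, and the step then reduces to multiplicative compatibility of polynomial congruence. The only small point to note is that, as in the preceding two lemmas, the induction on $k$ is carried out within the ambient base theory ($\PV_1$) using open length induction on the $\PV$-predicate expressing the congruence, exactly as was done to prove Lemma~\ref{lemmaexpmultiplicative} and the Divisor lemma.
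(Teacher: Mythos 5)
Your proposal is correct and matches the paper's proof essentially step for step: induction on $k$ with trivial base case, then peeling off one factor on each side via Lemma~\ref{lemmaexpmultiplicative} and applying the induction hypothesis together with the hypothesis $f_1^l \equiv f_2 \md{g,p}$ under multiplicative compatibility of congruence. The only cosmetic difference is that the paper chains all four congruences in a single display starting from $(f_1^l)^{k+1}$ rather than rewriting both sides first, but the underlying argument is identical.
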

\begin{proof}
    By induction on $k$. The case where $k=0$ is true trivially.

    Assume the statement holds for $k$. Then by Lemma~\ref{lemmaexpmultiplicative} 
    \begin{align*}
        (f_1^l)^{k+1} &\equiv (f_1^l)^{k} \cdot f_1^l\\
                    &\equiv f_2^{k} \cdot f_1^l\\
                    &\equiv f_2^{k} \cdot f_2 \equiv f_2^{k+1} \md{g,p}.\qedhere
    \end{align*}
\end{proof}

\begin{lemma}[$\PV_1$, Factor Theorem]\label{lemmafactortheorem}
    Let $F$ be a finite field, $\alpha \in F$ and $f\in F[\X]$. Then $f(\alpha)=0$ if and only if $\X-\alpha\mid f$.
\end{lemma}
\begin{proof}
    We will assume that $f(\alpha)=0$ and prove that $\X-\alpha\mid f$, the other direction is immediate. By long division, there are $q,r\in F[\X], \deg r <1$ such that
    $f= q\cdot (\X-\alpha)+r$. After evaluating both sides at $\alpha$, we get $0=f(\alpha)= q(\alpha)\cdot (\X-\alpha)+r(\alpha)=r(\alpha)$, since $\deg r=0$ it is a constant and therefore $r(\alpha)=r=0$. Thus, $f = q\cdot (\X-\alpha)$.
\end{proof}

\begin{corollary}[$S^1_2+\GFLT$]\label{Cor: h divisor}
    Let $p$ be a prime and $1^{(r)}$ be a number such that $1\leq r < p$ and $r \nmid p-1$. Then there is an irreducible divisor $h$ of $\X^r-1$ over $\ZZ/p$, which does not divide $\X^{r'}-1$ for any $0 < r' < r$ and $\deg h$ is at least $2$.
\end{corollary}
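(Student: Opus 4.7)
The plan is to apply Theorem~\ref{cycltomexists} to produce $Q_r$, factor it via Lemma~\ref{lemmapolyfact}, and then rule out linear irreducible factors by extracting Fermat's little theorem from $\GFLT$.

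First, Theorem~\ref{cycltomexists} supplies the cyclotomic polynomial $Q_r = \Cyc_p(1^{(r)})$, of degree $\phi(r)$, with $Q_r \mid \X^r - 1$ and $\gcd(Q_r, \X^{r'} - 1) = 1$ for every $1 \le r' < r$. Working in $S^1_2$, Lemma~\ref{lemmapolyfact} factors $Q_r$ as a product of non-constant irreducibles $\langle h_1, \dots, h_k \rangle$ over $\ZZ/p$; let $h = h_1$. Then $h \mid Q_r \mid \X^r - 1$, and if $h$ divided some $\X^{r'} - 1$ with $0 < r' < r$, then $h$ would be a non-constant common divisor of $Q_r$ and $\X^{r'} - 1$, violating the coprimality from Theorem~\ref{cycltomexists}. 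The first two conclusions of the corollary follow.

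The crux is $\deg h \ge 2$. Suppose for contradiction that $\deg h = 1$; after normalizing, $h = \X - a$ for some nonzero $a \in \ZZ/p$. By the polynomial remainder theorem (routine in $\PV_1$), the divisibility $h \mid \X^r - 1$ gives $a^r \equiv 1 \pmod p$, while $h \nmid \X^{r'} - 1$ for every $0 < r' < r$ gives $a^{r'} \not\equiv 1 \pmod p$. So $a$ has multiplicative order exactly $r$ modulo $p$. Now I derive Fermat's little theorem from $\GFLT$: specializing $\GFLT$ to the case $r = 1$ (admissible since $1 < p$) yields, for each $a'$ coprime to $p$, the congruence $(\X + a')^p \equiv \X^p + a' \pmod{p,\, \X - 1}$; evaluating the underlying $\ZZ[\X]$-identity at $\X = 1$ produces $(1 + a')^p \equiv 1 + a' \pmod p$. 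Letting $a' = b - 1$ for each $b \in \{2, \dots, p - 1\}$, and handling $b \in \{0, 1\}$ directly, gives $b^p \equiv b \pmod p$ for every $b$, so in particular $a^{p-1} \equiv 1 \pmod p$. Writing $p - 1 = qr + s$ with $0 \le s < r$ via the division algorithm in $S^1_2$, we get $1 \equiv a^{p-1} = (a^r)^q \cdot a^s \equiv a^s \pmod p$, and minimality of $r$ forces $s = 0$. Hence $r \mid p - 1$, contradicting the hypothesis.

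The principal obstacle is the passage from the polynomial congruence supplied by $\GFLT$ to numerical Fermat's little theorem via evaluation at $\X = 1$: one must invoke the routine fact that $\ZZ[\X]$-polynomials congruent modulo $p$ and modulo $\X - 1$ agree at $\X = 1$ modulo $p$. Once this is in place, the remaining manipulations with multiplicative orders fit comfortably inside $S^1_2 + \GFLT$.
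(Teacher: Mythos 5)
Your proof is essentially the paper's: existence of $Q_r$ from Theorem~\ref{cycltomexists}, factorization via Lemma~\ref{lemmapolyfact}, and a contradiction against $\deg h = 1$ by extracting Fermat's little theorem from $\GFLT$ and combining it with the multiplicative order of the root. The one genuine variation is the $\GFLT$-to-FLT step: you instantiate $\GFLT$ at $r=1$, so the relevant modulus is already $\X-1$, whereas the paper applies $\GFLT$ at the given $r$ and then reduces from $\X^r-1$ down to $\X-1$ via the Divisor Lemma (Lemma~\ref{lemmaexpdivisor}); your route nicely skips that invocation. However, the ``evaluation at $\X=1$'' you flag as a routine fact is, in the $\PV$ formalization, not a generic $\ZZ[\X]$-evaluation argument: the exponentiation $(\X+a')^p$ in $\GFLT$ is computed by a $\PV$-symbol for exponentiation by squaring modulo $\X-1$, and identifying its constant output with $(1+a')^p \bmod p$ (computed by the numerical exponentiation-by-squaring symbol) requires exactly an application of Composition of congruences (Lemma~\ref{lemmaexpcompcong}) to the trivial congruence $\X+a' \equiv 1+a' \md{\X-1,p}$ --- which is what the paper does at the corresponding step. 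Once you make that step explicit in place of the informal evaluation, your argument matches the paper's.
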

\begin{proof}
    By Theorem~\ref{cycltomexists}, there is a polynomial $Q_r$ which divides $\X^r-1$ while satisfying $\gcd(Q_r,\X^{r'}-1)=1$ for any $0<r'<r$. By Lemma~\ref{lemmapolyfact} there is an irreducible factorization of $Q_r=\prod_{i=1}^k h_i$. The $h=h_1$ satisfies that it is a divisor of $\X^r-1$ but not of any $\X^{r'}-1$, where $0<r'<r$. It remains to show that $h$ is of degree at least $2$.

    Assume for contradiction that $\deg h=1$, thus without loss of generality $h=\X-\alpha$, for some $\alpha \in \ZZ/p$. 
    By the $\GFLT$ axiom, we have 
    \[(\X+\beta)^p \equiv  \X^p+\beta \md{\X^r-1,p},\]
    for every $\beta\in \{0,\dots,p-1\}$. Since $\X-1\mid \X^r-1$, Lemma~\ref{lemmaexpdivisor} gives us
    \[(\X+\beta)^p \equiv  \X^p+\beta \md{\X-1,p}.\tag{*}\]
    We also trivially have
    \begin{align*}
        \X &\equiv 1 \md{\X-1,p}\\
        \X+\beta &\equiv 1+\beta \md{\X-1,p},
    \end{align*}
    by Lemma~\ref{lemmaexpcompcong}, we have that
    \begin{align*}
        \X^p &\equiv 1^p \equiv 1 \md{\X-1,p}\\
        (\X+\beta)^p &\equiv (1+\beta)^p \md{\X-1,p},
    \end{align*}
    which together with (*) gives us
    \[(1+\beta)^p \equiv  1+\beta \md{\X-1,p}.\]
    Thus, by a linear substitution we have in $\ZZ/p$ that $\beta^p = \beta$, which implies $\beta^{p-1}=1$ for every $\beta\in\{0,\dots,p-1\}$. 

    Since $p-1=qr+m$, for some $q$ and $m$, where $0< m < r$, we have in $\mathbb{Z}/p$ that
    \[1 = \alpha^{p-1} = \alpha^{qr}\cdot \alpha^m = \alpha^m,\]
    and $\alpha^m \neq 1$ as this would imply by Lemma~\ref{lemmafactortheorem} that $h=\X-\alpha$ divides $\X^m-1$, which is a contradiction with the fact that $h$ does not divide $\X^{r'}-1$ for any $0<r<r'$.
\end{proof}

\subsection{The Root Upper Bound axiom}\label{subsecdlb}

In the proof of Lemma G in~\cite{AKS}, polynomials of degree polynomial in $n$ are considered, as such they are not covered by the formalism of low degree polynomials. Polynomials with unrestricted degree can be defined over $\PV_1$ in a limited way as follows. 

A \emph{sparse polynomial} is a list of pairs of monomials and coefficients, with the intended meaning being the polynomial consisting of the sum of those monomials with those coefficients. Note that the number of monomials in a sparse polynomial is bounded by the length of the list, therefore by a length of some number.  For sparse polynomials, we can define addition, multiplication and evaluation at an element of a bounded field in $\PV_1$. We will also use the notation $\deg f$ to denote the highest exponent of a monomial in the sparse polynomial $f$. In the rest of this section, we define the second algebraic axiom needed for our formalization which facilitates reasoning about sparse polynomials. It uses a new function symbol to provide the injection replacing the inequality between the number of roots of a polynomial over a field and the degree of the polynomial.

\begin{definition}[Root Upper Bound] Let $\PV(\iota)$ denote the language of $\PV$ extended by a new ternary function symbol $\iota$. The axiom $\DLB$ is the universal $\PV(\iota)$-sentence naturally formalizing the following statement.

\vspace{0.5em}
\fbox{\begin{minipage}{30em}
Let $F$ be a bounded field and $f\in F[\X]$ be a sparse polynomial, then
\[
\iota(F,f, x): \{\alpha \in F; f(\alpha) = 0\} \to [\deg f]
\]
is an injective function in the parameter $x$.
\end{minipage}}
\vspace{0.5em}

The injective function obtained from $\iota$ by fixing a bounded field $F$ and a sparse polynomial $f\in F[\X]$ will be denoted $\iota_{F,f}$.
\end{definition}

Since the axiom $\DLB$ introduces a new function symbol, we need to extend the axiom schemas to allow this new function symbol in the formulas that appear in them.

\begin{definition}
    The theory $S^1_2(\iota)+\DLB(\iota)$ is the $\PV(\iota)$-theory extending $S^1_2$ by the axiom $\DLB$ and allowing the function symbol $\iota$ in the $\sblm$ axiom scheme. The theory $S^1_2(\iota) + \iWPHP(\iota) + \DLB(\iota)$ is the $\PV(\iota)$-theory extending $S^1_2+\iWPHP$ by the axiom $\DLB$ and by allowing the function symbol $\iota$ to be used in the formulas of the axiom scheme $\sblm$ and extending $\iWPHP$ to allow $\PV(\iota)$-terms instead of just $\PV$-symbols. 

    For simplicity, we shall omit $\iota$ in the name of these theories and simply refer to them as $S^1_2 + \DLB$ and $S^1_2+\iWPHP+\DLB$. By the name $S^1_2+\iWPHP+\DLB+\GFLT$ we simply mean the theory extending  $S^1_2+\iWPHP+\DLB$ by the $\GFLT$ axiom.
\end{definition}
\section{The correctness in $S^1_2+\iWPHP+\DLB+\mathrm{GFLT}$}\label{secT2proof}

\subsection{$\PV_1\vdash$ Legendre's formula}\label{subseclegendre}

We start by proving Legendre's formula in $\PV_1$, this is subsequently used to prove our variant of Lemma C --- a lower bound on the value of $\lcm(1,\dots,2n)$ which is then used to prove our variant of Lemma D.

The following two Lemmas are straightforward and thus we omit their proofs.

\begin{lemma}
    There is a binary $\PV$-symbol $\nu_p(x)$ for which $\PV_1$ proves
    \begin{align*}
        \nu_p(0) &= 0\\
        \nu_p(x)&=\begin{cases}
            \nu_p(\floor{x/p})+ 1 & x \equiv 0 \md{p} \\
            0 & \text{otherwise.}
        \end{cases}
    \end{align*}
\end{lemma}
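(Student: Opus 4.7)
The plan is to define the $\PV$-symbol $\nu_p(x)$ via a direct polynomial-time algorithm and then verify the two equations using sharply bounded induction, which is available in $\PV_1$. Concretely, I would take $\nu_p$ to be the $\PV$-symbol computed by the following algorithm: initialize a counter $c := 0$ and a value $y := x$; repeat at most $\abs{x}+1$ times the step ``if $y > 0$ and $y \equiv 0 \md{p}$ then $y := \floor{y/p}$ and $c := c+1$, else halt''; and return $c$. For $p \geq 2$ each successful iteration at least halves $y$, so the loop terminates well within $\abs{x}+1$ steps and $\nu_p$ is manifestly polynomial-time computable. (For $p \leq 1$ we set $\nu_p(x) := 0$ by convention; the stated equations are intended for the values of $p$ occurring in the Legendre-formula application below.)

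The equation $\nu_p(0) = 0$ is then immediate from the algorithm, since on input $x = 0$ the guard $y > 0$ fails before any iteration executes. For the recursive case I would introduce an auxiliary $\PV$-symbol $\nu'_p(x, c)$ computing the same loop but started from an arbitrary initial counter value $c$, so that by design $\nu_p(x) = \nu'_p(x, 0)$. A sharply bounded induction on $\abs{x}$ (or equivalently on the number of iterations remaining) yields $\nu'_p(x, c) = \nu'_p(x, 0) + c$; this is the only nontrivial reasoning step and is available because $\PV_1$ proves $\Sigma^b_0$-$\mathrm{IND}$. Then for $x$ with $x \not\equiv 0 \md{p}$ the algorithm halts at once with output $0$, matching the ``otherwise'' clause. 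For $x > 0$ with $x \equiv 0 \md{p}$, one iteration brings the internal state to $(\floor{x/p}, 1)$, and the auxiliary identity applied to this state gives $\nu_p(x) = \nu'_p(\floor{x/p}, 1) = \nu'_p(\floor{x/p}, 0) + 1 = \nu_p(\floor{x/p}) + 1$, as desired.

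There is no serious obstacle here: the lemma is essentially a definitional fact, because in $\PV_1$ one has the freedom to pick any polynomial-time algorithm as a $\PV$-symbol and then to verify its natural recurrences by sharply bounded induction. The only care required is to separate the base case $x = 0$ from the otherwise ambiguous ``$x \equiv 0 \md{p}$'' branch (under which $x = 0$ would falsely force $\nu_p(0) = \nu_p(0) + 1$), which I do above by making the termination test $y > 0$ an explicit conjunct of the iteration guard.
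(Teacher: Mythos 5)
The paper omits the proof of this lemma, declaring it straightforward, so there is no proof to compare against; your proposal supplies exactly the kind of argument the authors clearly had in mind. Defining $\nu_p$ by a polynomial-time loop (bounded by $\abs{x}+1$ since $p\geq 2$ halves $y$ at each step) and then verifying the recurrence by $\Sigma^b_0$-IND is the standard way to introduce a $\PV$-symbol with a specified recursion, and you correctly flag the one real subtlety: the $x=0$ case overlaps the $x\equiv 0\md{p}$ branch and must be disambiguated. The paper itself notes this immediately after the lemma (``since we never use $0$ as an argument, we simply defined it to be $0$''), so your explicit treatment via the $y>0$ guard matches the authors' intent. One small point worth spelling out if you were to write this up fully: after unrolling one iteration on input $x$, the residual loop is run from state $(\floor{x/p},1)$ with a budget of $\abs{x}$ remaining iterations, while $\nu'_p(\floor{x/p},1)$ as defined has budget $\abs{\floor{x/p}}+1\leq\abs{x}$; since the loop halts as soon as the divisibility test fails, the budgets never bind and the two agree, but this equality of outputs under differing iteration bounds is itself a (trivial) $\Sigma^b_0$-IND argument that should be mentioned rather than left implicit.
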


The value for $\nu_p(0)$ is often assigned to be $\infty$, but since we never use $0$ as an argument, we simply defined it to be $0$ to keep the function total.

The following lemma defines the bit-length factorial.
\begin{lemma}
    There is a $\PV$-symbol $\Fact{x}$ for which $\PV_1$ proves
    \begin{align*}
        \FactD{0} &= 1,\\
        \FactD{x} &= \FactD{\half{x}}\cdot \abs{x},
    \end{align*}
    that is:
    \[\NN \models \FactD{x} = \abs{x}!\]
\end{lemma}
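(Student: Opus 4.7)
The plan is to define $\Fact{\cdot}$ by limited recursion on notation and then verify the meta-level identity by an external induction. First I would establish the key length bound: since $\abs{x}! \leq \abs{x}^{\abs{x}}$, we have $\abs{\FactD{x}} \leq \abs{x}\cdot \log\abs{x} \leq \abs{x}^2$, so the value of $\FactD{x}$ is majorized by a smash-expressible term (for instance $x \# x = 2^{\abs{x}^2}$). This length bound is exactly what is needed to legitimately introduce $\Fact{\cdot}$ via the standard closure of Cook's function algebra under limited recursion on notation: one iterates $\abs{x}$ many times, at each stage halving the current argument and multiplying the accumulator by the current length.

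With the symbol in place, the defining recursive equations $\FactD{0} = 1$ and $\FactD{x} = \FactD{\half{x}} \cdot \abs{x}$ are then (essentially by construction) axioms of Cook's equational theory $\PV$, and therefore provable in $\PV_1$, which conservatively extends $\PV$.

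For the standard-model identity $\NN \models \FactD{x} = \abs{x}!$, I would proceed by a short external induction on $\abs{x}$. The base case is $x = 0$: here $\abs{0} = 0$, so $\FactD{0} = 1 = 0!$. For the inductive step with $x \geq 1$, observe that $\abs{\half{x}} = \abs{x} - 1$; the induction hypothesis then gives $\FactD{\half{x}} = (\abs{x}-1)!$, and the recursion yields
\[
\FactD{x} = \FactD{\half{x}} \cdot \abs{x} = (\abs{x}-1)! \cdot \abs{x} = \abs{x}!.
\]

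The only point requiring any care is checking that the polynomial bound $x \# x$ genuinely dominates $\abs{x}!$ throughout the recursion, so that the limited recursion schema applies; this is immediate from the estimate above. Everything else is routine, which is presumably why the authors chose to omit the proof.
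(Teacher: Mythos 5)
Your proposal is correct and is the argument the paper implicitly has in mind (the authors explicitly omit the proof as routine): the recursion $\FactD{x} = \FactD{\half{x}}\cdot\abs{x}$ is limited recursion on notation, the bound $\abs{\FactD{x}} \leq \abs{x\#x}$ licenses introducing the $\PV$-symbol so that the two defining equations are $\PV$-axioms, and the closing identity in $\NN$ follows by external induction on $\abs{x}$ using $\abs{\half{x}} = \abs{x}-1$ for $x\geq 1$.
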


\begin{lemma}[$\PV_1$]\label{lemmaprimeprime}
    Let $p$ be a prime and $x\cdot y \equiv 0 \md{p}$ then either $x\equiv 0 \md{p}$ or $y\equiv 0 \md{p}$.
\end{lemma}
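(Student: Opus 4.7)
The plan is to prove this by the standard Bezout-identity argument, which is fully available in $\PV_1$ thanks to the $\PV$-symbols $\gcd$ and $\xgcd$ provided by Lemma~\ref{lemmapveuclid}. Concretely, I will prove the contrapositive of one of the disjuncts: assuming $x \cdot y \equiv 0 \md{p}$ and $x \nequiv 0 \md{p}$, I will deduce $y \equiv 0 \md{p}$.

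First, I would show $\gcd(x,p) = 1$. By Lemma~\ref{lemmapveuclid} we have $\gcd(x,p) \mid p$, and since $p$ is prime, either $\gcd(x,p) = 1$ or $\gcd(x,p) = p$. In the latter case, from $\gcd(x,p) \mid x$ we would obtain $p \mid x$, i.e., $x \equiv 0 \md{p}$, contradicting our assumption. So $\gcd(x,p) = 1$.

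Next, I would invoke $\xgcd(x,p) = (1,u,v)$ to obtain the Bezout identity $ux + vp = 1$, read in the ambient integer encoding used in the paper (so $u,v$ may be ``signed''). Multiplying both sides by $y$ yields $uxy + vpy = y$. The assumption $xy \equiv 0 \md{p}$ gives $p \mid uxy$, and trivially $p \mid vpy$, so $p \mid y$, i.e., $y \equiv 0 \md{p}$, as required.

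The only delicate point is handling the fact that the Bezout coefficients produced by $\xgcd$ may be negative; this requires using the paper's convention for integer arithmetic in $\PV_1$ (signed encodings) and checking that multiplication by $y$ and reduction modulo $p$ behave as expected on such encodings. Since these are basic properties already built into the ambient theory, I do not expect a substantive obstacle, but this bookkeeping is the main thing to be careful about.
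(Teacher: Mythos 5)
Your proposal is correct and follows essentially the same route as the paper: reduce to $\gcd(x,p)=1$ via primality, apply $\xgcd$ to get $ux+vp=1$, multiply by $y$, and read off $y\equiv 0\md{p}$. (As a side note, the paper's proof contains a typo — it writes ``$xy\nequiv 0\md{p}$'' where ``$xy\equiv 0\md{p}$'' is clearly intended — and your version states the hypothesis correctly; your caution about signed Bezout coefficients is reasonable, but the paper's Lemma~\ref{lemmapveuclid} already packages the identity $g=ux+vy$ as a $\PV_1$-provable fact, so both proofs rely on the same black box there.)
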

\begin{proof}
    Assume that $p$ is a prime, $xy \nequiv 0 \md{p}$ and $x \nequiv 0 \md{p}$. Then by the fact that $p$ is prime we have $\gcd(x,p)=1$ and $\xgcd(x,p)=(1,u,v)$ such that $ux+vp=1$. Hence, $uxy+vpy = y$ and 
    \[
    y\equiv (uxy+vpy) 
        \equiv uxy 
        \equiv 0 \md{p}.   \qedhere
    \]
\end{proof}

\begin{lemma}[$\PV_1$]\label{lemmaprimepart}
    For every prime $p$ and a number $x$ there is $x'$ such that $x= x'\cdot p^{\nu_p(x)}$, and $p\nmid x'$.
\end{lemma}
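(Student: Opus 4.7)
The plan is to proceed by induction following the recursive structure of $\nu_p$. The two cases mirror the definition of $\nu_p$ exactly. When $p \nmid x$, the definition gives $\nu_p(x)=0$, so $p^{\nu_p(x)}=1$ and the witness $x'=x$ satisfies $x = x \cdot p^{\nu_p(x)}$ trivially while $p \nmid x'$ holds by hypothesis. When $p \mid x$ (and $x \geq 1$), the definition gives $\nu_p(x) = \nu_p(\flr{x/p})+1$, and since $\flr{x/p} < x$ for $p \geq 2$, the induction hypothesis applied to $\flr{x/p}$ produces some $x''$ with $\flr{x/p} = x''\cdot p^{\nu_p(\flr{x/p})}$ and $p \nmid x''$. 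Multiplying by $p$ and using $x = p\cdot \flr{x/p}$ (which holds since $p\mid x$), we get $x = x''\cdot p^{\nu_p(\flr{x/p})+1} = x''\cdot p^{\nu_p(x)}$, so $x' = x''$ works.

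To run this induction inside $\PV_1$, I would eliminate the existential quantifier by introducing an auxiliary $\PV$-symbol that computes the witness directly. One can define $\mathrm{unpad}_p(x)$ recursively by $\mathrm{unpad}_p(x) = x$ if $p \nmid x$, and $\mathrm{unpad}_p(x) = \mathrm{unpad}_p(\flr{x/p})$ otherwise; this is polynomial-time because $\flr{x/p}$ has strictly smaller length than $x$ once $p \geq 2$. Equivalently, one may take $\mathrm{unpad}_p(x) = \div{x}{p^{\nu_p(x)}}$, which is computable since $p^{\nu_p(x)} \leq x$. With this symbol in hand, the conjunction $x = \mathrm{unpad}_p(x)\cdot p^{\nu_p(x)} \land (x \geq 1 \to p \nmid \mathrm{unpad}_p(x))$ is an open formula, to which the length induction / $\Sigma^b_0$-PIND available in $\PV_1$ applies; the induction step is exactly the two-case analysis above, using Lemma~\ref{lemmaprimeprime} only indirectly (it is not needed here, it was needed in the preceding lemma).

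I do not foresee any real obstacle: the argument is essentially just unwinding the recursive definition of $\nu_p$. The only point to watch is the degenerate case $x = 0$, where $\nu_p(0)=0$ by convention but no $x'$ with $p\nmid x'$ can satisfy $0 = x'\cdot 1$; the intended reading of the lemma is therefore $x \geq 1$, which I would either make explicit or note follows from the context in which the lemma is used (establishing Legendre's formula for factorials).
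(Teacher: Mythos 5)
Your proof is correct, but it takes a genuinely different route from the paper's. The paper fixes $x$ and runs a \emph{double induction on the exponent} $i$: first it proves $i \leq \nu_p(x) \to p^i \mid x$, then $p^i \mid x \to \nu_p(x) = i + \nu_p(\floor{x/p^i})$, and from the resulting equivalence $(\forall i)(i \leq \nu_p(x) \leftrightarrow p^i \mid x)$ it reads off the claim with $x' = \floor{x/p^{\nu_p(x)}}$. You instead do a \emph{single length induction on $x$} that directly unwinds the recursion defining $\nu_p$, with the witness computed by an auxiliary $\PV$-symbol $\mathrm{unpad}_p$. Your approach is more economical (one induction instead of two) and closer to the intuitive recursion, though it requires phrasing the induction carefully since the step goes from $\floor{x/p}$ to $x$ rather than from $\floor{x/2}$ to $x$; the standard device of quantifying over all $y \leq a$ with $\abs{y}\leq n$ in a $\Sigma^b_0$-LIND instance handles this cleanly, as you gesture at. The paper's characterization $i \leq \nu_p(x) \leftrightarrow p^i \mid x$ is slightly more reusable as a stand-alone fact (and implicitly gets used again in the argument), but it is not strictly needed for this lemma. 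Your observation about $x = 0$ is apt: the statement fails verbatim for $x=0$ (there is no $x'$ with $p\nmid x'$ and $0 = x'\cdot 1$), and the paper's proof of the auxiliary formula $\psi_2$ also silently assumes $x \geq 1$ in its induction step; making the hypothesis $x\geq 1$ explicit is the cleaner fix, and it is harmless for the downstream uses in Legendre's formula. You are also right that Lemma~\ref{lemmaprimeprime} plays no role here.
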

\begin{proof}
    We will first use induction on the formula $\psi_1(x)$:
    \[i\leq \nu_p(x) \to p^i \mid x.\]

    The case where $i=0$ is trivial. Assume $\psi(i)$ and $i+1\leq \nu_p(x)$. By the inductive definition of $\nu_p(x)$, the case where $p^{i}\mid x$ but $p^{i+1}\nmid x$ is contradictory.

    We will now use induction on the formula $\psi_2(x)$:
    \[p^i \mid x \to \nu_p(x)=i+\nu_p(\floor{x/p^{i}}).\]
    Again, the case where $i=0$ is trivial. Assume $\psi(i)$ and $p^{i+1}\mid x$, then 
    \begin{align*}
        \nu_p(x)&=i+\nu_p(\floor{x/p^{i}})\tag{\dag}\\
                &=i+1+\nu_p(\floor{x/p^{i+1}}).\tag{\ddag}
    \end{align*}
    where ($\dag$) follows from the induction hypothesis, and ($\ddag$) follows from the definition of $\nu_p$.

    Together, we have established that\[(\forall i)(i\leq \nu_p(x) \leftrightarrow p^i \mid x).\tag{*}\] Let $k=\nu_p(x)$ and $x'=\floor{x/p^{k}}$. From $(*)$ we get that $p^k \mid x$, thus $x=x'p^k$. Finally, if $p\mid x'$, then $p^{k+1}\mid x'p^{k}=x$, which by $(*)$ implies that $k=k+1$, a contradiction.
\end{proof}

\begin{lemma}[$\PV_1$]\label{lemmaprimeortho}
    Let $p$ be a prime, let $x$ and $y$ be numbers and let $\nu_p(y)=0$. Then, $\nu_p(xy)=\nu_p(x)$.
\end{lemma}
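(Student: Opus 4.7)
The plan is to reduce this to the structural decomposition given by Lemma~\ref{lemmaprimepart} together with the primality property of Lemma~\ref{lemmaprimeprime}. First, I would apply Lemma~\ref{lemmaprimepart} to $x$ to obtain $x'$ with $x = x' \cdot p^{\nu_p(x)}$ and $p \nmid x'$. The hypothesis $\nu_p(y) = 0$ gives that $p \nmid y$ as well (for if $p \mid y$ then by the equivalence $(*)$ established inside the proof of Lemma~\ref{lemmaprimepart} we would have $1 \leq \nu_p(y)$, contradicting $\nu_p(y) = 0$).

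Next, I would invoke the contrapositive of Lemma~\ref{lemmaprimeprime}: since $p \nmid x'$ and $p \nmid y$, we conclude $p \nmid x'y$. Now we can rewrite
\[
xy \;=\; (x' y) \cdot p^{\nu_p(x)},
\]
exhibiting a factorization of $xy$ into $p^{\nu_p(x)}$ times something coprime to $p$. The key step is then to argue the uniqueness of the exponent in such a factorization. For this I would reuse the equivalence $(\forall i)(i \leq \nu_p(z) \leftrightarrow p^i \mid z)$ proved inside Lemma~\ref{lemmaprimepart}. It immediately yields $\nu_p(x) \leq \nu_p(xy)$ (as $p^{\nu_p(x)}\mid xy$), and conversely, if $p^{\nu_p(x)+1}\mid xy = (x'y)\cdot p^{\nu_p(x)}$, then dividing both sides by $p^{\nu_p(x)}$ (which is legitimate using the unique quotient-remainder decomposition available in $\PV_1$) we obtain $p \mid x'y$, contradicting what we just proved. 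Hence $\nu_p(xy) = \nu_p(x)$.

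The main obstacle I anticipate is purely formal rather than conceptual: ensuring that the equivalence $(*)$ from the previous proof is genuinely available as a statement we can quote (it is proved inside the body of Lemma~\ref{lemmaprimepart} as two inductions on sharply bounded formulas), and making sure the argument ``divide both sides by $p^{\nu_p(x)}$'' is justified in $\PV_1$ --- this is fine because $p^{\nu_p(x)}$ divides $xy$ exactly and the witnessing quotient $x'y$ is explicitly available from our factorization. Once this bookkeeping is in place, the proof is essentially one application each of Lemma~\ref{lemmaprimepart}, Lemma~\ref{lemmaprimeprime}, and the characterization $(*)$.
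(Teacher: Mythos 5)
Your proof is correct, and it reaches the same conclusion from the same two building blocks (Lemma~\ref{lemmaprimepart} and Lemma~\ref{lemmaprimeprime}), but the final bookkeeping differs from the paper's. The paper's proof, after obtaining $x'$ with $p\nmid x'$ and $p\nmid x'y$, runs a fresh open induction on the formula $\nu_p(x'yp^i)=i$, using the recursive defining clause of $\nu_p$ directly at each step; the value $\nu_p(xy)=\nu_p(x)$ then drops out by instantiating $i=\nu_p(x)$. You instead extract the bidirectional characterization $(\forall i)(i\le\nu_p(z)\leftrightarrow p^i\mid z)$ from inside the proof of Lemma~\ref{lemmaprimepart} and sandwich $\nu_p(xy)$: the lower bound $\nu_p(x)\le\nu_p(xy)$ from $p^{\nu_p(x)}\mid xy$, and the upper bound by showing $p^{\nu_p(x)+1}\nmid xy$ via cancellation of $p^{\nu_p(x)}$ and the already-established $p\nmid x'y$. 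Your sandwich argument is arguably cleaner to read and avoids a separate induction, but — as you yourself note — it depends on the equivalence $(*)$ being available as a quotable statement, whereas the paper only establishes it inline in the proof of Lemma~\ref{lemmaprimepart} without promoting it to a named lemma. The paper's inductive proof is self-contained and only quotes the two displayed lemma statements, at the cost of repeating a small induction. Either way the $\PV_1$ formalization goes through: your cancellation step is justified since $p^{\nu_p(x)}>0$ and the quotient $x'y$ is exhibited explicitly.
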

\begin{proof}
    By Lemma~\ref{lemmaprimepart} there is $x'$ which is not divisible by $p$, such that $x=x'p^{k}$, where $k=\nu_p(p)$. We will proceed by induction on $i$ on the formula
    \[\nu_p(x'yp^i)=i.\]

    Assume that $i=0$. Since $p\nmid x'$ and $p\nmid y$, then by Lemma~\ref{lemmaprimeprime} $p\nmid x'y$, and thus $\nu_p(x'y) = 0$.

    Now assume that $\nu_p(x'yp^i)=i$, then \[\nu_p(x'yp^{i+1})=\nu_p(\floor{x'yp^{i+1}/p})+1 = \nu_p(x'yp^i)+1 = i+1,\]
    which completes the induction step. By putting $i=k$, we get that \[\nu_p(xy) =\nu_p(x'yp^{k})=k=\nu_p(x).\qedhere\]
\end{proof}

\begin{lemma}[$\PV_1$]\label{lemmanumult}
    For a prime $p$ and $x,y >0$ we have
    \[\nu_p(x\cdot y) = \nu_p(x) + \nu_p(y).\]
\end{lemma}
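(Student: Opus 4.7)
The plan is to reduce the general multiplicativity statement to the already established case where one factor is coprime to $p$. By Lemma~\ref{lemmaprimepart}, write $x = x' p^{a}$ and $y = y' p^{b}$, where $a = \nu_p(x)$, $b = \nu_p(y)$, $p \nmid x'$, and $p \nmid y'$. Then $xy = (x'y')\, p^{a+b}$, so our task reduces to evaluating $\nu_p\bigl((x'y') p^{a+b}\bigr)$.

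First I would invoke Lemma~\ref{lemmaprimeprime} to conclude from $p \nmid x'$ and $p \nmid y'$ that $p \nmid x'y'$, i.e.\ $\nu_p(x'y') = 0$. Next I would apply Lemma~\ref{lemmaprimeortho} with the roles of $x$ and $y$ there instantiated by $p^{a+b}$ and $x'y'$ respectively; this yields
\[
\nu_p(xy) \;=\; \nu_p\bigl(p^{a+b} \cdot x'y'\bigr) \;=\; \nu_p(p^{a+b}).
\]
Finally I would establish the auxiliary identity $\nu_p(p^{k}) = k$ by a straightforward open induction on $k$, using only the recursive defining clauses of $\nu_p$: for $k = 0$, $\nu_p(1) = 0$ since $p \nmid 1$; for the induction step, $\nu_p(p^{k+1}) = \nu_p(\lfloor p^{k+1}/p\rfloor) + 1 = \nu_p(p^{k}) + 1 = k + 1$. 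Chaining these equalities gives $\nu_p(xy) = a + b = \nu_p(x) + \nu_p(y)$, as desired.

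There is no real obstacle here; all the substance is already packaged inside Lemmas~\ref{lemmaprimepart}, \ref{lemmaprimeprime}, and~\ref{lemmaprimeortho}. The only minor point is that the small induction showing $\nu_p(p^{k}) = k$ is on an open formula, so it is freely available in $\PV_1$; in fact, this same induction is implicitly carried out inside the proof of Lemma~\ref{lemmaprimeortho} (the case $x' = y = 1$), so one could alternatively cite it as a corollary of that argument rather than redo it.
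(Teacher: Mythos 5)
Your proof is correct and follows essentially the same route as the paper: factor $x = x' p^{\nu_p(x)}$ and $y = y' p^{\nu_p(y)}$ via Lemma~\ref{lemmaprimepart}, note $p \nmid x'y'$ via Lemma~\ref{lemmaprimeprime}, and finish with Lemma~\ref{lemmaprimeortho}. You also explicitly supply the auxiliary identity $\nu_p(p^k)=k$, which the paper's one-line appeal to Lemma~\ref{lemmaprimeortho} glosses over (as you correctly observe, it is the $x'=y=1$ instance of the induction carried out inside that lemma's proof), so your write-up is if anything slightly more careful.
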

\begin{proof}
    By Lemma~\ref{lemmaprimepart} there are $x'$ and $y'$ not divisible by $p$ such that \[x=x'p^{\nu_p(x)},\quad y=y'p^{\nu_p(y)}.\]
    Then \[\nu_p(xy) = \nu_p(x'y'p^{\nu_p(x)+\nu_p(y)})=\nu_p(x)+\nu_p(y),\] where the last equality follows from Lemma~\ref{lemmaprimeortho}.
\end{proof}

\begin{lemma}[$\PV_1$]\label{lemmasplitsum}
    For every $1^{(n)}$ and a prime $p$ we have
    \[\nu_p(\Fact{1^{(n)}}) = \sum_{i=1}^n \nu_p(i).\]
\end{lemma}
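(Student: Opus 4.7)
The plan is to prove the identity by induction on $n$. Since the formula $\nu_p(\FactD{1^{(n)}}) = \sum_{i=1}^n \nu_p(i)$ is an equation of $\PV$-terms, hence $\Sigma^b_0$, this induction is available in $\PV_1$ (equivalently, one may view it as polynomial induction on the value $1^{(n)}$).

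The base case $n = 0$ is immediate: $1^{(0)} = 0$, so $\FactD{1^{(0)}} = \FactD{0} = 1$ and $\nu_p(1) = 0$, matching the empty sum on the right.

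For the inductive step with $n \geq 1$, I would use the identities $\half{1^{(n)}} = 1^{(n-1)}$ and $\abs{1^{(n)}} = n$, both consequences of the BASIC axioms, together with the recursive property $\FactD{x} = \FactD{\half{x}} \cdot \abs{x}$, to conclude $\FactD{1^{(n)}} = \FactD{1^{(n-1)}} \cdot n$. Both factors are positive, so Lemma~\ref{lemmanumult} gives $\nu_p(\FactD{1^{(n)}}) = \nu_p(\FactD{1^{(n-1)}}) + \nu_p(n)$, and applying the induction hypothesis together with the recursive definition of the bounded sum yields $\sum_{i=1}^{n-1}\nu_p(i) + \nu_p(n) = \sum_{i=1}^n \nu_p(i)$, closing the step.

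There is no significant obstacle here: the argument is a direct induction once Lemma~\ref{lemmanumult} and the recursion for $\FactD$ are in hand. The only points requiring minor bookkeeping are verifying the identities $\half{1^{(n)}} = 1^{(n-1)}$ and $\abs{1^{(n)}} = n$ from the BASIC axioms, and confirming that $\FactD{1^{(n-1)}}$ and $n$ are strictly positive (the latter following from $n\geq 1$, the former by a trivial side induction showing $\FactD{x} \geq 1$) so that the hypothesis of Lemma~\ref{lemmanumult} is satisfied.
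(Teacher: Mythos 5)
Your proof is correct and follows essentially the same route as the paper: induction on $n$, using the recursion $\FactD{1^{(n)}} = \FactD{1^{(n-1)}}\cdot n$ together with the multiplicativity of $\nu_p$ from Lemma~\ref{lemmanumult}. The extra bookkeeping you supply (the $\Sigma^b_0$-IND justification, the identities $\half{1^{(n)}} = 1^{(n-1)}$, $\abs{1^{(n)}} = n$, and positivity of the factors) is sound and simply makes explicit what the paper leaves implicit.
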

\begin{proof}
    By induction on $n$. The case $n=0$ is obvious.
    
    Assume the statement holds for $n$, then by Lemma~\ref{lemmanumult}:
    \begin{align*}
        \nu_p(\Fact{1^{(n+1)}}) &= \nu_p(\Fact{1^{(n)}}\cdot (n+1))\\
        &= \nu_p(\Fact{1^{(n)}})+\nu_p(n+1)\\
        &= \sum_{i=1}^n \nu_p(i)+\nu_p(n+1)\\
        &= \sum_{i=1}^{n+1} \nu_p(i). \qedhere
    \end{align*}
\end{proof}

\begin{lemma}[$\PV_1$]\label{lemmanusum}
    Let $p$ be a prime and $x$ be a number, then 
    \[\nu_p(x) = \sum_{1\leq j\leq \abs{x},\:p^j\mid x} 1.\]
\end{lemma}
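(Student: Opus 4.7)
The plan is to reduce the summation to a simple counting argument using the characterization of $\nu_p(x)$ that was already established in the proof of Lemma~\ref{lemmaprimepart}, namely the equivalence
\[
(*)\qquad (\forall i)\bigl(i \leq \nu_p(x) \leftrightarrow p^i \mid x\bigr).
\]
Granted $(*)$, the predicate ``$p^j \mid x$'' that controls the summation is the same, inside the range $1 \leq j \leq |x|$, as the predicate ``$j \leq \nu_p(x)$''. Hence the set of indices contributing $1$ to the sum is exactly $\{j : 1 \leq j \leq \min(|x|,\nu_p(x))\}$, and we are left to identify this with $\{1,\dots,\nu_p(x)\}$.

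The edge case $x=0$ is immediate: $|0|=0$ makes the sum empty, and $\nu_p(0)=0$ by definition. For $x>0$ the main subtask is to show $\nu_p(x) \leq |x|$. From $(*)$ applied with $i=\nu_p(x)$ we obtain $p^{\nu_p(x)} \mid x$, hence $p^{\nu_p(x)} \leq x$. Since $p \geq 2$, this gives $2^{\nu_p(x)} \leq x$, which by the defining property of $|x|=\lceil \log_2(x+1)\rceil$ yields $\nu_p(x) \leq |x|$. All of these facts are open (indeed $\Sigma^b_0$) and available in $\PV_1$.

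With these pieces in hand, I would formalize the final counting step by a short open induction on $n\leq |x|$ verifying
\[
\sum_{\substack{1\leq j\leq n\\ p^j \mid x}} 1 \;=\; \min(n,\nu_p(x)),
\]
whose induction step uses the equivalence $(*)$ to decide whether the new index $j=n+1$ contributes. Setting $n=|x|$ and using $\nu_p(x)\leq |x|$ yields the lemma. The sum itself is handled by the iterative $\PV$-symbol for summation over sequences introduced in the preliminaries, and the indicator ``$p^j \mid x$'' for $j \leq |x|$ is evaluated by iterated multiplication of $p$ of logarithmic length.

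I do not expect a real obstacle here: the only nontrivial ingredient is $(*)$, which has already been extracted during the proof of Lemma~\ref{lemmaprimepart}, and the bound $\nu_p(x)\leq |x|$ is a one-line consequence of $p\geq 2$. The formalization effort is limited to writing the counting induction as an open formula and checking that the auxiliary $\PV$-symbol summing the indicator does compute the stated iterated sum.
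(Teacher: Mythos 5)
Your proof is correct and takes a route that is close to but genuinely distinct from the paper's. Both arguments ultimately rest on Lemma~\ref{lemmaprimepart} and a short open induction, but the decompositions differ. The paper writes $x = x'p^{\nu_p(x)}$ with $p\nmid x'$, then inducts on the auxiliary quantity $i$ with the invariant $\nu_p(x'p^i) = \sum_{1\leq j\leq i,\,p^j\mid x}1$, instantiating at $i=\nu_p(x)$; the work of the induction step is delegated to $\nu_p(x'p^{i+1}) = \nu_p(x'p^i)+1$, i.e.\ to Lemma~\ref{lemmaprimeortho}. You instead extract the equivalence $(*)\colon i\leq\nu_p(x)\leftrightarrow p^i\mid x$ established inside the proof of Lemma~\ref{lemmaprimepart}, and then induct directly on the upper limit $n$ of the sum, with invariant $\sum_{1\leq j\leq n,\,p^j\mid x}1 = \min(n,\nu_p(x))$. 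The practical advantage of your version is that it makes the extension from the summation bound $\nu_p(x)$ to $|x|$ fully explicit: you separately prove $\nu_p(x)\leq|x|$ (using $p\geq 2$ and $p^{\nu_p(x)}\mid x$), and your induction formula is already parametrized by $n$ so you just substitute $n=|x|$. The paper's write-up is terser on this last step — after reaching $i=\nu_p(x)$ it still implicitly needs the ``only if'' direction of $(*)$ to argue that indices $j\in(\nu_p(x),|x|]$ contribute nothing to the sum — so your plan is, if anything, a bit more self-contained. Both are fine for $\PV_1$ since all the relevant formulas are open.
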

\begin{proof}
    By Lemma~\ref{lemmaprimepart} there is $x'$ such that $x=x'p^{\nu_p(x)}$. We will proceed by induction on the formula
    \[\nu_p(x'p^i) = \sum_{1\leq j \leq i;\:p^j \mid x} 1.\]

    The case where $i=0$ is trivial. Assume that the statement holds for $i$. Then,
    $\nu_p(x'p^{i+1}) = \nu_p(x'p^i)+1=i+1,$ which completes the induction step.
    The statement of the lemma is obtained by taking $i=\nu_p(x)$.
\end{proof}

\begin{lemma}[$\PV_1$]\label{lemmadivsum}
    Let $1^{(a)}$ and $b$ be numbers, and $b\neq 0$. Then
    \[\sum_{i=1,b\mid i}^a 1 = \floor{a/b}.\]
\end{lemma}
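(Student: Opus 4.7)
The plan is to prove the equality by induction on $a$. Since $1^{(a)}$ is given, the number $a$ is of logarithmic size, so the sum $\sum_{i=1, b\mid i}^{a} 1$ is defined by the obvious iterative $\PV$-algorithm: it returns $0$ when $a=0$ and otherwise adds $[b\mid a]$ to its value at $a-1$, where $[b\mid a]$ denotes the characteristic function of divisibility. Writing $S(a)$ for this value, the recursive property
\[
S(0)=0, \qquad S(a+1)=S(a)+[b\mid a+1],
\]
is provable in $\PV_1$ directly from the definition. The formula asserting $S(a)=\floor{a/b}$ is sharply bounded (with $b$, and the other parameters, as free variables), so $\Sigma^b_0$-IND on $a$ is available in $\PV_1$.

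For the base case $a=0$, both sides are $0$. For the inductive step, assume $S(a)=\floor{a/b}$. Then by the recursive property,
\[
S(a+1)=\floor{a/b}+[b\mid a+1],
\]
so it suffices to show the arithmetic identity
\[
\floor{(a+1)/b}=\floor{a/b}+[b\mid a+1].
\]
This is the key auxiliary step, and it can be proved in $\PV_1$ by writing $a=qb+r$ with $0\leq r<b$, which is justified by the $\PV$-symbol for division with remainder. Then $a+1=qb+(r+1)$, and one splits into the two cases: if $r+1<b$, then $b\nmid a+1$ and $\floor{(a+1)/b}=q=\floor{a/b}$, so both sides agree; if $r+1=b$, then $b\mid a+1$, $\floor{(a+1)/b}=q+1$, and $\floor{a/b}+1=q+1$, so again both sides agree. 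These are straightforward consequences of the basic properties of the floor division and remainder symbols provable in $\PV_1$.

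I do not expect any serious obstacle here: the only subtlety is making sure the sum $\sum_{i=1,b\mid i}^{a}1$ is read as the intended iterative $\PV$-symbol so that its recursive unfolding is literally provable, and that the induction formula is sharply bounded. Both conditions hold since $a$ is presented in unary and the equality of two $\PV$-terms is an open formula.
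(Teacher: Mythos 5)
Your proof is correct and follows essentially the same route as the paper: induction on $a$ with a case split on whether $b \mid a+1$. You add a bit more explicit detail (writing $a=qb+r$ and justifying the induction schema's availability in $\PV_1$), but the core argument is identical to the paper's.
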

\begin{proof}
     By induction on $a$. The case where $a=0$ is trivial.

     Assume that 
     \[\sum_{i=1,b\mid i}^a 1 = \floor{a/b},\]
     if $b\mid a+1$, then 
     \[\sum_{i=1,b\mid i}^{a+1} 1 = 1+ \sum_{i=1,b\mid i}^{a} 1 = 1+\floor{a/b} =  \floor{(a+1)/b},\]
     and if $b\nmid a+1$, then
     \[\sum_{i=1,b\mid i}^{a+1} 1 = \sum_{i=1,b\mid i}^{a} 1 =\floor{a/b} =  \floor{(a+1)/b}. \qedhere\]
     
\end{proof}

We are now ready to prove Legendre's formula in $\PV_1$. Our proof is a direct adaptation of a standard proof.

\begin{theorem}[$\PV_1$, Legendre's formula]\label{thrmlegendre} 
    For every $1^{(n)}$ and a prime $p$ we have
    \[\nu_p(\Fact{1^{(n)}}) = \sum_{i=1}^\abs{n} \div{n}{p^i}.\]
\end{theorem}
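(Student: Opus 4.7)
The plan is to combine the three Lemmas~\ref{lemmasplitsum}, \ref{lemmanusum}, and~\ref{lemmadivsum} via a swap of the order of summation. Starting from Lemma~\ref{lemmasplitsum}, I would rewrite
\[
\nu_p(\Fact{1^{(n)}}) = \sum_{i=1}^n \nu_p(i),
\]
and then expand each inner $\nu_p(i)$ using Lemma~\ref{lemmanusum} to obtain the double sum
\[
\sum_{i=1}^n \sum_{\substack{1\leq j\leq \abs{i}\\ p^j \mid i}} 1.
\]
The target expression $\sum_{j=1}^{\abs{n}} \floor{n/p^j}$ can, via Lemma~\ref{lemmadivsum}, be rewritten as
\[
\sum_{j=1}^{\abs{n}} \sum_{\substack{1\leq i\leq n\\ p^j\mid i}} 1,
\]
so the whole proof reduces to swapping the outer and inner summations and justifying that the upper bound $\abs{i}$ can be replaced by the uniform bound $\abs{n}$.

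The bound replacement is a short observation: for $j>\abs{i}$ we have $p^j\geq 2^j > 2^{\abs{i}}> i$, so $p^j \nmid i$ and the condition of the inner sum fails; thus extending the upper limit from $\abs{i}$ to $\abs{n}$ adds only zero terms. After this observation, both sides of the claimed identity can be represented by the same $\Sigma^b_0$-definable double sum over pairs $(i,j)$ with $1\leq i\leq n$, $1\leq j \leq \abs{n}$ and $p^j\mid i$.

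The main step is the swap itself. I would prove a general auxiliary statement in $\PV_1$: for any $\PV$-function $a(i,j)$ and any $1^{(m)}$,
\[
\sum_{i=1}^n \sum_{j=1}^m a(i,j) \;=\; \sum_{j=1}^m \sum_{i=1}^n a(i,j),
\]
by open induction on $n$ (both iterated sums are defined by the standard $\PV$-iterative algorithm, so this equation is $\Sigma^b_0$ and hence accessible by $\PV_1$-induction). Instantiating this with $a(i,j)$ being the characteristic function of the condition ``$1\leq j \leq \abs{n} \land p^j \mid i$'' and stringing the three rewrites together yields the formula.

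The only real obstacle is bookkeeping: making sure the summation bounds match and that the swap lemma is indeed formalizable in $\PV_1$, which requires the sums to range over a polynomially bounded rectangle so that both sides are given by $\PV$-symbols computable in polynomial time. Since $\abs{n}$ is of logarithmic size, the rectangle $\{1,\dots,n\}\times\{1,\dots,\abs{n}\}$ has size polynomial in $n$, so the iterative evaluation of both sides is indeed a $\PV$-computation and the swap goes through by sharply bounded induction. No further arithmetic insight is required.
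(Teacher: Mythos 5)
Your proposal follows exactly the same route as the paper's proof: apply Lemma~\ref{lemmasplitsum} to get $\sum_{i=1}^n \nu_p(i)$, expand each $\nu_p(i)$ via Lemma~\ref{lemmanusum}, swap the order of summation, and close off with Lemma~\ref{lemmadivsum}. The only difference is that you spell out two steps the paper's one-line chain of equalities leaves implicit — the replacement of the inner bound $\abs{i}$ by the uniform bound $\abs{n}$, and the $\PV_1$-formalizability of the finite-sum interchange — both of which you justify correctly.
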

\begin{proof}
    By direct computation:
    \begin{align*}
        \nu_p(\Fact{1^{(n)}})&=\sum_{j=1}^n \nu_p(j)\tag{A}\\
                             &=\sum_{j=1}^n \sum_{i,p^i\mid j}1\tag{B}\\
                             &= \sum_{i=1}^{\abs{n}}\sum_{1\leq j\leq n; p^i \mid j} 1\\
                             &= \sum_{i=1}^{\abs{n}} \floor{n/p^i},\tag{C}
    \end{align*}
    where (A) follows from Lemma~\ref{lemmasplitsum}, (B) follows from Lemma~\ref{lemmanusum} and (C) follows from Lemma~\ref{lemmadivsum}.
\end{proof}

\subsection{Lemma C}\label{subseclemmac}

\begin{lemma}[$S^1_2$]\label{lemmaprimedivisor}
    For every $n\geq 2$ there is a prime $p\leq n$ such that $p\mid n$.
\end{lemma}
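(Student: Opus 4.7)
The plan is to take $p$ to be the least $d$ with $1 < d \leq n$ and $d \mid n$, and then argue that any such minimal $d$ must be prime. Existence of a witness is immediate, since $d = n$ itself satisfies both conditions. The defining formula $\varphi(d) := (1 < d \land d \mid n)$ is open in the language of $\PV$ (divisibility is expressible via the $\PV$-symbol for the remainder), so it is $\Sigma^b_0$, and the corresponding bounded least number principle is derivable from $\Sigma^b_0$-IND. Since $\PV_1$ proves $\Sigma^b_0$-IND, this minimization is in particular available in $S^1_2$, giving the desired $p \leq n$.

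Having fixed $p$, I would next verify $\Prime(p)$ directly from its $\Pi^b_1$-definition. Since $p > 1$ holds by construction, what remains is to establish $(\forall y \leq p)(y \mid p \to y = 1 \lor y = p)$, which I would argue by contradiction: suppose some $y \leq p$ satisfies $y \mid p$ together with $y \neq 1$ and $y \neq p$. First, $y \neq 0$, since $0 \mid p$ would force $p = 0$, contradicting $p > 1$; hence $y \geq 2$ and $y < p$. Transitivity of divisibility, a routine $\PV_1$-fact obtained by multiplying the two witnessing quotients, combined with $p \mid n$ then yields $y \mid n$. Consequently $y$ satisfies $\varphi(y)$ with $y < p$, contradicting the minimality of $p$, so $p$ is prime.

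I do not anticipate a genuine obstacle here; the only point requiring a small amount of care is invoking the right minimization principle. Since $\varphi$ is $\Sigma^b_0$, the full strength of $\Sigma^b_1$-PIND or $\Sigma^b_1$-LMIN is not needed, and in fact the argument already goes through in $\PV_1$. The lemma is presumably phrased over $S^1_2$ simply to align with its intended use in Lemma~\ref{lemmapolyfact} and later applications to full prime factorization, where genuinely stronger induction is required.
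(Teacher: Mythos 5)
The crux of your argument is the claim that the least number principle for the formula $\varphi(d) := (1 < d \land d \mid n)$ follows from $\Sigma^b_0$-IND and is hence available already in $\PV_1$. This step does not go through. The standard derivation of a minimization principle from induction proceeds by inducting on the cumulative formula $\psi(x) := (\forall y < x)\,\lnot\varphi(y)$, but even when $\varphi$ is open in $\PV$, the quantifier $\forall y < x$ is bounded rather than sharply bounded, so $\psi$ is $\Pi^b_1$, not $\Sigma^b_0$. Thus $\Sigma^b_0$-IND gives you no handle on it. Ordinary minimization for open $\PV$-formulas lives at the level of $T^1_2$, not $S^1_2$, and there is no polynomial-time procedure locating the smallest nontrivial divisor of $n$ (that would be factoring), so neither $\PV_1$ nor $S^1_2$ can be expected to yield the genuine minimum here. (By contrast, the paper's later invocation of a ``least number principle for open $\PV$-formulas'' in Lemma~\ref{lemmabinomdivlcm} is for a \emph{monotone} predicate over a \emph{sharply bounded} range, where minimization is trivially poly-time; neither feature is present for your $\varphi$.)

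The paper instead uses $\Sigma^b_1$-LMIN, one of the defining axiom schemes of $S^1_2$, to obtain a divisor $m$ of $n$ with $1 < m$ whose \emph{length} $|m|$ is minimal among nontrivial divisors. The rest of your argument then needs one extra observation that you omit (because ordinary minimality made it unnecessary): if $y$ is a proper nontrivial divisor of $m$, then $m/y \geq 2$, so $y \leq \lfloor m/2\rfloor$ and hence $|y| < |m|$. With that in hand, $y$ is a nontrivial divisor of $n$ of strictly smaller length, contradicting length-minimality, so $m$ is prime. So the fix is small — replace ``least'' with ``length-least'' and add the half-length observation — but as written your proof invokes a minimization principle that is genuinely beyond $S^1_2$, and your closing remark that the argument already works in $\PV_1$ is incorrect for the same reason.
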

\begin{proof}
    By $\sblm$ we have
    \[(\forall n)(\exists m\leq n )(\forall m' \leq n)((1<n \land \abs{m'} < \abs{m}) \to (1< m \land m \mid n \land m'  \nmid n)).\]
    
    Let $n>1$ be a number, then we obtain $m$ by the above formula. If $m$ is not a prime, then there is $m'\mid m$, $1<m'$ and $\abs{m'}<\abs{m}$, which is in contradiction with the formula.
\end{proof}

\begin{lemma}[$S^1_2$]\label{lemmavaldiv}
    For every $n$ and $m$: if every prime $p \leq n$ satisfies that $\nu_p(n)\leq \nu_p(m)$, then $n\mid m$.
\end{lemma}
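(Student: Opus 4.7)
The plan is to prove this lemma by length induction on $n$, carried out as a minimum-counterexample argument within one of the equivalent $\Sigma^b_1$-induction schemes of $S^1_2$. Fixing $m$ as a parameter, the formula
\[\psi_m(n) \equiv (\forall p \leq n)(\Prime(p) \to \nu_p(n) \leq \nu_p(m)) \to n \mid m\]
has a $\Pi^b_1$ hypothesis and a $\Sigma^b_0$ conclusion, so it lives in the $\Sigma^b_1$ class for which such induction is available.

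The base case $n = 1$ is immediate, since $1 \mid m$. For the step, take $n > 1$ and assume $\psi_m(n'')$ for every $n''$ with $\abs{n''} < \abs{n}$. By Lemma~\ref{lemmaprimedivisor} fix a prime $p \leq n$ with $p \mid n$, set $k := \nu_p(n) \geq 1$, and use Lemma~\ref{lemmaprimepart} to factor $n = n' \cdot p^k$ with $p \nmid n'$. Since $p \geq 2$ and $k \geq 1$, we have $\abs{n'} < \abs{n}$. The valuation assumption descends to $n'$: for a prime $q \leq n'$ one has $q \leq n$, so $\nu_q(n) \leq \nu_q(m)$; if $q = p$ then $\nu_p(n') = 0 \leq \nu_p(m)$ directly, while if $q \neq p$ then Lemmas~\ref{lemmaprimeortho} and~\ref{lemmanumult} give $\nu_q(n) = \nu_q(n')$, hence $\nu_q(n') \leq \nu_q(m)$. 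The inductive hypothesis then yields $n' \mid m$, while $p^k \mid m$ follows directly from $\nu_p(n) \leq \nu_p(m)$ together with Lemma~\ref{lemmaprimepart} applied to $m$.

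Finally, combine $n' \mid m$ and $p^k \mid m$ via coprimality. Since $p$ is prime and $p \nmid n'$, $\gcd(n', p) = 1$, and a short induction on $k$ (using a number analog of Lemma~\ref{lemmacoprimefactor} available already in $\PV_1$) upgrades this to $\gcd(n', p^k) = 1$. Lemma~\ref{lemmapveuclid} then supplies $u, v$ with $u n' + v p^k = 1$, and writing $m = n' a = p^k b$ one computes
\[m = m (u n' + v p^k) = u n' p^k b + v p^k n' a = n (u b + v a),\]
so $n \mid m$, completing the induction. The main care required is to verify the auxiliary coprime-to-power fact and the Bezout-combining step already in $\PV_1$, so that $S^1_2$ is invoked only through the length induction itself and through the appeal to Lemma~\ref{lemmaprimedivisor}.
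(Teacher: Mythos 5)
Your underlying argument (split off one prime power at a time and combine via B\'ezout) is mathematically sound, but the complexity bookkeeping for the induction formula is wrong, and this is a real gap, not a cosmetic one. You assert that $\psi_m(n)\equiv(\forall p\le n)(\Prime(p)\to\nu_p(n)\le\nu_p(m))\to n\mid m$ has ``a $\Pi^b_1$ hypothesis,'' but $\Prime(p)$ is itself $\Pi^b_1$, so $\neg\Prime(p)$ is $\Sigma^b_1$ and the body $\Prime(p)\to\nu_p(n)\le\nu_p(m)$ is $\Sigma^b_1$; prefixing it by the bounded universal quantifier $(\forall p\le n)$ then lands in $\Pi^b_2$, not $\Pi^b_1$. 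Hence $\psi_m$ is $\Sigma^b_2$ and $\neg\psi_m$ is $\Pi^b_2$, and neither falls under the $\Sigma^b_1$-PIND/$\sblm$ schemes of $S^1_2$; there is no collapse, since witnessing $\neg\Prime(p)$ amounts to exhibiting a proper divisor of $p$. Inducting on $\psi_m$ as written is therefore an $S^2_2$-level step, which the lemma's stated theory does not supply.

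The argument can be repaired by moving the lemma's hypothesis out of the induction formula: fix $n,m$ satisfying the prime-valuation hypothesis, suppose $n\nmid m$, and apply $\sblm$ to the $\Sigma^b_0(\PV)$-formula $\chi(d)\equiv d>0\land d\mid n\land d\nmid m$ to obtain a length-minimal $d$. Then $d>1$, Lemma~\ref{lemmaprimedivisor} yields a prime $p\mid d$, and Lemma~\ref{lemmaprimepart} factors $d=d'p^k$ with $p\nmid d'$ and $k=\nu_p(d)\ge1$, so $|d'|<|d|$. Since $d\mid n$, Lemma~\ref{lemmanumult} gives $\nu_p(d)\le\nu_p(n)\le\nu_p(m)$, hence $p^k\mid m$ via Lemma~\ref{lemmaprimepart}; minimality gives $d'\mid m$; and your B\'ezout step combines these into $d\mid m$, a contradiction. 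This keeps everything inside $\Sigma^b_0/\Sigma^b_1$. It is worth noting that the paper sidesteps the issue entirely: it sets $n'=n/\gcd(m,n)$, finds a prime $p\mid n'$ when $n'>1$ by Lemma~\ref{lemmaprimedivisor}, and derives $\nu_p(n)>\nu_p(m)$ directly from Lemma~\ref{lemmanumult} and the universal property of $\gcd$, so the only minimization needed is already packaged inside the $\PV$-symbol for $\gcd$ and no B\'ezout combination is required.
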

\begin{proof}
    Assume that $n$ and $m$ satisfy that $\nu_p(n)\leq \nu_p(m)$ for every prime $p \leq n$. Let $n'=\floor{n/\gcd(m,n)}$ and $m' = \floor{m/\gcd(m,n)}$. If $n'=1$ then $n\mid m$, and we are done. We now prove that $n'>1$ is impossible.
    
    Assume for contradiction that $n' > 1$, therefore by Lemma~\ref{lemmaprimedivisor} there is a prime $p$ such that $p\mid n'$ and therefore $p \mid n$. If $\nu_p(m')>0$, then $\gcd(m,n)\cdot p$ is a common divisor of both $m$ and $n$, a contradiction with $\gcd(m,n)$ being the greatest common divisor. Therefore, we obtain that $\nu_p(m')=0$. 
    
    Let $k=\nu_p(\gcd(m,n))$. Then by Lemma~\ref{lemmanumult}, 
    \begin{align*}
        \nu_p(n) = \nu_p(n' \cdot \gcd(m,n)) \geq  k + 1
                                             >k + 0 = \nu_p(m'\cdot \gcd(m,n)) = \nu_p(m)
    \end{align*}
     a contradiction.
\end{proof}


        %
            %

\begin{lemma}[$S^1_2$]\label{lemmabinomdiv}
    For every $1^{(n)}$ we have
    that $(\Fact{1^{(n)}})^2$ divides $\Fact{1^{(2n)}}$.
\end{lemma}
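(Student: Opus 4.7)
The plan is to apply Legendre's formula (Theorem~\ref{thrmlegendre}) together with Lemma~\ref{lemmavaldiv} to reduce the divisibility claim to a term-by-term inequality about the floor function. Concretely, to show $(\Fact{1^{(n)}})^2 \mid \Fact{1^{(2n)}}$, by Lemma~\ref{lemmavaldiv} it suffices to prove that for every prime $p$,
\[
    \nu_p\big((\Fact{1^{(n)}})^2\big) \leq \nu_p\big(\Fact{1^{(2n)}}\big).
\]
By Lemma~\ref{lemmanumult} the left-hand side equals $2\nu_p(\Fact{1^{(n)}})$, and by Theorem~\ref{thrmlegendre} both sides can be written as sums,
\[
    2\nu_p(\Fact{1^{(n)}}) = 2\sum_{i=1}^{\abs{n}} \floor{n/p^i}, \qquad \nu_p(\Fact{1^{(2n)}}) = \sum_{i=1}^{\abs{2n}} \floor{2n/p^i}.
\]

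The core step is then the elementary arithmetic inequality
\[
    2\floor{n/p^i} \leq \floor{2n/p^i},
\]
provable in $\PV_1$ by writing $n = q p^i + s$ with $0 \leq s < p^i$ and computing directly: $2n = 2q p^i + 2s$, hence $\floor{2n/p^i} = 2q + \floor{2s/p^i} \geq 2q = 2\floor{n/p^i}$. I would first establish this single-term inequality, then lift it by a routine induction on $\abs{n}$ (using the recursive definition of iterated sums) to get $2\sum_{i=1}^{\abs{n}} \floor{n/p^i} \leq \sum_{i=1}^{\abs{n}} \floor{2n/p^i}$. Finally, since each $\floor{2n/p^i} \geq 0$, we can extend the sum on the right to $\abs{2n}$ terms (which is either $\abs{n}$ or $\abs{n}+1$) without decreasing its value.

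Putting the pieces together yields $\nu_p((\Fact{1^{(n)}})^2) \leq \nu_p(\Fact{1^{(2n)}})$ for every prime $p$, and then Lemma~\ref{lemmavaldiv} gives the desired divisibility. I expect the only mildly delicate point to be verifying the hypothesis of Lemma~\ref{lemmavaldiv} for \emph{all} primes $p \leq (\Fact{1^{(n)}})^2$ rather than just $p \leq 2n$; this is handled by noting that for $p > 2n$ each factor in $\Fact{1^{(n)}}$ and $\Fact{1^{(2n)}}$ is strictly smaller than $p$, so Lemma~\ref{lemmaprimeortho} (applied repeatedly, or directly the inductive definition of $\Fact{\cdot}$) gives $\nu_p(\Fact{1^{(n)}}) = 0$, and hence the inequality is trivial. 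The use of $S^1_2$ rather than $\PV_1$ is needed only to invoke Lemmas~\ref{lemmaprimedivisor} and~\ref{lemmavaldiv}; the remaining reasoning is polynomial-time.
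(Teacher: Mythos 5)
Your proposal is correct and follows essentially the same approach as the paper: both invoke Lemma~\ref{lemmanumult} and Theorem~\ref{thrmlegendre} to reduce the claim to the term-wise inequality $2\floor{n/p^i} \leq \floor{2n/p^i}$, sum over $i$, and conclude via Lemma~\ref{lemmavaldiv}. The one small redundancy is your separate treatment of primes $p > 2n$; Legendre's formula already yields $\nu_p(\Fact{1^{(n)}}) = \sum_{i=1}^{\abs{n}}\floor{n/p^i} = 0$ for any $p > n$, so the term-wise argument already covers every prime without a case split.
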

\begin{proof}
    Let $p$ be a prime. By Lemma~\ref{lemmanumult} and Theorem~\ref{thrmlegendre} we have \[\nu_p(\Fact{1^{(n)}}^2)=2\cdot\sum_{i=1}^\abs{n} \floor{n/p^i}.\]
    Again by Theorem~\ref{thrmlegendre}
    \begin{align*}
        \nu_p(\Fact{1^{(2n)}})&=\sum_{i=1}^{\abs{2n}}\floor{2n/p^i}\\
        &\geq\sum_{i=1}^{\abs{2n}}2\cdot\floor{n/p^i}\\
        &\geq \nu_p(\Fact{1^{(n)}}^2),
    \end{align*}
    which by Lemma~\ref{lemmavaldiv} gives $\Fact{1^{(n)}}^2\mid \Fact{1^{(2n)}}$.
\end{proof}

The following Lemma defines order for multiplicative groups whose universe is bounded by a length. The restriction on the size makes all relevant argument easily formalizable in $\PV_1$ and thus we omit its proof.

\begin{lemma}\label{lemmalcmdefinition}
    There is a $\PV$-symbol $\lcm(\langle x_1,\dots,x_m\rangle )$, such that $\PV_1$ proves 
    \[(\forall i\le m) \  (x_i \mid \lcm(\langle x_1,\dots,x_m \rangle))\]
    and
    \[(\forall y)((\forall i \leq m)( x_i \mid y) \to \lcm(\langle x_1,\dots,x_m \rangle ) \mid y).\]
\end{lemma}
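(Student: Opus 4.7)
The plan is to build the iterated $\lcm$ from a binary auxiliary function. I would first introduce a $\PV$-symbol $\lcm_2(a,b) := (a\cdot b)/\gcd(a,b)$ (with the convention $\lcm_2(a,0)=\lcm_2(0,b)=0$), which is well-defined because $\gcd$ is a $\PV$-symbol by Lemma~\ref{lemmapveuclid} and integer division is available in $\PV_1$. In $\PV_1$, I would then prove the two analogous properties for $\lcm_2$: (i) $a\mid\lcm_2(a,b)$ and $b\mid\lcm_2(a,b)$, which is immediate from $\gcd(a,b)\mid a$ and $\gcd(a,b)\mid b$; and (ii) if $a\mid c$ and $b\mid c$ then $\lcm_2(a,b)\mid c$. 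For (ii) I would invoke $\xgcd(a,b)=(g,u,v)$ so that $ua+vb=g$, multiply by $c$ to get $cg = cua+cvb$, and substitute $c=a\alpha=b\beta$ (where $\alpha,\beta$ exist by assumption) to obtain $cg = ab(u\beta + v\alpha)$, which shows $ab\mid cg$, i.e.\ $\lcm_2(a,b)=ab/g$ divides $c$.

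Next I would define the iterated $\lcm$ on a sequence by the recursion $\lcm(\langle\rangle)=1$ and $\lcm(\langle x_1,\dots,x_{k+1}\rangle) = \lcm_2(\lcm(\langle x_1,\dots,x_k\rangle), x_{k+1})$. This is computable by the obvious iterative polynomial-time algorithm which reads off $m$ and the entries from the sequence code and updates a running value, so it is a $\PV$-symbol; its output size is bounded by $\sum_i |x_i|$, which is polynomial in the input length.

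The two claimed properties then follow by sharply bounded induction on the length $k\leq m$ of the prefix processed so far, with the binary facts (i) and (ii) supplying the induction step. For the first property, (i) ensures that at each step both the running $\lcm$ and the new entry $x_{k+1}$ divide the updated value, and combining with the induction hypothesis gives that all prior $x_i$ still divide it via transitivity of $\mid$. For the second property, if $y$ is a common multiple of all $x_1,\dots,x_{k+1}$, the induction hypothesis gives $\lcm(\langle x_1,\dots,x_k\rangle)\mid y$, and (ii) applied to this running value together with $x_{k+1}\mid y$ yields the desired divisibility of the updated $\lcm$. Both induction formulas are $\Sigma^b_0$ once the $\PV$-symbols are unfolded, and the induction variable ranges over lengths $\leq m$, which is of logarithmic size in the sequence code, so the induction is available in $\PV_1$.

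I do not expect any serious obstacle: the argument is elementary and the only care required is verifying the polynomial bound on the output so that $\lcm$ qualifies as a $\PV$-symbol, and confirming that the relevant induction is on a length (and thus sharply bounded), so that $\Sigma^b_0$-\textrm{IND} in $\PV_1$ suffices.
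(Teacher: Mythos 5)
Your proposal is correct and follows essentially the same approach as the paper: the paper's (sketch) proof also defines the iterated $\lcm$ by the recursion $\lcm(\langle x_1,\dots,x_k\rangle)=\floor{\lcm(\langle x_1,\dots,x_{k-1}\rangle)\cdot x_k/\gcd(\lcm(\langle x_1,\dots,x_{k-1}\rangle),x_k)}$ with base case $1$ and a special case $0$ when some $x_i=0$. You merely factor out the binary step as $\lcm_2$ and spell out the two divisibility properties of $\lcm_2$ and the $\Sigma^b_0$ length induction, which the paper leaves implicit.
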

\begin{proof}
    (Sketch) We can see that by putting
    \[\lcm(\langle x_1,\dots,x_m\rangle ) = \begin{cases}
        1 & m=0,\\
        0 & \exists 1\leq i \leq  m: x_i=0,\\
        \floor{\frac{\lcm(\langle x_1,\dots, x_{m-1}\rangle )\cdot x_m }{\gcd(\lcm(\langle x_1,\dots, x_{m-1}\rangle) ,x_m)}} & \text{otherwise,}
    \end{cases}\]
    the required properties are satisfied.
\end{proof}

            \begin{lemma}[$S^1_2$]\label{lemmabinomdivlcm}
                For every $1^{(n)}$, we have
                \[\floor{\Fact{1^{(2n)}}/\Fact{1^{(n)}}^2}\mid \lcm(1,\dots,2n).\]
            \end{lemma}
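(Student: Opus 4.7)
The plan is, for each prime $p$, to bound $\nu_p(B) \leq k_p$ and $\nu_p(\lcm(1,\ldots,2n)) \geq k_p$, where $B := \floor{\Fact{1^{(2n)}}/\Fact{1^{(n)}}^2}$ and $k_p := \max\{k : p^k \leq 2n\}$, and then invoke Lemma~\ref{lemmavaldiv} with $B$ in the role of the divisor.

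By Lemma~\ref{lemmabinomdiv} we have $\Fact{1^{(2n)}} = B \cdot \Fact{1^{(n)}}^2$, so Lemma~\ref{lemmanumult} gives
\[ \nu_p(B) = \nu_p(\Fact{1^{(2n)}}) - 2\nu_p(\Fact{1^{(n)}}). \]
Applying Theorem~\ref{thrmlegendre} (Legendre's formula) to both terms and padding the sum for $\Fact{1^{(n)}}$ with zero summands up to index $\abs{2n}$ (valid since $\floor{n/p^i}=0$ when $p^i > n$) yields
\[ \nu_p(B) = \sum_{i=1}^{\abs{2n}}\left(\floor{2n/p^i} - 2\floor{n/p^i}\right). \]
A direct case analysis on the remainder $r$ of $n$ modulo $p^i$ (distinguishing $2r < p^i$ from $p^i \leq 2r < 2p^i$) shows each summand lies in $\{0,1\}$, and moreover the summand vanishes whenever $p^i > 2n$. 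Hence $\nu_p(B) \leq k_p$.

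For the matching lower bound, since $p^{k_p} \in \{1,\ldots,2n\}$, Lemma~\ref{lemmalcmdefinition} gives $p^{k_p} \mid \lcm(1,\ldots,2n)$, and the equivalence $(\forall i)(i \leq \nu_p(x) \leftrightarrow p^i \mid x)$ established in the proof of Lemma~\ref{lemmaprimepart} then yields $\nu_p(\lcm(1,\ldots,2n)) \geq k_p$. Combining the two bounds gives $\nu_p(B) \leq \nu_p(\lcm(1,\ldots,2n))$ for every prime $p \leq 2n$. For primes $p$ with $2n < p \leq B$, no element of $\{1,\ldots,2n\}$ is divisible by $p$, so $\nu_p(\Fact{1^{(2n)}}) = 0$, forcing $\nu_p(B) = 0$ and making the inequality trivial. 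An application of Lemma~\ref{lemmavaldiv} then concludes $B \mid \lcm(1,\ldots,2n)$.

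The main bookkeeping obstacle is verifying formally in $S^1_2$ the two elementary arithmetic facts used above: that each summand $\floor{2n/p^i} - 2\floor{n/p^i}$ is at most $1$, and that the count of indices $i \in \{1,\ldots,\abs{2n}\}$ with $p^i \leq 2n$ equals $k_p$. Both go through by sharply bounded induction, but require careful unpacking of the definitions of $\nu_p$, $\Fact$, and floor division, together with monotonicity of $p^i$ in $i$.
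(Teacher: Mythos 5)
Your proof is correct and follows essentially the same route as the paper: decompose $\nu_p(B)$ via Lemma~\ref{lemmanumult} and Legendre's formula (Theorem~\ref{thrmlegendre}), bound each summand $\floor{2n/p^i}-2\floor{n/p^i}$ by $1$, observe the sum has at most $k_p$ nonzero terms, match this against $\nu_p(\lcm(1,\dots,2n))\geq k_p$ via Lemma~\ref{lemmalcmdefinition}, and conclude by Lemma~\ref{lemmavaldiv}. The only cosmetic differences are that the paper obtains $k_p$ (called $r$ there) through the least number principle rather than a max, and you explicitly spell out the trivial case of primes $p$ with $2n<p\leq B$, which the paper leaves implicit.
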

            \begin{proof}
                Let $p$ be a prime. By the length minimization principle for open $\PV$-formulas\footnote{This is available in $\PV_1$ by interating over the interval where we are checking for the least length.}, there is a minimal $r$ such that $2n < p^{r+1}$. Notice that $r<\abs{n}+1$ and that $p^r\leq 2n$.
                
                By Lemma \ref{lemmabinomdiv} we have
                \[\Fact{1^{(2n)}}= \floor{\Fact{1^{(2n)}}/\Fact{1^{(n)}}^2}\cdot\Fact{1^{(n)}}^2,\]
                thus by Lemma~\ref{lemmanumult}
                \[\nu_p(\floor{\Fact{1^{(2n)}}/\Fact{1^{(n)}}^2}) = \nu_p(\Fact{1^{(2n)}})- 2 \cdot \nu_p(\Fact{1^{(n)}}).\]
                
                Now by Theorem~\ref{thrmlegendre} we have
                \begin{align*}
                    \nu_p(\floor{\Fact{1^{(2n)}}/\Fact{1^{(n)}}^2}) &= \sum_{i=1}^{\abs{2n}} \floor{2n/p^i} - 2 \sum_{i=1}^{\abs{n}}\floor{n/p^i}\\
                    &= \sum_{i=1}^{\abs{n}+1} \floor{2n/p^i} - 2 \sum_{i=1}^{\abs{n}}\floor{n/p^i} \\
                    &= \sum_{i=1}^{r} \floor{2n/p^i} - 2 \sum_{i=1}^{r}\floor{n/p^i} \tag{*}\\
                    &= \sum_{i=1}^{r} (\floor{2n/p^i} - 2\floor{n/p^i}) \\
                    &\leq r,\tag{\dag}
                \end{align*}
                where (*) follows because $\floor{2n/p^i}=0$ for $i>r$ and $(\dag)$ follows from the fact that $\floor{2n/p^i}-2\floor{n/p^i}\leq 1$ for every $i\geq 0$. By the fact that $p^r\leq 2n$ we have \[\nu_p(\lcm(1,\dots,2n))\geq r\]
                and therefore by Lemma~\ref{lemmavaldiv} the statement follows.
            \end{proof}
            
            \begin{theorem}[$S^1_2$]\label{thrmlcmbound}
                For every $1^{(n)}$ we have
                \[2^n \leq \lcm(1,\dots,2n).\]
            \end{theorem}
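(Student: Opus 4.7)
The plan is to combine Lemma~\ref{lemmabinomdivlcm} with a direct lower bound on the central binomial coefficient. Set $B(k) = \floor{\Fact{1^{(2k)}}/\Fact{1^{(k)}}^2}$; by Lemma~\ref{lemmabinomdiv} the floor here is exact, so $\Fact{1^{(k)}}^2 \cdot B(k) = \Fact{1^{(2k)}}$. Lemma~\ref{lemmabinomdivlcm} gives $B(n) \mid \lcm(1,\dots,2n)$, and since $B(n)$ is a positive divisor of a positive number we get $B(n) \leq \lcm(1,\dots,2n)$. Thus it suffices to prove in $S^1_2$ that $2^n \leq B(n)$.

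I would first establish the integer recurrence
\[
(k+1)^2 \cdot B(k+1) = (2k+1)(2k+2) \cdot B(k)
\]
by substituting $\Fact{1^{(k+1)}} = (k+1)\Fact{1^{(k)}}$ and $\Fact{1^{(2k+2)}} = (2k+1)(2k+2)\Fact{1^{(2k)}}$ into the exact identity $\Fact{1^{(k+1)}}^2 \cdot B(k+1) = \Fact{1^{(2k+2)}}$ and cancelling $\Fact{1^{(k)}}^2$. Since $(2k+1)(2k+2) = 2(k+1)(2k+1) \geq 2(k+1)^2$ (using $2k+1 \geq k+1$), cancelling $(k+1)^2$ from both sides yields $B(k+1) \geq 2 \cdot B(k)$.

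The bound $2^k \leq B(k)$ then follows by induction on $k \leq n$: the base case $k=0$ reads $1 \leq 1$, and the inductive step is $B(k+1) \geq 2 \cdot B(k) \geq 2 \cdot 2^k = 2^{k+1}$. The induction formula is sharply bounded in the parameter $1^{(n)}$, since the values $2^k$ and $B(k)$ for $k \leq n$ are polynomially bounded in $1^{(n)}$, so the required induction is available in $S^1_2$. Instantiating at $k = n$ and combining with the reduction above gives the claim. The principal care needed is to work throughout with the exact integer identity $\Fact{1^{(k)}}^2 \cdot B(k) = \Fact{1^{(2k)}}$ rather than manipulating fractions, so that the recurrence and the resulting inequality are obtained by pure integer manipulations of the relevant $\PV$-symbols available in $S^1_2$.
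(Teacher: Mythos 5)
Your proposal follows the same strategy as the paper: reduce via Lemma~\ref{lemmabinomdivlcm} to the central binomial bound $2^n \leq \floor{\Fact{1^{(2n)}}/\Fact{1^{(n)}}^2}$ and then prove that bound by length induction on $n$ using the multiplicative step $(2n+1)(2n+2)/(n+1)^2 \geq 2$. The only (minor, and arguably cleaner) difference is that you phrase the inductive step as an exact integer recurrence $(k+1)^2 B(k+1) = (2k+1)(2k+2) B(k)$ and then cancel, whereas the paper manipulates the floor-division terms directly; both are correct in $S^1_2$ given Lemma~\ref{lemmabinomdiv}.
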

            \begin{proof}
                By Lemma~\ref{lemmabinomdivlcm} it is enough to prove $2^n \leq \floor{\Fact{1^{(2n)}}/(\Fact{1^{(n)}}^2)}$. We proceed by induction on $n$. For $n=1$ the statement holds as 
                \[2^1 \leq 2!/(1!)^2= 2.\]
                
                Assume the inequality holds for $n$. Then
                \begin{align*}
                    \floor{\Fact{1^{(2(n+1))}}/\Fact{1^{(n+1)}}^2}2&=\floor{\frac{\Fact{1^{(2n)}}(2n+2)(2n+1)}{\Fact{1^{(n)}}^2(n+1)^2}}\\
                    &\geq\floor{\frac{\Fact{1^{(2n)}}(2n+2)}{\Fact{1^{(n)}}^2(n+1)}}\\
                    &=2 \cdot \floor{\Fact{1^{(2n)}}/(\Fact{1^{(n)}}^2)}\\
                    &\geq 2\cdot 2^n = 2^{n+1}. \qedhere
                \end{align*}
            \end{proof}
            
            \begin{corollary}[$S^1_2$, Lemma C]\label{corollarylcmbound}
                For every $1^{(m)}$ we have
                \[2^{\floor{m/2}} \leq \lcm(1,\dots, m).\]
            \end{corollary}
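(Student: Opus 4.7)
The plan is to reduce the statement directly to Theorem \ref{thrmlcmbound} by a case split on the parity of $m$. Setting $n = \flr{m/2}$, we have that either $m = 2n$ or $m = 2n+1$, a dichotomy provable in $\PV_1$ using the basic properties of $\flr{\cdot/2}$ and hence available in $S^1_2$.

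In the even case $m = 2n$, we have $\flr{m/2} = n$ and Theorem \ref{thrmlcmbound} gives $2^{\flr{m/2}} = 2^n \leq \lcm(1,\dots,2n) = \lcm(1,\dots,m)$, which is exactly the desired bound. In the odd case $m = 2n+1$, we still have $\flr{m/2} = n$, and it suffices to show $\lcm(1,\dots,2n) \leq \lcm(1,\dots,2n+1)$ and then chain with Theorem \ref{thrmlcmbound}. For this inequality I would appeal to the universal property of $\lcm$ provided by Lemma \ref{lemmalcmdefinition}: since each $x_i \in \{1,\dots,2n\}$ also occurs in $\{1,\dots,2n+1\}$, the lemma tells us that every such $x_i$ divides $\lcm(1,\dots,2n+1)$, and then the universal property applied to the shorter sequence yields $\lcm(1,\dots,2n) \mid \lcm(1,\dots,2n+1)$. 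Since $\lcm(1,\dots,2n+1) > 0$ (indeed $\geq 1$), divisibility implies the numerical inequality.

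The small base cases $m=0$ and $m=1$, where $\flr{m/2}=0$ and $2^0 = 1 \leq \lcm(\langle\rangle) = 1$ respectively $\leq \lcm(\langle 1\rangle)=1$, are immediate from the definition in Lemma \ref{lemmalcmdefinition} and do not require Theorem \ref{thrmlcmbound}. There is no real obstacle here; the corollary is a routine parity case analysis on top of the theorem, and all ingredients (definition of $\lcm$, basic divisibility, $\flr{m/2}$ arithmetic) are already established in $\PV_1$, hence available in $S^1_2$.
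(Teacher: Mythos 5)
Your proof is correct and takes essentially the same approach as the paper's: a parity case split on $m$, reducing the even case directly to Theorem~\ref{thrmlcmbound} and the odd case to the monotonicity $\lcm(1,\dots,2n)\leq\lcm(1,\dots,2n+1)$. You spell out the divisibility justification via Lemma~\ref{lemmalcmdefinition} and the small base cases in more detail than the paper, but the argument is the same.
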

            \begin{proof}
                If $m=2n$ for some $n$, then this is Theorem~\ref{thrmlcmbound}. Otherwise, $m=2n+1$ for some $n$ and $2^{n} \leq \lcm(1,\dots,2n) \leq \lcm(1,\dots,2n+1)$.
            \end{proof}
            
            \subsection{Lemma D}\label{subseclemmad}

            \begin{lemma}
                There is a $\PV$-symbol $\ORD$ such that $\PV_1$ proves for every $y$ and $x$ satisfying $\gcd(x,\abs{y})=1$ that
                \[\ORD(y,x)=i \iff i>0 \land y^i \equiv 1 \md{\abs{x}} \land (\forall j < i)(y^j \nequiv 1 \md{\abs{x}}.\]
                We will write $\ord_r(y)$ to denote $\ORD(y,1^{(r)})$.
            \end{lemma}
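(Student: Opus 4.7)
The plan is to define $\ORD(y,x)$ by a straightforward brute-force search: compute the sequence $y \bmod \abs{x}, y^2 \bmod \abs{x}, \dots, y^{\abs{x}} \bmod \abs{x}$ by iteratively multiplying by $y$ modulo $\abs{x}$, and output the least index $i$ for which the value equals $1$, outputting $0$ if no such index exists. Since $\abs{x}$ is polynomial (in fact linear) in the input length $\abs{y}+\abs{x}$, this is a genuine $\PV$-symbol; the iterative multiplication modulo $\abs{x}$ is the standard modular exponentiation scheme already present in $\PV_1$.

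The $(\Leftarrow)$ direction of the iff is essentially by definition: if $i>0$ satisfies $y^i\equiv 1 \md{\abs{x}}$ and no smaller positive $j$ does, then the search in the definition of $\ORD$ will terminate at exactly $i$, provided $i \leq \abs{x}$. The $(\Rightarrow)$ direction uses that if the search returns $i>0$, then by construction $y^i\equiv 1\md{\abs{x}}$ and every strictly smaller positive $j$ has $y^j\nequiv 1\md{\abs{x}}$. Thus the only nontrivial content of the lemma is to rule out the case $\ORD(y,x)=0$ under the hypothesis, equivalently, to show that \emph{some} $1\leq i \leq \abs{x}$ satisfies $y^i\equiv 1\md{\abs{x}}$ (assuming the hypothesis $\gcd(y,\abs{x})=1$, which is what is needed for this usage as $\ord_r(y)=\ORD(y,1^{(r)})$).

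For the existence of such an $i$, the plan is a sharply bounded pigeonhole argument. Consider the residues $y \bmod \abs{x}, y^2\bmod \abs{x}, \dots, y^{\abs{x}+1}\bmod \abs{x}$, which are $\abs{x}+1$ elements of $\{0,1,\dots,\abs{x}-1\}$. The $\Sigma^b_0$-formula ``all residues $y^1\bmod \abs{x}, \dots, y^{k}\bmod \abs{x}$ are pairwise distinct'' is sharply bounded in the parameter $k\leq \abs{x}+1$, so by $\Sigma^b_0$-$\mathrm{IND}$ (provable in $\PV_1$) it cannot hold for $k=\abs{x}+1$. Hence there exist $1\leq j < k \leq \abs{x}+1$ with $y^j\equiv y^k\md{\abs{x}}$. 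From $\gcd(y,\abs{x})=1$, Lemma~\ref{lemmapveuclid} provides $u$ with $uy\equiv 1\md{\abs{x}}$; multiplying the collision by $u^j$ yields $y^{k-j}\equiv 1\md{\abs{x}}$ with $1\leq k-j\leq \abs{x}$, as required.

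The main obstacle is the careful formalization of the pigeonhole step: we must verify that the relevant collision-detection formula is genuinely $\Sigma^b_0$ (with the sequence of residues produced by the iterative modular exponentiation encoded as a single number via the standard sequence coding of Section~2.2), so that $\PV_1$'s sharply bounded induction suffices. Once this is in place, no heavier principle such as Euler's theorem is needed, and the correctness of $\ORD$ follows in $\PV_1$.
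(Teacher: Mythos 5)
The paper states this lemma without proof (it is grouped with Lemma~\ref{lemmalcmdefinition} under the remark that length-bounded arguments are routine in $\PV_1$), so there is no official proof to compare against; your brute-force definition of $\ORD$ together with a small-domain pigeonhole argument is the natural way to fill the gap and is correct in its essentials. One point worth sharpening: $\Sigma^b_0$-IND applied directly to the predicate ``the residues $y^1,\dots,y^k \bmod \abs{x}$ are pairwise distinct'' does not by itself refute that predicate at $k=\abs{x}+1$ — distinctness is preserved going from $k$ to $k+1$ unless something prevents it, so the contradiction has to come from a cardinality count. The standard way to close this in $\PV_1$ is to maintain the bit-set $S_k$ of residues seen so far (a number of length at most $\abs{x}$) and prove by $\Sigma^b_0$-IND that pairwise distinctness of $y^1,\dots,y^k$ implies $S_k$ has exactly $k$ ones; at $k=\abs{x}+1$ this contradicts $S_k\subseteq\{0,\dots,\abs{x}-1\}$ having at most $\abs{x}$ elements. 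All of this is indeed available in $\PV_1$ since everything is length-bounded, so your conclusion stands — but it is the counting, not $\Sigma^b_0$-IND alone, that supplies the contradiction. You also correctly observe that the hypothesis as printed, $\gcd(x,\abs{y})=1$, must be a typo for $\gcd(y,\abs{x})=1$: the latter is what the cancellation $y^j\equiv y^k \Rightarrow y^{k-j}\equiv 1$ via Lemma~\ref{lemmapveuclid} requires, and what the later usage $\ord_r(y)=\ORD(y,1^{(r)})$ presupposes.
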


            \begin{lemma}[$S^1_2$, Lemma D] \label{lemma D}
                For every $x\geq 2$, there is $r\leq 2\abs{x}^{6}$, such that $\ord_r(x)>\abs{x}^2$.
            \end{lemma}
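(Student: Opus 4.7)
The plan is to follow the classical AKS argument by contradiction: suppose every $r \leq 2\abs{x}^6$ with $\gcd(r,x)=1$ satisfies $\ord_r(x) \leq \abs{x}^2$. I will derive a contradiction by bounding $\lcm(1,\dots,2\abs{x}^6)$ from below by $2^{\abs{x}^6}$ via Corollary~\ref{corollarylcmbound}, while simultaneously bounding it from above by exhibiting a not-too-large witness number $N$ that is divisible by every $r \leq 2\abs{x}^6$.

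The witness I would use is
\[
N \;=\; x^{6\abs{x}} \cdot \prod_{i=1}^{\abs{x}^2}(x^i - 1),
\]
which is given by a $\PV$-symbol in $x$ and has length $\abs{N} = O(\abs{x}^5)$; the exponent $6\abs{x}$ is chosen so that $6\abs{x} \geq \log_2 r$ for every $r \leq 2\abs{x}^6$. The key step is to show that every $1 \leq r \leq 2\abs{x}^6$ divides $N$. Given such $r$, decompose $r = d \cdot e$ with $d = \gcd(r, x^{6\abs{x}})$ and $e = r/d$. Using the bound $\nu_p(r) \leq \log_2 r \leq 6\abs{x}$ together with $\nu_p(x^{6\abs{x}}) \geq 6\abs{x}$ whenever $p \mid x$, and the basic $\nu_p$-machinery of Section~\ref{subseclegendre} (notably Lemmas~\ref{lemmaprimepart}, \ref{lemmaprimeortho} and \ref{lemmanumult}), one checks that $\gcd(d,e) = 1$ and $\gcd(e,x) = 1$. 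The contradiction hypothesis applied to $e$ then forces $\ord_e(x) \leq \abs{x}^2$, hence $e \mid x^{\ord_e(x)} - 1 \mid \prod_{i=1}^{\abs{x}^2}(x^i - 1)$. Combined with $d \mid x^{6\abs{x}}$ and $\gcd(d,e) = 1$, this yields $r = de \mid N$.

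Applying Lemma~\ref{lemmalcmdefinition} to the sequence $\langle 1, 2, \dots, 2\abs{x}^6 \rangle$ then gives $\lcm(1,\dots,2\abs{x}^6) \mid N$ and so $\lcm(1,\dots,2\abs{x}^6) \leq N$. Corollary~\ref{corollarylcmbound} supplies the lower bound $2^{\abs{x}^6} \leq \lcm(1,\dots,2\abs{x}^6)$, while the routine length estimate
\[
\abs{N} \;\leq\; 6\abs{x}^2 + \sum_{i=1}^{\abs{x}^2} i \abs{x} \;\leq\; 6\abs{x}^2 + \abs{x}^5 \;<\; \abs{x}^6,
\]
valid for $\abs{x} \geq 2$ (guaranteed by the hypothesis $x \geq 2$), produces the contradiction $2^{\abs{x}^6} \leq N < 2^{\abs{x}^6}$.

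I expect the main technical obstacle to be the formal verification of the two coprimality statements $\gcd(d,e) = 1$ and $\gcd(e,x) = 1$ in $S^1_2$. This reduces to establishing the auxiliary identity $\nu_p(\gcd(a,b)) = \min(\nu_p(a), \nu_p(b))$, which requires a short additional induction on top of Lemmas~\ref{lemmaprimepart} and~\ref{lemmanumult}, followed by a prime-by-prime case analysis at each $p \mid r$. Defining $N$ via a $\PV$-symbol, proving the length bound on $N$ by a routine polynomial induction, and carrying out the final arithmetical inequality are all standard $S^1_2$-level bookkeeping.
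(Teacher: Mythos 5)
Your proof is correct, and it takes a genuinely different route to the same technical obstacle. The paper constructs essentially the same witness number, $a = x^b\prod_{i=1}^{\abs{x}^2}(x^i-1)$, but with the smaller exponent $b=\abs{2\abs{x}^6}$, invokes the same lower bound from Corollary~\ref{corollarylcmbound}, and then proves existence \emph{directly}: since $\lcm(1,\dots,2\abs{x}^6)>a>0$, some $r\leq 2\abs{x}^6$ fails to divide $a$, and by minimization over this length-bounded range one can take the \emph{least} such $r$; a short $\nu_p$-argument (there is a prime $p$ with $\nu_p(r)>\nu_p(a)$, and for that $p$ necessarily $\nu_p(x)=0$, whence $r/\gcd(r,x)$ also fails to divide $a$, so by minimality $\gcd(r,x)=1$) then finishes. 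You instead keep the contradiction structure of the textbook AKS proof and show that, under the hypothesis that all coprime $r$ have small order, \emph{every} $r\leq 2\abs{x}^6$ divides $N$, via the decomposition $r=de$ with $d=\gcd(r,x^{6\abs{x}})$. The two approaches are two ways of handling the same coprimality wrinkle: the paper minimizes once and derives $\gcd(r,x)=1$ for that single $r$, while you establish $\gcd(e,x)=1$ for every $r$. Your route stays closer to the informal argument, at the cost of the extra $\nu_p(\gcd(a,b))=\min(\nu_p(a),\nu_p(b))$ identity you correctly flag and a coprime-factors-combine step for integers (Lemma~\ref{lemmairreddivisors} is stated only for polynomials, though the integer analogue follows from Lemma~\ref{lemmapveuclid}); the paper's minimization trick dodges both, relying instead on the length minimization native to $S^1_2$. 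Both exponents suffice because each dominates $\log_2 r$ for $r\leq 2\abs{x}^6$, and both length estimates on the witness close with the $\abs{x}\geq 2$ cushion.
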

            \begin{proof}
                Notice, that $\ord_r(x) = i$ implies  $r \mid (x^i-1)$. Let 
                \[a = x^{b}\prod_{i=1}^{\abs{x}^2}(x^i-1)< x \cdot x ^{\abs{x}^4} < 2^{\abs{x}^6},\]
                where $b=\abs{2\abs{x}^6}$.
                By Corollary~\ref{corollarylcmbound} we have
                \[2^{\abs{x}^6} \leq  \lcm(1,\dots, 2\cdot \abs{x}^6).\]
                Hence, $\lcm(1,\dots,2\cdot \abs{x}^6)$ does not divide $a$. By Lemma~\ref{lemmalcmdefinition} we have that there is $r\leq 2 \cdot \abs{x}^6$ such that $r$ does not divide $a$, and since this value is bounded by a length, we can take the smallest such $r$.
                
                We will show that $\gcd(r,x)=1$. First, notice that from $r\leq 2\abs{x}^6$ it follows that for any prime $p$ that $\nu_p(r)\leq b$. Also for any prime $p$ we have that $\nu_p(x)\geq 1$ implies $\nu_p(x^b)\geq b$.
                
                \textbf{Claim:} There is a prime $p$ such that $\nu_p(r) > \nu_p(a)$ and $\nu_p(x)=0$.
                
                \textit{Proof of claim.} First, if $\nu_p(r)\geq 1$ and $\nu_p(x)\geq 1$, then \[\nu_p(r)\leq b \leq  \nu_p(x^b) \leq \nu_p(a).\]

                Since $r\nmid a$, then by Lemma~\ref{lemmavaldiv} there is a prime $p$ such that $\nu_p(r)>\nu_p(a)$, which cannot happen if $\nu_p(x)\geq 1$. This proves the claim.

                Consider the value $r/\gcd(r,x)$, we have
                \[\nu_p(r/\gcd(r,x)) = \nu_p(r) >\nu_p(a),\]
                thus by Lemma~\ref{lemmavaldiv} we have that $r/\gcd(r,x)\nmid a$. Since $r$ was chosen as the smallest non-divisor of $a$, then $\gcd(r,x)=1$.
                
                We claim that $r$ has $\ord_r(x)>\abs{x}^2$. If $\ord_r(x)\leq \abs{x}^2$ then there is $i\leq \abs{x}^2$ such that \[r \mid (x^i-1) \mid \prod_{i=1}^{\abs{x}^2}(x^i-1),\]
                a contradiction.
            \end{proof}

            This lemma implies, that the \texttt{if} statement on line 4 of the AKS algorithm is relevant only for finitely many values. The case of correctness where this \texttt{if} statement results in the answer \texttt{COMPOSITE} can be ignored, because it can be expressed a true bounded sentence and thus proved in $\PV_1$.

\subsection{Congruence Lemma}\label{subseccong}
Let us denote by \(H_0(n,r)\) the following assumptions:
\(n\) is a number such that \(\AKSPrime(n)\) holds, i.e., the AKS algorithm
asserts that \(n\) is prime, and $r$ is the value provided by the algorithm satisfying \(\ord_r(n) > |n|^{2}\).

We show that there is a prime divisor of $n$ called $p$ such that $\ord_r(p)>1$.

\begin{lemma}[$S^1_2$]\label{lemmaintfact}
   Assuming $H_0(n,r)$,  there exists a sequence coding the prime factorization of $n$.
\end{lemma}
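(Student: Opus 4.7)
The plan is to apply $\sblma$ to a $\Delta^b_1$-formula stating that a number $s$ encodes a sequence of integers $\geq 2$ whose product divides $n$. Concretely, let $\varphi(s)$ denote the formula
\[
s \text{ encodes a (possibly empty) sequence } \langle a_1,\dots,a_k\rangle \text{ with each } a_i\geq 2 \text{ and } \left(\prod_{i=1}^k a_i\right) \mid n.
\]
Since iterated products, divisibility and sequence decoding are all computed by $\PV$-symbols, $\varphi$ is quantifier-free over $\PV$, hence in $\Sigma^b_1$. Interpreting $0$ as the code of the empty sequence, whose product is $1$, one has $\varphi(0)$, so the antecedent of $\sblma$ is satisfied.

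Next, I would fix a bounding term $a$ polynomial in $\abs{n}$ large enough to contain any candidate factorization: since any sequence of integers $\geq 2$ whose product divides $n$ has at most $\abs{n}$ entries, each of value at most $n$, the binary length of its code is $O(\abs{n}^2)$, so a suitable smash term $a$ dominates any such code. Applying $\sblma$ to $\varphi$ with this bound yields $s^*\leq a$ with $\varphi(s^*)$ and $\abs{s^*}$ maximal among $s\leq a$ with $\varphi(s)$.

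I then argue that $s^*$ codes a factorization of $n$ into primes by two refinements. First, letting $m = \prod_i a_i$, if $m < n$ then $m\mid n$ implies $n/m \geq 2$, so by Lemma~\ref{lemmaprimedivisor} there exists a prime $p\mid (n/m)$; appending $p$ to $s^*$ yields a sequence $s'$ still satisfying $\varphi$ but with strictly larger encoding $\abs{s'}>\abs{s^*}$, contradicting maximality — hence $m=n$. Second, if some entry $a_i$ were composite, a sharply bounded $\PV$-search produces $b,c\geq 2$ with $bc=a_i$; replacing $a_i$ in $s^*$ by the pair $\langle b, c\rangle$ yields $s'$ with the same product but strictly larger $\abs{s'}$, again contradicting maximality.

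The main obstacle is the careful bookkeeping of the sequence encoding: one must verify that in the concrete pair-based coding $\langle a,b\rangle$ of sequences in $\PV_1$ (concatenation part $a$, marker part $b$), both appending a new element and splitting a composite entry strictly increase $\abs{s}$. This reduces to the standard bit-length inequalities $\abs{pq}\leq\abs{p}+\abs{q}$ on the concatenation part together with the observation that the marker string gains one additional bit whenever the number of entries grows. Once this is checked, the conclusion is immediate: $s^*$ is the desired sequence coding the prime factorization of $n$, and the whole argument fits within $\Sigma^b_1$-LMAX, hence within $S^1_2$.
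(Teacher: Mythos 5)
Your proof is correct and takes essentially the same approach as the paper: apply $\sblma$ to obtain a length-maximal sequence of small factors and derive primality of the entries from maximality. The one genuine difference is the choice of formula: you maximize over sequences of numbers $\geq 2$ whose product \emph{divides} $n$, whereas the paper works with sequences whose product \emph{equals} $n$. Your choice makes $\varphi(0)$ (empty sequence, product $1$) hold immediately, at the cost of an extra step: if $\prod_i a_i = m < n$ you invoke Lemma~\ref{lemmaprimedivisor} to find a prime dividing $n/m$ and append it, contradicting maximality. The paper's choice avoids that appending step but leaves the antecedent $\varphi(0)$ of $\sblma$ unaddressed (strictly, the singleton $\langle n\rangle$ rather than $0$ is the obvious witness, which requires a small reshuffling of the LMAX instance). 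Both arguments also rest on the claim that appending a new element or splitting a composite entry strictly increases the \emph{binary length} of the code; you explicitly flag this encoding issue, while the paper leaves it implicit. In short: same plan, slightly different bookkeeping, and your write-up is if anything a bit more careful.
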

\begin{proof}
    By $\sblma$ there is a longest sequence of numbers $\langle p_1,\dots,p_m \rangle$ such that $\prod^{m}_{i=1}p_i = n$ and for all $i$ we have $p_i>1$. The primality of $p_i$'s follows from the maximality of the length of the sequence.
\end{proof}

\begin{lemma}[$S^1_2$]\label{lemmaprimedivn}
     Assuming $H_0(n,r)$,  there exists a prime divisor $p$ of $n$ such that $\ord_r(p)>1$.
\end{lemma}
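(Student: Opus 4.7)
The plan is to argue by contradiction. First, I invoke Lemma~\ref{lemmaintfact} to obtain a sequence $\langle p_1,\dots,p_k\rangle$ of primes with $\prod_{i=1}^k p_i = n$. Because $\ord_r(n)$ appears in the hypothesis $\ord_r(n) > \abs{n}^2$, the defining condition of the $\ORD$ symbol forces $\gcd(n,r)=1$; since each $p_i$ divides $n$, this yields $\gcd(p_i,r)=1$ as well, so every $\ord_r(p_i)$ is well defined and positive.

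Now I suppose for contradiction that $\ord_r(p_i)\leq 1$ for every $i\leq k$. Then by positivity $\ord_r(p_i)=1$ for every $i$, which by the definition of $\ORD$ means $p_i \equiv 1 \md{r}$ for every $i$. I then show, by length induction on $j\leq k$ applied to the $\Sigma^b_0$-formula $\prod_{i=1}^{j} p_i \equiv 1 \md{r}$, that the product of all the $p_i$ is congruent to $1$ modulo $r$: the base case is trivial, and the induction step follows from the elementary fact that the product of two residues that are $\equiv 1 \md{r}$ is again $\equiv 1 \md{r}$. Taking $j=k$ yields $n \equiv 1 \md{r}$, which by the definition of $\ORD$ gives $\ord_r(n)=1$. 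This contradicts $\ord_r(n) > \abs{n}^2 \geq 1$ and completes the argument.

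The only nontrivial formalization concern is the induction on $j$; but the length $k$ of the factorization sequence is bounded by $\abs{n}$ and the formula iterated over is $\Sigma^b_0$ (given the sequence as a parameter), so the length induction already available in $\PV_1$ is more than sufficient, and the whole argument goes through in $S^1_2$. No new arithmetic or algebraic ingredient is needed beyond the prime factorization supplied by Lemma~\ref{lemmaintfact} and the defining properties of $\ORD$.
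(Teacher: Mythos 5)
Your proposal is correct and follows essentially the same route as the paper: obtain the prime factorization from Lemma~\ref{lemmaintfact}, assume for contradiction that $\ord_r(p_i)=1$ for every $i$, multiply the congruences $p_i\equiv 1 \md{r}$ to get $n\equiv 1\md{r}$, and contradict $\ord_r(n)>\abs{n}^2$. You simply spell out a few formalization details (the positivity of $\ord_r(p_i)$, the $\Sigma^b_0$ length induction on the partial products) that the paper leaves implicit.
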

\begin{proof}
    Assume that $\langle p_1,\dots, p_k \rangle $ is the prime factorization given by the previous lemma and assume for contradiction, that $\ord_r(p_i)=1$ for all $i\leq k$. Then $n= \prod_{i=1}^k p_i \equiv \prod_{i=1}^k 1 \equiv 1 \md{r}$, a contradiction.
\end{proof}

Let us fix $p$ to be a prime divisor of $n$ such that $\ord_r(p)>1$. The rest of Section~\ref{secT2proof} is dedicated to showing that $n$ satisfies $\Prime(n)$. To prove the congruence lemma we will use two simple properties of exponentiation by squaring.

\begin{lemma}[$\PV_1$, Evaluation lemma] \label{lem: evaluation}
    Let $p$ be a prime, $l$ be a number and let $f_1$, $f_2$ and  $g$ be low degree polynomials. Then, \[[f_1^l](f_2) \equiv ([f_1](f_2))^l \md{ [g](f_2),p},\] where $[g_1](g_2)$ denotes the composition of a polynomial $g_2$ with a polynomial~$g_1$.
\end{lemma}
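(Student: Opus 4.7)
The plan is to proceed by induction on $l$, exploiting the recursive identity of exponentiation by squaring established in Lemma~\ref{lemmaexpmultiplicative}. The pivotal elementary observation I will use repeatedly is that polynomial substitution preserves congruences modulo a polynomial and a prime: if $a \equiv b \md{g, p}$, i.e.\ $a - b = q\cdot g + r\cdot p$ for some polynomials $q, r$, then composing with $f_2$ yields $a(f_2) - b(f_2) = q(f_2)\cdot g(f_2) + r(f_2)\cdot p$, so $a(f_2) \equiv b(f_2) \md{g(f_2), p}$.

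In the base case $l = 0$, both sides equal $1$ by the definition of exponentiation by squaring, so the congruence is immediate. For the inductive step, assume the statement for $l-1$. Lemma~\ref{lemmaexpmultiplicative} applied modulo $g$ yields $[f_1^l] \equiv [f_1^{l-1}]\cdot f_1 \md{g, p}$, and composing with $f_2$ together with the substitution property above gives
\[
[f_1^l](f_2) \equiv [f_1^{l-1}](f_2)\cdot f_1(f_2) \md{g(f_2), p}.
\]
The induction hypothesis rewrites the first factor as $(f_1(f_2))^{l-1}$, and a second invocation of Lemma~\ref{lemmaexpmultiplicative}, this time on the right-hand side and modulo $g(f_2)$, collapses the resulting product $(f_1(f_2))^{l-1}\cdot f_1(f_2)$ to $(f_1(f_2))^l$. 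The induction formula is an open $\PV$-formula (everything involved --- exp-by-sq, polynomial composition, and reduction --- is a $\PV$-symbol), so ordinary induction available in $\PV_1$ suffices.

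I do not anticipate any genuine obstacle: the only subtlety is the bookkeeping between the two uses of Lemma~\ref{lemmaexpmultiplicative}, one modulo $g$ on the left and one modulo $g(f_2)$ on the right, bridged by the substitution property of congruences. Everything else is routine manipulation, and in particular no appeal to $S^1_2$-specific induction or to the further axioms $\iWPHP$, $\DLB$, or $\GFLT$ is needed.
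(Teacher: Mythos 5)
Your proof is correct and matches the paper's proof essentially line for line: both proceed by induction on $l$ with a trivial base case, and the inductive step applies Lemma~\ref{lemmaexpmultiplicative} on each side of the congruence, bridged by the observation that polynomial substitution preserves congruences modulo $(g,p)$. You make the substitution step and the second invocation of Lemma~\ref{lemmaexpmultiplicative} (modulo $g(f_2)$) explicit where the paper leaves them implicit, which is a helpful clarification but not a different argument.
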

\begin{proof}
    By induction on $l$. The case where $l=0$ is true trivially.

    Assume the statement holds for $l$. Then by Lemma~\ref{lemmaexpmultiplicative} 
    \begin{align*}
        [f_1^{l+1}](f_2) &\equiv [f_1^l](f_2) \cdot [f_1](f_2)\\
                    &\equiv ([f_1](f_2))^l \cdot [f_1](f_2)\\
                    &\equiv ([f_1](f_2))^{l+1} \md{[g](f_2),p}.\qedhere
        \end{align*}
\end{proof}

\begin{lemma}[$\PV_1$, Composition of exponentiation]\label{lem: composition}
    Let $p$ be a prime and let $l$ and $k$ be numbers, and let $f$ and $g$ be low degree polynomials. Then,
    \[((f)^l)^k \equiv f^{l\cdot k} \md{g,p}.\]
\end{lemma}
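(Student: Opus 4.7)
The plan is to proceed by induction on $k$, following the same template as Lemma~\ref{lemmaexpcompcong}. The base case $k=0$ is immediate, since both $(f^l)^0$ and $f^{l\cdot 0}$ unfold to $1$ by the definition of exponentiation by squaring. For the induction step I would first apply Lemma~\ref{lemmaexpmultiplicative} to peel off one outer factor, obtaining $(f^l)^{k+1} \equiv (f^l)^k \cdot f^l \md{g,p}$, and then invoke the induction hypothesis to replace $(f^l)^k$ by $f^{l\cdot k}$ modulo $(g,p)$.

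What remains is to recognise $f^{l\cdot k} \cdot f^l$ as $f^{l\cdot k + l} = f^{l\cdot (k+1)}$ modulo $(g,p)$. This is the additive exponent law $f^a \cdot f^b \equiv f^{a+b} \md{g,p}$, which is not stated explicitly among the preceding lemmas, so I expect this routine but unavoidable detour to be the main obstacle. I would establish it as an auxiliary claim by a short open induction on $b$ with $a$, $f$, $g$, $p$ treated as parameters: the case $b=0$ follows from $f^0 = 1$, and for the inductive step one applies Lemma~\ref{lemmaexpmultiplicative} on $f^{b+1}$ and then again on $f^{(a+b)+1}$ to obtain
\[f^a \cdot f^{b+1} \equiv f^a \cdot f^b \cdot f \equiv f^{a+b} \cdot f \equiv f^{(a+b)+1} \md{g,p}.\]

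Both inductions are on open $\PV$-formulas, so they are available already in $\PV_1$ and no stronger theory is required. Plugging the additivity lemma into the main induction then closes the step via $(f^l)^{k+1} \equiv f^{l\cdot k} \cdot f^l \equiv f^{l\cdot k + l} = f^{l\cdot(k+1)} \md{g,p}$, which completes the proof.
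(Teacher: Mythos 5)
Your proposal matches the paper's proof exactly: induction on $k$, peeling one factor off $(f^l)^{k+1}$ via Lemma~\ref{lemmaexpmultiplicative}, applying the hypothesis, and closing with the additive exponent law, which the paper likewise notes follows from Lemma~\ref{lemmaexpmultiplicative} by a straightforward auxiliary induction. You simply spell out that auxiliary induction in more detail than the paper does, but the argument is the same.
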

\begin{proof}
    By induction on $k$. The case where $k=0$ is true trivially.

    Assume the statement holds for $k$. Then by Lemma~\ref{lemmaexpmultiplicative} 
    \begin{align*}
        ((f)^l)^{k+1} &\equiv (f^l)^k \cdot f^l\\
                    &\equiv f^{l\cdot k} \cdot f^l\\
                    &\equiv f^{l\cdot k + l} \equiv f^{l\cdot (k+1)} \md{g,p},
    \end{align*}
    where the additivity of exponents also follows from Lemma~\ref{lemmaexpmultiplicative} by straightforward induction.
\end{proof}

\begin{lemma}[$S^1_2+\GFLT$, Congruence Lemma]\label{lem: p np n}
  Assuming $H_0(n,r)$ and  $a$ being a number such that $\gcd(a,p)=1$,  we have:
\[
(\X+a)^{\frac{n}{p}} \equiv \X^{\frac{n}{p}}+a \md{\X^r-1, p}.
\]
\end{lemma}

\begin{proof} 
By the notation $x \equiv y$, we mean $x\equiv y \md{\X^r-1, p}$ throughout this proof.
By $\AKSPrime(n)$ we have $(\X+a)^n \equiv \X^n+a \md{\X^r-1,n}$. We have
\begin{align}
    (\X+a)^n \equiv \X^n+a\\
    (\X+a)^p \equiv \X^p+a
\end{align}
The equivalence $(1)$ follows from the fact that $p$ is a prime divisor of $n$ and $(2)$ is an instance of the axiom GFLT. Since $r < p$ and $p$ is prime, there exist $u$ and $v$ such that $\xgcd(r,p)=(1,u,v)$. Thus, $pv \equiv 1 \md{r}$. Without loss of generality, we can assume that $v>0$, because otherwise, we can take $v' = v -rv>0$. Since $\X^v$ is a low degree polynomial (as $v <r$), by substituting $\X^v$ for $\X$ in $(1)$ using Lemma~\ref{lemmaexpdivisor} and Lemma \ref{lem: evaluation} we get
\[
(\X^v+a)^n \equiv \X^{v \cdot n}+a.
\]
To be concrete, by (1) we have:
\begin{equation}[\X+a]_{\X^{r}-1}^{n}\equiv[\X]_{\X^{r}-1}^{n}+a\md{\X^{r}-1,p}\end{equation}
By substituting $\X^v$ we get
\begin{equation}[\X+a]_{\X^{r}-1}^{n}(\X^{v})\equiv[\X]_{\X^{r}-1}^{n}(\X^{v})+a\md{\X^{vr}-1,p}\end{equation}
By Lemma~\ref{lem: evaluation}:
\begin{align}
[\X+a]_{\X^{vr}-1}^{n}(\X^{v})&\equiv[\X^{v}+a]_{\X^{vr}-1}^{n}\md{\X^{vr}-1,p}\\
[\X]_{\X^{vr}-1}^{n}(\X^{v})&\equiv[\X^{v}]_{\X^{vr}-1}^{n}\md{\X^{vr}-1,p}
\end{align}
By Lemma~\ref{lemmaexpdivisor}:
\begin{align}
[\X^{v}+a]_{\X^{vr}-1}^{n}&\equiv[\X^{v}+a]_{\X^{r}-1}^{n}\md{\X^{r}-1,p}\\
[\X+a]_{\X^{vr}-1}^{n}&\equiv[\X+a]_{\X^{r}-1}^{n}\md{\X^{r}-1,p}\\
[\X^{v}]_{\X^{vr}-1}^{n}&\equiv[\X^{v}]_{\X^{r}-1}^{n}\md{\X^{r}-1,p}\\
[\X]_{\X^{vr}-1}^{n}&\equiv[\X]_{\X^{r}-1}^{n}\md{\X^{r}-1,p}
\end{align}
Therefore
\begin{align*}
   [\X^{v}+a]_{\X^{r}-1}^{n} &\equiv [\X^{v}+a]_{\X^{vr}-1}^{n}\tag{\text{by (7)}}\\
                             &\equiv [\X+a]_{\X^{vr}-1}^{n}(\X^{v})\tag{by (5)}\\
                             &\equiv [\X+a]_{\X^{r}-1}^n(\X^{v})\tag{by (8)}\\
                             &\equiv [\X]_{\X^{r}-1}^{n}(\X^{v})+a \tag{by (4)}\\
                             &\equiv [\X]_{\X^{vr}-1}^{n}(\X^{v})+a\tag{by (10)}\\
                             &\equiv [\X^v]_{\X^{vr}-1}^{n}+a\tag{by (6)}\\
                             &\equiv [\X^v]_{\X^{r}-1}^{n}+a\tag{by (9)}\md{\X^r-1,p},
\end{align*}
or simply $(\X^v+a)^n \equiv \X^{v \cdot n}+a$.
By an analogous argument, we can use (2), Lemma~\ref{lemmaexpdivisor} and Lemma \ref{lem: evaluation} to get 
\[
(\X^v+a)^p \equiv \X^{v \cdot p} +a \equiv \X +a.
\]
By Lemma \ref{lem: composition}, we obtain
\[
((\X^v+a)^p)^{\frac{n}{p}} \equiv \X^{v \cdot n}+a. \tag{*}
\]
Moreover, using Lemmas \ref{lemmaexpcompcong} and \ref{lem: composition} we get:
\[
\X^{v \cdot n} \equiv \X^{v \cdot p \cdot \frac{n}{p}} \equiv (\X^{v \cdot p})^{\frac{n}{p}} \equiv \X^{\frac{n}{p}}.
\]
Therefore, by Lemma \ref{lemmaexpcompcong}, $(*)$ will become 
\[
(\X+a)^{\frac{n}{p}} \equiv \X^{\frac{n}{p}}+a.\qedhere
\]

\end{proof}

\subsection{Lemma E}\label{subseclemmae}
To prove Lemma E, let us first fix some notation. Let $p$ be a prime divisor of $n$ such that $\ord_r(p)>1$. For the ease of notation, we will denote $\ZZ/p$ by $F_p$. By Corollary \ref{Cor: h divisor}, for $F_p[\X]$ and $r$, there exists an irreducible divisor $h$ of $\X^r -1$ in $F_p[\X]$ such that $h \not\mid\X^{r'}-1$ for all $r' <r$ and $1< \deg(h) <r$. Take such $h$ and fix it and take $F= F_p[\X] / h(\X)$. The following lemma introduces a set $G_r$ represented by a $\PV$-predicate, which will be used in the rest of the paper.

\begin{lemma}\label{Lem: symbol G}
  Assuming $H_0(n,r)$, there is a $\PV$-symbol $G_r(x)$ such that $\PV_1$ proves:
\[
G_r(x) \leftrightarrow (x <r) \wedge (\exists i, j\leq r) (x\equiv (n/p)^i \cdot p^j \md r).
\]
\end{lemma}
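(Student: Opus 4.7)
The plan is to construct $G_r(x)$ as an explicit polynomial-time algorithm by exhaustive search and then verify the equivalence by direct unwinding of the definition. The crucial observation is that $r$ is of logarithmic size: in the context of the AKS correctness proof we have $r \leq 2\abs{n}^6$ by Lemma \ref{lemma D}, so $1^{(r)}$ is available as a polynomial-size object. Consequently the set of pairs $(i,j)$ with $0 \leq i, j \leq r$ has cardinality $(r+1)^2$, which is polynomial in the input length, so an exhaustive enumeration is a legitimate $\PV$-procedure.

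First I would define a $\PV$-symbol computing $\big((n/p)^i \cdot p^j\big) \bmod r$ from inputs $n$, $p$, $1^{(r)}$, $1^{(i)}$, $1^{(j)}$, using iterated modular multiplication (available as a standard $\PV$-symbol from modular exponentiation by squaring, compare the analogue for polynomials in Lemma \ref{lemmaexpmultiplicative}). Then $G_r(x)$ is defined by the algorithm: on inputs $n$, $p$, $1^{(r)}$, $x$, first check whether $x < r$; if not, output $0$. Otherwise initialize a flag to $0$ and loop over all pairs $(i,j)$ with $0 \leq i, j \leq r$, computing the value $((n/p)^i \cdot p^j) \bmod r$ and comparing it to $x$; if equality ever holds, set the flag to $1$. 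Output the flag. Since the loop has at most $(r+1)^2$ iterations and each step runs in polynomial time in the inputs, the resulting function is a bona fide $\PV$-symbol.

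For the equivalence proved in $\PV_1$, the $(\Rightarrow)$ direction simply reads off witnesses $i, j$ from a successful iteration, using the correctness of the modular arithmetic $\PV$-symbols. The $(\Leftarrow)$ direction uses sharply bounded induction on the iteration counter, together with the same correctness facts, to conclude that if some pair $(i,j)$ with $0 \leq i, j \leq r$ satisfies the congruence then the algorithm's flag is set to $1$ by the time that pair is considered. Both directions are routine because $\PV_1$ proves $\Sigma^b_0$-$\mathrm{IND}$ and handles sharply bounded existential quantifiers directly. I do not anticipate any real obstacle here: the lemma is essentially a definitional one, bridging the notation $G_r(x)$ used later in the proof with an explicit $\PV$-predicate whose polynomial-time evaluability rests on the unary presentation of $r$.
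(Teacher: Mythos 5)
Your proposal is essentially the same as the paper's proof: both define $G_r$ by an exhaustive search over all pairs $(i,j)$ with $i,j\leq r$, justified as a polynomial-time procedure because $r$ is bounded polynomially in $\abs{n}$ (hence $1^{(r)}$ is a feasible object), and both note that the equivalence is then a routine matter of $\Sigma^b_0$-reasoning in $\PV_1$. You give somewhat more detail on the $\PV_1$-verification of the two implications, but the approach and the key observation are identical.
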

\begin{proof}
Suppose that the number $x$ is given. First, we check that $x <r$. Then, for each $i,j \leq r$, check whether $x \equiv (n/p)^i \cdot p^j \md{r}$. If such $i$ and $j$ exist, then $G_r(x)$ holds. This argument can be formalized in $\PV_1$ since by Lemma \ref{lemma D} we have $r \leq |n|^{10}$.
\end{proof}

We sometimes use the notation $m\in G_r$ to mean $G_r(m)$. By Lemma~\ref{lemma D}, the number $r$ is bounded by $\abs{n}^{10}$. The theory $\PV_1$ can assign a cardinality to sets bounded by a length (and $|m|^{10}$ is the length of $s(m)$ for a suitable term $s$). We denote the cardinality of the set $\{x; G_r(x)\}$ by $t$, where $t < r$.

\begin{lemma}[$\PV_1$] \label{lem: t<phi(r)}
  Assuming $H_0(n,r)$, we have $t=|G_r| \leq \phi(r).$
\end{lemma}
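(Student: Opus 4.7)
The plan is to exhibit $\{x : G_r(x)\}$ as a subset of the residues modulo $r$ that are coprime to $r$, after which the desired bound $t \leq \phi(r)$ follows directly from the definition of $\phi$ recorded in Lemma~\ref{lemmapvtot}.

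First I would establish $\gcd(n,r)=1$. This is a consequence of the AKS algorithm returning \texttt{PRIME} on input $n$: step~3 of Algorithm~\ref{algaks} rules out $1<\gcd(a,n)<n$ for every $a\leq r$, while step~2 chose $r$ so that $\ord_r(n)>\abs{n}^2$, which in particular forces $n\nequiv 0 \md{r}$ and hence $\gcd(r,n)\neq n$. Combining, $\gcd(r,n)=1$. Since $p\mid n$ and $p\cdot(n/p)=n$, both $\gcd(p,r)=1$ and $\gcd(n/p,r)=1$ follow via Lemma~\ref{lemmapveuclid}: any common divisor of $p$ (respectively $n/p$) and $r$ divides $n$, and must therefore be $1$.

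Next I would show, by a short induction in $\PV_1$ on $i+j$ (again using Lemma~\ref{lemmapveuclid} to see that coprimality is preserved under products), that $\gcd((n/p)^i\cdot p^j,\, r)=1$ for all $i,j\geq 0$. Whenever $G_r(m)$ holds, Lemma~\ref{Lem: symbol G} supplies $i,j\leq r$ with $m\equiv (n/p)^i\cdot p^j \md{r}$, so $\gcd(m,r)=1$, and in particular $m\geq 1$ (since $\gcd(0,r)=r>1$). Thus $\{x : G_r(x)\} \subseteq \{x : 1\leq x\leq r,\ \gcd(x,r)=1\}$, and the latter set has cardinality $\phi(r)$ by Lemma~\ref{lemmapvtot}. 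Both sets lie inside the length-bounded interval $\{1,\dots,r\}$, so cardinality is well-behaved in $\PV_1$, and the inclusion yields $t=|G_r|\leq\phi(r)$.

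I do not expect a substantial obstacle here. The only issues that need a touch of care are tracing back through Lemmas~\ref{lemma D} and~\ref{lemmaprimedivn} together with step~3 of the algorithm to certify $\gcd(n,r)=1$, and invoking the standard length-bounded counting available in $\PV_1$ to translate the set inclusion into the cardinality inequality.
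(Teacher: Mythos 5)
Your proposal is correct and follows essentially the same approach as the paper: both proofs reduce the statement to showing $\gcd(x,r)=1$ whenever $G_r(x)$ holds (via $\gcd(n,r)=\gcd(p,r)=\gcd(n/p,r)=1$), and then count the coprime residues using Lemma~\ref{lemmapvtot}. You are somewhat more careful in two places where the paper is terse — explicitly tracing $\gcd(n,r)=1$ back to steps 2 and 3 of the algorithm, and replacing the paper's slightly garbled case split on $i$ versus $j$ with a clean induction on $i+j$ — but the underlying argument is the same.
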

\begin{proof}
We already know
\[
\quad \gcd(n,r)= \gcd(p,r)=1 \qquad \text{and hence} \qquad \gcd(n/p,r)=1 \tag{*}
\]
We claim that for any $x$ such that $G_r(x)$ we have $\gcd(x,r)=1$. As $G_r(x)$, there exist $i,j \leq r$ such that $x\equiv (n/p)^i \cdot p^j \md r$. Suppose for the sake of contradiction that $\gcd(x, r)=c$ where $c \neq 1$. Thus,
\[
c \mid r \qquad \text{and} \qquad c \mid (n/p)^i \cdot p^j
\]
If $i \leq j$ then $ c \mid (n/p)^{i-j}$ which is a contradiction with $\gcd(n/p,r)=1$. If $i < j$, then $c \mid n^i p^{j-i}$, which is a contradiction with the left side of (*).
\end{proof}

\begin{lemma}[$S^1_2$]\label{Lem: intro}
Assume $H_0(n,r)$. Let $f$ be a sparse polynomial and $m$ a number. There is a ternary $\PV$ predicate $\int_{\X^r-1}(f,p,m)$ for which $S^1_2$ proves 
\[
\int_{X^r-1}(f, p, m) \leftrightarrow \big(f(\X)\big)^m \equiv f(\X^m)\md{\X^r-1, p}.\]
\end{lemma}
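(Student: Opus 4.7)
My plan is to explicitly construct the predicate $\int_{\X^r-1}(f, p, m)$ as a $\PV$-symbol by computing canonical low-degree representatives of both sides of the claimed congruence and comparing them.

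First, I would define a polynomial-time reduction procedure taking any sparse polynomial $g = \sum_i a_i \X^{e_i}$ to its canonical form modulo $(\X^r-1, p)$, namely the low-degree polynomial $\mathrm{red}(g) := \sum_i (a_i \bmod p)\,\X^{e_i \bmod r}$, where terms with identical reduced exponents are combined. Applying this to $f$ yields a low-degree polynomial $\hat f := \mathrm{red}(f)$. Observe that $f(\X^m) = \sum_i a_i \X^{m \cdot e_i}$ is again a sparse polynomial with the same number of terms as $f$, so its canonical reduction $R(f, p, m) := \mathrm{red}(f(\X^m))$ can be computed directly from the sparse representation of $f$ in polynomial time without ever expanding $f(\X^m)$ as a dense polynomial. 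Using the previously established $\PV$-symbol for exponentiation by squaring modulo a polynomial (with coefficient arithmetic carried out modulo $p$), I then compute $L(f, p, m) := \hat f^{m} \bmod (\X^r-1, p)$. The predicate $\int_{\X^r-1}(f, p, m)$ is defined to hold iff $L(f, p, m)$ and $R(f, p, m)$ coincide as low-degree polynomials (an equality of finite sequences). All of these operations are polynomial-time in the input size, so this gives a $\PV$-predicate.

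To prove the stated equivalence in $S^1_2$, the ingredients I need are: (a) the correctness of $\mathrm{red}$, that is $g \equiv \mathrm{red}(g) \md{\X^r-1, p}$ for any sparse polynomial $g$, which is provable in $\PV_1$ directly from the definition by summing contributions of individual monomials and using $\X^r \equiv 1 \md{\X^r-1, p}$; (b) the $\PV_1$-provable congruence $\hat f^{m} \equiv L(f, p, m) \md{\X^r-1, p}$, which follows from Lemma~\ref{lemmaexpmultiplicative} by an open length induction on $m$; and (c) uniqueness of canonical form, i.e., two polynomials of degree less than $r$ with coefficients in $\{0,\dots,p-1\}$ that are congruent modulo $(\X^r-1, p)$ must be equal. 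Combining (a)--(c) gives that the predicate holds precisely when $(f(\X))^m \equiv f(\X^m) \md{\X^r-1, p}$.

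The only nonroutine step is (c), but it reduces to showing that a nonzero polynomial of degree less than $r$ with coefficients in $\{0,\dots,p-1\}$ cannot be divisible by $\X^r-1$ modulo $p$; this follows immediately by comparing leading degrees and fits comfortably inside $\PV_1$, so no genuine obstacle arises in the formalization.
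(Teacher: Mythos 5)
The paper states Lemma~\ref{Lem: intro} without giving a proof at all, so there is no argument of the authors' to compare against. Your construction — compute a canonical low-degree representative of each side of the congruence in polynomial time and compare them — is the natural and correct way to make the lemma precise, and it matches how the paper's formalism treats exponentiation (by squaring modulo a fixed polynomial, per the Preliminaries) and sparse polynomials (reduce exponents mod~$r$). The steps (b) and (c) that you isolate are exactly the substance one must supply: that the exponentiation-by-squaring symbol produces a value congruent to the ``true'' power (which is Lemma~\ref{lemmaexpmultiplicative} plus length induction), and that two polynomials of degree below $r$ that are congruent modulo $(\X^r-1,p)$ are equal (a degree-comparison argument).

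One point worth tightening: your ingredient~(a), the assertion $g \equiv \mathrm{red}(g) \md{\X^r-1,p}$ for a sparse polynomial $g$, cannot literally be read as an existential statement about a quotient polynomial when $g$ has exponents of full polynomial length, since the witnessing quotient would then have exponential degree and cannot be coded. In practice this is harmless because the lemma is only ever applied to low-degree $f$ (e.g.\ $\X+a$ or elements of $\hat P_t$ with degree below $r$), in which case $f(\X^m)$ has at most $r$ monomials with exponents bounded by $m(r-1)$ and one can phrase~(a) monomial-by-monomial as $\X^{kr+s}-\X^s=\X^s(\X^{kr}-1)$ and appeal to a length induction rather than exhibiting the quotient; alternatively one can simply take the congruence to \emph{mean} equality of canonical forms, in which case~(a) and~(c) become definitional. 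Either way no genuine obstacle arises, but a formalization should pick one of these readings explicitly rather than leaving the semantics of $\equiv$ for exponentially high-degree sparse polynomials implicit — a fuzziness that is already present in the lemma's own statement.
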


The number $m$ is called \emph{introspective} for $f(\X)$. 

Let us fix $\ell = \floor{\sqrt{\phi(r)}} \cdot \floor{\log n}$. We have the following easy observation. 

\begin{lemma}[$S^1_2+\GFLT$]\label{lem: p introspective}
  Assuming $H_0(n,r)$, both $p$ and $\frac{n}{p}$ are introspective for $\X + a$ for any $0 \leq a \leq \ell$.
\end{lemma}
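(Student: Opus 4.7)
The plan is to unpack the definition of introspectivity from Lemma \ref{Lem: intro} and then recognize each of the two required congruences as an immediate consequence of either the $\GFLT$ axiom or the Congruence Lemma proved in the previous subsection. By Lemma \ref{Lem: intro}, asserting that $m$ is introspective for the polynomial $\X + a$ is exactly the claim $(\X+a)^m \equiv \X^m + a \md{\X^r - 1, p}$. So the lemma reduces to establishing these two congruences for $m = p$ and $m = n/p$, uniformly in $0 \le a \le \ell$.

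For $m = p$, I would distinguish two cases according to whether $p \mid a$. If $\gcd(a,p) = 1$, the congruence is a direct instance of the $\GFLT$ axiom (Definition \ref{definitiongflt}); the side condition $r < p$ holds in our setting because step 3 of the AKS algorithm, which accepted $n$, would otherwise reject any prime $p \le r$ with $p < n$, and the case $p = n$ gives primality directly. If $p \mid a$ (in particular if $a = 0$), then $\X + a \equiv \X$ and $\X^p + a \equiv \X^p$ modulo $p$, and a short application of Lemma \ref{lemmaexpcompcong} to the congruence $\X + a \equiv \X$ yields $(\X + a)^p \equiv \X^p \equiv \X^p + a \md{\X^r - 1, p}$.

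For $m = n/p$, the required congruence $(\X+a)^{n/p} \equiv \X^{n/p} + a \md{\X^r - 1, p}$ is precisely the content of the Congruence Lemma (Lemma \ref{lem: p np n}) proved in Subsection \ref{subseccong}; no additional work is needed in the case $\gcd(a,p) = 1$. In the case $p \mid a$, the same trivial reduction as above applies: $(\X+a)^{n/p} \equiv \X^{n/p} \equiv \X^{n/p} + a \md{\X^r - 1, p}$ by Lemma \ref{lemmaexpcompcong}.

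There is no substantive obstacle here: all the analytic work is concentrated in the Congruence Lemma and in the $\GFLT$ axiom itself. The only minor point of care is that $\GFLT$ and the Congruence Lemma as stated presume $\gcd(a,p) = 1$, so the degenerate case $p \mid a$ must be treated by hand, but this is immediate modulo $p$.
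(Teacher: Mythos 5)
Your proposal is correct and takes the same route as the paper, whose proof is literally the one-liner ``By the axiom $\GFLT$ and Lemma~\ref{lem: p np n}.'' You merely fill in the details the paper leaves implicit (that $r < p$ follows from step 3 of the algorithm having accepted $n$, and that the only $a$ with $0 \le a \le \ell < p$ and $p \mid a$ is $a = 0$, which is trivial), so this is a more careful write-up of the same argument.
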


\begin{proof}
By the axiom $\GFLT$ and Lemma \ref{lem: p np n}.
\end{proof}
The following lemma combines Lemmas 4.5 and 4.6 in \cite{AKS}. It proves the closure of introspective numbers under multiplication and also shows that the set of polynomials for which $m$ is introspective is closed under multiplication.

\begin{lemma}[$S^1_2$, Lemma E]\label{lem: intro mm'}
Assume $H_0(n,r)$. For any $f \in F[\X]$ and numbers $m$ and $m'$ if $m$ and $m'$ are introspective for $f(\X)$ then so is $m \cdot m'$. Moreover, for any $f,g \in F[\X]$ and number $m$ , if $m$ is introspective for $f(\X)$ and $g(\X)$ then it is also introspective for $f(\X) \cdot g(\X)$.
\end{lemma}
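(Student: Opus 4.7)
The plan is to prove both assertions directly, by a chain of congruences that uses the exponentiation machinery built up in the previous subsections (Lemmas~\ref{lemmaexpmultiplicative}, \ref{lemmaexpdivisor}, \ref{lemmaexpcompcong}, \ref{lem: evaluation} and \ref{lem: composition}), with the only real subtlety being bookkeeping of the modulus as one substitutes $\X^m$ for $\X$.

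For the first assertion, I would begin from the identity
\[
f(\X)^{m m'} \equiv (f(\X)^m)^{m'} \md{\X^r-1, p},
\]
which is an instance of Lemma~\ref{lem: composition}. Since $m$ is introspective for $f$, Lemma~\ref{lemmaexpcompcong} converts the right-hand side to $(f(\X^m))^{m'} \md{\X^r-1, p}$. It then remains to show that $(f(\X^m))^{m'} \equiv f(\X^{m m'}) \md{\X^r-1, p}$. I would apply the Evaluation Lemma~\ref{lem: evaluation} with $f_1 = f$, $f_2 = \X^m$ and $g = \X^r-1$ to obtain $(f(\X^m))^{m'} \equiv (f^{m'})(\X^m) \md{\X^{m r}-1, p}$. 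Next, the introspectivity of $m'$ for $f$ reads $f^{m'} \equiv f(\X^{m'}) \md{\X^r-1, p}$; substituting $\X^m$ for $\X$ in this congruence lifts it to modulus $\X^{m r}-1$, whose right-hand side is $f((\X^m)^{m'}) = f(\X^{m m'})$. Finally, by Lemma~\ref{lemmaxmdiv} we have $\X^r-1 \mid \X^{m r}-1$, so by Lemma~\ref{lemmaexpdivisor} the resulting congruence descends from modulus $\X^{m r}-1$ back to modulus $\X^r - 1$, yielding the desired equality.

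For the second assertion, I would first prove the multiplicativity $(f g)^m \equiv f^m \cdot g^m \md{\X^r - 1, p}$. This is a routine open length induction on $m$ using Lemma~\ref{lemmaexpmultiplicative}, in the same style as the induction already carried out in that lemma's proof, and is straightforwardly formalized in $S^1_2$. Given this, the argument is just
\[
(fg)(\X)^m \equiv f(\X)^m \cdot g(\X)^m \equiv f(\X^m) \cdot g(\X^m) = (fg)(\X^m) \md{\X^r-1, p},
\]
where the middle congruence applies the two introspectivity hypotheses.

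The main obstacle is the modulus tracking in the first part: the Evaluation Lemma produces a congruence modulo $\X^{m r} - 1$ rather than modulo $\X^r - 1$, so the introspectivity of $m'$ must be transported to this larger modulus by substitution before it can be combined with the output of Lemma~\ref{lem: evaluation}, and then Lemma~\ref{lemmaexpdivisor} is needed to come back down. A minor concern is that $(f g)^m \equiv f^m g^m$ is not formulated explicitly earlier in the paper, but it follows by essentially the same induction as Lemma~\ref{lemmaexpmultiplicative} and should present no genuine difficulty.
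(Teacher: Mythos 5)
Your overall approach matches the paper's: decompose $f^{mm'}$ as $(f^m)^{m'}$, pass to $(f(\X^{\cdot}))^{m'}$, apply the Evaluation Lemma and the Divisor Lemma to land back on modulus $\X^r-1$, and handle the second assertion by multiplicativity of exponentiation plus the two introspectivity hypotheses. The second assertion is fine as you present it.

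However, there is a genuine gap in the first assertion. You apply the Evaluation Lemma~\ref{lem: evaluation} with $f_2 = \X^m$ and then work modulo $\X^{mr}-1$, but $m$ is not of logarithmic size: in the applications of Lemma~E, the numbers $m, m'$ range over products of $p$ and $n/p$, which have length polynomial in $\abs{n}$. Hence $\X^m$ is \emph{not} a low-degree polynomial, and $\X^{mr}-1$ is a polynomial whose (dense) representation has exponential length, so it is not an object of the formalism at all. Lemma~\ref{lem: evaluation} is explicitly restricted to low-degree polynomials, and Lemma~\ref{lemmaxmdiv}, which you invoke for $\X^r-1 \mid \X^{mr}-1$, is stated only for exponents presented in unary (as $1^{(k)}$, $1^{(l)}$); neither applies with $m$ of polynomial length. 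The same problem infects your step ``substituting $\X^m$ for $\X$ in this congruence lifts it to modulus $\X^{mr}-1$.'' What you write is the \emph{idea} of the textbook proof, but in $S^1_2$ one must keep every intermediate polynomial low-degree.

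The missing step — which is exactly what the paper's proof does — is to first replace $m$ by $m_r = m \bmod r$, using $\X^r \equiv 1 \md{\X^r-1, p}$ (via Lemmas~\ref{lemmaexpmultiplicative}, \ref{lemmaexpcompcong}, \ref{lem: composition}) to get $\X^m \equiv \X^{m_r} \md{\X^r-1, p}$ and hence $\X^{m m'} \equiv \X^{m_r m'} \md{\X^r-1, p}$. One then carries out all the substitutions with $\X^{m_r}$ in place of $\X^m$, so that the intermediate modulus is $\X^{m_r\cdot r}-1$, which has degree at most $r^2$ and is genuinely low-degree. Only after that reduction do Lemmas~\ref{lem: evaluation} and~\ref{lemmaexpdivisor} apply as you describe. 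Without this reduction the argument does not go through in the given formalization.
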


\begin{proof}
As $m$ is introspective for $f(\X)$ we have by Lemma~\ref{lemmaexpcompcong} and  Lemma~\ref{lem: composition}:
\[
\big(f(\X)\big)^{m \cdot m'} \equiv \big(f(\X^m)\big)^{m'}\md{\X^r-1, p} . \tag{\dag}
\]
Let $m = q\cdot r + m_r$, where $0\leq m_r<r$. Then, by Lemma~\ref{lemmaexpmultiplicative}, Lemma~\ref{lemmaexpcompcong}, Lemma~\ref{lem: composition}, and $\X^r \equiv 1 \md{\X^r-1,p}$, we have:
\[
\X^m \equiv \X^{q \cdot r +m_r} \equiv \X^{q\cdot r} \X^{m_r} \equiv (\X^r)^q \X^{m_r} \equiv 1^q \cdot \X^{m_r} \equiv \X^{m_r} \md{\X^r-1, p},
\]
which implies by Lemma~\ref{lem: composition}
\[\X^{m_r\cdot m'} \equiv (\X^{m_r})^{m'}\equiv (\X^{m})^{m'} \equiv \X^{m\cdot m'} \md{\X^r-1,p}.\]
By Lemma~\ref{lem: evaluation} we have
\begin{align*}
    [f(\X^{m_r})]_{\X^{m_r\cdot r}-1}^{m'}&\equiv [f]^{m'}_{\X^{m_r\cdot r}-1}(\X^{m_r}) \md{\X^{m_r\cdot r}-1}\tag{A}\\
    [\X^{m_r}]^{m'}_{\X^{m_r\cdot r}-1} &\equiv [\X]^{m'}_{\X^{m_r\cdot r}-1}(\X^{m_r}) \md{\X^{m_r \cdot r}-1}.\tag{B}
\end{align*}
As $\X^r-1 \mid \X^{m_r\cdot r}-1$, by Lemma~\ref{lemmaexpdivisor}:
\begin{align*}
    [f]_{\X^{m_r\cdot r}-1}^{m'} &\equiv [f]^{m'}_{\X^{r}-1}\md{\X^r-1,p}\\
    [\X]^{m'}_{\X^{m_r\cdot r}-1} &\equiv [\X]^{m'}_{\X^{r}-1} \md{\X^{r}-1,p}.
\end{align*}
Moreover, by general algebra
\begin{align*}
    [f]_{\X^{m_r\cdot r}-1}^{m'}(\X^{m_r}) &\equiv [f]^{m'}_{\X^{r}-1}(\X^{m_r})\md{\X^{m_r\cdot r}-1,p}\tag{C}\\
    [\X]^{m'}_{\X^{m_r\cdot r}-1}(\X^{m_r}) &\equiv [\X]^{m'}_{\X^{r}-1}(\X^{m_r}) \md{\X^{m_r\cdot r}-1,p}.\tag{D}
\end{align*}
By the introspectivity of $m'$
\[[f]^{m'}_{\X^r-1} \equiv f([\X]^{m'}_{\X^r-1}) \md{\X^r-1,p}.\]
Then,
\[[f]^{m'}_{\X^r-1}(\X^{m_r}) \equiv f([\X]^{m'}_{\X^r-1})(\X^{m_r}) \md{\X^{m_r\cdot r}-1,p}.\]
By (C) and (A)
\[ [f]^{m'}_{\X^{r}-1}(\X^{m_r}) \equiv [f]_{\X^{m_r\cdot r}-1}^{m'}(\X^{m_r})   \equiv [f(\X^{m_r})]_{\X^{m_r\cdot r}-1}^{m'} \md{\X^{m_r\cdot r}-1,p},\]
and by (D) and (B)
\begin{align*}
    f([\X]^{m'}_{\X^r-1})(\X^{m_r}) &\equiv f([\X]^{m'}_{\X^{m_r\cdot r}-1}(\X^{m_r}))\\
                                    &\equiv f([\X^{m_r}]^{m'}_{\X^{m_r\cdot r}-1}) \md{\X^{m_r\cdot r}-1,p}.    
\end{align*} 

Therefore, we have
\[{\big(f(\X^{m_r})\big)^{m'} } \equiv {f((\X^{m_r})^{m'}}) \md{\X^{m_r \cdot r}-1,p}.\]
Thus, by $\X^r-1 \mid \X^{m_r\cdot r}-1$ and Lemma~\ref{lemmaexpdivisor} we have
\[{\big(f(\X^{m_r})\big)^{m'} }  \equiv f(\X^{m_r\cdot m'})\md{\X^{r}-1,p}.\]
By ($\dag$) we get $m \cdot m'$ is introspective for $f(\X)$:
\begin{align*}
\big(f(\X)\big)^{m \cdot m'} \equiv (f(\X^m))^{m'} &\equiv (f(\X^{m_r}))^{m'}\\
                                                &\equiv f(\X^{m_r\cdot m'}) \equiv f(\X^{m \cdot m'})\md{\X^r-1, p}.    
\end{align*}
For the second part of the lemma, by Lemma~\ref{lemmaexpmultiplicative} we have: 
$$
{\big(f(\X) \cdot g(\X)\big)^m } \equiv \big(f(\X)\big)^m \cdot \big(g(\X)\big)^m \equiv f(\X^m) \cdot g(\X^m)\md{\X^r-1, p}.\qedhere$$    
\end{proof}

\subsection{Lemma F}\label{subseclemmaf}
Recall that \(H_0(n,r)\) denotes the following assumptions:
\(n\) is a number such that \(\AKSPrime(n)\) holds, i.e., the AKS algorithm
asserts that \(n\) is prime, and $r$ is the value provided by the algorithm satisfying \(\ord_r(n) > |n|^{2}\).
From now on,  we let $H(n,r)$ denote the assumption $H_0(n,r)$ extended by fixing the following values:
\begin{itemize}
    \item 
    $\ell = \floor{\sqrt{\phi(r)}} \cdot \floor{\log n}$;
    \item 
    $t=|G_r|$;
    \item 
    $p$ as a prime divisor of $n$ such that $\ord_r(p)>1$;
    \item 
    $F_p=\ZZ/p$;
    \item 
    an irreducible divisor $h$ of $\X^r -1$ in $F_p[\X]$ such that $h \not\mid\X^{r'}-1$ for all $r' <r$ and $1< \deg(h) <r$;
    \item 
    $F= F_p[\X] / h(\X)$. 
\end{itemize}
Recall that $G_r$ is the set introduced in Lemma \ref{Lem: symbol G}. We start with an easy observation about $G_r$.
\begin{lemma}[$S^1_2+\GFLT$]\label{lem: distinct mm'}
Assume $H(n,r)$. For any distinct elements $m$ and $m'$ of $G_r$ we have 
\[
\X^m \nequiv \X^{m'} \md{h(\X), p}.
\]
\end{lemma}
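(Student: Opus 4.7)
The plan is to argue by contradiction: suppose $\X^m \equiv \X^{m'} \md{h(\X), p}$ for distinct $m, m' \in G_r$, and derive a contradiction with the defining property of $h$ from Corollary~\ref{Cor: h divisor}, namely that $h \nmid \X^{r'}-1$ for any $0 < r' < r$.

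Without loss of generality assume $m > m'$ (the argument is symmetric). Since $m, m' \in G_r$, both are strictly less than $r$, hence $0 < m - m' < r$. From $\X^m \equiv \X^{m'} \md{h, p}$ we get
\[
\X^{m'}\bigl(\X^{m-m'}-1\bigr) \equiv 0 \md{h, p}.
\]
The key observation is that $\X$ is invertible modulo $h$ over $\F_p$: indeed, $h \mid \X^r-1$ gives $h(0) \mid -1$, so $h \nmid \X$, and since $h$ is irreducible we have $\gcd(\X,h)=1$, which is witnessed by a Bezout identity available from Lemma~\ref{lemmapveuclidpoly}. Multiplying by an inverse of $\X^{m'}$ modulo $h$ yields $\X^{m-m'} \equiv 1 \md{h, p}$, i.e.\ $h \mid \X^{m-m'}-1$ over $\F_p$. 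Setting $r' = m - m'$, we have $0 < r' < r$, contradicting Corollary~\ref{Cor: h divisor}.

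For the formalization in $S^1_2$, all the polynomials in sight (\,$\X^{m}$, $\X^{m'}$, $\X^{m-m'}-1$, and $h$\,) have degree bounded by $r$, which by Lemma~\ref{lemma D} is of logarithmic size in $n$. So they are low degree polynomials and the relevant arithmetic—exponentiation by squaring, cancellation via $\xgcd$ modulo $h$, and divisibility testing—is available as $\PV$-symbols with their basic properties already established. The contradiction is sharply bounded once $h$ and the witnessing $r'$ are fixed, so no strong induction is required beyond what $\PV_1$ already gives.

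The only mildly delicate step is justifying that $\X$ is invertible modulo $h$ in $\F_p[\X]$. In a normal field-theoretic argument one appeals directly to $F_p[\X]/(h)$ being a field; here it is cleaner to observe that $h \mid \X^r-1$ implies the constant term of $h$ is nonzero in $\F_p$, and then to run Lemma~\ref{lemmapveuclidpoly} on $\X$ and $h$ to produce polynomials $u,v$ with $u\X + vh = 1$, from which $\X^{m'} u^{m'} \equiv 1 \md{h,p}$ follows by a straightforward application of Lemma~\ref{lemmaexpmultiplicative} (or simply by multiplying through). Once cancellation is established, the rest is just the contradiction with the defining property of $h$.
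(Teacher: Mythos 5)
Your proposal is correct, and it arrives at the same contradiction with Corollary~\ref{Cor: h divisor} ($h$ would divide $\X^{r'}-1$ for some $0<r'<r$), but the cancellation of the factor $\X^{\min(m,m')}$ is handled by a different mechanism than in the paper. The paper factors $\X^{m'}-\X^m = \X^{m}(\X^{m'-m}-1)$, uses the irreducibility of $h$ together with $h\nmid \X^{m'-m}-1$ to force $h\mid\X^m$, and then again uses irreducibility to conclude $h$ must be $\X$ (up to a unit), contradicting $\deg h>1$ from Corollary~\ref{Cor: h divisor}. You instead observe that $h\mid\X^r-1$ forces $h(0)\neq 0$ in $\F_p$, so $\gcd(\X,h)=1$ and a Bezout identity from Lemma~\ref{lemmapveuclidpoly} makes $\X$ invertible modulo $h$; multiplying through cancels $\X^{m'}$ and yields $h\mid\X^{m-m'}-1$ directly. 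Your route is arguably a bit leaner: it never needs the ``irreducible implies prime'' step for polynomials (which the paper invokes implicitly when concluding $h\mid\X^m$ from $h\mid\X^m(\X^{m'-m}-1)$) nor the $\deg h>1$ clause of Corollary~\ref{Cor: h divisor}, at the small cost of having to establish $\gcd(\X,h)=1$. Both variants live comfortably in $S^1_2$ since $r$ is a length, so all polynomials involved are low degree. One tiny nit: ``$h(0)\mid -1$'' is vacuous in a field and should simply read ``$h(0)\neq 0$''.
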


\begin{proof}
We prove the lemma by contraposition. Suppose   
\[
\X^m \equiv \X^{m'} \md{h(\X), p}.
\]
Then, $h \mid \X^{m'}-\X^m$ and hence
\[
h \mid \X^{m} \cdot (\X^{m'-m}-1).
\]
However, since $m'<r$ and $m \neq m'$, by Corollary \ref{Cor: h divisor} we have 
\[
h \not \mid \X^{m'-m}-1.
\]
Therefore, $h \mid \X^m$. Since $h$ is an irreducible polynomial, this means that $h(\X)=\X$. However, this contradicts the fact that $\deg(h) >1$.
Thus, we must have $m=m'$.
\end{proof}

The following are a series of lemmas necessary for the proof of Lemma F. To state the next lemma, we use the  $\PV$-symbol $\NumOne(x)$ which counts the number of ones in the binary expansion of its argument.
\begin{lemma}[$\PV_1$]\label{Lem: ckm}
There is a $\PV$-symbol $c_{k}^{m}(x)$ such that $\PV_1$ proves for any numbers $1^{(k)}$ and $1^{(m)}$
\[
c_k^m(x) \leftrightarrow |x| \leq m \wedge \NumOne(x)=k.
\]
\end{lemma}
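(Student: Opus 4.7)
The plan is to define $c_k^m(x)$ as the characteristic function of the relevant predicate. Concretely, I would first ensure that the symbol $\NumOne$ is available as a $\PV$-function: it can be defined by the obvious polynomial-time algorithm that scans the bits of $x$ using the symbol $\bit(x,i)$, namely
\[\NumOne(x) = \sum_{i < |x|} \bit(x,i),\]
which $\PV_1$ proves to be well-defined via open induction on $|x|$ using the recursion $\NumOne(2y) = \NumOne(y)$ and $\NumOne(2y+1) = \NumOne(y)+1$ (together with $\NumOne(0) = 0$). In particular $\NumOne(x) \leq |x|$, and $\PV_1$ proves that $\NumOne(x)$ coincides with the count of positions $i < |x|$ at which $\bit(x,i) = 1$.

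Next, I would define $c_k^m(x)$ as the $\PV$-symbol computed by the following algorithm: on input $(1^{(k)}, 1^{(m)}, x)$, first test whether $|x| \leq m$ (which is a basic $\PV$-predicate since $|\cdot|$ is in $\LBASIC$), then compute $\NumOne(x)$ and compare it to $k$; output $1$ if both tests succeed and $0$ otherwise. This is clearly polynomial-time in its inputs (note that $k, m$ are of unary size, so arithmetic on them is cheap), so it is a legitimate $\PV$-symbol.

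The biconditional then follows directly from the definition of $c_k^m$ and the correctness of the primitives used. Specifically, $\PV_1$ proves $c_k^m(x) = 1$ if and only if the conjunction $|x| \leq m \wedge \NumOne(x) = k$ holds, by unfolding the defining algorithm and using basic properties of $|\cdot|$, equality, and the definition of $\NumOne$ via $\bit$. I expect no real obstacle here: the lemma is a bookkeeping statement introducing notation for the $\PV$-predicate that recognizes binary strings of length at most $m$ with exactly $k$ ones, which will be used in the subsequent formalization of the combinatorial number system.
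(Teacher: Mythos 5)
Your proposal is correct and matches what the paper intends: the paper states this lemma without proof, treating it as a bookkeeping step, and your filled-in details (defining $\NumOne$ by limited recursion on notation via $\bit$, then taking $c_k^m$ as the characteristic function of the conjunction, verified by unfolding the defining algorithm in $\PV_1$) are exactly the natural argument one would give. No issues to report.
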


\begin{lemma}[$\PV_1$]\label{lem: injective function}
There is a $\PV$-symbol $f_{k}^m(i)$  such that $\PV_1$ proves for any numbers $1^{(k)}$ and $1^{(m)}$, where $0 \leq k \leq m$, 
\[
f_{k}^m: \left\{0<i \leq \binom{m}{k}\right\} \to \{x; c_k^m(x)=1\},
\]
is an injective function.
\end{lemma}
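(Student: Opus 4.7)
The plan is to define $f_k^m$ as the greedy algorithm underlying the combinatorial number system. On input $i$ with $0 < i \leq \binom{m}{k}$, set $N_k := i-1$ and, for $j = k, k-1, \dots, 1$, locate by binary search the largest $c_j \leq m-1$ with $\Binom(c_j, j) \leq N_j$, then update $N_{j-1} := N_j - \Binom(c_j, j)$. The output is the number $x := \sum_{j=1}^{k} 2^{c_j}$. Since $1^{(m)}$ is in the input, a straightforward row-by-row construction of Pascal's triangle gives a $\PV$-symbol $\Binom$ returning $\binom{a}{b}$ for $a \leq m$ (this avoids division, and all intermediate values are bounded by $2^m$). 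The whole procedure then runs in time polynomial in $k + m + |i|$, so $f_k^m$ is a bona fide $\PV$-symbol.

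First I would verify in $\PV_1$ that the indices produced satisfy $m > c_k > c_{k-1} > \cdots > c_1 \geq 0$, which guarantees that $x$ has exactly $k$ distinct one-bits, each at a position $< m$, hence $c_k^m(x) = 1$. The bound $c_k < m$ comes from $N_k = i - 1 < \binom{m}{k}$. The strict descent $c_{j+1} > c_j$ is the key point and is extracted from Pascal's identity $\binom{c+1}{j+1} = \binom{c}{j+1} + \binom{c}{j}$ together with the maximality clause $\Binom(c_{j+1}+1, j+1) > N_{j+1}$, which rearranges to $\Binom(c_{j+1}, j) > N_{j+1} - \Binom(c_{j+1}, j+1) = N_j$, forcing $c_j < c_{j+1}$.

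For injectivity, the cleanest route is to exhibit a $\PV$-symbol $g_k^m$ that is a left inverse of $f_k^m$. Given $x$ with $c_k^m(x)=1$, read off the one-bit positions of $x$ in increasing order as $c_1 < c_2 < \cdots < c_k$ and define $g_k^m(x) := 1 + \sum_{j=1}^{k} \Binom(c_j, j)$. Then $g_k^m(f_k^m(i)) = i$ becomes an equality between $\PV$-terms that reduces to $N_k = \sum_{j=1}^{k} \Binom(c_j, j)$, which I would prove by open induction on $k$ by peeling off one summand per iteration, using that the sorted one-bit positions of $f_k^m(i)$ are precisely the $c_j$'s generated by the algorithm (in reverse order of production). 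Once $g_k^m \circ f_k^m = \mathrm{id}$ is established, the injectivity of $f_k^m$ is immediate.

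The main obstacle is the bookkeeping around Pascal's identity: one must propagate the tight two-sided inequalities $\Binom(c_j, j) \leq N_j < \Binom(c_j+1, j)$ through each iteration of the greedy loop to extract the strict descent $c_{j+1} > c_j$ and to match the algorithmic output against the sorted bit pattern used by $g_k^m$. A smaller auxiliary hurdle is fixing a convenient formalization of $\Binom$; building it additively from Pascal's triangle sidesteps the need for division in $\PV_1$ and keeps every intermediate quantity polynomially bounded in $1^{(m)}$.
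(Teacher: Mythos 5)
Your proof is correct, but it takes a genuinely different route from the paper's. The paper defines $f_k^m$ by a double recursion on $k$ and $m$ that directly mirrors Pascal's identity: $f_k^{m+1}(i)$ equals $f_k^m(i)$ for $i \leq \binom{m}{k}$, and $f_{k-1}^m(i-\binom{m}{k}) + 2^m$ otherwise, so that the top bit encodes whether the ``element'' $m$ is in the selected subset. Injectivity is then shown by an inductive case split (both arguments in the low range, both in the high range, mixed ruled out by the length bound $|f_k^m(i)| \leq m$). You instead compute the representation greedily and non-recursively, peeling off the largest $c_j$ with $\binom{c_j}{j} \leq N_j$ at each stage, and you certify injectivity by exhibiting a $\PV$ left inverse $g_k^m$ via the telescoping identity $\sum_{j=1}^k \binom{c_j}{j} = N_k - N_0$ (where $N_0 = 0$ follows from the invariant $N_{j-1} < \binom{c_j}{j-1}$ at $j=1$). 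Both arguments are fully polynomial-time and use only bounded/open induction, so both are legitimately $\PV_1$-formalizable. The paper's version keeps the inductive structure of the proof tightly aligned with the recursive definition, which makes the claim $c_k^{m+1}(f_k^{m+1}(i))$ fall out with almost no arithmetic; yours front-loads more Pascal-identity manipulation (to extract the strict descent $c_k > \cdots > c_1$ and the invariant $N_{j-1} < \binom{c_j}{j-1}$) but pays off with a conceptually cleaner injectivity argument, since a two-sided $\PV$ inverse is stronger and more reusable than a case-by-case collision analysis. Both are fine; the only thing to double-check in a write-up of your version is that the greedy step is always well-defined (it is, since $\binom{0}{j}=0 \leq N_j$ for $j \geq 1$, and the first step's bound $c_k < m$ follows from $N_k < \binom{m}{k}$).
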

\begin{proof}
By recursion (simultaneously on $k$ and $m$) define the function $f_k^m$ as:
\[
f^0_0(1)=0 \quad \text{and} \quad
f_k^{m+1}(i) = \begin{cases}
    f_k^{m}(i) &  0 < i \leq \binom{m}{k} \\
     f_{k-1}^{m}(i-\binom{m}{k})+2^m & \binom{m}{k} < i \leq \binom{m+1}{k}
  \end{cases}
\]
Now, we prove the following claim.

\textbf{Claim:} If $0 < i \leq \binom{m+1}{k}$ then we have $c_k^{m+1}(f_k^{m+1}(i))$.
 
\noindent We prove the claim by induction on $k$ and $m$. For $k=m=0$ the claim trivially holds. Suppose by the induction step, for any $0 < i \leq \binom{m+1}{k}$, we have $c_k^{m}(f_k^m (i))$ and $c_{k-1}^{m}(f_{k-1}^m (i))$. Using the equality $\binom{m+1}{k}= \binom{m}{k}+ \binom{m}{k-1}$ and the definition of the predicate $c_k^{m+1}(x)$, it is easy to see that the following holds for any $x < 2^{m+1}$
\[
c_k^{m+1}(x)= c_k^{m}(x) \vee c_k^{m}(2^m+x) \tag{*}
\]
Now, if $0 < i \leq \binom{m+1}{k}$ then either $i \leq \binom{m}{k}$ or $\binom{m}{k} < i \leq \binom{m+1}{k}$. 
\begin{itemize}
    \item 
    If  $0 < i \leq \binom{m}{k}$, we have $f^{m+1}_k(i)=f^m_k(i)$ and by the induction step we have $c_k^m(f^m_k(i))$. By (*), we get $c_k^{m+1}(f^m_k(i))$. 
    \item 
    If $\binom{m}{k} < i \leq \binom{m+1}{k}$, we have $f_{k}^{m+1}(i)= f_{k-1}^{m}(i-\binom{m}{k})+2^m$. By the induction step, $c_{k-1}^{m}(f_{k-1}^m (i-\binom{m}{k}))$. Therefore, we get $c_{k}^{m+1}(f_{k-1}^m (i-\binom{m}{k})+2^m)$ and hence, by (*), we get $c_{k}^{m+1}(f_{k}^{m+1} (i))$.
\end{itemize}
This finishes the proof of the claim. Now, it is left to prove that $f_k^m$ is injective. By induction on $k$ and $m$ we prove that if $f_{k}^{m+1}(i)=f_{k}^{m+1}(j)$ then $i=j$. By the induction step, we know that
\[
\text{if} \; f_{k}^{m}(i)=f_{k}^{m}(j)\; \text{then} \; i=j \qquad \text{if} \; f_{k-1}^{m}(i)=f_{k-1}^{m}(j)\; \text{then} \; i=j
\]
There are three cases:
\begin{enumerate}
\item 
If $i,j \leq \binom{m}{k}$, then $f_{k}^{m+1}(i)=f_{k}^{m+1}(j)=f_{k}^{m}(i)=f_{k}^{m}(j)$. Therefore, by the induction step, we have $i=j$.
\item 
If $\binom{m}{k} < i,j \leq \binom{m+1}{k}$, then $f_{k}^{m+1}(i)=f_{k}^{m+1}(j)=f_{k-1}^{m}(i-\binom{m}{k})+2^m=f_{k-1}^{m}(j-\binom{m}{k})+2^m$. Again, by the induction step, we have $i=j$.
\item 
We show that the case where $i \leq \binom{m}{k}$ and $\binom{m}{k} <j \leq \binom{m+1}{k}$ is not possible. For the sake of contradiction, suppose otherwise. Then, 
\begin{align*}
f_{k}^{m+1}(i) & =f_{k}^{m+1}(j) & \text{(By the assumption)} \\
f_{k}^{m+1}(i) & =f_{k}^{m}(i) & \text{as} \; i \leq \binom{m}{k} \\
f_{k}^{m+1}(j) & =f_{k-1}^{m}(j-\binom{m}{k})+2^m & \text{as} \; \binom{m}{k} <j \leq \binom{m+1}{k}
\end{align*}
By the claim $c_k^{m}(f_k^m(i))$ holds, which by definition we get $|f_k^m(i)| \leq m$. We have, 
\[
|f_{k}^{m}(i)| = |f_{k-1}^{m}(j-\binom{m}{k})+2^m|.
\]
However, 
$|f_{k}^{m}(i)| \leq m$,
which is a contradiction with
\[
|f_{k-1}^{m}(j-\binom{m}{k})+2^m| > m.\qedhere
\]
\end{enumerate}
\end{proof}
\begin{definition}
A function $f: \{1, \ldots, m\} \to \{1, \ldots, m'\} $ is called \emph{strictly order-preserving (s.o.p.)} if for any $x , y \leq m$ it satisfies
\[
\text{if} \quad x<y \quad \text{then} \quad  f(x)<f(y).
\]
\end{definition}
Recall that $\ell = \floor{\sqrt{\phi(r)}} \cdot \floor{\log n}$ and $r$ is a length of a string. By Lemma \ref{lem: t<phi(r)}, we have $t=|G_r| \leq \phi(r)$, which means that both $t$ and $\ell$ are lengths. Lemmas \ref{lem: injective function}, \ref{lem: f to e}, \ref{lem: injective}, and \ref{lem: uv} are the ingredients needed to prove the original Lemma F in \cite{AKS}. In the former three lemmas, we provide injective functions, which we will compose to get the main result in Corollary \ref{cor: combination of lemmas}. 

\begin{lemma}[$\PV_1$]\label{lem: injective}
Assume $H(n,r)$. There is a $\PV$-symbol $h^{\ell}_{t}(x)$ such that $\PV_1$ proves that the function $h^{\ell}_{t}$ with the domain
\[
 \{x; c_{\ell+1}^{t+\ell}(x)=1\}
\]
and the codomain
\[
\{f \mid f \; \text{is a  s.o.p. function}, f: \{1, 2, \ldots, \ell+1\} \to \{1, 2, \ldots, t+\ell\}\}
\]
is an injective function.
\end{lemma}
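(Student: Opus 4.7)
The plan is to realize $h^\ell_t$ as the standard correspondence between length-$\leq(t+\ell)$ binary strings with exactly $\ell+1$ ones and strictly order-preserving injections $\{1,\ldots,\ell+1\}\to\{1,\ldots,t+\ell\}$: a string $x$ with ones at positions $i_1<\cdots<i_{\ell+1}$ is sent to the function $j\mapsto i_j+1$. Concretely, define the $\PV$-symbol $h^\ell_t(x)$ by the obvious left-to-right scan: iterate $i$ from $0$ to $t+\ell-1$ while maintaining a counter $j$ initialized to $1$; whenever $\bit(x,i)=1$, set the $j$-th entry of the output sequence to $i+1$ and increment $j$. Since $t$ and $\ell$ are lengths of numbers, $t+\ell$ is a length and the scan runs in polynomial time; the output is encoded as the sequence $\langle h^\ell_t(x)(1),\ldots,h^\ell_t(x)(\ell+1)\rangle$ of length $\ell+1$.

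Next I would check that on inputs satisfying $c^{t+\ell}_{\ell+1}(x)=1$ this function has the claimed domain/codomain and is strictly order-preserving. The hypothesis $\NumOne(x)=\ell+1$ together with $|x|\leq t+\ell$ ensures that the scan records exactly $\ell+1$ values, all in $\{1,\ldots,t+\ell\}$; this is a routine open $\PV_1$-induction on the scanning index, using that $\NumOne$ is computed by essentially the same scan. Strict order preservation is immediate from the scan visiting positions $i$ in increasing order, so the recorded values $i+1$ are also strictly increasing in $j$.

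For injectivity, the key observation is that $x$ is completely recovered from the sequence of positions of its one-bits: explicitly,
\[
x \;=\; \sum_{j=1}^{\ell+1} 2^{h^\ell_t(x)(j)-1}.
\]
This identity is provable in $\PV_1$ by open induction on the scan, using the standard properties of $\bit$ and of short iterated sums formalized in $\PV_1$. Hence $h^\ell_t(x)=h^\ell_t(x')$ yields $x=x'$.

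The main obstacle is essentially bureaucratic rather than mathematical: one must arrange the invariants of the scan (the number of ones recorded so far, strict monotonicity of the recorded values, the partial reconstruction identity) so that they are all expressible as open or sharply bounded $\PV$-formulas, allowing $\PV_1$'s induction to apply. No new number-theoretic or algebraic content is required beyond the basic bit-manipulation apparatus already developed in the preceding sections.
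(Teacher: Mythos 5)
Your construction is essentially the same as the paper's: both realize $h^\ell_t$ as the map sending $x$ to the increasing sequence of positions of its one-bits (yours via a single left-to-right scan, the paper's by repeatedly extracting the smallest one-bit and stripping it off), and both derive strict order preservation from the monotone enumeration of positions. Your explicit reconstruction identity $x=\sum_j 2^{h^\ell_t(x)(j)-1}$ makes the injectivity argument a bit cleaner than the paper's brief assertion, but the underlying idea is identical.
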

\begin{proof}
The function $h^{\ell}_{t}(x)$ is defined by recursion, sending each $x$ to a function $f$ of the above form. The idea is as follows. Suppose $x$ is given. Let $x_1=x$. We start by defining $f(1)=j$ where $j$ is the smallest bit of $x_1$ that is one, i.e, for any $j' < j$ the $j'$th bit of $x_1$ is zero. This is possible in $\PV_1$ because of the following: there is a $\PV$-symbol enumerating small sets and going through it to find the smallest nonzero bit and output the index. To define $f(2)$, we first calculate $x_2=x_1-2^{j-1}$. Then, $f(2)=k$ where $k$ is the smallest bit of $x_2$ that is one. We continue till reaching $f(\ell+1)$. Note that in each step the number of ones in $x_{i+1}$ is one less than the number of ones in $x_i$. The function $f$ with the domain $\{1, \ldots, \ell+1\}$ is well-defined since there are $\ell+1$ ones in $x$. Moreover, $f(i) \in \{1, \ldots , t+\ell\}$ and it is a s.o.p. function. It is clear that the function $h^{\ell}_{t}$ defined this way is injective.
\end{proof}

\begin{lemma}[$\PV_1$]\label{lem: f to e}
Assume $H(n,r)$. There is a $\PV$-symbol $g^\ell_t(f)$ such that $\PV_1$ proves that the function $g^\ell_t$ with the domain
\[
\{f \mid f \;\text{is a  s.o.p. function}, f: \{1, 2, \ldots, \ell+1\} \to \{1, 2, \ldots, t+\ell\}\}
\]
and the codomain $\{(e_0, \ldots, e_\ell) ; \Sigma_{i=0}^\ell e_i \leq t-1\}$ is an injective function.
\end{lemma}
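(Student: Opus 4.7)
The plan is to define $g^\ell_t$ by the ``gap encoding''. Given a strictly order-preserving $f \colon \{1,\dots,\ell+1\} \to \{1,\dots,t+\ell\}$, set
\[
e_0 = f(1)-1,\qquad e_i = f(i+1)-f(i)-1 \quad\text{for } 1\leq i\leq \ell,
\]
and let $g^\ell_t(f) = \langle e_0,\dots,e_\ell\rangle$. This map is clearly computable in polynomial time from the encoding of $f$, so it is a $\PV$-symbol.

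For correctness, since $f(1)\geq 1$ and $f(i+1)>f(i)$ (strict order-preservation), each $e_i\geq 0$. To verify the sum condition one computes the telescoping identity
\[
\sum_{i=0}^{\ell} e_i = (f(1)-1) + \sum_{i=1}^\ell (f(i+1)-f(i)-1) = f(\ell+1) - 1 - \ell \leq (t+\ell) - 1 - \ell = t-1,
\]
where the first equality is a straightforward telescoping sum provable in $\PV_1$ by open induction on $\ell$ (here $\ell$ is a length, so sharply bounded induction suffices), and the final inequality uses that $f$ maps into $\{1,\dots,t+\ell\}$.

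Injectivity is argued by inverting the construction: from any tuple $(e_0,\dots,e_\ell)$ one recovers a candidate $f$ via the recursion $f(1) = e_0+1$ and $f(i+1) = f(i) + e_i + 1$, which can be iteratively computed in $\PV_1$. If $g^\ell_t(f_1) = g^\ell_t(f_2)$, then both $f_1$ and $f_2$ satisfy this recursion with identical initial value and identical increments, so an induction on $i\leq \ell+1$ shows $f_1(i)=f_2(i)$.

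The only mild obstacle is bookkeeping around the encoding of the domain objects (strictly order-preserving functions as sequences of $\ell+1$ values bounded by $t+\ell$) and making sure the telescoping identity is set up inside $\PV_1$'s framework for iterated sums over sequences; but this is precisely the iterative sum-over-sequence algorithm discussed in the preliminaries, so no new idea is required.
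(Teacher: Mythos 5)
Your proof is correct and takes essentially the same approach as the paper: the paper defines $e_i$ as the cardinality of the gap set between consecutive values of $f$ (and before $f(1)$), which is exactly your arithmetic formula $e_0 = f(1)-1$, $e_i = f(i+1)-f(i)-1$, and it uses the same telescoping bound and the same inductive recovery of $f$ from its gaps to establish injectivity.
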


\begin{proof}
The function $g^\ell_t$ behaves as follows. It takes an $f$ of the above form and sends it to a tuple $(e_0, \ldots, e_\ell)$. Take subsets $s_i \subseteq \{1, \ldots, t+\ell\}$ for $0 \leq i \leq \ell$ as follows:
\begin{align*}
s_0&:= \{j \in \{1, \ldots, t+\ell\} ; j < f(1)\}\\
s_i&:= \{j \in \{1, \ldots, t+\ell \}; f(i) < j < f(i+1)\}\} \; \text{ for $1 \leq i \leq \ell$}
\end{align*}
Now, we take $g^\ell_t(f):= (|s_0|, \ldots, |s_\ell|)$, i.e., $e_i := |s_i|$ for each $0 \leq i \leq \ell$. We have to show that $\Sigma_{i=0}^\ell |s_i| \leq t-1$. Note that
\begin{align*}
\Sigma_{i=0}^\ell |s_i| &=|s_0|+\Sigma_{i=1}^\ell |s_i|  \\
& = f(1) -1 + \Sigma_{i=1}^\ell (f(i+1)-f(i)-1)\\
& = f(\ell+1)- (\ell +1)\\
& \leq (t+\ell) - (\ell +1) \tag{*}\\
& \leq t-1
\end{align*}
where the inequality (*) is derived by the fact that the codomain of $f$ is $\{1, \ldots, t+\ell\}$.

We prove that $g^\ell_t$ is injective. Suppose $g^\ell_t(f)=g^\ell_t(f')$. Therefore, there exist sets $s_0,\dots,s_\ell$ and $s'_0,\dots,s'_\ell$ such that $(|s_0|, \ldots, |s_\ell|)=(|s'_0|, \ldots, |s'_\ell|)$. By induction on $1 \leq i \leq \ell+1$, we will show that $f(i)=f'(i)$. Since $|s_0|=|s'_0|$ we get
\[
|\{j \in \{1, \ldots, t+\ell ; j < f(1)\}|=|\{k \in \{1, \ldots, t+\ell ; k < f'(1)\}|,
\]
which means that $f(1)=f'(1)$. Now, suppose for an $i \leq \ell$ we have $f(i)=f'(i)$. Since $|s_i|=|s'_i|$ we have:
\small\[
|\{j \in \{1, \ldots, t+\ell \}; f(i) < j < f(i+1)\}|=|\{k \in \{1, \ldots, t+\ell\} ; f'(i) < k< f'(i+1)\}|.
\]
\normalsize Therefore, $f(i+1)=f'(i+1)$. 
\end{proof}

\begin{lemma}[$\PV_1$] \label{lem: uv}
Let $u$ and $v$ be sets with cardinality bounded by a length. Then we have:
\[
\text{if} \qquad u \neq v \qquad \text{then} \qquad \Pi_{b \in u} (\X +b) \neq \Pi_{b \in v} (\X +b)
\]   
\end{lemma}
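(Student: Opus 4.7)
The plan is to prove the statement by contrapositive, exhibiting a concrete point where the two iterated products take different values. Suppose $u \neq v$; by the symmetry of the statement between $u$ and $v$, we may assume that some $b_0 \in u \setminus v$ exists, which can be found by sharply bounded search since $u$ and $v$ have cardinality bounded by a length. All arithmetic in what follows is performed over the relevant bounded field $F$ (in the intended application $F = \ZZ/p$, inside which the elements of $u \cup v$ are pairwise distinct because every $b$ involved is less than $\ell$ and hence less than $p$). Write $P_u(\X) = \prod_{b \in u}(\X + b)$ and $P_v(\X) = \prod_{b \in v}(\X + b)$; these iterated products over sets bounded by a length are well-defined $\PV$-symbols, and standard manipulation of coded iterated products gives the factorization $P_u(\X) = (\X + b_0) \cdot \prod_{b \in u \setminus \{b_0\}}(\X + b)$.

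I would then evaluate both polynomials at the field element $-b_0$ of $F$. Immediately $P_u(-b_0) = 0$, because the factor $(\X + b_0)$ vanishes there. On the other hand, $P_v(-b_0) = \prod_{b \in v}(b - b_0)$, and for every $b \in v$ we have $b \neq b_0$ (since $b_0 \notin v$), so each factor $b - b_0$ is a nonzero element of $F$. As $F$ has no zero divisors, the iterated product of finitely many nonzero elements is nonzero; this fact is itself provable in $\PV_1$ by open induction on the length of the sequence encoding $v$. Consequently $P_u(-b_0) = 0 \neq P_v(-b_0)$, so $P_u$ and $P_v$ cannot be equal as polynomials over $F$, and therefore not as formal polynomials either.

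The main obstacle is not algebraic but rather the careful bookkeeping inside $\PV_1$: one has to turn the inequality $u \neq v$ into a concrete witness $b_0 \in u \triangle v$ via a sharply bounded search, reduce to the case $b_0 \in u \setminus v$ by swapping the roles of $u$ and $v$, and justify the splitting $P_u = (\X + b_0) \cdot P_{u \setminus \{b_0\}}$ by reassociating the coded product, whose correctness is checked by induction on the enumeration of $u$. Each of these steps is routine given the coding of sets bounded by a length and the recursive definition of iterated products already developed in the preliminaries, so no new principle beyond $\PV_1$ is required.
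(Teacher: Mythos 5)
Your proof is correct and is essentially the same argument as the paper's: reducing the two iterated products modulo $\X + c$ (as the paper does) is exactly the same operation as evaluating them at $-b_0$ (as you do), and both proofs then rely on the product of nonzero field elements being nonzero to derive the contradiction. You are somewhat more explicit about the $\PV_1$ bookkeeping (witnessing $b_0$ by sharply bounded search, proving the no-zero-divisors product fact by induction on the sequence length), but the underlying idea is identical.
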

\begin{proof}
For the sake of contradiction, suppose $\Pi_{b \in u} (\X +b) = \Pi_{b \in v} (\X +b)$ and $u \neq v$. Then, without loss of generality we can assume that there is an element $c \in u$ such that $c \notin v$. Therefore,
\[
\Pi_{b \in u} (\X+b) \equiv 0 \qquad \md{\X+c}
\]
and 
\begin{align*}
\Pi_{b \in u} (\X+b)  & \equiv \Pi_{b \in v} (\X+b)\\
& \equiv  \Pi_{b \in v} \big((\X+c) + (b-c)\big)\\
& \equiv  \Pi_{b \in v} (b-c)
\qquad \md{\X+c}    
\end{align*}
Therefore, 
\[
\Pi_{b \in v} (b-c) \equiv 0 \md{\X+c}.
\]
Hence, there exists $b \in v$ such that $b=c$, a contradiction with $c \notin v$.
\end{proof}

The following lemma introduces a set $\hat{P}_t$, which will be used later.


\begin{lemma}
  Assuming $H(n,r)$, there is a $\PV$-symbol $\hat P_t(f)$ such that $\PV_1$ proves:
    \begin{align*}
        \hat P_t(f) \leftrightarrow &\:\text{there is a sequence $\langle e_0,\dots, e_\ell \rangle$ such that:}\\
                  &\sum_{a=0}^\ell e_a < t,  f\equiv \Pi_{a=0}^\ell (\X +a)^{e_a} \md{h,p};\quad \deg f <\deg h.
    \end{align*}
\end{lemma}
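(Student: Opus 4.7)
The plan is to realize $\hat P_t(f)$ as a $\PV$-symbol via exhaustive enumeration over the (bounded) space of candidate exponent tuples. On input $f$ (together with the fixed parameters $h, p, \ell, t$, with $t$ and $\ell$ supplied in unary as $1^{(t)}$ and $1^{(\ell)}$), the algorithm first verifies $\deg f < \deg h$, returning $0$ if this fails. Otherwise it iterates over all tuples $\langle e_0,\dots,e_\ell\rangle$ of non-negative integers with $\sum_{a=0}^{\ell}e_a<t$; for each candidate it computes $\prod_{a=0}^{\ell}(\X+a)^{e_a}$ modulo $(h,p)$ using the $\PV$-symbols for iterated polynomial multiplication and exponentiation by squaring (as used in Lemma~\ref{lemmaexpmultiplicative}), and compares the result with $f$, returning $1$ on the first match and $0$ after exhausting all candidates.

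For the enumeration itself I would reuse the combinatorial number system developed earlier in this section. By Lemma~\ref{lem: injective function}, the $\PV$-symbol $f^{t+\ell}_{t-1}$ injectively indexes the $(t-1)$-element subsets of $\{1,\dots,t+\ell\}$ by integers in $\{1,\dots,\binom{t+\ell}{t-1}\}$; the standard stars-and-bars bijection (the same one implicit in Lemma~\ref{lem: f to e}) then converts each such subset in polynomial time into a unique tuple $(e_0,\dots,e_\ell)$ of non-negative integers with $\sum e_a \leq t-1$, by reading off the gap lengths between the chosen indices. Composing these, the loop enumerates every admissible tuple exactly once, so the algorithm's output is $1$ iff some witnessing sequence exists, which is precisely the right-hand side of the biconditional.

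Verification of the $\PV_1$-provable equivalence is then routine. The forward direction is tautological: if $\hat P_t(f) = 1$, then the code has already produced an explicit witness and the degree check has passed. The converse proceeds by open induction on the loop counter: if no iteration yields a match, then by injectivity of the enumeration no admissible tuple can yield a product congruent to $f$ modulo $(h,p)$, and hence no such sequence exists. Both directions depend only on the correctness of the underlying $\PV$-symbols for polynomial arithmetic and exponentiation modulo $(h,p)$, all of which are already established.

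The main point to be careful about is complexity bookkeeping rather than any genuine mathematical obstacle. The enumeration has $\binom{t+\ell}{t-1}$ many iterations, which is only polynomially bounded relative to the unary parameters $1^{(t)}$ and $1^{(\ell)}$; this matches the convention already adopted for the combinatorial number system of Lemma~\ref{lem: injective function}, whose index range is likewise bounded only relative to a unary input. Once this convention is aligned with the conventions already in force elsewhere in Section~\ref{secT2proof}, the construction of $\hat P_t$ as a genuine $\PV$-symbol becomes immediate, and no deeper algebraic argument is required.
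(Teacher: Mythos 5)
Your proposed algorithm is not polynomial time, so the predicate you build would not be a $\PV$-symbol. The number of tuples $(e_0,\dots,e_\ell)$ with $\sum_{a=0}^{\ell}e_a<t$ is $\binom{t+\ell}{\ell+1}$, and this binomial coefficient is \emph{exponential} in $t+\ell$, not polynomial. Since $1^{(t)}$ and $1^{(\ell)}$ have bit-lengths $t$ and $\ell$ respectively, ``polynomial in the unary inputs'' means polynomial in $t+\ell$; but $\binom{t+\ell}{\ell+1}$ is of magnitude up to $2^{\Theta(t+\ell)}$ when $t$ and $\ell$ are comparable (and in the present context both $t>|n|^2$ and $\ell=\floor{\sqrt{\phi(r)}}\floor{\log n}$ grow with $|n|$). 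Your comparison with Lemma~\ref{lem: injective function} misreads what that lemma buys you: there, $f^m_k$ is a polynomial-time function evaluated at a \emph{single} index $i\leq\binom{m}{k}$ (an input of $O(m)$ bits), which is fine; iterating over \emph{all} such $i$ is a different matter entirely and blows up to exponential time. So the ``complexity bookkeeping'' you flag as the delicate point is in fact where the proposal breaks.

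The paper sidesteps this by not enumerating at all: on input $f$ with $\deg f<\deg h$, it greedily tries to divide $f$ by each $\X+a$ for $a\leq\ell$, peeling off one linear factor per round and tallying the multiplicities into the sequence $\langle e_0,\dots,e_\ell\rangle$; it stops after at most $t$ rounds, accepting iff the residual polynomial becomes a unit. Each round costs $O(\ell)$ polynomial divisions of low-degree polynomials, for $O(t\ell)$ total work --- genuinely polynomial in $t$, $\ell$, and the bit-lengths of $h$ and $p$. To repair your argument you would need to replace the brute-force loop with something like this greedy decomposition (or some other polynomial-time decision procedure); the semantic correctness and $\PV_1$-verification you sketch are otherwise along the right lines, but they are moot until the algorithm itself is feasible.
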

\begin{proof}
Let $f$ be a given polynomial of degree $t$. The truth value of $\hat P_t(f)$ is computed as follows,
\begin{itemize}
\item 
if $f \nequiv 0 \md{\X+a}$ for all $0 \leq a \leq \ell$, then $\hat{P}_t(f)$ is false. 
\item 
Otherwise, take the smallest $a$ such that $f \equiv 0 \md{\X+a}$ and assign $f_1=\frac{f}{\X+a}$.
\end{itemize}
Repeat the step for $f_1$, i.e., 
\begin{itemize}
\item 
if $f_1 \nequiv 0 \md{\X+a}$ for all $0 \leq a \leq \ell$, then $\hat{P}_t(f)$ is false. 
\item 
Otherwise, take the smallest $a$ such that $f \equiv 0 \md{\X+a}$ and assign $f_2=\frac{f_1}{\X+a}$.
\end{itemize}
Repeat the step at most $t$ many times, where each step takes $\ell$ many divisions. If we reach some $i$ such that $f_i \equiv 1 \md{\X+a}$ for some $0 \leq a \leq \ell$, then $\hat{P}_t(f)$ is true. Otherwise, $\hat{P}_t(f)$ is false. To compute the sequence $\langle e_0,\dots, e_\ell \rangle$ from the process, initially assign $s_0=\langle e_0,\dots, e_\ell \rangle = \langle 0,\dots, 0 \rangle$. In step $i+1$
\begin{align*}
\text{if} \quad f_i=\frac{f_{i-1}}{\X+a} \quad \text{and} \quad s_i&=\langle e_0,\dots, e_a, \dots, e_\ell \rangle \quad \text{then}\\
s_{i+1}&=\langle e_0,\dots,  e_a+1, \dots, e_\ell \rangle
\end{align*}
Now, to finish the proof, we need the following easy facts:
\begin{enumerate}
\item 
For all $1 \leq a \leq \ell$ we have $\X+a \neq 0$ in $F$, because by Corollary \ref{Cor: h divisor} we have $\deg(h)>1$.
\item
The polynomials $\X, \X+1, \cdots, \X+\ell$ are all distinct in $F$. The reason is as follows. Let $1 \leq i \neq j \leq \ell$. Since
\[
\ell =\floor{\sqrt{\phi(r)}} \cdot \floor{\log n} < \floor{\sqrt{r}} \cdot \floor{\log n} <r \quad \text{and} \quad r <p,
\]
then $i \neq j$ in $F_p$.
\end{enumerate}
Moreover, the argument can be formalized in $\PV_1$ as $\ell= \floor{\sqrt{\varphi(r)}}\cdot \floor{\log n}$ and by Lemma \ref{lemma D} we have $r \leq |n|^{10}$. 
\end{proof}

\begin{corollary}[$\PV_1$]\label{cor: combination of lemmas}
Assume $H(n,r)$. There is a $\PV$-symbol $\sigma_t^\ell(i)$ such that $\PV_1$ proves 
\[
\sigma^{\ell}_t: \binom{t+\ell}{\ell+1}\to  \hat{P}_t
\]
is an injective function.
\end{corollary}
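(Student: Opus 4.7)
The plan is to take $\sigma^\ell_t$ to be the $\PV$-computable composition of the three injective $\PV$-symbols already built in Lemmas~\ref{lem: injective function}, \ref{lem: injective}, and \ref{lem: f to e}: first apply $f^{t+\ell}_{\ell+1}$ to the input $i$, then pass the result through $h^\ell_t$ to obtain a strictly order-preserving map, then apply $g^\ell_t$ to extract a tuple $(e_0,\dots,e_\ell)$ with $\sum_a e_a \leq t-1$, and finally return $\prod_{a=0}^\ell (\X+a)^{e_a} \bmod (h, p)$. Each stage is a $\PV$-symbol and polynomial arithmetic modulo the fixed low-degree $h$ is in $\PV$, so $\sigma^\ell_t$ is itself a $\PV$-symbol.

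Membership in $\hat P_t$ is then immediate: the output has degree less than $\deg h$ by construction, and the tuple $(e_0,\dots,e_\ell)$ with $\sum_a e_a < t$ produced along the way is exactly the witness demanded by the predicate $\hat P_t$. For injectivity, the composition of the three injective maps guarantees that whenever $i\neq i'$ one has $(e_0,\dots,e_\ell) \neq (e'_0,\dots,e'_\ell)$, so the remaining task is to verify that distinct tuples yield distinct residues in $F=F_p[\X]/h$.

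This final reduction is the main technical obstacle. The strategy is a strengthening of Lemma~\ref{lem: uv} that accommodates exponents larger than one: supposing $\prod_a(\X+a)^{e_a} \equiv \prod_a(\X+a)^{e'_a} \pmod{h,p}$, pick the smallest index $c$ at which the two tuples differ, cancel the shared factor $(\X+c)^{\min(e_c, e'_c)}$, and reduce the resulting identity modulo $(\X+c)$; the fact that $b-c \neq 0$ in $F_p$ for all $b\neq c$ with $0\leq b\leq \ell$ (since $\ell < p$), together with $\deg h > 1$ from Corollary~\ref{Cor: h divisor}, then yields a contradiction exactly as in the proof of Lemma~\ref{lem: uv}. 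The delicate point is justifying this cancellation modulo $h$ rather than in $F_p[\X]$ directly; here one would use the bound $\sum e_a \leq t-1 < \deg h$, which follows from the inequality $t \leq \deg h$. This inequality in turn can be extracted from Lemma~\ref{lem: distinct mm'}, since the map $m \mapsto \X^m \bmod h$ embeds $G_r$ into the $F_p$-algebra automorphism group of $F$, whose cardinality is $\deg h$.
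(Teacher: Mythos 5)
Your proposal correctly identifies the composition $g^\ell_t \circ h^\ell_t \circ f^{t+\ell}_{\ell+1}$ producing the tuple $(e_0,\dots,e_\ell)$ with $\sum_a e_a \leq t-1$, and the map $i \mapsto \prod_{a=0}^\ell(\X+a)^{e_a}$; this part matches the paper. But then you diverge in a way that creates a genuine gap.

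The paper's $\sigma^\ell_t(i)$ is the polynomial $\prod_{a=0}^\ell(\X+a)^{e_a}$ in $F_p[\X]$, and its injectivity is as a map into polynomials; the only tool needed is Lemma~\ref{lem: uv} (extended from sets to multisets of linear factors, a routine modification). You instead define $\sigma^\ell_t(i)$ to be the residue $\prod_a(\X+a)^{e_a} \bmod(h,p)$ and try to prove that distinct tuples give distinct residues. That is a substantially stronger statement --- it is essentially Lemma~\ref{Lemma 4.7}, not this corollary --- and the paper proves it by an entirely different mechanism: introspectivity of elements of $G_r$, the substitution $\Y=\X^m$, and counting roots of $Q(\Y)=f(\Y)-g(\Y)$, which has degree $<t$ but would have $t=|G_r|$ distinct roots in $F$. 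Your proposed argument for this stronger claim does not work. Once you pass to $F=F_p[\X]/h$, the elements $\X+c$ are nonzero field elements (units), so there is no meaningful operation ``cancel the shared factor $(\X+c)^{\min}$ and reduce modulo $\X+c$'': everything is invertible, $\X+c$ is not a maximal ideal of $F$, and the evaluation trick from Lemma~\ref{lem: uv} has no analogue inside a field.

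Additionally, your supporting inequality $t \leq \deg h$ is false in general; the direction is reversed. One has $\deg h = \ord_r(p)$, and the cyclic subgroup of $(\ZZ/r)^\times$ generated by $p$ sits inside $G_r$, so $\deg h \leq |G_r| = t$. Your justification via ``$G_r$ embeds into $\mathrm{Aut}_{F_p}(F)$'' also fails: only the powers of $p$ (Frobenius) give $F_p$-automorphisms of $F$, not arbitrary elements of $G_r$. Fortunately none of this is needed here. Keep $\sigma^\ell_t$ as a map to polynomials, prove injectivity by the multiset version of Lemma~\ref{lem: uv} (pick the least index $c$ with $e_c \neq e'_c$, divide out $(\X+c)^{\min(e_c,e'_c)}$ in $F_p[\X]$, then evaluate at $-c$), and defer the ``distinct modulo $(h,p)$'' claim to Lemma~\ref{Lemma 4.7}.
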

\begin{proof}
Take the functions $f_{\ell+1}^{t+\ell}$, $g_t^\ell$, and $h^{\ell}_t$ as in Lemmas \ref{lem: injective function}, \ref{lem: f to e}, and \ref{lem: injective}, respectively. By composing these functions, we get $e_0, \ldots, e_\ell$ such that
\[
g^{\ell}_t(h^{\ell}_t(f_{\ell+1}^{t+\ell}(i)))=(e_0, \ldots, e_\ell) \quad \text{and} \quad \Sigma_{i=0}^\ell e_i \leq t-1.
\]
Define $\sigma^\ell_t(i)=\prod_{a=0}^{\ell} (\X+a)^{e_a}$. The function $\sigma^\ell_t$ is injective by Lemma \ref{lem: uv}  and by the fact that the functions $f_{\ell+1}^{t+\ell}$, $g^\ell_t$, and $h^\ell_t$ are all injective. 
\end{proof}


\begin{lemma}[$\PV_1$, Lemma F]\label{Lemma 4.7}
  Assume $H(n,r)$. There is a $\PV$-symbol $\tau^\ell_t(x)$ such that $\PV_1$ proves that the function
\[
\tau^\ell_t: \binom{t+\ell}{t-1} \to \hat{P}_t
\]
satisfies the following condition:
\[
\text{if} \qquad x\neq y \qquad \text{then} \qquad \tau^\ell_t(x) \nequiv \tau^\ell_t(y) \md{h,p}.
\]
\end{lemma}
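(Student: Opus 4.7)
The plan is to take $\tau^\ell_t$ to be the function provided by Corollary~\ref{cor: combination of lemmas}, reducing the output modulo $h$ if necessary so that it lies in $\hat P_t$; note that $\binom{t+\ell}{\ell+1} = \binom{t+\ell}{t-1}$, so the domain matches. The injectivity from Corollary~\ref{cor: combination of lemmas} is only at the level of polynomial equality in $F_p[\X]$; the task is to strengthen it to injectivity modulo $h$ and $p$. Suppose $\tau^\ell_t(i) \equiv \tau^\ell_t(j) \md{h, p}$ and write $f = \prod_{a=0}^\ell (\X+a)^{e_a}$ and $g = \prod_{a=0}^\ell (\X+a)^{e'_a}$ for the underlying unreduced products, where $\sum_a e_a, \sum_a e'_a \leq t-1$ by Lemma~\ref{lem: f to e}. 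The goal is to deduce $f = g$ as polynomials in $F_p[\X]$, after which Corollary~\ref{cor: combination of lemmas} immediately gives $i = j$.

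The key step uses introspectivity to propagate the congruence $f \equiv g \md{h, p}$ through the whole set $G_r$. By Lemma~\ref{lem: p introspective}, both $p$ and $n/p$ are introspective for every $\X + a$ with $0 \leq a \leq \ell$; Lemma~\ref{lem: intro mm'} then lifts this (closure under products of integers and products of polynomials) to introspectivity of every integer $M = (n/p)^b p^c$ for both $f$ and $g$. Given $m \in G_r$, Lemma~\ref{Lem: symbol G} supplies witnesses $b, c \leq r$ with $M = (n/p)^b p^c \equiv m \md{r}$. Introspectivity yields $f(\X)^M \equiv f(\X^M) \md{\X^r-1, p}$, and writing $M = qr + m$ together with $\X^r \equiv 1$ one obtains $\X^M \equiv \X^m \md{\X^r-1}$, hence $f(\X)^M \equiv f(\X^m) \md{\X^r-1, p}$; analogously for $g$. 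Applying $h \mid \X^r - 1$ and Lemma~\ref{lemmaexpdivisor} descends these to congruences modulo $h$ and $p$, and combining with $f(\X)^M \equiv g(\X)^M \md{h, p}$ (which follows from $f \equiv g \md{h,p}$ via Lemma~\ref{lemmaexpcompcong}) gives $f(\X^m) \equiv g(\X^m) \md{h, p}$ for every $m \in G_r$.

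Now consider $\Delta(\Y) := f(\Y) - g(\Y) \in F_p[\Y]$, which has degree at most $t - 1$. The previous step shows that each $\X^m \bmod h$, for $m \in G_r$, is a root of $\Delta$ in $F_p[\X]/(h)$; by Lemma~\ref{lem: distinct mm'} these are $t$ pairwise distinct roots. The standard fact that a nonzero polynomial of degree $d$ over a field has at most $d$ roots is, for low degree polynomials over bounded fields, provable by induction on the degree, peeling off $(\Y - \alpha)$ via the polynomial division from Lemma~\ref{lemmapveuclidpoly}. Hence $\Delta = 0$, i.e., $f = g$ in $F_p[\Y]$, and Corollary~\ref{cor: combination of lemmas} yields $i = j$. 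The main obstacle is the careful propagation of introspectivity from the integer witness $M = (n/p)^b p^c$ down to the residue $m \in G_r$: the interplay between exponents modulo $r$, the identification $\X^M \equiv \X^m \md{\X^r-1}$, and the descent from the modulus $\X^r-1$ to $h$ must be executed with the same bookkeeping as in the proof of the Congruence Lemma~\ref{lem: p np n}, combining Lemmas~\ref{lemmaexpmultiplicative}, \ref{lemmaexpcompcong}, \ref{lemmaexpdivisor}, and \ref{lem: evaluation}.
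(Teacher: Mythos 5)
Your proposal follows the same strategy as the paper's own proof: take the injective map $\sigma^\ell_t$ from Corollary~\ref{cor: combination of lemmas}, reduce it modulo $(h,p)$ to obtain $\tau^\ell_t$, assume a collision $\tau^\ell_t(x)\equiv\tau^\ell_t(y)\md{h,p}$, propagate it through all of $G_r$ via introspectivity, form the difference polynomial $\Delta(\Y)$, and derive a contradiction from the root count $t$ against the degree bound $t-1$. In fact your write-up is more careful at two points the paper's proof glosses over. First, you explicitly separate the residue $m\in G_r$ from the integer witness $M=(n/p)^b p^c$ with $M\equiv m\md{r}$: the paper asserts directly that ``$m$ is introspective for $f$ and $g$,'' whereas Lemmas~\ref{lem: p introspective} and~\ref{lem: intro mm'} literally give introspectivity of $M$, and one must then pass through $\X^M\equiv\X^m\md{\X^r-1}$ exactly as you do. Second, you form $\Delta(\Y)=f(\Y)-g(\Y)$ from the \emph{unreduced} products $\sigma^\ell_t(\cdot)=\prod_a(\X+a)^{e_a}$, not from the reduced $\tau^\ell_t(\cdot)$; this is the version that actually has degree $\le t-1$, whereas the reduced polynomials are only bounded by $\deg h$, which need not be $<t$. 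The paper writes $Q(\Y)=f(\Y)-g(\Y)$ with $f,g$ denoting the reduced $\tau$-values and then invokes $\deg Q<t$, which is really a fact about the $\sigma$-values; your bookkeeping fixes this. Both proofs appeal to the too-many-roots principle over the bounded field $F$, which for low-degree polynomials is provable in $\PV_1$ by peeling off linear factors via Lemma~\ref{lemmapveuclidpoly}, as you note.
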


\begin{proof}
We know that $\binom{t+\ell}{\ell+1}=\binom{t+\ell}{t-1}$. So, we will use them interchangeably.
Using Corollary \ref{cor: combination of lemmas}, take the injective function 
\[
\sigma^\ell_t: \binom{t+\ell}{\ell+1}\to \{f \in \hat{P}_t ; \deg (f) < t\}.
\]
Consider the function 
\[
\tau^\ell_t(x) =  \mod{\sigma^\ell_t(x)}{h,p}.
\]
We have to show that 
\[
\text{if} \qquad x\neq y \qquad \text{then} \qquad \tau^\ell_t(x) \nequiv \tau^\ell_t(y) \md{h,p}.
\]
For the sake of contradiction, suppose $x \neq y$ but $\tau^\ell_t(x) \equiv \tau^\ell_t(y) \md{h,p}$. Denote $\tau^\ell_t(x)=f(\X)$ and $\tau^\ell_t(y)=g(\X)$. Let $m \in  G_r$, i.e., there are $0 \leq i,j \leq r$ such that
\[
m \equiv (\frac{n}{p})^i \cdot p^j \md{r}.
\]
Thus, 
\[
\big(f(\X)\big)^m\equiv \big(g(\X)\big)^m \md{h, p}.
\]
By Lemmas \ref{lem: p introspective} and \ref{lem: intro mm'}, $m$ is introspective for both $f$ and $g$. Therefore, 
\[
\big(f(\X)\big)^m \equiv f(\X^m)\md{\X^r-1, p} \quad \big(g(\X)\big)^m \equiv g(\X^m)\md{\X^r-1, p}.
\]
Since $h(\X)$ divides $\X^r-1$ we get 
\[
f(\X^m) \equiv g(\X^m) \md{h, p}.
\]
Define $Q(\Y)=f(\Y) - g(\Y)$. Then, $\X^m$ is a root of $Q(\Y)$ for each $m \in G_r$. 
Moreover, if $\X^m\equiv \X^{m'} \md{h, p}$ for each $m,m' \in G_r$, then by Lemma \ref{lem: distinct mm'} we get $m=m'$. Hence, there are $|G_r|=t$ many distinct roots of $Q(\Y)$ in the field $F$. However, the degree of $Q(\Y)$ is less than $t$ and we get a contradiction. Thus $\tau^\ell_t(x) \nequiv \tau^\ell_t(y) \md{h,p}$. This argument can be formalized in $\PV_1$ because the polynomials used here have logarithmically bounded degrees.
\end{proof}

\subsection{Lemma G}\label{subseclemmag}

\begin{lemma} [$\PV_1$] \label{lem: leading to G}
Let $x$,$y$ and $1^{(k)}$ be numbers such that for any numbers $a, b\leq k$  we have $x^a \neq y^b$. 
There is a $\PV$-symbol $f_{x,y}(i)$ such that $\PV_1$ proves  
\[
f_{x,y}:  \{1, 2, \ldots, (k+1)^2\} \to \{x^i y^j \mid 0 \leq i,j \leq k\}
\]
is an injective function.
\end{lemma}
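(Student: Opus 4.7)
The plan is to define $f_{x,y}(i)$ as the canonical enumeration of $\{(a,b):0\le a,b\le k\}$: using division with remainder, write $i-1=a\cdot(k+1)+b$ with $a,b\in\{0,\dots,k\}$, and set $f_{x,y}(i):=x^a\cdot y^b$. Since $1^{(k)}$ is part of the input, the output $x^ay^b$ has bit-length at most $k\cdot(|x|+|y|)$, polynomial in the input size, and is computable by exponentiation by repeated squaring followed by a multiplication, all of which are $\PV$-operations; hence $f_{x,y}$ is a $\PV$-symbol whose image lies in $\{x^ay^b:0\le a,b\le k\}$ by construction.

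For injectivity I would argue as follows. Suppose $f_{x,y}(i)=f_{x,y}(i')$ produced pairs $(a,b)\ne(a',b')$ in $\{0,\dots,k\}^2$ with $x^ay^b=x^{a'}y^{b'}$. The hypothesis is to be read (as its use in Lemma G requires, at $x=n/p$ and $y=p$ with $n$ not a perfect power) as ``$x^\alpha\ne y^\beta$ for all $1\le\alpha,\beta\le k$'', which in particular rules out $x=1$, $y=1$, and $x=y$, so $x,y\ge 2$. Without loss of generality $a\ge a'$. If additionally $b\le b'$, rearranging yields $x^{a-a'}=y^{b'-b}$ with $(a-a',b'-b)\ne(0,0)$: if both exponents are positive this contradicts the hypothesis directly, while if exactly one is zero we obtain a positive power of $x$ or $y$ equal to $1$, impossible for $x,y\ge 2$. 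In the remaining case $a>a'$, $b>b'$ the equation $x^{a-a'}y^{b-b'}=1$ with positive exponents is impossible in $\NN$.

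The whole argument formalizes in $\PV_1$ without difficulty: the pair $(a,b)$ and the equality $x^ay^b=x^{a'}y^{b'}$ are expressible by open $\PV$-formulas on numbers of polynomial size, and the case analysis is quantifier-free. The main (and only minor) obstacle is interpretational, namely making sure the hypothesis is read in the non-trivial way that matches its later use, so that the relevant exponent pairs are positive and the contradictions are genuine; everything else is a routine verification inside $\PV_1$.
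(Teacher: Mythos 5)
Your construction coincides with the paper's: both define $f_{x,y}(l)=x^i y^j$ via the base-$(k+1)$ decomposition of $l-1$ and verify injectivity by a case analysis on how the exponent pairs can differ. You are also right to flag that the hypothesis as literally written is unsatisfiable, since $a=b=0$ always gives $x^0=y^0$.

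However, the reinterpretation you settle on, ``$x^\alpha\ne y^\beta$ for all $1\le\alpha,\beta\le k$'', is too weak to support your own argument. It does \emph{not} exclude $x=1$ with $y\ge 2$ (or symmetrically $y=1$ with $x\ge 2$): if $x=1$ and $y\ge 2$ then $x^\alpha=1\ne y^\beta$ for every $1\le\alpha,\beta\le k$, so your hypothesis holds, and yet $f_{x,y}$ is genuinely non-injective since $f_{x,y}(1)=x^0y^0=1=x^1y^0=f_{x,y}(k+2)$. So your claim that this interpretation ``rules out $x=1$, $y=1$, and $x=y$, so $x,y\ge 2$'' is incorrect, every subsequent case that invokes $x,y\ge 2$ is unjustified, and under your reading the lemma is actually false. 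The reading that both repairs the argument and matches the later application (at $x=n/p\ge 2$ and $y=p\ge 2$ in Lemma~G) is ``$x^a\ne y^b$ for all $a,b\le k$ with $(a,b)\ne(0,0)$'': taking $(a,b)=(1,0)$ forces $x\ne 1$ and $(a,b)=(0,1)$ forces $y\ne 1$, after which your case analysis goes through verbatim. For what it is worth, the paper's own proof has a parallel omission --- it silently skips the subcase $i=i'$, $j\ne j'$ (and its mirror), which is exactly the corner that needs $x\ne 1$ and $y\ne 1$ --- and the same ``not both zero'' reading repairs it.
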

\begin{proof}
For any $1 \leq l \leq (k+1)^2$ we can write $l=i(k+1)+j+1$ where $0 \leq i,j \leq k$. We can find $i$ and $j$ as follows and define the function $f_{x,y}$ as $f_{x,y}(l)=x^iy^j$ where
\[
i = \floor{\frac{l-1}{k+1}} \qquad \text{and} \qquad
j = l-1  \; \bmod{(k+1)}.
\]
It is clear that $0 \leq i, j \leq k$.
Now, we prove that $f_{x,y}$ is injective, i.e., 
\[
\text{if} \qquad f_{x,y}(l)=f_{x,y}(m) \qquad \text{then} \qquad l=m.
\]
Suppose $f_{x,y}(l)=f_{x,y}(m)$ for some $l,m \in \{1, \ldots, (k+1)^2\}$. Therefore, there are $0 \leq i,i',j',j' \leq k$ such that 
\begin{align*}
i &= \floor{\frac{l-1}{k+1}}  & i' &= \floor{\frac{m-1}{k+1}}\\ 
j &= l-1  \; \bmod{k+1}  & j' &=m-1 \; \bmod{k+1}
\end{align*}
and
\[
x^iy^j=x^{i'}y^{j'}.
\]
There are several cases:
\begin{itemize}
\item 
If $i > i'$ and $j < j'$, then $x^{i-i'}=y^{j'-j}$, which is a contradiction with the assumption that for any numbers $a, b$ we have $x^a \neq y^b$.
\item 
If $i<i'$ and $j>j'$, we have a situation similar to the previous case.
\item 
If $i > i'$ and $j > j'$, then $x^{i-i'} y^{j-j'}=1$. This only happens when $x=y=1$, which is again a contradiction with the assumption.
\item 
If $i<i'$ and $j<j'$, we have a situation similar to the previous case.
\end{itemize}
Thus, $i=i'$, $j=j'$, and $l=m$. Hence, $f_{x,y}$ is injective.
\end{proof}

\begin{lemma}[$\PV_1$]\label{lem: hat I}
Assume $H(n,r)$. There is a $\PV$-symbol $\hat{I}_t(y)$ for which $\PV_1$ proves 
\[
\hat{I}_t(y) \leftrightarrow \exists \; 0 \leq i,j \leq \lfloor \sqrt{t} \rfloor \quad y= \left(\frac{n}{p}\right)^i \cdot p^j.
\]    
\end{lemma}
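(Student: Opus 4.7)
The plan is to define $\hat{I}_t(y)$ by direct enumeration, following the template of the $\PV$-symbol $G_r$ from Lemma~\ref{Lem: symbol G}. Given $y$, the algorithm first computes $s = \lfloor\sqrt{t}\rfloor$, then for each pair $(i,j)$ with $0 \leq i,j \leq s$ it computes the product $(n/p)^i \cdot p^j$ and compares it with $y$, returning $1$ iff some pair yields equality.

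The key observation for $\PV$-feasibility is that the relevant exponents have logarithmic size. By Lemma~\ref{lemma D}, $r \leq |n|^{10}$, and since $t = |G_r| \leq r$ we obtain $s \leq |n|^5$, so both $s$ and the candidate exponents $i,j$ are of logarithmic size in $|n|$. Hence the standard $\PV$-symbol for exponentiation by repeated squaring computes each $(n/p)^i$ and $p^j$ in polynomial time with outputs of bit-length bounded by roughly $|n|^6$. The full enumeration performs at most $(s+1)^2 \leq (|n|^5 + 1)^2$ such tests, and is hence polynomial-time overall. The cardinality $t$ itself is available as a $\PV$-value, since counting elements below $r$ (logarithmic in $|n|$) satisfying the $\PV$-predicate $G_r$ can be carried out by iteration over $\{0, \dots, r-1\}$.

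Correctness in $\PV_1$ is then immediate from the definition and the sharply bounded nature of the search. For the $(\Leftarrow)$ direction, if there exist $0 \leq i,j \leq s$ with $y = (n/p)^i \cdot p^j$, the enumeration will encounter such a pair; this is a sharply bounded statement and follows by $\Sigma^b_0$-induction, which is available in $\PV_1$. For the $(\Rightarrow)$ direction, if the algorithm accepts, it has identified explicit witnesses $i,j$ for the right-hand side. I do not anticipate any real obstacle: the entire construction parallels that of $G_r$, with the only substantive difference being that equality is checked over $\mathbb{Z}$ rather than modulo $r$, which if anything simplifies matters, the main point simply being the size bookkeeping that guarantees everything remains polynomial-time.
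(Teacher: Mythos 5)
Your proposal is correct and follows essentially the same approach as the paper: direct enumeration over the polynomially many candidate pairs $(i,j)$, justified by the size bounds $r < |n|^{10}$ (from Lemma~\ref{lemma D}) and $t < r$, so that $\lfloor\sqrt{t}\rfloor \leq |n|^5$ and each test involves only polynomial-size arithmetic. One small wording slip: $r$ is \emph{polynomially bounded in} $|n|$, not ``logarithmic in $|n|$''; what matters (and what you in fact use) is that $r$ is a length, i.e.\ $1^{(r)}$ is feasible, which makes the iteration over $\{0,\dots,r-1\}$ and the exponentiations to exponent $\leq |n|^5$ polynomial-time.
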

\begin{proof}
Note that $t < r$ by the discussion after Lemma \ref{Lem: symbol G} and $r< |n|^{10}$ by Lemma \ref{lemma D}. Therefore, it is easy to see that given a number $y$, in $\PV_1$ we can check whether there are numbers $0 \leq i,j \leq \lfloor \sqrt{t} \rfloor$ such that $y= (\frac{n}{p})^i \cdot p^j$.
\end{proof}

\begin{lemma}[$S^1_2+\DLB$, Lemma G]\label{Lemma 4.8}
Assume $H(n,r)$. If $n$ is not a power of $p$ then there exists a function
\[
\hat{g}_t: \hat{P}_t \to \{1, \ldots, n^{\floor{\sqrt{t}}}\},
\]
which satisfies the condition:
\[
\text{if} \qquad f_1 \nequiv f_2 \md{h,p} \qquad \text{then} \qquad \hat{g}_t(f_1)\neq \hat{g}_t(f_2).
\]
\end{lemma}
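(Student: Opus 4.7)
My plan is to follow the classical AKS argument, replacing the counting inequality $\abs{\G}\leq n^{\flr{\sqrt{t}}}$ by an explicit injection supplied by the $\DLB$ axiom. Let $\hat I_t$ be as in Lemma~\ref{lem: hat I}. The hypothesis that $n$ is not a power of $p$ yields $n/p>1$ and $(n/p)^{a}\neq p^{b}$ for all $a,b\geq 1$ (since $n$ has a prime factor distinct from $p$), so Lemma~\ref{lem: leading to G} supplies an injection $f_{n/p,\,p}\colon\{1,\dots,(\flr{\sqrt{t}}+1)^{2}\}\to\hat I_{t}$. Composing with reduction modulo $r$ lands in $G_{r}$, which has cardinality $t<(\flr{\sqrt{t}}+1)^{2}$. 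Since all cardinalities involved are bounded by a length, the sharply bounded finite pigeonhole provable in $\PV_{1}$ produces indices $l_{1}\neq l_{2}$ whose images $m_{1}:=f_{n/p,\,p}(l_{1})$ and $m_{2}:=f_{n/p,\,p}(l_{2})$ are distinct but satisfy $m_{1}\equiv m_{2}\md{r}$. Without loss of generality $m_{1}>m_{2}$, and $m_{1}\leq (n/p)^{\flr{\sqrt{t}}}\cdot p^{\flr{\sqrt{t}}}=n^{\flr{\sqrt{t}}}$.

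Next I introduce the sparse polynomial $Q(\Y):=\Y^{m_{1}}-\Y^{m_{2}}$ over the bounded field $F=\ZZ/p[\X]/(h)$; it is nonzero with $\deg Q=m_{1}\leq n^{\flr{\sqrt{t}}}$. The core step is to show that every $f\in\hat P_{t}$, viewed as an element of $F$, is a root of $Q$. Unfolding $\hat P_{t}(f)$ yields exponents $e_{0},\dots,e_{\ell}$ with $f\equiv g\md{h,p}$ for $g:=\prod_{a}(\X+a)^{e_{a}}$. By Lemma~\ref{lem: p introspective} both $p$ and $n/p$ are introspective for each $\X+a$; iterating the two parts of Lemma~\ref{lem: intro mm'} (first closing under products of numbers, then under products of polynomials) shows that $m_{1}$ and $m_{2}$ are introspective for $g$, i.e., $g^{m_{i}}\equiv g(\X^{m_{i}})\md{\X^{r}-1,p}$. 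Applying Lemma~\ref{lemmaexpdivisor} together with $h\mid\X^{r}-1$ transports these congruences to modulus $(h,p)$; moreover $m_{1}\equiv m_{2}\md{r}$ gives $\X^{m_{1}}\equiv \X^{m_{2}}\md{\X^{r}-1,p}$, and substituting into $g$ yields $g(\X^{m_{1}})\equiv g(\X^{m_{2}})\md{h,p}$. Using Lemma~\ref{lemmaexpcompcong} to transfer from $g$ to $f$, one obtains the chain
\[ f^{m_{1}}\equiv g^{m_{1}}\equiv g(\X^{m_{1}})\equiv g(\X^{m_{2}})\equiv g^{m_{2}}\equiv f^{m_{2}}\md{h,p}, \]
so $Q(f)\equiv 0\md{h,p}$.

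Finally I invoke the $\DLB$ axiom on $Q$ over $F$, obtaining an injection $\iota_{F,Q}$ from the roots of $Q$ in $F$ into $\{1,\dots,\deg Q\}\subseteq\{1,\dots,n^{\flr{\sqrt{t}}}\}$, and define $\hat g_{t}(f):=\iota_{F,Q}(f \bmod (h,p))$. If $f_{1}\nequiv f_{2}\md{h,p}$ then $f_{1}$ and $f_{2}$ represent distinct roots of $Q$ in $F$, so the injectivity of $\iota_{F,Q}$ gives $\hat g_{t}(f_{1})\neq\hat g_{t}(f_{2})$, as required.

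The main technical obstacle will be the middle chain of congruences: carefully tracking whether each intermediate identity lives in $\ZZ/p[\X]/(\X^{r}-1)$ or in $F$, and correctly propagating the substitution $m_{1}\equiv m_{2}\md r$ through the composition $g(\X^{m_{i}})$ (which is a sparse polynomial of potentially large degree before reduction). The pigeonhole step on sharply bounded cardinalities is routine in $\PV_{1}$, and $\DLB$ is applied only once, on a two-monomial sparse polynomial, so once the congruence chain is rigorously assembled from the already available introspection and divisor lemmas, the conclusion is immediate.
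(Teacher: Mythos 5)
Your proposal follows essentially the same approach as the paper's own proof: use Lemma~\ref{lem: leading to G} to obtain $(\flr{\sqrt{t}}+1)^2 > t$ numbers satisfying $\hat I_t$, apply pigeonhole against $|G_r|=t$ to find $m_1>m_2$ with $m_1\equiv m_2 \md{r}$, form the two-monomial sparse polynomial $Q(\Y)=\Y^{m_1}-\Y^{m_2}$ of degree $m_1\leq n^{\flr{\sqrt{t}}}$, show every element of $\hat P_t$ is a root via introspectivity and the divisor lemma, and then invoke $\DLB$ to get the injection. Your treatment is in fact slightly more careful than the paper's on two points: you introduce $g=\prod_a(\X+a)^{e_a}$ as the canonical representative and transfer between $f$ and $g$ via Lemma~\ref{lemmaexpcompcong} (the paper applies introspectivity directly to $f_i$, which is a minor elision), and you explicitly note that the coprimality hypothesis for Lemma~\ref{lem: leading to G} must be read as $a,b\geq 1$ together with $n/p>1$ (the paper's phrasing ``for any $a,b\leq\flr{\sqrt{t}}$'' is technically vacuous at $a=b=0$). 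These are the same proof; the remaining ``obstacle'' you flag at the end is handled in the paper exactly as you sketch, through Lemmas~\ref{lemmaexpdivisor}, \ref{lem: evaluation}, and \ref{lem: composition}.
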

\begin{proof}

Assuming $n$ is not a power of $p$, then by Lemma  \ref{lemmaintfact}, there is a prime divisor $p'$ of $n$ distinct from $p$. This implies, that $(n/p)^a \neq p^b$ for any numbers $a,b\leq \floor{\sqrt{t}}$. Thus, we can use Lemma~\ref{lem: leading to G}, to obtain the function $f_{\frac{n}{p}, p}$
\[
f_{\frac{n}{p}, p}:  \{1, 2, \ldots, (\lfloor \sqrt{t} \rfloor+1)^2\} \to \{(\frac{n}{p})^i \cdot p ^j \mid 0 \leq i,j \leq \lfloor \sqrt{t} \rfloor\}
\]
or alternatively
\[
f_{\frac{n}{p}, p}:  \{1, 2, \ldots, (\lfloor \sqrt{t} \rfloor+1)^2\} \to  \{y \mid \hat{I}_t(y)=1\},
\]
which is an injective function. This means that the number of distinct numbers $y$ such that $\hat{I}_t(y)=1$ is at least $(\lfloor \sqrt{t} \rfloor +1)^2 >t$. 
As $|G_r|=t$, there are two numbers $m_1 > m_2$ such that 
\[
\hat{I}_t(m_1) , \hat{I}_t(m_2) \text{ and } m_1 \equiv m_2 \md{r}.
\]
It is easy to see that
\[
\X^{m_1} \equiv \X^{m_2} \md{\X^r-1}.
\]
We want to define the function $\hat{g}_t: \hat{P}_t \to n^{\lfloor \sqrt{t} \rfloor}$ such that 
\[
\text{if} \qquad \hat{P}_t(f_1), \; \hat{P}_t(f_2), \;  f_1 \nequiv f_2 \md{h,p} \qquad \text{then} \qquad \hat{g}_t(f_1)\neq \hat{g}_t(f_2).
\]
Let $f_1, f_2 \in \hat{P}_t$ such that $f_1 \nequiv f_2 \md{h,p}$. Thus, for $i \in \{1,2\}$ we get 
\begin{align*}
\big(f_i(\X)\big)^{m_1} & \equiv f_i(\X^{m_1})\\
& \equiv f_i(\X^{m_2}) \\
 & \equiv \big(f_i(\X)\big)^{m_2} \; \md{\X^r-1, p}
\end{align*}
Therefore, in the field $F$, we have $\big(f_i(\X)\big)^{m_1} = \big(f_i(\X)\big)^{m_2}$. Now, take the sparse polynomial $Q'(\Y)=\Y^{m_1} - \Y^{m_2}$. We have $\deg(Q')=m_1$ and $f_i(\X)$ is a root of $Q'(\Y)$ in the field $F$. 
By the axiom $\DLB$, the function 
\[
\iota_{F,Q'} : \{\Y \in F ; Q'(\Y) =0\} \to [\deg Q']
\]
is injective. 
Thus, the number of distinct roots of $Q'$ is at least $m_1$. In addition, we have $m_1 \leq (\frac{n}{p} \cdot p)^{\lfloor \sqrt{t} \rfloor} \leq n^{\lfloor \sqrt{t} \rfloor}$. Take the function $\hat{g}_t(\Y)$ as $\iota_{F,Q'}(\Y)$. By the above observations we get $\hat{g}_t(f_1) \neq \hat{g}_t(f_2)$ and this finishes the proof. 
\end{proof}

\subsection{Lemma H}\label{subseclemmah}
We start with some easy combinatorial facts.
\begin{lemma}[$\PV_1$]\label{lem: Lemma4.9, part1}
For any numbers $1^{(k)}$, $1^{(l)}$, and $1^{(s)}$ where $k \geq s$ we have $\binom{k+l}{k}\geq \binom{s+l}{s}$.    
\end{lemma}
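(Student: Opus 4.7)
The plan is to fix $s$ and $l$ and proceed by induction on $k$, starting from $k=s$, using the formula $\varphi(k) \equiv [\binom{k+l}{k} \geq \binom{s+l}{s}]$. This is an open $\PV$-formula in terms of $1^{(s)}$, $1^{(k)}$, $1^{(l)}$, once we treat $\binom{\cdot}{\cdot}$ as the $\PV$-symbol on unary inputs already used in Lemma~\ref{lem: injective function}; hence $\Sigma^b_0$-IND (available in $\PV_1$) suffices to run the induction, with $k$ bounded by the length of $1^{(k)}$.

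The base case $k = s$ is trivial, since $\binom{s+l}{s} \geq \binom{s+l}{s}$. For the inductive step, I would reduce the claim to the single-step inequality $\binom{k+1+l}{k+1} \geq \binom{k+l}{k}$, since then chaining with the inductive hypothesis gives $\binom{k+1+l}{k+1} \geq \binom{k+l}{k} \geq \binom{s+l}{s}$.

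For the single-step inequality, I would invoke Pascal's identity:
\[
\binom{k+1+l}{k+1} = \binom{k+l}{k+1} + \binom{k+l}{k} \geq \binom{k+l}{k},
\]
where the inequality uses that binomial coefficients are non-negative. Pascal's identity was already implicitly used in the recursive definition of $f_k^{m+1}$ in the proof of Lemma~\ref{lem: injective function} via $\binom{m+1}{k} = \binom{m}{k} + \binom{m}{k-1}$, so it is available in $\PV_1$.

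The only potential obstacle is bookkeeping around the $\PV_1$ formalization of binomial coefficients --- specifically, making sure Pascal's identity and non-negativity are provable for the chosen formalization. Since binomials on unary inputs admit a straightforward recursive $\PV$-definition that validates Pascal's identity by $\Sigma^b_0$-IND, this is routine rather than substantive.
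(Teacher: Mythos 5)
Your proof is correct, but it takes a genuinely different route from the paper's. The paper argues directly and without induction: it writes $\binom{k+l}{k}=\frac{(k+l)!}{k!\,l!}$, observes that $\frac{(k+l)!}{k!}=\prod_{i=1}^{l}(k+i)$, and compares the two products factor-by-factor using $k+i\ge s+i$, so the whole lemma reduces to monotonicity of products over sequences of length $l$. You instead induct on $k$ from the base $k=s$, stepping up via Pascal's identity $\binom{k+1+l}{k+1}=\binom{k+l}{k+1}+\binom{k+l}{k}\ge\binom{k+l}{k}$; this is a clean $\Sigma^b_0$-IND over a sharply bounded formula, which is certainly available in $\PV_1$. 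The trade-off is in which supporting facts about the $\PV$-formalization of $\binom{\cdot}{\cdot}$ one must check: the paper's proof leans on the factorial/product representation and its multiplicativity, while yours leans on Pascal's identity and non-negativity. Both are routine on unary inputs, and indeed Pascal's identity for the unary binomial is already used without comment in the proof of Lemma~\ref{lem: injective function}, so your dependency is no heavier than the paper's. One small stylistic note: the paper avoids any explicit induction here, which keeps the argument a one-liner, whereas your version is slightly longer but arguably more self-contained since it does not need the factorial ratio form at all.
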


\begin{proof}
Since $k \geq s$, we have $k+i \geq s+i$ for each $1 \leq i \leq l$. Therefore, $\Pi_{i=1}^{l} (k+i) \geq \Pi_{i=1}^{l} (s+i)$. Hence, $\frac{(k+l)!}{k!l!} \geq \frac{(s+l)!}{s!l!}$, which is  $\binom{k+l}{k}\geq \binom{s+l}{s}$.    
\end{proof}

\begin{lemma}[$\PV_1$]\label{lem: Lemma4.9, part2}
For any number $1^{(k)}$, where $k \geq 6$, we have $\binom{2k+1}{k} > 2^{k+2}$.    
\end{lemma}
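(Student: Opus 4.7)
The plan is to proceed by induction on $k$ starting from $k=6$. Since the hypothesis provides $1^{(k)}$, both sides of the desired inequality are bounded by a polynomial in the input, so the statement $\binom{2k+1}{k} > 2^{k+2}$ can be expressed as a $\Sigma^b_0$-formula in the parameter $1^{(k)}$; induction of this form is available in $\PV_1$.

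For the base case $k=6$, I would compute directly: $\binom{13}{6}=1716$ and $2^{8}=256$, so $\binom{13}{6} > 2^{8}$.

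For the induction step, assuming $\binom{2k+1}{k} > 2^{k+2}$, I would derive $\binom{2k+3}{k+1} > 2^{k+3}$ via the identity
\[
(k+2)\cdot \binom{2k+3}{k+1} \;=\; 2(2k+3) \cdot \binom{2k+1}{k},
\]
which follows immediately from the factorial definition $\binom{n}{k} = \Fact{1^{(n)}}/(\Fact{1^{(k)}}\cdot \Fact{1^{(n-k)}})$ used throughout the paper (for instance in Lemma~\ref{Lemma 4.7}); in integer form one checks the equivalent equality of products, thereby avoiding division. Applying the induction hypothesis yields
\[
(k+2)\cdot \binom{2k+3}{k+1} \;>\; 2(2k+3) \cdot 2^{k+2} \;=\; (4k+6)\cdot 2^{k+2} \;>\; (2k+4)\cdot 2^{k+2} \;=\; (k+2)\cdot 2^{k+3},
\]
where the middle strict inequality uses $4k+6 > 2k+4$ which is valid for all $k \geq 0$, and in particular for $k \geq 6$. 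Cancelling the positive factor $k+2$ gives $\binom{2k+3}{k+1} > 2^{k+3}$, completing the induction step.

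The only mild subtlety is the formal derivation of the recurrence $(k+2)\binom{2k+3}{k+1} = 2(2k+3)\binom{2k+1}{k}$; but this is just rearrangement of the factorial identity $\Fact{1^{(2k+3)}} = (2k+3)(2k+2)\Fact{1^{(2k+1)}}$ together with $\Fact{1^{(k+2)}} = (k+2)(k+1)\Fact{1^{(k)}}$, both of which are provable in $\PV_1$ from the recursive definition of $\Fact{\cdot}$ given earlier. No other step presents any obstacle.
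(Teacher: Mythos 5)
Your proof is correct, but it takes a genuinely different route from the paper's. The paper argues directly by a term-by-term comparison: it writes
\[
\binom{2k+1}{k} \;=\; (k+2)\prod_{i=0}^{k-2}\frac{2(k-i)+(i+1)}{k-i},
\]
observes that each factor in the product strictly exceeds $2$, obtains $\binom{2k+1}{k} > (k+2)2^{k-1}$, and finishes by noting $(k+2)2^{k-1}\geq 2^{k+2}$ once $k\geq 6$. Your argument instead runs a length induction on $k$ with base case $k=6$ and the two-term recurrence $(k+2)\binom{2k+3}{k+1}=2(2k+3)\binom{2k+1}{k}$; your numerical verifications ($\binom{13}{6}=1716>256$, and $4k+6>2k+4$ giving the inductive descent) are all correct, and the recurrence itself follows from the factorial identity exactly as you say. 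The trade-offs: your route is an immediate instance of open-$\PV$ LIND on the parameter $1^{(k)}$ and works entirely with integer equalities (you explicitly sidestep division), which if anything is cleaner to formalize in $\PV_1$; the paper's route needs a little more care to turn the rational product estimate into a $\PV_1$-argument, but delivers the slightly stronger intermediate bound $\binom{2k+1}{k} > (k+2)2^{k-1}$, which also explains where the threshold $k\geq 6$ comes from. One small piece of bookkeeping you should keep in mind if you flesh this out: since $\binom{2k+1}{k}>2^{k+2}$ actually fails for $k\leq 2$, the LIND instance should be run on a shifted or guarded formula such as $\psi(k):= (k<6)\vee\bigl(\binom{2k+1}{k}>2^{k+2}\bigr)$; and establishing the recurrence formally relies on the factorial characterization $\binom{n}{k}\cdot k!\cdot(n-k)!=n!$ for length-bounded inputs, which the paper also uses implicitly (e.g.\ in its Lemma~\ref{lem: Lemma4.9, part1}) and which is standard but not proved in the text for the $\PV_1$ setting.
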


\begin{proof}
We have  
\begin{align*}
 \binom{2k+1}{k} &= \frac{(2k+1)(2k) \ldots (k+2)}{k!}   \\
 & = (k+2) \Pi_{i=0}^{k-2}\frac{2(k-i)+(i+1)}{k-i}\\
 & > (k+2)2^{k-1} 
\end{align*}
If $k\geq 6$ then we have $(k+2)2^{k-1}  \geq 2^{k+2}$. Therefore, $\binom{2k+1}{k} > 2^{k+2}$. 
\end{proof}

\begin{lemma}[$S^1_2 + \iWPHP + \DLB + \GFLT$, Lemma H]\label{Lemma 4.9}
Assume $H(n,r)$. If the algorithm returns $\mathrm{PRIME}$ then $n$ is prime.
\end{lemma}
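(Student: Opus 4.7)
The plan is to derive a contradiction from the assumption that $\AKSPrime(n)$ holds while $n$ is composite. First I would verify that $n$ cannot be a power of $p$: since the algorithm passed step~1, $\lnot \PerfectPower(n)$ holds, while $n \neq p$ (as $p$ is prime and $n$ is composite) and $n = p^k$ for any $k \geq 2$ would contradict $\lnot \PerfectPower(n)$. The hypothesis of Lemma~\ref{Lemma 4.8} is therefore met, yielding a $\PV(\iota)$-function $\hat{g}_t: \hat{P}_t \to \{0,\dots,n^{\flr{\sqrt{t}}}-1\}$ that separates polynomials inequivalent modulo $(h,p)$. Composing with $\tau^\ell_t$ from Lemma~\ref{Lemma 4.7} produces an injective $\PV(\iota)$-function
\[
\hat{g}_t \circ \tau^\ell_t : \left\{1, \dots, \binom{t+\ell}{t-1}\right\} \to \left\{0, \dots, n^{\flr{\sqrt{t}}}-1\right\},
\]
since for $x \neq y$ Lemma~\ref{Lemma 4.7} gives $\tau^\ell_t(x) \nequiv \tau^\ell_t(y) \md{h,p}$, and then Lemma~\ref{Lemma 4.8} forces the two images to differ.

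The quantitative heart of the proof is the combinatorial inequality $\binom{t+\ell}{t-1} \geq 2\, n^{\flr{\sqrt{t}}}$. For this I would use two facts. From $n = (n/p)\cdot p$, the residue $n \mdl r$ lies in $G_r$, so the powers of $n \mdl r$ form a subset of $G_r$ and thus $t \geq \ord_r(n) > |n|^2$, whence $\flr{\sqrt{t}} \geq |n|$. From $t \leq \phi(r)$ (Lemma~\ref{lem: t<phi(r)}) we get $\ell \geq \flr{\sqrt{t}}\flr{\log n}$. Picking $m$ of order $\flr{\sqrt{t}}\flr{\log n}$ and applying Lemma~\ref{lem: Lemma4.9, part1} twice (reducing the binomial along each index in turn, using binomial symmetry) yields $\binom{t+\ell}{t-1} \geq \binom{2m+1}{m}$, and Lemma~\ref{lem: Lemma4.9, part2} then provides $\binom{2m+1}{m} > 2^{m+2}$ as soon as $m \geq 6$. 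Comparing $2^{m+2}$ to $2\, n^{\flr{\sqrt{t}}}$ via $n < 2^{|n|}$ and $\flr{\sqrt{t}} \geq |n|$ closes the inequality, with any finitely many small-$n$ cases (where the $m \geq 6$ or margin assumption fails) absorbed by noting that the conclusion restricted to a fixed $n$ is a true bounded sentence, hence provable already in $\PV_1$.

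Once the inequality is in hand, the contradiction is immediate from $\iWPHP$: applied to the $\PV(\iota)$-term $\hat{g}_t \circ \tau^\ell_t$ with $a = n^{\flr{\sqrt{t}}}$, it forces either some output to be $\geq a$ (impossible by construction of the codomain) or two distinct inputs to collide (contradicting injectivity established above). The main obstacle is the combinatorial inequality of the previous paragraph, because the definition $\ell = \flr{\sqrt{\phi(r)}}\flr{\log n}$ used in this paper is strictly smaller than the $\flr{\sqrt{\phi(r)}\log n}$ appearing in~\cite{AKS}; consequently one has to squeeze the sharp $2^{m+2}$ factor from Lemma~\ref{lem: Lemma4.9, part2} to absorb both the floor losses in $\ell$ and the extra factor of~$2$ required by $\iWPHP$. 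Pinning down the precise small-$n$ threshold below which one appeals instead to $\PV_1$-provability of true bounded sentences is likely to be the main technical nuisance.
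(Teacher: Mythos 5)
Your proposal is essentially the same argument as the paper's: compose the quasi-injection from Lemma~\ref{Lemma 4.7} with the separating map from Lemma~\ref{Lemma 4.8}, establish the binomial lower bound $\binom{t+\ell}{t-1} > 2^{\floor{\sqrt{t}}\floor{\log n}+2}$ via Lemmas~\ref{lem: Lemma4.9, part1} and~\ref{lem: Lemma4.9, part2}, compare against $n^{\floor{\sqrt{t}}}$, and apply $\iWPHP$ to the resulting $\PV(\iota)$-term. The only cosmetic difference is that you establish $n$ is not a power of $p$ up front from the perfect-power check, whereas the paper derives ``$n$ is a power of $p$'' as the conclusion of the $\iWPHP$ contradiction and only then invokes step~1; these are interchangeable. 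Your closing concern about whether the margin $2^{m+2}\geq 2\,n^{\floor{\sqrt{t}}}$ actually survives the double floor loss in $\ell=\floor{\sqrt{\phi(r)}}\floor{\log n}$ (versus the single $\floor{\sqrt{\phi(r)}\log n}$ in the original AKS) is a fair one and applies equally to the paper's own presentation of this step, so it does not distinguish the two.
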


\begin{proof}
By Lemma \ref{Lemma 4.7}, there is an injective function from $\binom{t+\ell}{t-1}$ to $\hat{P}_t$.
Recall the definition of $G_r$ in Lemma \ref{Lem: symbol G} and $|G_r|=t$. We have
\[
t  \geq  \ord_r(n)   > |n|^2 
\]
where the rightmost inequality holds by Lemma \ref{lemma D} and the leftmost inequality is clear by Lemma \ref{Lem: symbol G}. 
Hence,
\begin{align*}
    t & \geq \floor{\sqrt{t}} \cdot \floor{\sqrt{t}} \\
    & \geq \floor{\sqrt{t}} \cdot |n| \tag{$t > |n|^2$}\\ 
    & > \floor{\sqrt{t}} \cdot \floor{\log n} \tag{$|n| =\lceil \log (n+1) \rceil > \floor{\log n}$}
\end{align*}
Consequently, $t-1 \geq \floor{\sqrt{t}} \cdot \floor{\log n}$. In Lemma \ref{lem: Lemma4.9, part1} substitute $t-1$ for $k$ and $\floor{\sqrt{t}} \cdot \floor{\log n}$ for $s$ and $\ell+1$ for $l$ to get 
\[
\binom{t+\ell}{t-1} \geq \binom{\ell+1 + \floor{\sqrt{t}} \cdot \floor{\log n}}{\floor{\sqrt{t}} \cdot \floor{\log n}}.
\]
By Lemma \ref{lem: t<phi(r)}, we have $t \leq \phi(r)$. Hence, 
\[\ell= \floor{\sqrt{\phi(r)}} \cdot \floor{\log n} \geq \floor{\sqrt{t}} \cdot \floor{\log n}.\] Therefore,
\[
\binom{\ell+1 + \floor{\sqrt{t}} \cdot \floor{\log n}}{\floor{\sqrt{t}} \cdot \floor{\log n}} \geq \binom{2\floor{\sqrt{t}} \cdot \floor{\log n}+1}{\floor{\sqrt{t}} \cdot \floor{\log n}}.
\]
In Lemma \ref{lem: Lemma4.9, part2}, if we substitute $\floor{\sqrt{t}} \cdot \floor{\log n}$ for $k$, we get 
\[
\binom{2\floor{\sqrt{t}} \cdot \floor{\log n}+1}{\floor{\sqrt{t}} \cdot \floor{\log n}} > 2^{\floor{\sqrt{t}} \cdot \floor{\log n}+2}
\]
for $\floor{\sqrt{t}} \cdot \floor{\log n} \geq 6$, which we can assume to hold as the remaining cases form a true bounded sentence. Moreover, we have $2^{\floor{\sqrt{t}} \cdot \floor{\log n}+2} \geq 2 n^{\lfloor \sqrt{t} \rfloor}$. This means that there exists an injective function from $2 n^{\lfloor \sqrt{t} \rfloor}$ to $\hat{P}_t$. However, by Lemma \ref{Lemma 4.8}, we have if $n$ is not a power of $p$, then there exists an injective function from $\hat{P}_t$ to $n^{\lfloor \sqrt{t} \rfloor}$. This means that there exists an injective function 
\[
f: [2 n^{\lfloor \sqrt{t} \rfloor}] \to [n^{\lfloor \sqrt{t} \rfloor}]
\]
computed by a $\PV(\iota)$-term, which is a contradiction with the axiom scheme $\iWPHP$ (because what we do is essentially composing $\iota$ with $\PV$ functions and not
iterating $\iota$ in some p-time process). Thus, $n$ is a power of $p$. But, if $n=p^k$ for some $k>1$ then the algorithm would have returned $\mathrm{COMPOSITE}$ in Step 1. Hence $n=p$.
\end{proof}

\subsection{The correctness}

With the Lemma~\ref{Lemma 4.9} in hand we have everything we need to prove the correctness in $S^1_2+\iWPHP+\DLB+\GFLT$.

\begin{theorem}\label{theoremT2proof}
\[S^1_2+\iWPHP+ \DLB + \GFLT \vdash \AKSCorrect \]    
\end{theorem}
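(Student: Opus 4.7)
The plan is to prove the biconditional $\AKSPrime(x) \leftrightarrow \Prime(x)$ by establishing the two implications separately. The direction $\AKSPrime(n) \to \Prime(n)$ is exactly the content of Lemma H (Lemma~\ref{Lemma 4.9}), which has just been proved in the full theory $S^1_2 + \iWPHP + \DLB + \GFLT$. So the remaining work is the forward direction $\Prime(n) \to \AKSPrime(n)$, corresponding to Lemma B in the outline, which I expect to be provable already in $\PV_1 + \GFLT$.

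For the forward direction, I would argue by checking each step of Algorithm~\ref{algaks} and showing that none of them outputs \texttt{COMPOSITE} when $n$ is prime. Step 1 does not trigger because a prime $n$ is not a perfect power (formalized via $\PerfectPower$ from Lemma~\ref{lemmaperfectpower}). The value of $r$ required in Step 2 exists by Lemma~\ref{lemma D}, and since the slightly stronger bound $\ord_r(n) > \abs{n}^2$ used in the formalization still implies the bound $\ord_r(n) > \flr{\log n}^2$ required by the algorithm, this step succeeds. Step 3 does not trigger because, as $n$ is prime, every $a \leq r < n$ satisfies $\gcd(a,n) \in \{1,n\}$, and in the relevant range $\gcd(a,n) = 1$. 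If $n \leq r$, Step 4 outputs \texttt{PRIME} and we are done; otherwise the algorithm proceeds to Step 6.

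The heart of the forward direction is Step 6: we must verify
\[(\X+a)^n \equiv \X^n + a \md{\X^r-1, n}\]
for every $0 \leq a \leq \flr{\sqrt{\phi(r)}}\cdot \flr{\log n}$. Since $n$ is prime, $r < n$, and $\gcd(a, n) = 1$ for all such $a$ (treating $a = 0$ separately as a trivial identity), this is precisely an instance of the $\GFLT$ axiom with $p := n$. Hence Step 6 never outputs \texttt{COMPOSITE}, and the algorithm reaches Step 7 and outputs \texttt{PRIME}, giving $\AKSPrime(n)$.

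Combining this forward direction with Lemma~\ref{Lemma 4.9} yields $\AKSPrime(x) \leftrightarrow \Prime(x)$, as required. There is no serious obstacle here, since the nontrivial analytic work has been concentrated in the proofs of Lemmas D, F, G, and H; the final theorem is just the bookkeeping that assembles them together with the trivial verification that a prime passes each filter of the algorithm via $\GFLT$.
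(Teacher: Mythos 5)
Your proposal is correct and follows essentially the same route as the paper: the hard direction is delegated to Lemma~\ref{Lemma 4.9}, and the easy direction $\Prime(n) \to \AKSPrime(n)$ is established by walking through the algorithm and invoking Lemma~\ref{lemmaperfectpower} and the $\GFLT$ axiom (with $p:=n$). The paper's own proof of this theorem is just a terser version of the same assembly, so your more detailed verification of each step (including the separate handling of $a=0$) is exactly the intended argument made explicit.
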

\begin{proof}
    The implication $\AKSPrime(x)\to \Prime(x)$ is proved in Lemma~\ref{Lemma 4.9}. The other implication, $\Prime(x) \to \AKSPrime(x)$, is simpler. The AKS algorithm simply checks some properties of the number $x$ and if they are satisfied, the number is proclaimed a prime, so this implication follows from the Lemma~\ref{lemmaperfectpower} and the $\GFLT$ axiom.
\end{proof}
\section{$VTC^0_2$ proves $S^1_2+\iWPHP+\DLB+\mathrm{GFLT}$}\label{secvtc}

In this section, we show that the axioms with which we extended $S^1_2$ to prove the correctness of the algorithm are actually provable in $VTC^0_2$. The theory $\VTC^0_2$ includes $T_2$ by definition. We then show that $\GFLT$ can be proved and then that $\DLB$ can be proved with $\iota$ being replaced by $\Sigma^B_1$-definable function. By the $\Sigma^B_0(\card)-\mathrm{COMP}$ scheme and the axioms defining counting sequences we obtain that $S^1_2(\iota) + \iWPHP(\iota) + \DLB(\iota)+\GFLT$ is provable in $\VTC^0_2$ and thus $\VTC^0_2 \vdash \AKSCorrect$.


\subsection{$\Sigma^B_1$-definability of algebraic operations}

In this section, we will prove the factorial and binomial coefficient functions to be $\Sigma^B_1$-definable when the inputs are from the number sorts. Similarly, we will show that function computing the powers of high degree polynomials is $\Sigma^B_1$-definable, provided that the exponent is from the number sort. The following two lemmas follow straightforwardly from the provability of the axiom $\IMUL$ (see Theorem~\ref{thrmimul}).

\begin{lemma}[$\VTC^0$]
   There is a $\Sigma^B_1$-definition of a function $x \mapsto x!$, whose values are objects of the set sort, for which $\VTC^0_2$ can prove the inductive properties:
   \begin{align*}
       0!&=1\\
       (x+1)!&=(x+1)\cdot x!
   \end{align*}
\end{lemma}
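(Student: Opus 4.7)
The plan is to define $x!$ as the iterated product of the sequence $(1,2,\dots,x)$ and invoke the $\Sigma^B_1$-definable iterated multiplication provided by Theorem~\ref{thrmimul}. Concretely, I would first use $\Sigma^B_0(\card)\text{-}\mathrm{COMP}$ to construct, for each $x$, a canonical sequence $S_x$ of length $x$ with $S_x^{[i]} = i+1$ for all $i<x$. Each entry is a number sort element bounded by $x$, so the whole sequence fits into a set whose size is polynomial in $x$ and whose membership predicate is directly $\Sigma^B_0$. I would then set
\[
x! \;:=\; \prod_{i<x} S_x^{[i]},
\]
with the right-hand side being the $\Sigma^B_1$-definable iterated product of Theorem~\ref{thrmimul}. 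Composing the two $\Sigma^B_1$-definitions yields the required $\Sigma^B_1$-definition of the set sort-valued function $x\mapsto x!$.

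For the inductive properties, the base case $0! = \prod_{i<0} S_0^{[i]} = 1$ is immediate from the first recursive clause of $\IMUL$. For the successor step, note that $S_{x+1}$ extends $S_x$ by one entry equal to $x+1$ in position $x$, i.e.\ $S_{x+1}^{[i]} = S_x^{[i]}$ for $i<x$ and $S_{x+1}^{[x]} = x+1$. I would first establish a short extensionality lemma for the iterated product, stating that if two sequences agree on all positions $i<n$ then their iterated products of length $n$ coincide; this is proved by a straightforward induction on $n$ using the two clauses of $\IMUL$, and the induction is over a $\Sigma^B_0(\card)$-formula. Applying this lemma together with the second clause of $\IMUL$ gives
\[
(x+1)! \;=\; \prod_{i<x+1} S_{x+1}^{[i]} \;=\; S_{x+1}^{[x]}\cdot\prod_{i<x} S_{x+1}^{[i]} \;=\; (x+1)\cdot\prod_{i<x}S_x^{[i]} \;=\; (x+1)\cdot x!.
\]

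The only nonroutine point is the extensionality lemma, and even that is standard once one has the recursive characterization of iterated multiplication; all remaining steps are direct unpackings of $\IMUL$ and of set sort comprehension, so the argument goes through already in $\VTC^0$, as asserted. An entirely analogous scheme will serve in the subsequent lemmas for binomial coefficients and for iterated powers of high-degree polynomials, by choosing an appropriate auxiliary sequence and replacing numerical multiplication by the relevant $\Sigma^B_1$-definable operation.
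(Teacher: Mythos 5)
Your proposal is correct and follows essentially the route the paper intends: the paper gives no explicit proof of this lemma, remarking only that it ``follows straightforwardly from the provability of the axiom $\IMUL$,'' and your argument is the natural unfolding of that claim, including the minor but necessary extensionality step for the iterated product (and the harmless typo in the statement of $\IMUL$, where $X^{[i+1]}$ should read $X^{[i]}$, is implicitly corrected in your use of it).
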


\begin{lemma}[$\VTC^0$]
    There is a $\Sigma^B_1$-definition of a function $\binom{x}{y}$, whose values are objects of the set sort,  which for $y\leq x$ satisfies:
    \[\binom{x}{y}=\floor{\frac{x!}{y!(x-y)!}},\]
    and otherwise $\binom{x}{y}=0$.
\end{lemma}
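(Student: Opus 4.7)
The plan is to express $\binom{x}{y}$ as a composition of $\Sigma^B_1$-definable functions already provided by $\VTC^0$: the factorial from the preceding lemma, multiplication and the integer division $\floor{X/Y}$ from Theorem~\ref{thrmimul}. Since the class of $\Sigma^B_1$-definable functions in $\VTC^0$ is closed under composition (take existential set-sort quantifiers over intermediate values and use $\Sigma^B_0(\card)\text{-}\mathrm{COMP}$ to build a single set coding the combined witnessing data), the lemma reduces to writing down the correct graph.

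First I would define the graph $\varphi(x,y,Z)$ of $\binom{x}{y}$ by a case split: either $y>x$ and $Z=0$, or $y\leq x$ and there exist set-sort witnesses $A,B,C,D$ such that $A=x!$, $B=y!$, $C=(x-y)!$, $D=B\cdot C$, and $Z=\floor{A/D}$, each witness being introduced via the $\Sigma^B_1$ graph of the relevant function. Since all these subformulas are $\Sigma^B_1$ and the additional quantifiers on $A,B,C,D$ appear positively, $\varphi$ is itself $\Sigma^B_1$.

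Next I would verify in $\VTC^0_2$ that $\varphi$ defines a total function $(\forall x,y)(\exists! Z)\varphi(x,y,Z)$: uniqueness of $Z$ follows from the functional character of factorial, multiplication and division; totality follows from the totality of each of these, together with the trivial observation that $y!,(x-y)!\geq 1$, so that $D\neq 0$ and $\floor{A/D}$ is meaningful by Theorem~\ref{thrmimul}. The two required properties, namely $\binom{x}{y}=\floor{x!/(y!(x-y)!)}$ for $y\leq x$ and $\binom{x}{y}=0$ otherwise, are then immediate from the construction.

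I do not foresee any genuine obstacle here: the lemma is in essence a syntactic assembly of primitives already established in $\VTC^0$, and the only substantive work, namely the $\Sigma^B_1$-definability of iterated multiplication and of integer division, has been done in Theorem~\ref{thrmimul}.
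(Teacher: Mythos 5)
Your proposal is correct and matches the intended argument; the paper in fact omits the proof entirely, noting only that this lemma (together with the one for factorial) ``follows straightforwardly'' from the provability of $\IMUL$ (Theorem~\ref{thrmimul}). Your reconstruction --- define the graph by a case split, introduce set-sort existential witnesses for $x!$, $y!$, $(x-y)!$, their product, and the quotient, appeal to closure of $\Sigma^B_1$-definable functions under composition, and note that $y!,(x-y)!\geq 1$ so the divisor is nonzero --- is exactly the routine assembly the authors had in mind.
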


The main principle behind the proof of the following Lemma is still the axiom $\IMUL$.

\begin{lemma}[$\VTC^0$]\label{lemmapolypow}
    There is a $\Sigma^B_1$-definable function, which to each high degree polynomial $P$ and each number $x$ assigns a high degree polynomial $P^x$ such that $\VTC^0_2$ proves for every $x$ and $P$:
    \begin{align*}
        P^0 &= 1 \\
        P^{x+1} & = P\cdot P^{x}.
    \end{align*}
\end{lemma}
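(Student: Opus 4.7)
The plan is to reduce iterated polynomial exponentiation to iterated integer multiplication via Kronecker substitution, and then invoke Theorem~\ref{thrmimul}. Given a high degree polynomial $P = \sum_{i \leq d} a_i \X^i$ with coefficients bounded by $B$ and a number $x$, each coefficient of $P^x$ is a sum of at most $(d+1)^{x-1}$ products of $x$ many coefficients of $P$, hence bounded by $(B(d+1))^x$; its length is bounded by $x(|B|+|d|+1)$. Using the smash function available in $\VTC^0_2$, I can fix $M := 2^k$ for some $k$ exceeding this bound, so that setting $\tilde P := \sum_{i\leq d} a_i M^i$ ensures that the base-$M$ digits of $\tilde P^x$ are precisely the coefficients of $P^x$, with no carries between monomial positions.

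With $M$ and $\tilde P$ in hand, the $\Sigma^B_1$-definition of $P^x$ proceeds as follows. First, I form the constant sequence $S$ of length $x$ all of whose entries equal $\tilde P$ (by a simple $\Sigma^B_0(\card)$-comprehension). By Theorem~\ref{thrmimul}, the iterated product $Y := \prod_{i<x} S^{[i]}$ is $\Sigma^B_1$-definable and satisfies the recursive axiom $\IMUL$. Then, using the $\VTC^0_2$-definable integer division also granted by Theorem~\ref{thrmimul}, I read off the $i$-th coefficient of $P^x$ as $\lfloor Y / M^i \rfloor \bmod M$ for $i \leq dx$, and bundle these coefficients into a single set coding the resulting high degree polynomial. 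This gives a $\Sigma^B_1$-definition of $(P,x) \mapsto P^x$.

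For the recurrence: when $x=0$ the $\IMUL$ axiom yields $Y=1$ and so $P^0 = 1$ after decoding. For the step $P^{x+1}=P\cdot P^x$, the $\IMUL$ recurrence gives $\tilde P^{x+1} = \tilde P \cdot \tilde P^x$; since $M$ was chosen large enough to accommodate the coefficients of $P^{x+1}$, decoding the digit expansion in base $M$ on both sides produces precisely the polynomial product $P \cdot P^x$, because polynomial and integer multiplication agree in the absence of carries. The agreement of the chosen $M$ across both $x$ and $x+1$ is handled by taking $k$ uniformly large in $x$ (using smash) when proving the recurrence for a fixed $x$.

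The main obstacle is making the Kronecker bookkeeping precise inside $\VTC^0_2$: one must verify (a) that the bound $M$ on coefficients of $P^x$ is correct and provable via $\IMUL$, and (b) that digit extraction and the no-carry condition are $\Sigma^B_1$-definable and can be proved to commute with multiplication. Both reduce to routine applications of the division function from Theorem~\ref{thrmimul} and of $\Sigma^B_0(\card)$-comprehension, with the smash function supplying enough space for the base-$M$ expansion; this is exactly why the recurrence is asserted over $\VTC^0_2$ rather than plain $\VTC^0$.
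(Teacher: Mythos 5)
Your approach is genuinely different from the paper's. The paper defines $P^x = \sum_{i=0}^{xd} B_i \X^i$ directly, with $B_i = \sum_{j_1+\dots+j_x=i}\prod_{k=1}^x A_{j_k}$ taken over the coefficients $A_j$ of $P$; this is $\Sigma^B_1$-definable because both iterated sums and iterated products are available, and the recurrence $P^{x+1} = P \cdot P^x$ then follows from a direct algebraic rearrangement of the multi-index sums — no detour through integers, no digit extraction. Your route instead packs $P$ into an integer via Kronecker substitution, applies $\IMUL$, and decodes the base-$M$ digits. Both proofs rest on Theorem~\ref{thrmimul}, and both are sound in principle; the paper's is shorter precisely because the coefficients are named explicitly, so there is nothing to decode.

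The one place your argument needs real care is the dependence of the decoding on $M$. Your definition necessarily fixes $M = M(P,x)$, but the $\IMUL$ recurrence $\tilde P^{x+1} = \tilde P\cdot\tilde P^x$ only applies when the \emph{same} base is used on both sides, whereas $P^{x+1}$ is defined via $M_{x+1}$ and $P^x$ via $M_x$. Your remark about ``taking $k$ uniformly large in $x$'' points at this but does not resolve it: to move between bases you need a well-definedness lemma saying that for all sufficiently large $M$, the digits of $\tilde P_M^{\,x}$ in base $M$ coincide with the coefficients of $P^x$ as produced by the canonical choice. Proving that lemma in $\VTC^0_2$ again requires an induction on $x$ that simultaneously carries the coefficient bound and the no-carry property, so the bookkeeping is not as routine as the closing paragraph suggests. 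It is certainly fixable — fix $M = M_{x+1}$, show by $\Sigma^B_0(\card)$-induction on $j\le x+1$ that the base-$M$ digits of $\tilde P_M^{\,j}$ equal the coefficients of $P^j$, using $\IMUL$ and the convolution/no-carry argument for the step — but you should say this; the paper's direct-coefficient definition is specifically designed to sidestep the whole issue.
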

\begin{proof}
   For $P= \sum_{i=0}^d A_i\X^i$ we define $P^x$ to be $\sum_{i=0}^{xd} B_i\X^i$ such that for any $0\leq i \leq xd$: $B_i=(\sum_{j_1+\dots+j_x = i}\prod_{k=1}^x A_{j_k}),$ where $0 \leq j_k \leq n$. This is $\Sigma^B_1$-definable as both iterated multiplication and iterated addition are.

   We shall prove the recursive properties of this function by induction. For $x=0$, the statement is clear. Assume the statement holds for $x$, then 
   \begin{align*}
       P\cdot P^{x} &= P \cdot \sum_{i=0}^{xd} \left( \sum_{j_1+\dots + j_x = i}\prod_{l=1}^x A_{j_l}\right) \X^i\\
                    &= \sum_{i=0}^{(x+1)d}\left(\sum_{j+k=i} A_i \cdot\left( \sum_{j_1+\dots + j_x = j}\prod_{l=1}^x A_{j_l}\right) \right)\X^i\\
                    &= \sum_{i=0}^{(x+1)d}\left( \sum_{j_1+\dots + j_x + k = i} A_k\prod_{l=1}^x A_{j_l}\right) \X^i\\
                    &= \sum_{i=0}^{(x+1)d}\left( \sum_{j_1+\dots + j_x + j_{x+1} = i} \prod_{l=1}^{x+1} A_{j_l}\right) \X^i\\
                    &= P^{x+1}. \qedhere
   \end{align*}
\end{proof}

\subsection{The binomial theorem and provability of $\GFLT$}

With the factorial and binomial coefficient functions $\Sigma^B_1$-defined, we will prove the binomial theorem, which then serves as a lemma to prove the $\GFLT$ axiom.

\begin{lemma}[$\VTC^0$, Pascal's triangle]\label{lemmapasc}
   Let $1 \leq x$ and $y\leq x$, then 
   \[\binom{x}{y} = \binom{x-1}{y}+\binom{x-1}{y-1}\]
   and
   \[\binom{x}{y}y!(x-y)! = x!\]
\end{lemma}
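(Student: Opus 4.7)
The plan is to prove the second (multiplicative) identity $\binom{x}{y}y!(x-y)! = x!$ first by induction on $x$, and then to extract Pascal's identity as a corollary. Since $\binom{x}{y}$ is defined with a floor, the real content of the multiplicative identity is the divisibility claim $y!(x-y)! \mid x!$, which makes the floor vacuous.

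First I would check the base case $x = 1$ by direct computation on $y \in \{0, 1\}$. For the induction step from $x$ to $x+1$, the boundary cases $y = 0$ and $y = x+1$ are immediate from $\binom{x+1}{0} = \binom{x+1}{x+1} = 1$. For $1 \leq y \leq x$, the key manipulation is to write $(x+1)! = (y + (x+1-y))\cdot x!$ and apply the induction hypothesis twice, once for $\binom{x}{y-1}$ and once for $\binom{x}{y}$, absorbing the scalar factors $y$ and $x+1-y$ into the neighbouring factorials via $y\cdot(y-1)! = y!$ and $(x+1-y)\cdot(x-y)! = (x+1-y)!$. After collecting terms, the common factor $y!\,(x+1-y)!$ appears, giving
\[(x+1)! = \bigl(\binom{x}{y-1} + \binom{x}{y}\bigr)\cdot y!\,(x+1-y)!.\]
This exhibits $y!\,(x+1-y)!$ as an exact divisor of $(x+1)!$, so the floor in the definition of $\binom{x+1}{y}$ becomes vacuous, and we read off both the Pascal identity $\binom{x+1}{y} = \binom{x}{y-1} + \binom{x}{y}$ and the multiplicative identity at $x+1$ simultaneously. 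Shifting the index $x \mapsto x+1$ in Pascal recovers the first displayed equation of the lemma.

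The main obstacle is purely one of formalization rather than of mathematics. The induction formula contains factorial, binomial coefficient, and multiplication of set-sort objects, all of which are $\Sigma^B_1$-definable in $\VTC^0$ by Theorem~\ref{thrmimul} (via $\IMUL$). The statement is thus a $\Sigma^B_0(\card)$-formula once these function symbols are available, and the required $\Sigma^B_0(\card)\text{-}\mathrm{IND}$ principle is provable in $\VTC^0$ as recalled in the preliminaries. No further combinatorial input is needed beyond the recursive properties of $x!$ and iterated multiplication.
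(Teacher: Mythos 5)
Your proof is correct and takes essentially the same approach as the paper: simultaneous induction on $x$, using the multiplicative identity from the induction hypothesis to establish that $y!\,(x+1-y)!$ exactly divides $(x+1)!$, which makes the floor in the definition of $\binom{x+1}{y}$ vacuous and yields both identities at $x+1$. The only difference is cosmetic --- the paper first derives Pascal's identity at $x+1$ by manipulating the floor expressions directly and then multiplies through, whereas you first obtain the exact-division equation $(x+1)!=\bigl(\binom{x}{y-1}+\binom{x}{y}\bigr)\,y!\,(x+1-y)!$ and read off both conclusions from it; the underlying use of the induction hypothesis is identical.
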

\begin{proof}
    By induction on $x$. If $x=1$, we can check that both equations hold. Assume the statement holds for $x$. Regarding the first equation, we have
    \begin{align*}
        \binom{x}{y}+\binom{x}{y-1}&=
    \floor{\frac{x!}{y!(x-y)!}}+
    \floor{\frac{x!}{(y-1)!(x-y+1)!}}\\
        &=
    \floor{\frac{x!(x-y+1)}{y!(x-y+1)!}}+
    \floor{\frac{x!\cdot y}{y!(x-y+1)!}}
    \end{align*}
    by the second equation in the induction hypothesis, both of the denominators divide the numerators and thus
    \begin{align*}
    \floor{\frac{x!(x-y+1)}{y!(x-y+1)!}}+
    \floor{\frac{x!\cdot y}{y!(x-y+1)!}} = \floor{\frac{x!\cdot (x+1)}{y!(x+1-y)!}} &= \binom{x+1}{y}.
    \end{align*}
    To prove the second equation, we start with
        \[\binom{x+1}{y}=\binom{x}{y}+\binom{x}{y-1}\]
      which we multiply by $y!(x+1-y)!$ to obtain
        \[\binom{x+1}{y}y!(x+1-y)!=\binom{x}{y}y!(x+1-y)!+\binom{x}{y-1}y!(x+1-y)!\]
    which by the application of the induction hypothesis becomes:
    \[\binom{x+1}{y}y!(x+1-y)!=x!(x+1-y)+x!y=x!(x+1)=(x+1)!\qedhere\]
\end{proof}

\begin{theorem}[$\VTC^0$]\label{thrmbinomial}
    For any numbers $a$ and $b$:
    \[(\mathcal{X}+a)^b=\sum_{i=0}^b \binom{b}{i} \mathcal{X}^i a^{b-i}\]
\end{theorem}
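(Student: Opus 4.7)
The plan is a straightforward induction on $b$, using the Pascal identity from Lemma~\ref{lemmapasc} and the recursive property of polynomial exponentiation from Lemma~\ref{lemmapolypow}. For the base case $b=0$ both sides equal $1$: the left by Lemma~\ref{lemmapolypow}, and the right because the sum consists of the single term $\binom{0}{0}\X^0 a^0$.

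For the inductive step I would apply Lemma~\ref{lemmapolypow} to write $(\X+a)^{b+1} = (\X+a)\cdot (\X+a)^b$, substitute the induction hypothesis for $(\X+a)^b$, and distribute, obtaining
\[(\X+a)^{b+1} = \sum_{i=0}^{b} \binom{b}{i}\, \X^{i+1} a^{b-i} + \sum_{i=0}^{b} \binom{b}{i}\, \X^{i} a^{b+1-i}.\]
Reindexing the first sum by $i \mapsto i-1$ and splitting off the boundary terms $\X^{b+1}$ and $a^{b+1}$ yields
\[\X^{b+1} + \sum_{i=1}^{b} \left(\binom{b}{i-1} + \binom{b}{i}\right) \X^{i} a^{b+1-i} + a^{b+1},\]
at which point the Pascal identity from Lemma~\ref{lemmapasc} converts each inner coefficient to $\binom{b+1}{i}$, producing the desired $\sum_{i=0}^{b+1} \binom{b+1}{i}\, \X^i a^{b+1-i}$.

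Since both sides of the identity are $\Sigma^B_1$-definable high degree polynomials (that is, coefficient sequences in the set sort), the statement to be inducted on is $\Sigma^B_0(\card)$ in $b$, so the induction is legitimate in $\VTC^0_2$ via the $\Sigma^B_0(\card)\text{-}\mathrm{IND}$ scheme. The principal technical obstacle will be formalizing the two iterated-sum manipulations above, namely distributivity of polynomial multiplication over an iterated sum and the index-shift reindexing $i \mapsto i-1$, both of which must be established as separate preparatory lemmas by straightforward inductions inside $\VTC^0$ from the recursive defining properties of $\sum_{i<n}$ recalled in Section~2. These auxiliary facts are routine but not entirely automatic in the present formalism, since iterated sums and products of length $n$ are only $\Sigma^B_1$-defined rather than primitive operations.
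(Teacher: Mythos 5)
Your proposal is correct and follows essentially the same route as the paper: induction on $b$, with the recursive property of exponentiation giving $(\X+a)^{b+1}=(\X+a)(\X+a)^b$, then distribution, reindexing, splitting off boundary terms, and Pascal's identity (Lemma~\ref{lemmapasc}). The extra remarks you make about formalizability (the $\Sigma^B_0(\card)$-IND scheme, and the need to justify the iterated-sum manipulations as preparatory lemmas) are sound observations that the paper leaves implicit.
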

\begin{proof}
    By induction on $b$. If $b=0$, then both sides evaluate to $1$.

    Assume the statement holds for $b$. Then 
    \begin{align*}
        (\X+a)^{b+1} &= (\X+a)(\X+a)^{b}\\
                 &= (\X+a)\sum_{i=0}^b \binom{b}{i}\X^i a^{b-i}\\
                 &= \left(\sum_{i=0}^b \binom{b}{i}\X^{i+1} a^{b-i}\right) + \left(\sum_{i=0}^b \binom{b}{i}\X^{i} a^{b+1-i}\right)\\
                 &= \left(\sum_{i=1}^{b+1} \binom{b}{i-1}\X^{i} a^{b+1-i}\right) + \left(\sum_{i=0}^b \binom{b}{i}\X^{i} a^{b+1-i}\right)\\
                 &= \X^{b+1} + a^{b+1} + \sum_{i=1}^{b} \left(\binom{b}{i}+\binom{b}{i-1}\right )\X^{i} a^{b+1-i},
    \end{align*}
    which by Lemma~\ref{lemmapasc} equals
    \[\sum_{i=0}^{b+1} \binom{b+1}{i} \mathcal{X}^i a^{b+1-i}.\qedhere\]
\end{proof}

\begin{lemma}[$\VTC^0_2$]\label{lemmaeuclidsequence}
    Let $p$ be a prime, and let $X$ code a sequence of $m$ elements of the number sort. If for each $i\leq m:p\nmid X^{[i]}$, then $p \nmid \prod_{i=1}^m X^{[i]}$.
\end{lemma}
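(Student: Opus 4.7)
The plan is to proceed by induction on $m$, using the recursive characterization of iterated multiplication from Theorem~\ref{thrmimul} together with Euclid's lemma (Lemma~\ref{lemmaprimeprime}), which transfers to $\VTC^0_2$ since the latter contains all $\PV$-symbols and their defining axioms from $\PV_1$.

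For the base case $m=0$, I would invoke Theorem~\ref{thrmimul} to conclude that the empty product equals $1$, and observe that $p \nmid 1$ because $p$ is prime and hence $p \geq 2$. For the inductive step, assume the claim for $m$ and that $p \nmid X^{[i]}$ for every $1 \leq i \leq m+1$. Using the recursive clause of $\IMUL$ to rewrite
\[
\prod_{i=1}^{m+1} X^{[i]} \;=\; X^{[m+1]} \cdot \prod_{i=1}^{m} X^{[i]},
\]
I would argue by contradiction: if $p$ divided the right-hand side, then Lemma~\ref{lemmaprimeprime} would force $p \mid X^{[m+1]}$ or $p \mid \prod_{i=1}^{m} X^{[i]}$, contradicting respectively the hypothesis on $X$ or the induction hypothesis.

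The only point requiring care is that the induction formula involves the $\Sigma^B_1$-defined function symbol for iterated multiplication. However, this is exactly the situation handled by the $\Sigma^B_0(\card)\text{-}\mathrm{IND}$ scheme which $\VTC^0_2$ proves, as noted in the preliminaries. I therefore expect no real obstacle in the formalization; the whole proof is a short and routine application of the tools already set up.
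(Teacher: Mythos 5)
Your proposal is correct and follows essentially the same route as the paper: induction on $m$ using the recursive clause of $\IMUL$, with Euclid's lemma applied in the inductive step to derive the contradiction. The only cosmetic difference is that you cite Lemma~\ref{lemmaprimeprime} for Euclid's lemma, whereas the paper inlines the same $\xgcd$ argument directly (producing $ux+vp=1$, multiplying by $\tilde X=\prod_{i=1}^m X^{[i]}$, and concluding $p\mid\tilde X$); your explicit remark about the $\Sigma^B_0(\card)\text{-}\mathrm{IND}$ scheme is a point the paper leaves implicit.
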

\begin{proof}
    By induction on $m$. For $m\leq 1$ this is clear. Assume the statement holds for $m$ and let $\tilde X=\prod_{i=1}^m X^{[i]}$ and $x=X^{(m+1)}$ and assume for contradiction that $p\mid \tilde X x$. By Lemma~\ref{lemmapveuclid} there is $\xgcd(x,p)=(1,u,v)$ such that $ux+vp=1$. Multiplying by $\tilde X$ we get
    \[ux\tilde X + v \tilde X p = \tilde X.\]
    As both summands on the left hand side are divisible by $p$, so is $\tilde X$, which is a contradiction.
\end{proof}

\begin{lemma}[$\VTC^0_2$]\label{lemmabinomdivisibility}
Let $p$ be a prime, then for every $0<m<p$ we have $p \mid \binom{p}{m}$.
\end{lemma}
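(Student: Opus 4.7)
The plan is to combine Lemma~\ref{lemmapasc} (the ``factorial form'' of Pascal's identity) with Lemma~\ref{lemmaeuclidsequence} (the multi-factor Euclid's lemma) in the standard way. By Lemma~\ref{lemmapasc} we have the identity
\[
\binom{p}{m}\cdot m!\cdot (p-m)! = p!,
\]
and since $p!=p\cdot (p-1)!$, the right-hand side is divisible by $p$. Hence $p$ divides the product $\binom{p}{m}\cdot m!\cdot (p-m)!$. The goal then is to transfer this divisibility onto $\binom{p}{m}$ alone by showing that $p$ is coprime with $m!\cdot (p-m)!$.

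To do the latter, I would form (by $\Sigma^B_0(\card)$-COMP) the sequence $X$ of length $m+(p-m)=p$ whose first $m$ entries are $1,2,\dots,m$ and whose last $p-m$ entries are $1,2,\dots,p-m$. Each entry $X^{[i]}$ is a positive integer strictly less than $p$, hence $p\nmid X^{[i]}$ because $p$ is prime (if $p\mid k$ with $0<k<p$ then by writing $p=qk+s$ with $0\leq s<k$ one gets $s=p$ mod $k$, contradicting primality, and this is a routine $\PV_1$ fact). Applying Lemma~\ref{lemmaeuclidsequence} then yields $p\nmid \prod_{i=1}^{p} X^{[i]} = m!\cdot(p-m)!$.

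From $p\mid \binom{p}{m}\cdot (m!(p-m)!)$ and $p\nmid m!(p-m)!$, I would conclude $p\mid \binom{p}{m}$ via the standard $\xgcd$ trick (exactly as in the proof of Lemma~\ref{lemmaeuclidsequence}): since $p$ is prime and does not divide $m!(p-m)!$, $\gcd(m!(p-m)!,p)=1$, so $\xgcd$ gives $u,v$ with $u\cdot m!(p-m)! + v p = 1$; multiplying by $\binom{p}{m}$ expresses $\binom{p}{m}$ as a sum of two multiples of $p$.

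The only genuine concern is that $\binom{p}{m}$, $m!$ and $(p-m)!$ are set-sort objects in $\VTC^0_2$, not number-sort ones, so we cannot appeal to $\PV_1$ directly; but all we need, namely iterated multiplication, basic divisibility and the $\xgcd$/Euclid argument on the set sort, is available in $\VTC^0_2$ by Theorem~\ref{thrmimul} and is exactly what powers Lemma~\ref{lemmaeuclidsequence}. Thus the main obstacle is cosmetic rather than substantive, and the proof goes through as sketched.
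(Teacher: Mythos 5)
Your proof is correct and follows essentially the same route as the paper: both proofs use the second part of Lemma~\ref{lemmapasc} to get $\binom{p}{m}m!(p-m)!=p!$, invoke Lemma~\ref{lemmaeuclidsequence} to show $p\nmid m!(p-m)!$, and then cancel the unit $m!(p-m)!$ modulo $p$ (which is justified by the $\xgcd$ argument you spell out and the paper leaves implicit). Your closing remark about the set-sort/number-sort distinction is a reasonable sanity check, but as you note it is handled by the same machinery already used in Lemma~\ref{lemmaeuclidsequence}.
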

\begin{proof}
    All numbers less than $p$ are not divisible by $p$, thus by Lemma~\ref{lemmaeuclidsequence} we have that $p\nmid m!(p-m)!$, and thus $m!(p-m)! \nequiv 0 \md{p}$.
    On the other hand by Lemma~\ref{lemmapasc} we have that $m!(p-m)! \mid m!$, therefore the following are equivalent
    \begin{align*}
        \binom{p}{m} &\equiv 0 \md{p}\\
        \binom{p}{m}m!(p-m)! &\equiv 0 \md{p}.
    \end{align*}
    By the second part of Lemma~\ref{lemmapasc} we have that  \[\binom{p}{m}m!(p-m)! = p! \equiv 0 \md{p}.\qedhere\]
\end{proof}

\begin{lemma}[The theory $\VTC^0_2$ proves $\GFLT$.]\label{lemmagflt}
    Let $p$ be a prime, $a\leq p$ a number and $1^{(r)}$ be a number such that $r<p$. Then
    \[(\X+a)^p \equiv \X^p + a \md{p, \X^r-1},\]
    where the exponentiation is computed using a $\PV$-symbol for exponentiation by squaring modulo $\X^r-1$.
\end{lemma}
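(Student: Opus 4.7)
The strategy is to decompose the proof into three clean pieces: reduce $(\X+a)^p$ to $\X^p+a$ as \emph{true} high-degree polynomials modulo $p$; establish that exponentiation by squaring modulo $\X^r-1$ agrees with the true high-degree power modulo $\X^r-1$; then combine. For the first piece, I would invoke the binomial theorem (Theorem~\ref{thrmbinomial}) inside $\VTC^0$ to write
\[(\X+a)^p = \sum_{i=0}^{p} \binom{p}{i}\,\X^i\,a^{p-i},\]
where $(\X+a)^p$ refers to the $\Sigma^B_1$-definable high-degree polynomial power from Lemma~\ref{lemmapolypow}. Lemma~\ref{lemmabinomdivisibility} kills every middle coefficient modulo $p$, so $(\X+a)^p \equiv \X^p + a^p \md{p}$, where the congruence is coefficient-wise.

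For the second piece, I would prove Fermat's Little Theorem $a^p \equiv a \md{p}$ inside $\VTC^0_2$ by induction on $a$ (the statement is a $\Sigma^B_0(\card)$-formula, since the power $a^p$ is given by $\IMUL$). The base case $a=0$ is trivial, and for the inductive step one uses the same binomial argument as above applied to the constant polynomial, namely $(a+1)^p = \sum_{i=0}^p \binom{p}{i} a^i \equiv a^p + 1 \equiv a + 1 \md{p}$, where the middle terms vanish by Lemma~\ref{lemmabinomdivisibility} and the last equivalence is the induction hypothesis. Combining with the previous paragraph, $(\X+a)^p \equiv \X^p + a \md{p}$ as polynomials, hence also modulo $\X^r-1$.

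For the third piece, one must connect the $\PV$-computed value $[\X+a]^p_{\X^r-1}$ (exponentiation by squaring, producing a low-degree polynomial of degree less than $r$) with the high-degree power $(\X+a)^p$ from Lemma~\ref{lemmapolypow}. The natural claim is that for every $m$, $[\X+a]^m_{\X^r-1} \equiv (\X+a)^m \md{\X^r-1}$, which I would prove by induction on $m$. The base case $m=0$ gives $1$ on both sides. In the inductive step, Lemma~\ref{lemmaexpmultiplicative} (available in $\VTC^0_2$ since it extends $\PV_1$) gives $[\X+a]^m_{\X^r-1} \equiv [\X+a]^{m-1}_{\X^r-1}\cdot(\X+a)\md{\X^r-1}$, while the recursive property of high-degree powers in Lemma~\ref{lemmapolypow} gives $(\X+a)^m = (\X+a)^{m-1}\cdot(\X+a)$; applying the induction hypothesis yields the claim. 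The induction formula involves the $\Sigma^B_1$-defined high-degree power and the $\TC^0$ reduction of polynomials modulo $\X^r-1$ (via Kung--Sieveking, which the paper notes is available in $\VTC^0_2$), so it falls within $\Sigma^B_0(\card)$-$\mathrm{IND}$.

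The main obstacle I anticipate is this third step — bookkeeping between the two polynomial formalisms. The low-degree exp-by-squaring lives inside the number sort and is a $\PV$-symbol, while the true power lives in the set sort and is accessed through $\Sigma^B_1$-definitions; verifying that their reductions modulo $\X^r-1$ coincide requires checking that the $\TC^0$ polynomial division behaves compatibly with both the iterated product $(\X+a)^m$ and the squaring recurrence. Once that bridge is in place, the rest (binomial theorem plus Fermat) is comparatively routine and follows the classical textbook argument verbatim inside $\VTC^0_2$.
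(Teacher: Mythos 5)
Your proof is correct and follows essentially the same three-step decomposition as the paper: binomial theorem, binomial coefficient divisibility, and the bridge between exp-by-squaring and the true high-degree power via $\Sigma^B_0(\card)$-induction. One thing you do that is a genuine improvement over the paper's writeup: Theorem~\ref{thrmbinomial} together with Lemma~\ref{lemmabinomdivisibility} directly yields $(\X+a)^p \equiv \X^p + a^p \md{p}$, and the paper jumps from there to $\X^p + a$ without explicitly invoking $a^p \equiv a \md{p}$; your inductive derivation of this Fermat-little-theorem instance from the same binomial machinery fills a step the paper leaves tacit. Your remarks on the third step (keeping the number-sort exp-by-squaring and the set-sort iterated power coherent under the Kung--Sieveking reduction modulo $\X^r-1$) are also in line with the paper's appeal to Lemmas~\ref{lemmaexpmultiplicative} and \ref{lemmapolypow}, and correctly identify why $\Sigma^B_0(\card)$-$\mathrm{IND}$ suffices.
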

\begin{proof}
    By Theorem~\ref{thrmbinomial} we have 
        \[(\mathcal{X}+a)^p=\sum_{i=0}^p \binom{p}{i} \mathcal{X}^i a^{p-i},\]
        by Lemma~\ref{lemmabinomdivisibility} we get
        \[(\mathcal{X}+a)^p\equiv \X^p + a \md{p}.\]

        To obtain the final congruence, we take both sides modulo $\X^r-1$. By Lemma~\ref{lemmaexpmultiplicative} and Lemma~\ref{lemmapolypow} it follows by $\Sigma^B_0(\card)$-induction that exponentiation by squaring is equivalent to first taking true exponentiation and then compute the remainder modulo $\X^r-1$.
\end{proof}

\subsection{Division of high degree polynomials and provability of $\DLB$}

It this section, we will show that the division of high degree polynomials over a bounded field is total in $\VTC^0_2$, and thus we can formalize the usual proof of the $\DLB$ axiom. It is straightforward to find a $\Sigma^B_0(\card)$ formula defining a predicate $P\in F[\X]$ formalizing that $P$ is a high degree polynomial over the bounded field $F$.

\begin{lemma}[$\VTC^0_2$]\label{lemmafieldpolypow}
    Let $F$ be a bounded field, then there is a $\Sigma^B_1$-definable function which to each high degree polynomial $P\in F[\X]$ and a number $x$ assigns a high degree polynomial $P^x$ and $\VTC^0_2$ proves for every $x$ and every high degree polynomial $P\in F[\X]$:
    \begin{align*}
        P^0 &= 1 \\
        P^{x+1} &= P \cdot P^x
    \end{align*}
\end{lemma}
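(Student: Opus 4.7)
The proof follows the same template as Lemma~\ref{lemmapolypow}, with arithmetic over $\mathbb{N}$ replaced by arithmetic in the bounded field $F = (b, \tilde 0, \tilde 1, C_i, C_o, C_a, C_m)$. The main new ingredient is to establish that iterated sums and products in $F$ are $\Sigma^B_1$-definable in $\VTC^0_2$; once that is in hand, everything else transfers by the analogous symbolic manipulation.

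First I would show that for any sequence $\langle a_0, \dots, a_{n-1}\rangle$ of elements of $[b]$, the iterated field sum and iterated field product are $\Sigma^B_1$-definable. In contrast with integer iterated multiplication, all partial results here stay inside $[b]$, so the sequence of partial sums (respectively products) has polynomially bounded total bit-size. It is characterized by the recursion $s_0 = \tilde 0$, $s_{i+1} = C_a(s_i, a_i)$ (respectively $s_0 = \tilde 1$, $s_{i+1} = C_m(s_i, a_i)$), which is a $\Sigma^B_0$-condition in the fixed $\PV$-symbols for evaluating those circuits. Existence and uniqueness of this sequence follow by $\Sigma^B_0(\card)$-induction on $n$, and hence the value of the iterated operation is a $\Sigma^B_1$-definable function.

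Building on this, I would define $P^x$ by the same closed-form expression as in Lemma~\ref{lemmapolypow}: for $P = \sum_{i=0}^{d} A_i \X^i$, set
\[
P^x := \sum_{i=0}^{xd} B_i \X^i, \qquad B_i := \sum_{j_1 + \cdots + j_x = i}\, \prod_{k=1}^{x} A_{j_k},
\]
where all inner sums and products are now taken in $F$. The preceding step makes this a $\Sigma^B_1$-definable function of $P$ and $x$. The recursive identities $P^0 = 1$ and $P^{x+1} = P \cdot P^x$ then follow by exactly the same manipulation as in Lemma~\ref{lemmapolypow}, with distributivity, associativity and commutativity of $F$'s operations (supplied by the bounded field axioms) playing the role of the corresponding identities for $\mathbb{N}$.

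The main obstacle is the first step --- formalizing iterated field operations inside $\VTC^0_2$. The crucial point that makes it go through is that each step is an application of a \emph{fixed} $\PV$-symbol (namely, the evaluator of the circuit $C_a$ or $C_m$) mapping $[b]^2$ into $[b]$, so the full sequence of partial results is polynomially bounded and can be captured as a single set-sort object, in direct analogy to the construction used for the $\IMUL$ axiom in Theorem~\ref{thrmimul}, but simpler thanks to the uniform bound $b$.
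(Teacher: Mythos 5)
Your overall plan — reuse the closed-form expression for $P^x$ from Lemma~\ref{lemmapolypow} with $\mathbb{N}$-arithmetic replaced by $F$-arithmetic — matches the approach the paper intends (its own proof is the one-liner ``Can be proven analogously to Lemma~\ref{lemmapolypow}''). However, the step you single out as the ``main new ingredient'' contains a genuine gap.

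You argue that iterated $F$-sums and $F$-products are $\Sigma^B_1$-definable in $\VTC^0_2$ because the partial results stay in $[b]$, so the sequence $s_0 = \tilde 0$, $s_{i+1} = C_a(s_i,a_i)$ has small total bit-size and its ``existence and uniqueness follow by $\Sigma^B_0(\card)$-induction.'' This does not work. The statement ``there is a set-sort object $Y$ coding a length-$m$ sequence of partial results'' is a $\Sigma^B_1$ formula (it has an existential set-sort quantifier), and $\VTC^0_2$ does not prove $\Sigma^B_1$-IND on the number sort: that induction scheme is the essential power of $V^1$, which corresponds to $\P$, not to $\TC^0$. More concretely, ``each step applies a fixed poly-time function with values in a poly-bounded range'' is exactly the power of arbitrary polynomial-time computation with polynomial space, so if small partial results sufficed you would be proving $\P\subseteq\TC^0$. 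Uniqueness of the sequence follows by $\Sigma^B_0(\card)$-induction, but existence does not. In general, iterated multiplication in a bounded field presented only by circuits $(C_a, C_m)$ is \emph{not} known to be $\TC^0$-computable.

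What actually makes the lemma go through is the specific structure of the bounded fields in play — $\ZZ/p$ and $\ZZ/p[\X]/(h)$ — rather than any general principle about small values. For $\ZZ/p$, iterated $F$-addition and $F$-multiplication reduce to integer iterated addition (the CARD axiom) and iterated multiplication (Theorem~\ref{thrmimul}), followed by reduction modulo $p$. For $F=\ZZ/p[\X]/(h)$, one encodes elements of $F$ as integers (evaluating the representing polynomial at a sufficiently large power of two), uses $\IMUL$ to take the iterated product of the encodings, reads off a polynomial over $\ZZ$, reduces coefficients modulo $p$, and then reduces modulo $h$ by the division Lemma~\ref{lemmavtcpolydiv}. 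You should either restrict the lemma to fields of this explicit form (as the paper ultimately uses it) or carry out that reduction; the ``small values plus induction'' argument needs to be replaced.
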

\begin{proof}
Can be proven analogously to Lemma~\ref{lemmapolypow}.
\end{proof}

\begin{lemma}[$\VTC^0_2$, adapted from~\cite{healy2006fieldops}]\label{lemmavtcpolydiv}
     For every bounded field $F$ and every $P,S\in F[\X]$ there are $Q,R \in F[\X]$ such that \[\deg(R)< \deg(S)\text{ and }P=S\cdot Q + R.\]
\end{lemma}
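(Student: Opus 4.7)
The plan is to formalize the Kung--Sieveking algorithm, which reduces polynomial division to the computation of a multiplicative inverse of a polynomial modulo $\X^k$ via a Newton iteration of logarithmic depth. First, by multiplying $S$ through by the inverse (in $F$) of its leading coefficient, we reduce to the case that $S$ is monic. If $\deg P < \deg S$, take $Q = 0$ and $R = P$. Otherwise, set $m = \deg P$, $d = \deg S$, and $k = m - d + 1$, and define the \emph{reversed} polynomials $P^{*} = \X^m P(1/\X)$ and $S^{*} = \X^d S(1/\X)$; monicity of $S$ gives $S^{*}(0) = 1$.

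The central step is to $\Sigma^B_1$-define the Newton iterates $T_0, T_1, \ldots, T_\ell$ with $\ell = \lceil \log k \rceil$ by
\[
T_0 = 1, \qquad T_{i+1} = \bigl(2 T_i - S^{*} T_i^2\bigr) \bmod \X^{2^{i+1}}.
\]
Each $T_{i+1}$ is obtained from $T_i$ by one polynomial multiplication and truncation, operations available in $\VTC^0_2$ by Theorem~\ref{thrmimul} and Lemma~\ref{lemmafieldpolypow}; the whole sequence, encoded as a set-sort sequence of polynomials with doubling lengths, is therefore $\Sigma^B_1$-definable. By $\Sigma^B_0(\card)\text{-}\mathrm{IND}$ on $i \leq \ell$ we verify the invariant $S^{*} T_i \equiv 1 \md{\X^{2^i}}$: writing $S^{*} T_i = 1 + \X^{2^i} U$, the step gives
\[
S^{*} T_{i+1} \equiv (1 + \X^{2^i} U)\bigl(2 - (1 + \X^{2^i} U)\bigr) = 1 - \X^{2^{i+1}} U^2 \equiv 1 \md{\X^{2^{i+1}}}.
\]
Setting $T = T_\ell$ yields $S^{*} T \equiv 1 \md{\X^k}$.

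Next compute $Q^{*} = (P^{*} T) \bmod \X^k$, of degree at most $k - 1 = m - d$, and recover $Q$ as the reversal $Q(\X) = \X^{m-d} Q^{*}(1/\X)$, then set $R = P - S Q$. To verify $\deg R < d$, note that $P^{*} \equiv Q^{*} S^{*} \md{\X^k}$ gives $P^{*} - Q^{*} S^{*} = \X^k V$ with $\deg V \leq m - k = d - 1$. Applying the identity $\X^{a+b}(F G)(1/\X) = [\X^a F(1/\X)][\X^b G(1/\X)]$, valid whenever $\deg F \leq a$ and $\deg G \leq b$, the reversal $\mathrm{rev}_m$ sends $P^{*} - Q^{*} S^{*}$ to $P - QS$ and $\X^k V$ to $\X^{d-1} V(1/\X)$; hence $P - QS$ has degree at most $d - 1$, as required.

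The main obstacle is the $\Sigma^B_1$-definition of the sequence of Newton iterates $T_i$---whose lengths roughly double at each step---together with a careful $\Sigma^B_0(\card)$-induction verifying the doubling-precision invariant. Since only $\ell = O(\log k)$ iterations are needed, and $\log k$ is of length logarithmic in $|P|$, the required induction is well within the reach of $\VTC^0_2$; the remaining bookkeeping about reversals and truncated products is routine given the iterated multiplication axiom $\IMUL$.
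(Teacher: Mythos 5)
Your reduction via reversal, your identity $P^{*} - Q^{*}S^{*} = \X^k V$ with $\deg V \leq d-1$, and your recovery of $Q$ and $R$ are all essentially the same as the paper's argument. The difference — and the source of a genuine gap — is how you compute the inverse of $S^{*}$ modulo $\X^k$. You use Newton iteration with $\ell = O(\log k)$ sequential steps $T_{i+1} = (2T_i - S^{*}T_i^2)\bmod \X^{2^{i+1}}$ and assert that the resulting sequence ``is therefore $\Sigma^B_1$-definable.'' This does not follow. Each step is $\TC^0$, but an $O(\log n)$-deep composition of $\TC^0$ maps is a priori an $\mathrm{NC}^1$-type computation, not $\TC^0$; the existence of the full sequence of iterates is a $\Sigma^B_1$ statement, and proving it by induction on $i \leq \ell$ would require $\Sigma^B_1$-IND on the number sort, which $\VTC^0_2$ does not have — only $\Sigma^B_0(\card)$-IND. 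Your $\Sigma^B_0(\card)$-induction verifies the invariant $S^{*}T_i \equiv 1 \md{\X^{2^i}}$ \emph{given} the sequence, but it cannot establish the sequence exists.

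The paper sidesteps this by using the closed form. Since $S^{*}(0)=1$, write $S^{*} = 1 - W$ with $W$ having no constant term; then the inverse modulo $\X^{k}$ is the truncated geometric series $\tilde S_\R = \sum_{i=0}^{k-1}(1-S^{*})^i$, because $\tilde S_\R S^{*} = 1 - (1-S^{*})^{k}$ and the lowest $k$ coefficients of $(1-S^{*})^{k}$ vanish. This expression is a single iterated sum of powers: the powers $(1-S^{*})^i$ are $\Sigma^B_1$-definable by Lemma~\ref{lemmafieldpolypow} (itself a closed-form convolution formula, not a recursion), and the sum is $\Sigma^B_1$-definable iterated addition. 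No sequential iteration is needed. You should replace your Newton iterates with this closed-form inverse; the rest of your argument then goes through unchanged. Note that your iterates also have a closed form ($T_i = \sum_{j<2^i}(1-S^{*})^j \bmod \X^{2^i}$), so if you substitute that, you are back to the paper's method.
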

\begin{proof}
    Without loss of generality, we can assume $P$ and $S$ to be monic. Further let $\deg P = n$, $\deg S = m$, $P=\sum_{i=0}^n a_i \X^i$ and $S=\sum_{i=0}^m b_i \X^i$. Define $S_\R=\sum_{i=0}^n a_{n-i}\X^i$ and $S_\R=\sum_{i=0}^m b_{m-i}\X ^i$. By Lemma~\ref{lemmapolypow} and the totality of iterated addition we can define $\tilde S_\R = \sum_{i=0}^{n-m} (1-S_\R)^i$.

    Then put $H=P_\R\tilde S_\R$, assume $H=\sum_{i=0}^{2n+m} c_i \X^i$. Finally, define the polynomials $Q=\sum_{i=0}^{n-m} c_{n-m-i}\X^i$ and $R=f-g\cdot q$.

    Now it just remains to show that $\deg R < m$. First notice that
    \begin{align*}
        \tilde S_\R S_\R &= \tilde S_\R(1-(1-S_\R)) \\
                       &= \tilde S_\R- \tilde S_\R(1-S_\R)\\
                       &=\left (\sum_{i=0}^{n-m}(1-g_R)^i \right) - \left(\sum_{i=0}^{n-m}(1-g_R)^{i+1}\right)\\
                       &= 1-(1-S_\R)^{n-m+1}.
    \end{align*}
    Since $S$ is monic, $1-S_\R$ has no constant term and so $(1-S_R)^{n-m+1}$ has its lowest $n-m+1$ coefficients with value $0$, in other words there exists $T \in F[\X]$ such that $(1-S_\R)^{n-m+1}=\X^{n-m+1}T$. Let $d=\deg R$, assume for contradiction, that $d\geq m$. Define $Q_\R = \X^{n-m}Q(1/\X)$, $R_\R = \X^d R(1/\X)$. Then
    \begin{align*}
        P &= S\cdot Q + R\\
        P(1/\X) &= S(1/\X)\cdot Q(1/\X) + R(1/\X)\\
        \X^n P(1/\X) &= (\X^m S(1/\X)) \cdot (\X^{n-m} Q(1/\X)) + \X^{n-d} (\X^d R(1/\X))\\
        P_\R &= S_\R \cdot Q_\R + x^{n-d}R_\R.
    \end{align*}
    Now since $H= P_\R \tilde S_\R = (S_\R\cdot Q_\R + x^{n-d}R_\R)\tilde S_\R = Q_\R (1-(1-S_\R)^{n-m+1})+\X^{n-d} \tilde S_\R R_\R$ then $H= Q_\R - \X^{n-m+1}T + \X^{n-d}\tilde S_\R R_\R$. But from the fact that $n-d < n-m+1$ and $\tilde S_\R$ and $R_\R$ have their constant coefficients equal to $1$ we obtain that $\X^{n-d}\tilde S_\R R_\R$ has the coefficient of the monomial $\X^{n-d}$ non-zero and thus $Q_\R$ is not equal to the lowest $n-m+1$ coefficients of $H$, a contradiction.
\end{proof}

The following theorem shows provability of $\DLB$ in $\VTC^0_2$. In the original formulation of $\DLB$, sparse polynomial are used but every such polynomial determines a high degree polynomial, so the statement we give is actually at least as strong as the original one.

\begin{theorem}[The theory $\VTC^0_2$ proves $\DLB$.]\label{thrmdlb}
    There is a $\Sigma^B_1$-definable function $I_{F,G}(x)$ such that $\VTC^0_2$ proves: 
    
    For every bounded field $F$ and a high degree polynomial $G\in k[\X]$ the function \[I_{F,G}(x):\{a\in F ; G(a)=0\} \to \{1,\dots,\deg G\},\]
    is injective.
\end{theorem}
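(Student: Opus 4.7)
The plan is to define $I_{F,G}(a)$ as the rank function counting how many roots of $G$ in $F$ lie at or below $a$, namely $I_{F,G}(a) := \lvert \{a' \leq a : a' \in F \land G(a') = 0\} \rvert$. This function is $\Sigma^B_1$-definable in $\VTC^0_2$: evaluating a high degree polynomial at an element of $F$ is $\Sigma^B_1$-definable via iterated addition and multiplication in $F$ (Theorem~\ref{thrmimul}, Lemma~\ref{lemmafieldpolypow}), so ``$a' \in F$ and $G(a') = 0$'' is a $\Sigma^B_0(\card)$ predicate, and the axiom $(\mathrm{CARD})$ together with $\Sigma^B_0(\card)\text{-}\mathrm{COMP}$ delivers the counting function. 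Injectivity on the root set is then immediate from strict monotonicity: if $a < b$ are both roots of $G$, then $I_{F,G}(a) < I_{F,G}(b)$.

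The substantive content is to show the outputs lie in $\{1,\dots,\deg G\}$, which reduces to proving the uniform bound $m := |R| \leq \deg G$ where $R := \{a \in F : G(a) = 0\}$. The key auxiliary fact is a formalization of the factor theorem inside $\VTC^0_2$: for $a \in F$ with $G(a) = 0$, we have $(\X - a) \mid G$ in $F[\X]$. This is obtained by applying polynomial division (Lemma~\ref{lemmavtcpolydiv}) to write $G = (\X - a) Q + r$ with $\deg r < 1$; so $r$ is a field constant, and evaluating at $a$ forces $r = G(a) = 0$. Next, using the rank function just defined, enumerate the roots in increasing order as $a_1 < a_2 < \dots < a_m$, and set $P_k := \prod_{i=1}^{k}(\X - a_i)$, which is $\Sigma^B_1$-definable via iterated polynomial multiplication.

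The core step is a $\Sigma^B_0(\card)$-induction on $k \leq m$ of the statement ``$P_k \mid G$'', with explicit quotient $H_k := G / P_k$ supplied as a $\Sigma^B_1$-definable function by Lemma~\ref{lemmavtcpolydiv}. The base case $k = 0$ takes $H_0 := G$. For the inductive step, assume $G = P_k H_k$ and $k < m$; evaluating at $a_{k+1}$ gives $0 = G(a_{k+1}) = P_k(a_{k+1}) \cdot H_k(a_{k+1})$. Since $P_k(a_{k+1}) = \prod_{i \leq k}(a_{k+1} - a_i)$ is a product of nonzero elements of the field $F$, it is itself nonzero by an argument mirroring Lemma~\ref{lemmaeuclidsequence}; hence $H_k(a_{k+1}) = 0$, and the factor theorem applied to $H_k$ yields $(\X - a_{k+1}) \mid H_k$, so $P_{k+1} \mid G$. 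At $k = m$, the polynomial $P_m$ is monic of degree exactly $m$ and divides $G$, and the standard degree identity $\deg(fg) = \deg f + \deg g$ for nonzero $f,g \in F[\X]$ (checkable by inspecting leading coefficients, using the totality of polynomial multiplication in $\VTC^0$) forces $m \leq \deg G$. The main obstacle is packaging the inductive hypothesis as a $\Sigma^B_0(\card)$ formula over which induction is available; this is handled by using the explicit division function $H_k := G/P_k$ so that ``$P_k H_k = G$'' becomes a polynomial-equality check on values of $\Sigma^B_1$-definable functions, admissible in $\Sigma^B_0(\card)$ formulas via $\Sigma^B_0(\card)\text{-}\mathrm{COMP}$.
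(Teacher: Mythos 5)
Your proof is correct and follows essentially the same route as the paper's: both define $I_{F,G}$ as the rank of an element within the root set (the paper via an incremental recursion, you directly via $(\mathrm{CARD})$), and both establish $|\{a\in F : G(a)=0\}|\le\deg G$ by showing inductively that the product of linear factors $(\X-a_i)$ divides $G$, using Lemma~\ref{lemmavtcpolydiv} both to derive the factor theorem and to phrase the inductive divisibility hypothesis as a $\Sigma^B_0(\card)$-formula. The only cosmetic difference is that you index the induction by the count of roots found so far (after first enumerating roots in increasing order), whereas the paper indexes by the field element $m$ and accumulates $D_{F,G}(m)=\prod_{i<m,\,G(i)=0}(\X-i)$, which sidesteps the explicit root enumeration.
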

\begin{proof}
    There is a $\Sigma^B_1$-definable function $D(F,G,x)$ such that for any bounded field $F$ with bound $b$ on its universe and a high degree polynomial $G\in F[\X]$ we have for all $m\leq b$: $D_{F,G}(x)(m)=\prod_{i\in\{0,\dots,m-1\},G(i)=0} (\X-i)$. 

    \textbf{Claim:} For all $m\leq b$:  $D_{F,G}(m)\mid G$.
    
    \emph{Proof of claim.} By induction on $m$. For $m=0$, we have $D_{F,G}(0)=1$ and thus $D_{F,G}(0) \mid G$. Assume the statement holds for $m$, that is $D_{F,G}(m)$ divides $G$. If $G(m)\neq 0$, then $D_{F,G}(m+1)=D_{F,G}(k) \mid G$ and we are done. Therefore, we assume that $G(m)=0$. By the induction hypothesis, there is $H\in k[\X]$ such that $G = D_{F,G}(m) \cdot H$. Hence, $0=G(m)=D_{F,G}(m)\cdot H(m)$. And since $D_{F,G}(m) = \prod_{i\in\{0,\dots, m-1\},G(i)=0}(m-i)$ is a product of nonzero elements of $k$, we have that $H(m)=0$. By Lemma~\ref{lemmavtcpolydiv}, there are $Q$ and $R$ such that $H=(\X-m)Q+R$, if $R\neq 0$, then $0=H(0) = 0\cdot Q + R \neq 0$. This implies that $R=0$ and therefore $(\X-m)\cdot Q = H$. Together we get that $D_{F,G}(k+1)Q = D_{F,G}(m) (\X-m) Q = G_k H = G$ and that $D_{F,G}(m+1)\mid G$ which proves the claim.

    Finally, we can $\Sigma^B_1$-define a function $I_{F,G}(x)$ such that for any bounded field $F$ with bound $b$ on its universe and a high degree polynomial $G\in k[\X]$ we have for all $m\in\{0,\dots, b\}:$
    \[I_{F,G}(m) = 
    \begin{cases}
        0 & m=0,\\
        I_{F,G}(m-1)+1 & \text{else if }D_{F,G}(m-1)\neq D_{F,G}(m),\\
        I_{F,G}(m-1) & \text{otherwise.}\\
    \end{cases}
    \]
    By induction on $m$ we get that for all $m\leq b$ we have \[I_{F,G}(m) = \deg D_{F,G}(m) \leq \deg G\] and that $I_{F,G}(-)$ is non-decreasing. Let $a,b\in k$ such that $G(a)=G(b)=0$ and $a<b$, then $I_{F,G}(a)<I_{F,G}(b)$, which shows the injectivity of $I_{F,G}(x)$.
\end{proof}

Finally, we have everything needed to finish the proof of correctness in the theory $\VTC^0_2$.

\begin{theorem} \label{Thm: VTC0}
        $\VTC^0_2$ proves the consequences of the theory $S^1_2+\iWPHP+\DLB+\GFLT$ restricted to the language of $\PV$, that is, the consequences without the symbol $\iota$.
\end{theorem}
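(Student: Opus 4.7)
The plan is to interpret the language $\PV(\iota)$ into $\VTC^0_2$ by keeping the $\PV$-symbols fixed and replacing the function symbol $\iota$ with the $\Sigma^B_1$-definable function $I_{F,G}$ supplied by Theorem~\ref{thrmdlb}. Since this interpretation is the identity on the $\PV$-vocabulary, it suffices to verify that every axiom of $S^1_2(\iota)+\iWPHP(\iota)+\DLB+\GFLT$ becomes provable in $\VTC^0_2$ after the substitution; any $\PV$-consequence of the source theory (notably $\AKSCorrect$, via Theorem~\ref{theoremT2proof}) will then automatically be a theorem of $\VTC^0_2$.

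Most of the axioms are covered by the work already done. The BASIC axioms lie in $T_2\subseteq \VTC^0_2$ by definition. The $\GFLT$ axiom is exactly the content of Lemma~\ref{lemmagflt}, and the $\DLB$ axiom, once $\iota$ has been replaced by $I_{F,G}$, is the statement of Theorem~\ref{thrmdlb}. For the $\Sigma^b_1(\iota)$-LMIN scheme (equivalently $\Sigma^b_1(\iota)$-PIND), the key observation is that after unfolding $\iota$ through its $\Sigma^B_1$-graph and treating the $\PV$-symbols as $\Sigma^B_1$-definable functions, every $\Sigma^b_1(\iota)$-formula becomes a $\Sigma^B_0(\card)$-formula in $\VTC^0_2$. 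The required length minimization then reduces to $\Sigma^B_0(\card)$-IND by the standard bit-extraction argument, and $\VTC^0_2$ proves $\Sigma^B_0(\card)$-IND.

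The main obstacle is the $\iWPHP(\iota)$ scheme: for every $\PV(\iota)$-term $f$, $\VTC^0_2$ must prove
\[
a>0 \to (\exists x<2a)(f(x)\geq a) \lor (\exists x<x'<2a)(f(x)=f(x')).
\]
Under the interpretation, any such $f$ is a composition of $\PV$-symbols with $I_{F,G}$ and hence still a $\Sigma^B_1$-definable function of $\VTC^0_2$. The plan is to invoke (or rederive) the general fact that $\VTC^0$ proves iWPHP for every $\Sigma^B_1$-definable function: given an alleged injection $f:\{x<2a\}\to\{y<a\}$, one uses the counting axiom $(\mathrm{CARD})$ together with $\Sigma^B_0(\card)$-COMP to produce a counting sequence for the image set $\{f(x); x<2a\}$. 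Injectivity of $f$ together with a straightforward $\Sigma^B_0(\card)$-induction on $b\le 2a$ forces the counted value to equal $2a$, while the fact that the image sits inside $\{y<a\}$ forces it to be at most $a$, yielding the desired contradiction. With this step settled, each axiom of $S^1_2(\iota)+\iWPHP(\iota)+\DLB+\GFLT$ is provable in $\VTC^0_2$ under the interpretation, which completes the proof.
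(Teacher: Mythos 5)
Your proposal matches the paper's own proof in substance: both replace $\iota$ by the $\Sigma^B_1$-definable function $I_{F,G}$ from Theorem~\ref{thrmdlb}, verify $\GFLT$ via Lemma~\ref{lemmagflt}, obtain the length-minimization/PIND scheme from $\Sigma^B_0(\card)$-COMP and -IND, and handle $\iWPHP(\iota)$ by observing that the resulting terms are $\Sigma^B_1$-definable and that $\VTC^0$ proves the pigeonhole principle for such maps. The only difference is that you sketch the counting argument behind that last step, whereas the paper simply cites it as well known.
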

\begin{proof}
    By Theorem~\ref{thrmdlb}, the theory $\VTC^0_2$ proves that a $\Sigma^B_1$-definition of $I_{F,G}(x)$ satisfies the axiom $\DLB(\iota)$ with $\iota$ replaced by the definable function $I_{F,G}(x)$. By the $\Sigma^B_0(\card)\text{-}\mathrm{COMP}$, we can show that it it also proves the $\sblm$ scheme with the symbol $\iota$ being replaced by the definition of $I_{F,G}(x)$.

    It is well known, that $\VTC^0$ proves the $\PHP$ for maps represented by an object of the set sort, by the $\Sigma^B_0(\card)\text{-}\mathrm{COMP}$ scheme and the $\Sigma^B_1$-definability of $I_{F,G}(x)$ we obtain that it also proves $S^1_2 + \iWPHP(\iota)+ \DLB$ with $\iota$ replaced by the definable map $I_{F,G}(x)$. The remaining $\GFLT$ axiom is provable in $\VTC^0_2$ by Lemma~\ref{lemmabinomdiv}.
\end{proof}

Using Theorem~\ref{Thm: VTC0} and Theorem~\ref{theoremT2proof} we obtain the following.

\begin{corollary}
    The theory $\VTC^0_2$, and consequently the theory $T^\ct_2$, both prove the sentence $\AKSCorrect$.
\end{corollary}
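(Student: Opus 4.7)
The plan is to chain together the two main theorems of the paper. First I would invoke Theorem~\ref{theoremT2proof}, which gives $S^1_2 + \iWPHP + \DLB + \GFLT \vdash \AKSCorrect$. Next, I would observe that the sentence $\AKSCorrect$, being $(\forall x)(\AKSPrime(x) \leftrightarrow \Prime(x))$, is formulated purely in the language of $\PV$ --- it contains no occurrence of the new function symbol $\iota$ --- so it falls within the scope of Theorem~\ref{Thm: VTC0}. That theorem tells us that $\VTC^0_2$ proves all $\PV$-consequences of $S^1_2 + \iWPHP + \DLB + \GFLT$, and applying it to $\AKSCorrect$ yields $\VTC^0_2 \vdash \AKSCorrect$.

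For the second half of the statement, I would appeal to the conservativity fact recalled in the preliminaries: the $\Sigma^b_\infty$-consequences of $\VTC^0_2$ in the language of $\PV$ are conservative over $T^\ct_2$ (this is the instance of~\cite[Lemma 13.1.2]{krajicek1995bounded} cited in the introduction of the theories). Since $\AKSCorrect$ is a $\Pi^b_1$-sentence in the language of $\PV$, hence certainly $\Sigma^b_\infty$, the provability in $\VTC^0_2$ transfers to provability in $T^\ct_2$, completing the corollary.

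There is essentially no obstacle here: the corollary is a one-line combination of the two already-proved results together with the conservativity remark, and all the genuine mathematical work has been done in Sections~\ref{secT2proof} and~\ref{secvtc}. The only care needed is to verify that $\AKSCorrect$ really lives in the $\iota$-free fragment (so that Theorem~\ref{Thm: VTC0} is applicable) and that it has the syntactic form required for the conservativity transfer --- both of which are immediate from the definitions of $\AKSPrime$ and $\Prime$ as $\PV$-predicates.
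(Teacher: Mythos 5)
Your argument is correct and matches the paper's proof exactly: the paper obtains the corollary by combining Theorem~\ref{theoremT2proof} with Theorem~\ref{Thm: VTC0} and then invoking the conservativity of $\VTC^0_2$ over $T^\ct_2$ for $\Sigma^b_\infty$-consequences in the $\PV$ language. One small nit: $\AKSCorrect$ has an unbounded outermost universal quantifier, so it is not literally $\Pi^b_1$; what matters for the transfer is that the matrix $\AKSPrime(x)\leftrightarrow\Prime(x)$ with $x$ free is $\Sigma^b_\infty$, which is exactly what the conservativity fact requires.
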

\section{Concluding remarks}

In this work, we proved the correctness of the AKS algorithm as a first order sentence in the language of $\PV$ in the theory $\VTC^0_2$. Obtaining such a proof inside $T_2$ seems to be unlikely as even ordinary Fermat's Little Theorem implies, over $T_2$, an instance of $\PHP$ which is not known to be provable in $T_2$~\cite{EmilAbelian}. Therefore, we believe it is natural to ask about the following problem.

\begin{problem}
    Does $T_2 + \PHP(\Sigma^b_\infty)$ prove the correctness of the AKS algorithm?    
\end{problem}

We do know, by Jeřábek's work~\cite{EmilAbelian}, that $T_2+\PHP$ proves Fermat's Little Theorem. Does this imply anything about $\GFLT$ in $T_2+\PHP$?

\begin{problem}
    Does $T_2 + \PHP(\Sigma^b_\infty)$ prove the $\GFLT$ axiom?
\end{problem}

Showing the unprovability of either of the above statements under some complexity theoretic assumptions is just as interesting. In fact, it is an old problem of Macintyre whether $I\Delta_0$ proves $\PHP$ for bounded formulas. It is also tempting to understand the role of the function computed by the polynomial $(\X + a)^p$, or any similar function, in the context of proof complexity generators. 

The provability of the implication $\Prime(x)\to \AKSPrime(x)$, which is a $\Sigma^b_1$-formula, relates to the complexity of total $\NP$ search problems by well known witnessing theorems. The relevant total problem is: For an input number $x$ either verify it is prime using the AKS algorithm or find a proper divisor. We show provability of this implication in the theory $S^1_2 + \GFLT$. Unfortunately, it seems the corresponding witnessing theorem gives only trivial reductions for the factorization problem. It would be interesting to find a provability of this implication in a theory which gives non-trivial witnessing.

The other implication, $\AKSPrime(x)\to\Prime(x)$, is a $\Pi^b_1$ statement and thus relates to propositional proof complexity by results about propositional translations. Our proof in $\VTC^0_2$ implies a proof in $U^1_2$ whose corresponding proof system is the quantified sequent calculus $G$. The theory $\VTC^0_2$ also proves everything $T_2$ does, which implies that the proof we obtain by propositional translations should be in a system above every $G_i$, but below $G$.

\begin{problem}
    Describe the propositional proof system corresponding to the $\Pi^b_1$-consequences of the theory $\VTC^0_2$ or alternatively to the theory $T^\ct_2$.
\end{problem}

The theory $S^1_2+\DLB+\iWPHP+\GFLT$ is likely weaker than full $T^\ct_2$. It could also be interesting to find the propositional proof system corresponding to its $\Pi^b_1$-consequences.

\section*{Acknowledgement}
We are deeply grateful to Emil Je{\v{r}}{\'a}bek for his generous and thoughtful feedback on an earlier draft of this work. His comments helped us clarify key arguments, correct important inaccuracies, and his suggestion to consult~\cite{healy2006fieldops} was especially valuable. We are grateful to Jan Kraj{\'\i}{\v{c}}ek and Pavel Pudl{\'a}k for their comments and guidance. We also thank Eitetsu Ken, Erfan Khaniki, Faruk G\"{o}loglu, and Amir Tabatabai for helpful discussions.

\bibliographystyle{plain}
\bibliography{Primes}

\end{document}